\renewcommand\paragraph{\@startsection{paragraph}{4}{\z@}%
            {-2.5ex\@plus -1ex \@minus -.25ex}%
            {-1ex}%
            {\normalfont\normalsize\bfseries}}
\newtheorem{theorem}{Theorem}[subsection]
\newtheorem{claim}[theorem]{Claim}
\newtheorem{fact}[theorem]{Fact}
\newtheorem{proposition}[theorem]{Proposition}
\newtheorem{corollary}[theorem]{Corollary}
\newtheorem{lemma}[theorem]{Lemma}
\newtheorem*{namedtheorem}{\theoremname}
\newcommand{\theoremname}{testing}
\newenvironment{named}[1]{\renewcommand{\theoremname}{#1}\begin{namedtheorem}}{\end{namedtheorem}}
\theoremstyle{definition}
\newtheorem{definition}[theorem]{Definition}
\newtheorem{example}[theorem]{Example}
\newtheorem{remark}[theorem]{Remark}
\numberwithin{equation}{subsection}
\DeclareMathOperator{\Aut}{Aut}
\DeclareMathOperator{\Sub}{Sub}
\DeclareMathOperator{\PSL}{PSL}
\DeclareMathOperator{\SO}{SO}
\DeclareMathOperator{\PSO}{PSO}
\DeclareMathOperator{\injrad}{injrad}
\DeclareMathOperator{\systole}{systole}
\DeclareMathOperator{\minsystole}{\sigma}
\DeclareMathOperator{\id}{id}
\DeclareMathOperator{\hyp}{hyp}
\DeclareMathOperator{\ext}{ext}
\DeclareMathOperator{\disc}{d}
\DeclareMathOperator{\nd}{nd}
\DeclareMathOperator{\Id}{Id}
\DeclareMathOperator{\elem}{elem}
\newcommand{\RC}{ \mathrm{RC}} 
\newcommand{\pbase}{p_{base}}
\newcommand{\modsim}{/ \mathord \sim} 
\newcommand{\inj}{\mathrm{inj}^\infty}
\newcommand\dblsetminus{\setminus\hspace{-1ex}\setminus}
\newcommand{\RR}{\mathbb{R}}
\newcommand{\Sec}{\mathrm{Sec}}
\newcommand{\NN}{\mathbb{N}}
\newcommand{\CC}{\mathbb{C}}
\newcommand{\ZZ}{\mathbb{Z}}
\newcommand{\HH}{\mathbb{H}}
\newcommand{\bbS}{\mathbb{S}}
\newcommand{\SSS}{\mathcal{S}}
\newcommand{\DDD}{\mathcal{D}}
\newcommand{\PPP}{\mathcal{P}}
\newcommand{\LLL}{\mathcal{L}}
\newcommand{\length}{\mathrm{length}}
\newcommand{\dsm}{\mathbin{{\setminus}\mspace{-5mu}{\setminus}} }
\newcommand{\I}{\mathbf{I}}
\newcommand{\II}{\mathbf{II}}
\newcommand{\cplx}{\nu}
\newcommand{\nhd}{\mathscr{N}}
\title{On the Chabauty space of $\PSL_2(\RR)$, I: lattices and grafting}
\author{Ian Biringer, Nir Lazarovich, and Arielle Leitner} 
\begin{document}

\maketitle

\begin{abstract}
This is the first of two papers on the global topology of the space $\Sub(G)$ of all closed subgroups of $G=\PSL_2(\RR)$, equipped with the Chabauty topology. In this paper, we study the spaces of lattices and elementary subgroups of $G$, and prove a continuity result for conformal grafting of (possibly infinite type) vectored orbifolds that will be useful in both papers. 

More specifically, we first identify the homotopy type of the space of elementary subgroups of $G$, following Baik--Clavier. Then for a fixed finite type hyperbolizable $2$-orbifold $S$, we show that the space $\Sub_S(G)$ of all lattices $\Gamma < G$ with $\Gamma \backslash \HH^2 \cong S$ is a fiber orbibundle over the moduli space $\mathcal M(S)$. We describe the closure $\overline{\Sub_S(G)}$ in $\Sub(G)$ and show that $\partial \Sub_S(G)$ has a neighborhood deformation retract within $\overline{\Sub_S(G)}$. When $S$ is not one of finitely many low complexity orbifolds, we show that $\overline{\Sub_S(G)}$ is simply connected. In the simplest exceptional case, when $S$ is a sphere with three total cusps and cone points, we show that $\overline{\Sub_S(G)}$ is a (usually nontrivial) lens space. Finally, we show that when $(X_i,v_i) \to (X_\infty,v_\infty)$ is a (possibly infinite type) smoothly converging sequence of vectored hyperbolic $2$-orbifolds, and we graft in Euclidean annuli along suitable collections of simple closed curves in the $X_i$, then after uniformization, the resulting vectored hyperbolic $2$-orbifolds converge smoothly to the expected limit. As part of the proof, we give a new lower bound on the hyperbolic distance between points in a grafted orbifold in terms of their original distance.
\end{abstract}

\section {Introduction}

Let $G$ be a locally compact second countable group and let $\Sub(G)$ be the set of all closed subgroups of $G$. We endow $\Sub(G)$ with the \emph{Chabauty topology}, wherein $H_n\to H$ if 
\begin{enumerate} 
\item each $h \in H$ is the limit of a sequence of elements $h_n \in H_n$, and 
\item any accumulation point of a sequence of elements $h_n \in H_n$ lies in $H$.
\end{enumerate} 
With this topology, $\Sub(G)$ is compact and metrizable \cite[Lemma E.1.1]{Benedettilectures}. The Chabauty topology was introduced in several places, among them \cites{chab, Fell}. It is essential in hyperbolic geometry, where the Chabauty topology on $\mathrm{Isom}(\mathbb H^n)$ is  usually called the \emph{geometric topology}, see \cite{CEG} for a survey; it can be used to compactify the symmetric spaces and Bruhat-Tits buildings of semisimple $G$, see \cite{Haettel3}, and the moduli spaces of genus g  \cite{harvey2016chabauty}; it gives a natural way to transition between different geometric structures on manifolds, e.g.\ the eight $3$-dimensional geometries in Thurston's geometrization conjecture, see \cite{cooper}; and it is the natural topology underlying the theory of \emph{invariant random subgroups} of $G$, see e.g.\ \cites{7sam,abertkesten,gelander,krifka2020irs}.  For an overview of the Chabauty  topology in a general setting we recommend \cite{harpe}.

Only in very special cases can the entire topology of $\Sub(G)$ be understood.  For example, it is easy to see that $\Sub(\RR) \cong [0,\infty]$ via the map that sends $\{e\} \mapsto \infty$, $\RR \mapsto 0$ and $\langle x \rangle \mapsto x$ for $x>0$. The topology becomes more interesting in dimension two:  Hubbard--Pourezza \cite{HP} showed that $\Sub(\RR^2) \cong \bbS^4$ via a homeomorphism that takes the non-lattices in $\Sub(G)$ to the suspension $S(K) \subset S(\bbS^3) = \bbS^4$ of the trefoil knot $K \subset \bbS^3$. In higher dimensions, though, a full description of $\Sub(\RR^n)$ is out of reach.  Most of what is known is due to Kloeckner~\cite{kloeckner}, who showed that while $\Sub(\RR^n)$ is not a manifold for $n>2$, it is a stratified space in the sense of Goresky-MacPherson, and is simply connected.   There are a few non-abelian groups $G$  for which $\Sub(G)$ is reasonably well understood, e.g.\ the Heisenberg group  and some other low dimensional examples \cites{BHK,Haettel2}, but for most $G$ the topology of $\Sub(G)$ is quite complicated.

Various authors have also made progress in understanding limits of certain categories of subgroups inside some larger ambient group $G$, thus shedding light on part of $\Sub(G)$:
for example the categories of abelian subgroups \cites{Haettel,leitner2016limits,leitner2016conjugacy, ciobotaru2017chabauty, baik2016space}, connected subgroups \cite{lazarovich2019local}, and lattices \cite{wang1968limit}. 

\medskip

From now on, we study the topology of $\Sub(G)$ when $G = \PSL_2(\RR)$.  The group $G$ acts by  fractional linear transformations on the upper half plane $\HH^2$. This action is isometric and identifies $G \cong \mathrm{Isom}^+(\HH^2)$, and we can then view subgroups of $G$ geometrically through their actions on $\HH^2$.  The best understood subgroups are those that are \emph{elementary}, i.e.\  that have a global fixed point in $\HH^2 \cup \partial \HH^2$ or stabilize a geodesic line in $\HH^2$. Elementary subgroups include all closed, non-discrete proper subgroups, as well as all virtually cyclic discrete groups, and they can be completely classified. We give a complete list in Proposition \ref{prop: classification of elem subgroups}, and we describe the topology of the space of elementary subgroups in Proposition \ref{prop: subspace of elem subgroups}. As a more concrete corollary, one can prove the following, which slightly generalizes work of Baik--Clavier \cite{baikclavier}.
\begin{named}{Corollary \ref{cor: homotopy type of sub elem}}
The space of elementary subgroups of $G$ is homotopy equivalent to the subset of $\RR^3$ that is the union of the $2$-spheres $\bbS_n$, $n=2,3,\ldots,\infty$, where $\bbS_n$ has radius $1-1/n$ and is centered at $(1-1/n,0,0).$ In particular, $\Sub_{elem}(G)$ is simply connected.
\end{named}

All nonelementary proper subgroups in $\Sub(G)$ are discrete groups, which are best viewed as follows in terms of their quotient orbifolds. Fix some unit vector $v_{base}$ in the tangent space at some point in $\HH^2$. If $\Gamma < G$  is discrete, then the quotient $X := \Gamma \backslash \HH^2$ is a hyperbolic $2$-orbifold, and the vector $v_{base}$  projects  under the quotient map to a vector $v $ in the tangent bundle of $X$.   We call the pair $(X,v)$ a \emph{vectored hyperbolic $2$-orbifold}. See \S \ref{sec: orbifolds} for more information about orbifolds and their tangent bundles. 

It turns out that the map $\Gamma \longmapsto (X,v)$ gives a bijection 
\begin{equation*}
    \Sub_d(G) : = \{ \text { discrete subgroups of } G \ \} \longrightarrow \{ \text { vectored orientable hyperbolic } 2\text{-orbifolds } (X,v) \ \} \modsim,
\end{equation*}
where here two vectored orbifolds  $(X,v)$ and $(X',v')$ are equivalent if there is an orientation preserving isometry $f :X \to X'$ where the derivative satisfies $d f(v) = v'$. We denote the inverse of the map by $$(X,v) \longmapsto \Gamma(X,v).$$  Under this bijection, the Chabauty topology on $\Sub_d(G)$ corresponds to the \emph{vectored smooth topology} on the space of vectored orbifolds, in which $(X,v)$ is close to $(X',v')$ if large compact sets around the base points of these two orbifolds are diffeomorphic and almost isometric. See \S \ref{sec: convergence section}  for details, or \cite{CEG} \S I.3.1, I.4.1, for the classical case of surfaces.

There are certain subsets of $\Sub_d(G)$ that are vectored versions of the classical moduli spaces of hyperbolic orbifolds with fixed topology. Namely, let $S$ be an orientable, hyperbolizable $2$-orbifold that has \emph{finite type}, meaning that it is obtained from a compact orbifold by deleting a finite set of points. Let
$$\Sub_S(G) := \{ \ \Gamma \in \Sub_d(G) \ | \ \Gamma \backslash \HH^2 \text{ has finite volume and is homeomorphic to } S \}.$$
With respect to the group/orbifold correspondence above, elements of $\Sub_S(G)$ correspond to vectored, finite volume hyperbolic $2$-orbifolds homeomorphic to $S$, up to vectored isometry. In \S \ref{sec: vecmodsec}, we see that $\Sub_S(G)$ is a bundle over the classical \emph{moduli space} $\mathcal M(S)$, whose elements are hyperbolic structures on $S$ up to orientation preserving isometry. In particular, we show the following.

\begin{named}{Proposition \ref{prop: orbibundle}}
The natural map $\Sub_S(G) \longrightarrow \mathcal M(S),$ where $ \Gamma(X,v) \longmapsto [X],$
is a fiber orbibundle whose regular fibers are homeomorphic to $T^1 S$, and $\Sub_S(G)$ is a $6g+2(k+l)-3$ dimensional manifold, where $g$ is the genus of $S$, $k$ is the number of cusps, and $l$ is the number of cone points.
\end{named}

% 

% \begin{theoremA}
%     Suppose that $S$ is a hyperbolizable $2$-orbifold with finite topological type. Then the boundary $\partial \Sub_S(G)$ is the union of all $\Sub_T(G)$, where $T$ is a hyperbolizable $2$-orbifold that embeds as an essential suborbifold in $S$, together with a subset of 
%     $\Sub_{elem}(G)$.
% \end{theoremA}

% \begin{theoremA}
% $\partial \Sub_S(G)$ has a neighborhood retract in $\overline{\Sub_S(G)}$.
% \end{theoremA}

% \begin{theoremA}
% Suppose that $S$ is a hyperbolizable 2-orbifold with sufficient genus, cone points and cusps. Then, the closure $\overline {\Sub_S(G)}$ is simply connected.
% \end{theoremA}

% \begin{theoremA}
% Suppose that $S$ is a sphere with a total of $3$ cusps and cone points, not all of which are cone points, then $\overline{\Sub_S(G)}$ is homeomorphic to a lens space.
% \end{theoremA}

% {\bf
% Is there a way to summarize points 1-3 below to one theorem about how grafting behaves wrt Chabauty topology? 

% ************************
% }

\subsection{Chabauty boundaries of vectored moduli spaces} The main point of this paper is to describe the spaces $\Sub_S(G)$ and how they limit onto each other. In a forthcoming sequel, we will combine this work with an analysis of the space $\Sub^\infty(G)$ consisting of all discrete subgroups of $G$ with infinite covolume. 

Fix a finite type, orientable, hyperbolizable $2$-orbifold $S$ and let $$\partial \Sub_S(G) := \overline{\Sub_S(G)} \setminus \Sub_S(G)$$ be the topological boundary of $\Sub_S(G)$. 

\begin{named}{Proposition \ref{prop: boundarybewhat}}[Informal version]
$\partial \Sub_S(G)$ is the union of some elementary groups with all the spaces $\Sub_T(G)$, where $T$ is an essential proper suborbifold of $S$.
\end{named}
For instance, if $S$ is a surface other than the thrice punctured sphere, the relevant elementary groups are the set of all conjugates of the one parameter subgroups $A,N \leq G$, where $A$ is the diagonal subgroup and $N$ is the upper uni-triangular subgroup. A formal statement of the proposition is given in \S \ref{boundaries and ends}. Here, vectored $2$-orbifolds in $\Sub_S(G)$ converge to the boundary when curves are pinched to cusps, or when the base vector goes out a cusp. Given a sequence in $\Sub_S(G)$ converging into the boundary, if the injectivity radius at the base vector stays bounded away from zero, the limit is in some $\Sub_T(G)$, while if the injectivity radius goes to zero, we get convergence to a conjugate of $A$ or $N$, depending on whether there is a geodesic at bounded distance from the base vector or not. This proposition will not be surprising to experts, and some similar ideas are present even in a 1971 paper of Harvey \cite{harvey2016chabauty}. We mention the proposition here since it is the first step to understanding the following theorem, the proof of which takes up a large portion of the paper.

\begin{named}{Theorem \ref{thm: deformation retract thm}}[Informal version]
If $S$ is a finite type, hyperbolizable $2$-orbifold, then the boundary $\partial \Sub_S(G)$ is a neighborhood deformation retract within the closure $\overline {\Sub_S(G)}$.
\end{named}
More precisely, let $\epsilon>0$ be less than the Margulis constant and define the \emph{$\epsilon$-end  neighborhood} of $\Sub_S(G)$  to be the subset $\Sub_S^\epsilon(G)$ consisting of all vectored  orbifolds $(X,v) \in \Sub_S(G)$ such that either 
\begin{enumerate}
	\item the \emph{systole} of $X$, i.e. the length of the shortest closed geodesic, is less than $\epsilon$, or 
\item the vector $v$ lies in a component of the $\epsilon$-thin part of $S$ that is a horoball neighborhood of the cusp.
\end{enumerate}
Then $\Sub_S^\epsilon(G) \cup \partial \Sub_S(G)$ is a neighborhood of $ \partial \Sub_S(G)$ within $\overline { \Sub_S(G)}$, and we show that this neighborhood deformation retracts onto $ \partial \Sub_S(G)$.

Theorem \ref{thm: deformation retract thm} is thematically similar to the existence of a deformation retract from moduli space to its $\epsilon$-thick part, an incomplete result of Harer \cite{harer1988cohomology} that was completed by Ji-Wolpert \cite{jiwolpert}, see also Ji \cite{ji2014well} for a better version of the proof. In those results, the idea is to flow in the direction of the Weil-Petersson gradient of the systole function until the systole increases to $\epsilon$. In Theorem \ref{thm: deformation retract thm}, we instead want to \emph{decrease} the systole to zero, but it is hard to use Weil-Petersson gradients to do this in a way that produces a deformation retract onto $\partial \Sub_S(G)$, mostly because we have to control the base vectors of our orbifolds.

Briefly, the idea behind our proof is as follows. If $\systole(X)<\epsilon$, we push $(X,v)$  to the boundary $\partial \Sub_S(G)$ by pinching all short curves, while if $v$ lies in a cusp neighborhood, we leave $X$ alone and just flow $v$ straight  out the cusp, in which case the groups $\Gamma(X,v)$ limit to  a conjugate of $N$. One needs to do this in a canonical way that does not depend on any marking for $X$ in order for the resulting deformation retraction to be continuous; this is why Harer and Ji-Wolpert use Weil Petersson gradients above. The tool we use is \emph{conformal grafting}, see e.g.\ \cites{mcmullen,bourque}, in which length $L$ Euclidean annuli are inserted along the short curves in $X$, and then the resulting piecewise hyperbolic/Euclidean orbifold $X_L$ is uniformized to give a hyperbolic $2$-orbifold $X_L^{hyp}$, see \S \ref{sec: graftingsec}. It is well known that as $L\to \infty$, the lengths of the corresponding curves on $X_L^{hyp}$ tend to zero. The advantage of grafting for us is that the operation of inserting a Euclidean annulus is explicit enough that one can easily convert a parametrization for $X$ into a parametrization for $X_L$. The disadvantage is that uniformization is somewhat opaque, so it takes some work to identify the appropriate limits in $\partial \Sub_S(G)$, which depend on the hyperbolic geometry of $X_L^{hyp}$, not just on $X_L$.  

\medskip

Our main motivation in proving Theorem \ref{thm: deformation retract thm} above was to better understand the topology of the closure of $\Sub_S(G) \subset \Sub(G)$. In particular, we use it to prove the following.

\begin{named}{Theorem \ref{thm: vankampen}}[Informal version] Suppose that $S$ is not one of finitely many low-complexity exceptional orbifolds. Then the closure $\overline {\Sub_S(G)}$ is simply connected.
\end{named}

Since $\Sub_S(G)$ is a fiber orbibundle over $\mathcal M(S)$ with regular fiber $T^1 S$, the fundamental group of $\Sub_S(G)$ is an extension of $\pi_1 T^1 S$ by the mapping class group $\mathrm{Mod}(S)$, see Remark \ref{rem: repsremark}. Essentially, the reason that all loops in $\Sub_S(G)$ die in the closure is that the mapping class group is generated by Dehn twists and half Dehn twists, and by pinching the associated twisting curves one can homotope the corresponding loops in $\Sub_S(G)$ to points in $\partial \Sub_S(G)$. 

Formally, the proof of Theorem \ref{thm: vankampen} is by induction. The space $M := \overline{\Sub_S(G)}$  is the union of certain elementary groups and certain spaces $\Sub_T(G)$, where $T$ embeds in $S$. For each $\cplx$, let $M_\cplx \subset M$ be the subset of all elementary groups and all $\Sub_T(G)$ where $T$ has complexity at most $\cplx$. Using induction, we \emph{essentially}\footnote{This sketch contains lies. It is not a priori obvious that all the $M_\cplx$ are simply connected, and indeed $M_0$ may not be, so the proof is a little bit more roundabout than what is sketched here. But what is written above does capture the main ideas of the argument.} prove that for every $\cplx$ between $0$ and the complexity of $S$, the space $M_\cplx$ is simply connected. In the induction step, for every $T$ with complexity $\cplx$, we attach  $\Sub_T(G)$ to $M_{\cplx-1}$ along $\partial \Sub_T(G)$. By Theorem~\ref{thm: deformation retract thm}, $\partial \Sub_T(G)$ has a neighborhood deformation retract, so Van Kampen's Theorem applies and we can calculate $\pi_1 M_\cplx$ in terms of the groups $\pi_1 \Sub_T(G)$ and $\pi_1 M_{\cplx-1}=1$. The observation about Dehn twists above implies that all loops in $\Sub_T(G)$ are nullhomotopic in its closure, so $\pi_1 M_\cplx =1$ as well.

In the exceptional cases excluded in Theorem \ref{thm: vankampen}, the fundamental group of $\overline {\Sub_S(G)} $ may be nontrivial. For instance, in Example \ref{ex: sphere with 3 points example} we see that when $S$ is a sphere with three cone points, the space $\Sub_S(G)$ is compact, and hence is its own closure. Moreover, $$\Sub_S(G)\cong T^1 (S)/\mathrm{Isom}(S),$$ so $\pi_1 \overline{\Sub_S(G)}$ is a cyclic extension of the orbifold fundamental group $\pi_1 S$. When some of the cone points are replaced by cusps, $\Sub_S(G)$ becomes noncompact, and taking the closure in $\Sub(G)$ amounts to performing a certain Dehn filling. Analyzing the filling slope carefully, we prove:

\begin{named}{Proposition \ref{prop: lensprop}}[Informal version] If $S$ is a sphere with a total of $3$ cusps and cone points, not all of which are cone points, then $\overline{\Sub_S(G)}$ is homeomorphic to a (usually nontrivial) lens space.
\end{named}

Finally, when $S$ is a sphere with four cone points of distinct orders, the group $\pi_1 \Sub_S(G)$ contains a nonabelian free group. This is because $\Sub_S(G)$ has 3 ends (see Proposition \ref{prop: oneend}) while $\partial \Sub_S(G)$ is connected. It is not quite as easy to calculate $\pi_1$ completely in this case. Indeed, our proof of Theorem \ref{thm: vankampen} only gives a normal generating set for $\pi_1$. But we believe that with sufficient effort one could probably calculate $\pi_1 \overline {\Sub_S(G)}$ for all finite type hyperbolizable orbifolds $S$ without using arguments radically different from those in this paper.

\subsection{Continuity of conformal grafting in the smooth topology}
 
As mentioned above, our proof of Theorem \ref{thm: deformation retract thm} involves pinching short curves via  \emph{conformal grafting}, wherein a length $L$ Euclidean annulus is inserted along a geodesic $\gamma$ in a vectored hyperbolic orbifold $(X,v)$, and then the resulting conformal structure is uniformized to give a hyperbolic orbifold $(X_L^{hyp},v^{hyp})$, see see e.g.\ \cites{mcmullen,bourque}.  It is essential in our arguments that the resulting vectored orbifold varies continuously in the smooth topology with respect to $L$ and $(X,v)$. 

Now, Tan \cites{tanigawa1997grafting} proved in 1997 that length $L$ grafting along a simple closed curve in a closed surface $S$ gives a homeomorphism $T(S)\longrightarrow T(S)$ of Teichm\"uller space. The type of continuity statement we need is a bit stronger than this, since we need to keep track of base vector and (more importantly) understand what happens to $X_L^{hyp}$ in the smooth topology when $L\to \infty$. Borque \cite[Proposition 1.1]{bourque} proved something close to what we need, showing that when a surface $X$ is fixed and $L\to \infty$, the surfaces $X_L^{hyp}$ converge to the cusped surface obtained from $X$ by `infinite length' conformal grafting. With some awkwardness, in the current paper it would be sufficient to use a continuity result for grafting that is roughly similar in scope to Borque's work. However, in the sequel to this paper we will need a stronger result. So, we prove:

\begin{named}{Theorem \ref{thm: contgraft}}[Weaker informal version]
Suppose that $(X^i,v^i)\to (X^\infty,v^\infty)$ is a smoothly convergent sequence of vectored hyperbolic $2$-orbifolds, $L^i \to L^\infty$, and we create orbifolds $((X_{L^i}^i)^{hyp}, (v^i)^{hyp})$ by doing length $L^i$-grafting along a sequence $\gamma^i\subset X^i$ of curves that converge to a curve  $\gamma^\infty \subset X^\infty$. Then $$((X_{L^i}^i)^{hyp}, (v^i)^{hyp}) \to ((X_{L^\infty})^{hyp}, (v^\infty)^{hyp})$$ in the smooth topology.
\end{named}

Here, one novelty of our theorem is that the orbifolds $X^i$ may have infinite type. The statement of Theorem \ref{thm: contgraft} given in the body of the paper is stronger than the one above: it allows $\gamma^i$ to be a multicurve and $L^i$ to depend on the particular component, it allows there to be additional grafting along bounded length curves in the $X^i$ whose distances to the base vector go to infinity, and it also characterizes the limits if the $v^i$ are inserted in the Euclidean annuli. When $L^i\to \infty$ and the $v^i$ are taken deeper and deeper in such annuli, the limits are taken in the Chabauty topology and are certain (precisely identified) elementary subgroups of $G$, see also Proposition \ref{prop: limits in collars}.

There are two technical results that are used in the proof of Theorem \ref{thm: contgraft}. Both of them have to do with the fact that grafting produces an orbifold with a piecewise hyperbolic/Euclidean structure, which is then uniformized to be hyperbolic, and the last uniformization step is geometrically opaque. First, we construct uniform lower bounds for the hyperbolic distance \emph{(after uniformization!)} across any grafting cylinder.

\begin{named}{Proposition \ref{prop: modelprop}}[Weaker informal version]
Suppose that $X$ is a hyperbolic $2$-orbifold, $\gamma \subset X$ is a simple closed curve, and a Euclidean cylinder $C$ of length $L$ is grafted along $\gamma$ to create a hyperbolic $2$-orbifold $X_L^{hyp}$. Then the hyperbolic distance across $C \subset X_L^{hyp}$ is at least some explicit function of $L$ and $\ell(\gamma)$. 
\end{named}

This is a uniform version of a lemma of Bourque \cite[Lemma 4.1]{bourque}, whose estimates depend on $X$, not just on $L$ and $\ell(\gamma)$. See also the proof of Proposition 4.1 in Hensel \cite{hensel}, which includes some related estimates in the case that $\gamma$ is short. Our proof of Proposition \ref{prop: modelprop} is simpler than both of their arguments, and the full statement given in the body of the paper is much stronger. First, the statement we give does not only concern the total hyperbolic distance across $C$; rather, it gives a lower bound for the hyperbolic distance from a point in $C$ to $\partial C$, and even allows one to build a good bilipschitz model for the hyperbolic metric on $C \subset X_L^{hyp}$. Also, the way the statement is worded above, the estimates get worse as $\ell(\gamma)\to 0$, but in the real statement of the proposition we replace $C$ with its union with the `standard collar' around the geodesic $\gamma \subset X$ and give estimates that depend only on $L$ and an upper bound for $\ell(\gamma)$. 

Our second technical result on grafting produces lower bounds on  distances in $X_L^{hyp}$.

\begin{named}{Proposition \ref{prop: globalesstimates}}[Weaker informal version]
Given $\LLL$, there is a homeomorphism
$\eta : [0,\infty] \longrightarrow [0,\infty], $
 with $\eta(\infty)=\infty$ as follows. Suppose that $X$  is a hyperbolic $2$-orbifold, $\gamma$ is a simple closed geodesic on $X$ with $\ell(c) \le \LLL$, and $L \geq 0$. Let $X_L^{hyp}$ be the orbifold obtained from $X$ via  length $L$ grafting along $\gamma$. Then for all $x,y\in X$ that lie outside the standard collar of $\gamma \subset X$, we have
$$d_{(X_L)^{hyp}}\big (\iota(x),\iota(y)\big ) \geq \eta\big (d_X(x,y)\big ).$$
\end{named}

Here, the precise statement of Proposition \ref{prop: globalesstimates} mainly differs from the above in that we are allowed to graft simultaneously along a possibly infinite collection of curves $\gamma$. Note that the hyperbolic metric on  $X_L^{hyp}$ is bounded above by the piecewise hyperbolic/Euclidean metric one obtains by gluing in the Euclidean grafting cylinder to $X$, see for instance Tanigawa \cite[\S 2.1]{tanigawa1997grafting}, so it is easy to produce \emph{upper bounds} on hyperbolic distances in $X_L^{hyp}$. To produce the lower bounds in Proposition \ref{prop: globalesstimates}, we construct an explicit quasiconformal model
$Y$ for $X_L^{hyp}$ by using Fenchel-Nielsen coordinates to shrink the length of $\gamma$ by an appropriate amount. A $K$-quasiconformal map $f$ distorts hyperbolic distances at most according to an inequality like $$d(f(x),f(y))\leq \eta(d(x,y)),$$ where $\eta$ is some function depending on $K$, so the main point of the proposition is to build the quasiconformal model $Y$ in such a way that distances in it can be easily related to distances in $X$. To do this, we extend $\gamma$ to a pants decomposition and define an explicit map $X \longrightarrow Y$ on each pants. One has to be careful doing this, since the distortion of the constructed map cannot depend on the length of the pants curves, which may be extremely large. Indeed, part of this argument involves a detailed and technical manipulation of right angled hexagons, which we relegate to an appendix.

Given the two propositions, here is the main idea behind the proof of Theorem \ref{thm: contgraft}. If $p^\infty \in X^\infty$ is the basepoint of $v^\infty$, and some huge $R>>0$ is fixed, then for large $i$ the smooth convergence  $(X^i,v^i) \longrightarrow (X^\infty,v^\infty)$ gives an almost isometric embedding $$\psi: B(p^\infty,R) \hookrightarrow X^i.$$ This $\psi $ can be extended across the grafting annuli in $X^i$ and $X^\infty$ to give a map $\tilde \psi $ from a large compact subset of $(X^\infty_L)^{hyp}$ into $(X^i_L)^{hyp}$. It is not obvious that $\tilde \psi$ is almost isometric, since the relevant hyperbolic metrics on the domain and codomain are produced using uniformization. However, $\tilde \psi$ is almost conformal, and using the distance estimates from the two propositions one can show that the image of $\psi$ covers a large ball around the base vector in $(X^i_L)^{hyp}$. (Here, Proposition \ref{prop: globalesstimates} applies except when considering points outside the standard collar around $\gamma$, and for points inside the collar one can use the stronger version of Proposition \ref{prop: modelprop}\footnote{Proposition \ref{prop: modelprop} also allows one to control the Chabauty limits one gets when the basevectors $v^i$ are taken deep in the grafting cylinders. See Proposition \ref{prop: limits in collars} and the second half of the statement of Theorem \ref{thm: contgraft} given in the body of the paper.} given in the body of the paper.) But by standard compactness theorems about quasiconformal maps on the disc, see \S \ref{sec: convergence section}, smooth convergence of a sequence of vectored hyperbolic orbifolds can be verified by producing a sequence of almost conformal maps from large compact subsets of the limit to large compact subsets of the approximates. So, Theorem \ref{thm: contgraft} follows.

\subsection{Outline}
In \S \ref{sec: sub elementary}, we describe the space of elementary subgroups of $G$. Section \ref{sec: orbifolds} contains the necessary background on hyperbolic $2$-orbifolds, including information on orbifold basics, uniformization, geodesics, pants decompositions, Teichm\"uller space, Fenchel-Nielsen coordinates, moduli space and its ends, the Thick-Thin decomposition, the correspondences between discrete groups and vectored orbifolds, and between the Chabauty topology and the smooth topology, as well as some technical results describing smooth convergence in terms of quasi-conformal maps, and on obtaining well behaved maps between smoothly-nearby vectored orbifolds. In section \ref{sec: finitevolumesec}, we discuss the spaces $\Sub_S(G)$ defined above, and study the boundary $\partial \Sub_S(G)$ and the topology of the closure, culminating in the proof of Theorem \ref{thm: vankampen}. Section \ref{sec: graftingsec} contains all our results on grafting, and section \ref{sec: retractsec} is devoted entirely to the proof of Theorem \ref{thm: deformation retract thm}. Finally, we include an appendix containing some work on hyperbolic right angled hexagons that is used in \S \ref{sec: graftingsec}.

\subsection{Acknowledgements}

 The authors would like to thank Jonah Gaster, Juan Souto and Martin Bridgeman for helpful conversations. I.B. was partially supported by NSF grant DMS-1611851 and CAREER Award DMS-1654114.
N.L was supported by the Israel Science Foundation (grant no. 1562/19), and by the German-Israeli Foundation for Scientific Research and Development. A.L. was partially supported by ISF grant 704/08.

\section{The space of elementary  subgroups}\label{sec: sub elementary}

Let $G=\PSL_2(\RR)$, and let $\Sub(G)$ be its Chabauty space. In this brief expository section, we describe the topology of the subspace of $\Sub(G)$ consisting of all closed, elementary subgroups of $G$. 

The group $G$ acts by fractional linear transformations on the upper half plane $\HH^2 \subset \CC$. When $\HH^2$ is equipped with the hyperbolic metric $ds^2 = 1/y^2 (dx^2 + dy^2)$, this  action induces an isomorphism of $G$ with the group of orientation preserving isometries of $\HH^2$.  The action of $G$ extends continuously to the ideal boundary $\partial \HH^2 := \RR \cup \infty$.  Nontrivial elements of $G$ can then be classified by their fixed points in $\HH^2 \cup \partial \HH^2$: a \emph{rotation} fixes a point $p\in \HH^2$, a \emph{hyperbolic type} isometry  has two fixed points in $\partial \HH^2$ and translates along the geodesic axis that connects them, and a \emph{parabolic} isometry has unique fixed point $\xi \in \partial \HH^2$ and leaves invariant all the horocycles tangent to $\xi$. See \cite{Benedettilectures} for details.

A subgroup $\Gamma < G$  is called \emph {elementary}  it has a finite orbit in $\HH^2 \cup \partial \HH^2.$ The following proposition is a well known consequence of the classification of isometries.
 
\begin{proposition}\label{prop: classification of elem subgroups} The closed, elementary subgroups of $G$ are as follows.
	\begin{enumerate}
		\item  the trivial group.
		\item The compact group $K(p)$ of all hyperbolic rotations around some $p \in \HH^2$, and the finite cyclic subgroups ${\bf k}(p,2\pi/n)$ generated by a $2\pi/n$-rotation around $p\in \HH^2 $ for $2\le n\in\mathbb N$.  Any such subgroup is conjugate to either $\SO(2) \subset \PSL_2(\RR)$ or one of its finite subgroups.
		\item  The group $A(\alpha)$ of all  hyperbolic type isometries that translate along some geodesic axis $\alpha \subset \HH^2$, and the infinite cyclic subgroups ${\bf a}(\alpha,t) \subset A(\alpha)$ consisting of isometries that translate by multiples of $t \in \RR$. When $\alpha$ is the imaginary axis in $\HH^2$,  the subgroup $A(\alpha)$ is the subgroup of $G$ consisting of diagonal matrices with determinant $1$.
		\item  The group $A'(\alpha) \cong A(\alpha) \rtimes \ZZ/2\ZZ$ of all orientation preserving isometries that  leave invariant an axis $\alpha \subset \HH^2$, and the infinite dihedral subgroups ${\bf a'}(\alpha,t,p) \subset A'(\alpha)$ generated by a $\pi$-rotation around a point $p \in \alpha$ and a hyperbolic type isometry that translates $\alpha $ by $t$. Note that in the latter case, $p$ is only well defined up to translation by a multiple of $t$. When defining $A'(\alpha)$, one can take  $\ZZ /2\ZZ$ to be generated by any $\pi$-rotation around a point $p$ on $\alpha$.
		\item The group $N(\xi)$ of all parabolic type isometries fixing some $\xi \in \partial \HH^2$ and its infinite cyclic subgroups.  Such subgroups are conjugate to either the upper uni-triangular subgroup of $G$ or one of its infinite cyclic subgroups.
		\item The group $B(\xi)$  of all  orientation preserving isometries fixing some $\xi \in \partial \HH^2$. Here, $B(\xi) \cong N(\xi) \rtimes \RR$ where the $\RR$ factor can be identified with $A(\alpha)$ for any $\alpha$ that has an endpoint at $\xi$. Any such group is conjugate to the Borel subgroup of upper triangular matrices of $G$.
		\item The subgroup ${\bf b}(\xi,t) \subset B(\xi)$ of all parabolic isometries fixing $\xi$ and all hyperbolic type isometries fixing $\xi$ that have translation length a multiple of $t$. Here, ${\bf b}(\xi,t) \cong N(\xi) \rtimes \ZZ$, where the $\ZZ$ can be identified with the infinite cyclic group generated by any hyperbolic type isometry fixing $\xi$ with translation length $t$.
	\end{enumerate} 
\end{proposition}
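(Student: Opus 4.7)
The plan is to reduce the classification to three geometric cases according to where the finite orbit of $\Gamma$ lies, and then analyze the closed subgroups of each of the three corresponding stabilizer groups in $G$.

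\emph{Reduction to three cases.} First I would argue that any closed elementary $\Gamma$ either (a) fixes a point of $\HH^2$, (b) fixes a unique point of $\partial \HH^2$, or (c) stabilizes an unordered pair $\{\xi_1,\xi_2\} \subset \partial\HH^2$, equivalently a geodesic axis $\alpha$. Indeed, if some finite orbit lies in $\HH^2$, the Cartan fixed point theorem for $\mathrm{CAT}(0)$ spaces (applied to the circumcenter of the orbit) puts us in case (a). Otherwise every finite orbit lies in $\partial\HH^2$; if such an orbit has $\geq 3$ points then its pointwise stabilizer is trivial by the simple $3$-transitivity of $G$ on $\partial \HH^2$, so $\Gamma$ embeds in a finite symmetric group, is finite, and again fixes a point of $\HH^2$ by Cartan. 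This leaves orbits of size $1$ or $2$, which are cases (b) and (c).

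\emph{Cases (a) and (c).} In case (a), $\Gamma$ is a closed subgroup of $K(p)\cong \SO(2)$; the closed subgroups of $\SO(2)$ are the trivial group, the finite cyclic groups, and $\SO(2)$ itself, yielding items (1) and (2). In case (c), $\Gamma \subseteq A'(\alpha)\cong \RR \rtimes \ZZ/2\ZZ$. Setting $H = \Gamma \cap A(\alpha)$, which is a closed subgroup of $\RR$ and hence either $\{0\}$, $t\ZZ$, or $\RR$, I would split on whether $\Gamma$ is contained in $A(\alpha)$ (giving item (3), or a degeneration into (1)/(2)) or contains an orientation-reversing element; in the latter case $\Gamma$ is generated by $H$ and a $\pi$-rotation about a suitable $p\in\alpha$, yielding item (4).

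\emph{Case (b) and the main obstacle.} This is the delicate case. The stabilizer fits into the short exact sequence $1 \to N(\xi) \to B(\xi) \xrightarrow{\pi} \RR \to 1$, where $\pi$ records the translation length of hyperbolic elements. The closed image $\pi(\Gamma)$ is $\{0\}$, $t\ZZ$, or $\RR$; if $\pi(\Gamma)=\{0\}$ we land in item (5). Otherwise, writing $N_0 = \Gamma\cap N(\xi)$, the crucial step---and the main obstacle---is to establish the dichotomy $N_0 \in \{\{0\}, N(\xi)\}$. The idea is that conjugation by any hyperbolic $h\in\Gamma$ with $\pi(h)=s$ acts on $N(\xi)\cong\RR$ by the scalar $e^s$, so if $N_0$ were a nontrivial discrete subgroup $s_0\ZZ$, we would need $e^{ks} s_0 \in s_0\ZZ$ for all $k\in\ZZ$, forcing $e^s = \pm 1$ and contradicting $s\neq 0$. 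Once this dichotomy is in hand, the remaining bookkeeping gives items (6) and (7) when $N_0 = N(\xi)$ (according as $\pi(\Gamma)$ equals $\RR$ or a lattice $t\ZZ$), while $N_0 = \{0\}$ makes $\pi|_\Gamma$ injective, so $\Gamma \subseteq A(\alpha)$ for the axis $\alpha$ of any hyperbolic element of $\Gamma$ and we have already treated this in case (c). The conjugacy statements in (2)--(7) are immediate from the transitivity of $G$ on points of $\HH^2$, on oriented axes, and on ideal points, and each listed group is visibly closed and elementary from its geometric description.
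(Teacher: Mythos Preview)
Your proof is correct and follows essentially the same route as the paper's: reduce to the three stabilizer types, dispose of the compact and axis-stabilizer cases directly, and identify the Borel case as the crux, resolving it via the scaling action of hyperbolic elements on $N(\xi)$. The only cosmetic differences are that the paper handles the reduction by taking a non-elliptic element and bounding orbit sizes via its fixed-point set (citing Katok for the all-elliptic case) rather than invoking the Cartan circumcenter, and phrases the dichotomy dynamically (the conjugates $h^{-n}gh^{n}$ accumulate at the identity, forcing $\Gamma\cap N(\xi)$ to be non-discrete) rather than algebraically as you do; one small point to tighten is that ``the closed image $\pi(\Gamma)$'' and ``$\Gamma\subseteq A(\alpha)$ when $N_0=0$'' both implicitly use that two hyperbolics in $B(\xi)$ with distinct axes have a nontrivial parabolic commutator, which is worth stating explicitly.
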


 \begin{proof}
 For discrete groups, this classification is given in Theorem 2.4.3 of Katok \cite{katok1992fuchsian}. Here is the more general proof, for completeness.
 
Suppose that $H $ is elementary. If all of the elements of $H $ are elliptic, we are in case 2 by \cite[Theorem  2.4.1]{katok1992fuchsian}. So, we may assume that there is at least one element $h\in H$ that is not elliptic. If our finite $H$-orbit has at least three points in it, it cannot be that all three of these points are fixed points of $h$, which is a contradiction since $h\in H$ and all $h$-orbits are infinite except for the fixed points. A similar argument shows that $H$ cannot have an orbit of two points inside of $\HH^2$. So, either $H$ has a global fixed point in $\partial \HH^2$, or it leaves invariant a pair of points in $\partial \HH^2$.

  Suppose $H$ fixes $\xi \in \partial \HH^2$. Here, one can see that the full stabilizer $B(\xi) \cong N(\xi) \rtimes \RR$ algebraically by conjugating $\xi$ to $\infty,$ in which case $B(\xi)$ becomes the upper triangular matrices, $N(\xi)$ is the upper unitriangular matrices, and the $\RR$ is the diagonal matrices. If $H$ is contained in $N(\xi)$, it is as in case 5 above. If $H \cap N(\xi) = id$, then $H$ is purely hyperbolic and isomorphic to a closed subgroup of $\RR$, so it is as in case 3 above. So, we can assume that $H$ intersects $N(\xi)$ nontrivially, but is not contained in it.  In this case, there is some $h \in H \setminus N(\xi)$ that is a hyperbolic type isometry whose attracting fixed point is $\xi$, and there is some $g \in H \cap N(\xi) \setminus 1$. The sequence of conjugates $h^{-1}gh$ converges nontrivially to $id$ in $N(\xi)$, so $H \cap N(\xi) $ is nonempty, non-discrete and closed. Since $N(\xi) \cong \RR$, it follows that $H \supseteq N(\xi)$, and we are in cases 7 or 6 depending on whether the projection of $H$ to $B(\xi)/N(\xi)$ is infinite cyclic or the entire group.
  
The only remaining case is when $H$ leaves invariant $\{a,b\} \subset \partial \HH^2$. The geodesic $\alpha$ joining $a,b$ is then invariant under $H$, so $H < A'(\alpha)$ from case 4, and we are done.
  \end{proof}

Let $\Sub_{elem}(G) \subset \Sub(G)$ be the subspace consisting of all closed, elementary subgroups. We now describe all possible Chabauty limits in $\Sub_{elem}(G)$, following work of Baik-Clavier \cite{baikclavier}, who studied the closure of the space of cyclic subgroups.

% \begin{tikzcd} 
% & & B(\nu)  && \\
% \textcolor{red}{ {\bf a'}(\alpha,t,p)} \arrow[rrddd, red, bend right=40,swap, "t \to \infty \ and \ p \to \partial \HH^2"] \arrow[rdddd, dotted, bend right = 30, "(1)",swap] \arrow[r, red,  bend left=40, "t \to 0 ", "\alpha \to \beta"'] 
% &  \textcolor{red}{A'(\beta)} \arrow[rd ,red, "\beta \to \nu",swap]
% & {\bf b}(\xi,t) \arrow[u, "t \to 0","\xi \to \nu"',swap] \arrow[d, "t\to \infty", "\xi \to \nu"'] 
% & \textcolor{red}{A(\beta)}\arrow[ld,red,"\beta \to \nu"]
% & \textcolor{red}{{\bf a}(\alpha,t)} \arrow[l,red,  bend right=40,swap,"t\to 0", "\alpha \to \beta"'] \arrow[llddd, bend left =40,red,"\alpha \to \partial \HH^2 \ or \ t\to \infty"]\\
% & & \textcolor{red}{N(\nu)} \\
% & 
% & \langle \gamma \rangle \subset N(\xi)  \arrow[u,red,swap,dotted,"(4)"] \arrow[d,red, dotted,"(5)"]&& \\
% & & \textcolor{red}{1}&& \\
% \\
% & {\bf k}(p',\theta) \arrow[r, dotted,"(2)"] \arrow[u,"\theta \to 0", "p'\to q"'] \arrow[rd,dotted,"(3)"] & & K(q) \arrow[r,"q \to \xi",swap] \\
% \end{tikzcd}
% \end{center}
\begin{figure}[ht]
    \centering
    \includegraphics{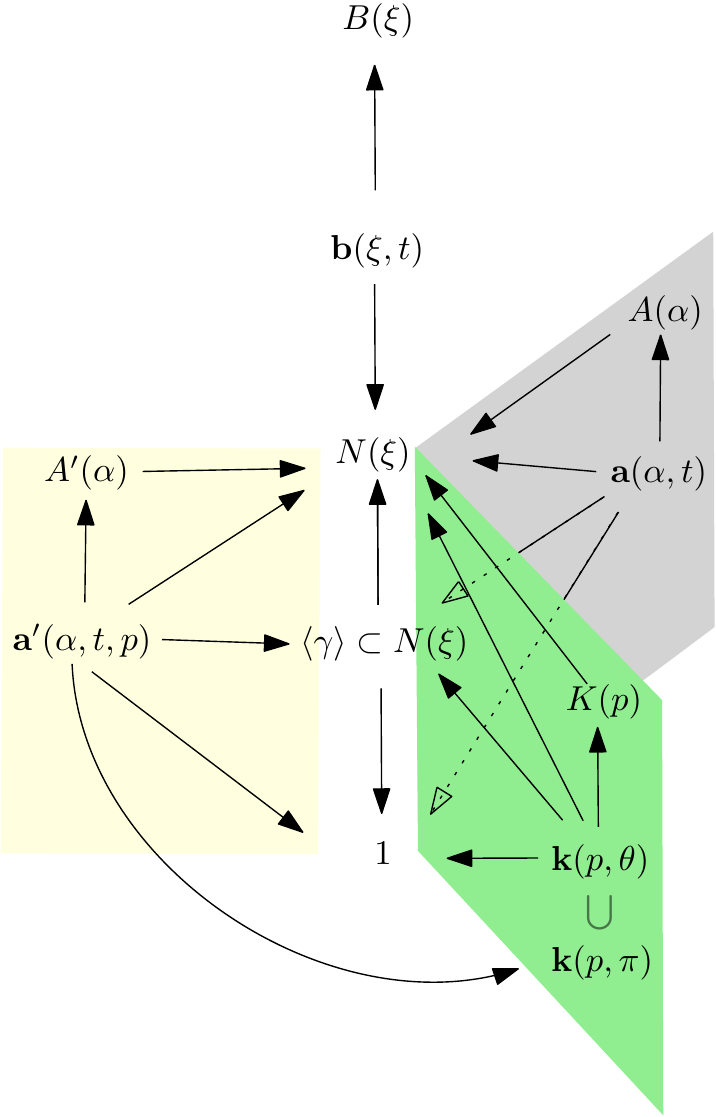}
    \caption{A schematic indicating the stratification of $\Sub_{elem}(G)$ by spaces of subgroups of the types listed in Proposition \ref{prop: classification of elem subgroups}. An arrow $A\longrightarrow B$ indicates that there is a sequence of subgroups of the form $A$ that limits onto a (in fact, any) subgroup of the form $B$.}
    \label{fig: heartsurgery}
\end{figure}

\begin{proposition}[The space of elementary subgroups]\label{prop: subspace of elem subgroups}
	The space $\Sub_{elem}(G)$ of closed, elementary subgroups of $G$ is a union of the spaces of subgroups of the types listed in Proposition \ref{prop: classification of elem subgroups}, which have the following topological types, and are glued together in the following ways.
	\begin{enumerate}
	    \item The spaces of all subgroups $B(\xi)$ and all subgroups $N(\xi)$, where $\xi \in \partial \HH^2$, are disjoint embedded circles in $\Sub(G).$
	    \item The space of all $\mathbf{b}(\xi,t)$, where $\xi\in \partial \HH^2, t\in \RR_+$, is an open cylinder. As $t \to 0$, the cylinder limits onto the circle of all $B(\xi)$. As $t \to \infty$, the cylinder limits onto the circle of all $N(\xi)$.
	    \item The space of all infinite cyclic parabolic subgroups, i.e.\ all the $\langle \gamma \rangle  \subset N(\xi)$ for $\xi \in \partial \HH^2$, is an open cylinder. On one end, it limits onto the circle of all $N(\xi)$, and the other limits to $1 \in \Sub(G)$. 
	    \item The space of all $A(\alpha)$ is homomorphic to the space $\mathcal G$ of all geodesics $\alpha \subset \HH^2$, which is homeomorphic to the symmetric product $\big ((\partial \HH^2)^2 \setminus \Delta \big )/(\xi,\xi') \sim (\xi',\xi)$, an open M\"obius band. The space of all $A'(\alpha)$ is  homeomorphic to $\mathcal G$ in the same way.
	    \item The space of all $K(p)$, where $p\in \HH^2$, is an open disc with boundary the circle $N(\xi)$. 
	    \item The space of all finite cyclic groups ${\bf k}(p,\theta)$ is homeomorphic to $\HH^2 \times \NN$ via the map $$(p,n) \mapsto {\bf k}(p,2\pi/n).$$
	    Moreover, if we abusively set ${\bf k}(p,2\pi/\infty) := K(p)$, the corresponding map $$\HH^2 \times (\NN \cup \{\infty\}) \longrightarrow \Sub(G))$$ is an embedding. This infinite union of discs in $\Sub(G)$ limits onto the set of all $N(\xi)$, the set of all infinite cyclic parabolic subgroups, and the identity. Specifically, if $p\in \HH^2$ is any fixed point, a sequence $\langle \gamma_i \rangle$ of finite cyclic groups converges to $N(\xi),$ some cyclic group, or $1$ if the translation distance $d(p,\gamma_i(p))$ converges to $0$, some finite number, or infinity, respectively. See \S 6 of \cite{baikclavier} for a translation of this into the notation above: as $(p_i,n_i)$ varies, the limit of ${\bf k}(p_i,2\pi/n_i)$ depends on the comparison between the rate at which $n_i\to \infty$ and the rate at which $p_i \to \partial \HH^2$.
	    \item The space of all infinite cyclic hyperbolic type groups ${\bf a}(\alpha,t)$ is parameterized by $\mathcal G \times (0,\infty)$, where  $\mathcal G$ is the space of all geodesics in $\HH^2$, which is homomorphic to an open M\"obius band. Setting $${\bf a}(\alpha,0):=A(\alpha),$$ we get an embedding $\mathcal G \times [0,\infty) \hookrightarrow \Sub(G)$. The image of this embedding limits onto the set of all $N(\xi)$, the set of all cyclic parabolic groups, and the identity. The limit of a sequence of infinite cyclic groups $\langle \gamma_i \rangle$ depends on the translation length $d(\gamma_i(p),p)$ of a fixed point in $\HH^2$, in the same way as in the previous case. See again \S 6 in \cite{baikclavier} to understand how the limit of ${\bf a}(\alpha_i,t_i)$ depends on the rates at which $\alpha_i \to \partial \HH^2$ and $t_i\to 0.$
	    \item The space of all infinite dihedral groups ${\bf a'}(\alpha,t,p)$ is homeomorphic to the order $2$ quotient \[\overrightarrow{ \mathcal G}   \times (0,\infty) \times (\RR/\ZZ) \modsim,\]  where here $\overrightarrow{ \mathcal G} $ is the space of oriented geodesics in $\HH^2$, which is an open cylinder, and the equivalence relation is generated by the involution $(\alpha,t,s) \mapsto (-\alpha,t,-s)$ that reverses the orientation of the geodesic and negates $s$. Here, after fixing a point $q\in \HH^2$, the parameterization takes $(\alpha,t,s) \mapsto {\bf a'}(\alpha,t,p_s)$, where $p_s$ is the point on $\alpha$ that lies a distance of $ts$ from the projection of $q$ onto $\alpha$, in the direction of the chosen orientation. Setting ${\bf a'}(\alpha,0,p) :=  A'(\alpha)$ for every $s$ and thinking of $(0,\infty) \times (\RR/\ZZ) \cong \RR^2 \setminus 0$ via the polar coordinates $(r,\theta)\mapsto (r\cos(2\pi\theta),r\sin(2\pi\theta))$, the definition of the parameterization above extends to $t=0$, giving an embedding
	    \[\overrightarrow{ \mathcal G}   \times \RR^2 \modsim \  \hookrightarrow \Sub(G),\]
	    where the involution now reverses the orientation of $\alpha \in \overrightarrow{\mathcal G}$ and reflects $\RR^2$ through the $x$-axis. The image of this embedding limits onto $\mathcal N$ almost exactly as in the case 7. The only difference is that if $t_i\to \infty$, the groups ${\bf a'}(\alpha_i,t_i,p_i)$ will converge to the order $2$ rotation group $K(p,\pi)$ if the $p_i$ can be chosen\footnote{Recall that ${\bf a'}(\alpha,t,p) = {\bf a'}(\alpha,t,p')$ if $p'$ and $p$ differ by a translation along $\alpha $ by a multiple of $t$.} to converge to $p,$ and while ${\bf a'}(\alpha_i,t_i,p_i)\to 1$ if it is not possible to choose $p_i$ to lie in a compact subset of $\HH^2$. If the geodesics $\alpha_i$ exit $\partial \HH^2$, however, the limiting trichotomy for ${\bf a'}(\alpha_i,t_i,p_i)$ is exactly the same as it is for ${\bf a}(\alpha_i,t_i)$. In particular, the description of the limit in terms of the parameters $\alpha_i,t_i$ is exactly the same as that given in \S 6 of \cite{baikclavier}.
 	\end{enumerate}
\end{proposition}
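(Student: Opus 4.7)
The plan is to verify each of the eight cases in turn, using the Chabauty convergence criterion recalled in the introduction: $H_n \to H$ if and only if every $h \in H$ is a limit of elements $h_n \in H_n$ and every accumulation point of a sequence $h_n \in H_n$ lies in $H$. For each family listed in Proposition \ref{prop: classification of elem subgroups}, I would first write down an explicit parametrization of the subgroups in terms of geometric data (points in $\HH^2$, boundary points, geodesics, translation lengths), check that this parametrization is a continuous injection into $\Sub(G)$, and finally determine all Chabauty limits that occur as the parameters degenerate to the boundary of the parameter space.

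For the one-parameter subgroups in items (1)--(5), continuity of the parametrizations follows from continuity of the $G$-action on $\HH^2 \cup \partial \HH^2$, and injectivity is immediate because one can recover the fixed-point data from the subgroup: $A(\alpha)$ determines its invariant axis $\alpha$, $B(\xi)$ and $N(\xi)$ each determine their unique fixed boundary point, and $K(p)$ determines its center of rotation. The transition limits in (2) and (3) follow by direct one-parameter computations in the upper-triangular Borel subgroup after conjugating $\xi$ to $\infty$: the discrete group $\mathbf{b}(\xi,t)$ becomes dense in $B(\xi)$ as $t\to 0$, while only the unitriangular part survives in the limit as $t\to\infty$. For the boundary of the disc of $K(p)$'s in (5), one conjugates $K(p)$ by a hyperbolic element moving $p$ to a fundamental point and checks that in the limit $p \to \xi$ the rotations fixing $p$ converge to the parabolic subgroup $N(\xi)$; this last step uses that the conjugating hyperbolic isometries push all rotations except those with small angle out to infinity.

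The main work is in items (6), (7), (8), where cyclic and dihedral discrete groups can have several possible Chabauty limits depending on how their parameters degenerate. Here I would fix a base point $p\in \HH^2$ and use the standard trichotomy for a sequence $\langle \gamma_n \rangle$: if the displacement $d(p,\gamma_n(p))$ stays bounded below and above, then by compactness a subsequence converges to a nontrivial cyclic subgroup of the same type; if it tends to $0$, the limit is the closed one-parameter subgroup in which $\gamma_n$ eventually lies, i.e. an $N(\xi)$, some $A(\alpha)$, or some $K(p)$ (with the appropriate case determined by the type of $\gamma_n$); and if it tends to infinity, only powers $\gamma_n^{k_n}$ whose displacement at $p$ remains bounded can appear in the limit, and these produce either $1$ or a cyclic parabolic subgroup, depending on whether the axes/centers of $\gamma_n$ accumulate at a single boundary point. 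To avoid reproducing the detailed rate analysis, I would quote \S 6 of \cite{baikclavier} for the cyclic cases (6) and (7), and extend to the infinite dihedral case (8) by splitting $\mathbf{a'}(\alpha,t,p) = \mathbf{a}(\alpha,t) \sqcup r_p\cdot \mathbf{a}(\alpha,t)$, where $r_p$ is the $\pi$-rotation at $p$. After identifying the limit of the index-two cyclic part via (7), the only remaining issue is whether the $\pi$-rotations $r_{p_n}$ converge, which happens iff the $p_n$ (well defined modulo translations along $\alpha_n$ of length $t_n$) admit a convergent representative.

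The main obstacle I anticipate is the careful bookkeeping in (8) when $t_n\to\infty$: here the indeterminacy of $p$ modulo multiples of $t$ becomes trivial, and one must distinguish between the two subcases $p_n\to p\in\HH^2$ (giving limit $\mathbf{k}(p,\pi)$) and $p_n\to \partial\HH^2$ (giving limit $1$ when $\alpha_n$ stays in a compact family of geodesics, or one of the parabolic limits from case (7) when $\alpha_n$ also exits). Once these cases are handled, assembling the global picture of $\Sub_{elem}(G)$ and verifying the attaching pattern sketched in Figure \ref{fig: heartsurgery} is then just a matter of observing that every Chabauty accumulation point of an elementary sequence is forced by Proposition \ref{prop: classification of elem subgroups} to belong to one of the listed types, and the limits computed stratum-by-stratum above exhaust all possibilities.
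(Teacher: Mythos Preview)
Your proposal is correct and follows essentially the same approach as the paper, which treats the proposition as a routine verification from the definition of Chabauty convergence, singling out the key observation that fixed points vary continuously and that convergence of elliptic/hyperbolic elements to a parabolic in $N(\xi)$ is detected by convergence of the translation distance $d(g_i(p),p)$ (via Arzela--Ascoli). The paper likewise defers the detailed rate analysis in cases (6)--(8) to \S 6 of Baik--Clavier, so your plan is, if anything, more explicit than what the paper actually writes.
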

\begin{proof}
We will leave any remaining details to the reader, as basically every sentence in the lengthy statement of the proposition is trivial to prove using the definition of the Chabauty topology. In that sense, the statement is the proof. The basic fact that one uses over and over is that limits of group elements in $G$ are simple to understand, and that the elements of the elementary groups above are explicitly defined. In particular, the fixed points in $\HH^2 \cup \partial \HH^2$ of elements of $G$ vary continuously with the element, and if $(g_i)$ is a sequence of hyperbolic or elliptic elements whose fixed points converge to $\xi \in \partial \HH^2$, then $(g_i)$ converges to a parabolic element $g\in N(\xi)$ exactly when the translation distances $d(g_i(p),p)\to d(g(p),p)$, for some (any) fixed $p\in \HH^2$. (Convergence of translation distance implies a subsequential limit of $(g_i)$ by Arzela-Ascoli, and any such limit must have $\xi$ as a fixed point and must translate $p$ by the limiting translation distance, so must be $g$.) This is the only thing that is needed to understand the way that the hyperbolic and elliptic groups limit onto the parabolic groups in 6-8 above. The more complicated mathematics surrounding this proposition is what is done in Baik-Clavier~\cite{baikclavier}, where they understand a bit better in terms of the given parametrizations how the pieces above fit together.
\end{proof}

%\tinytodo{There are a few issues with this second figure. $\partial a'$ and $\partial a$ should include $N$. The $a'$ groups also limit onto the order two finite subgroups, which isn't shown, and the picture makes it look like the $a'$ groups have the same topology as the $a$ groups, when in fact you have this extra dimension coming from the fact that you get to pick the point of the $\pi$-rotation. Also, some of the notation is different from what is used above.}

%\begin{figure}[htbp!]
%	\begin{center}
	%	\def\svgwidth{0.7\textwidth}
	%	\input{./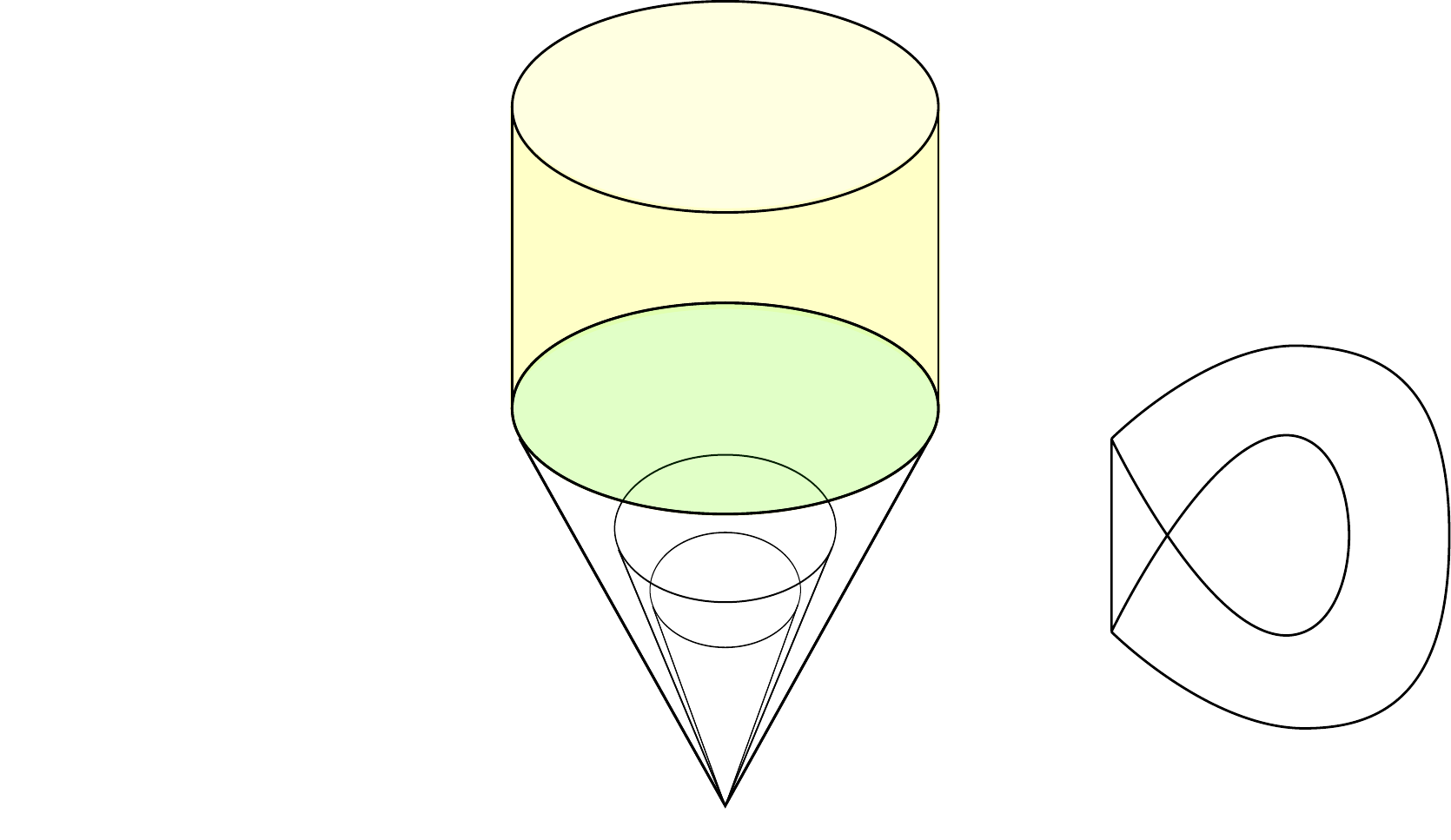_tex}
	%\end{center}
%\end{figure}

Figure \ref{fig: sub_elem} attempts to illustrate the global topology of $\Sub_{\elem}(G),$ as described above.

\begin{figure}[h!]
    \centering
    \includegraphics[scale=.5]{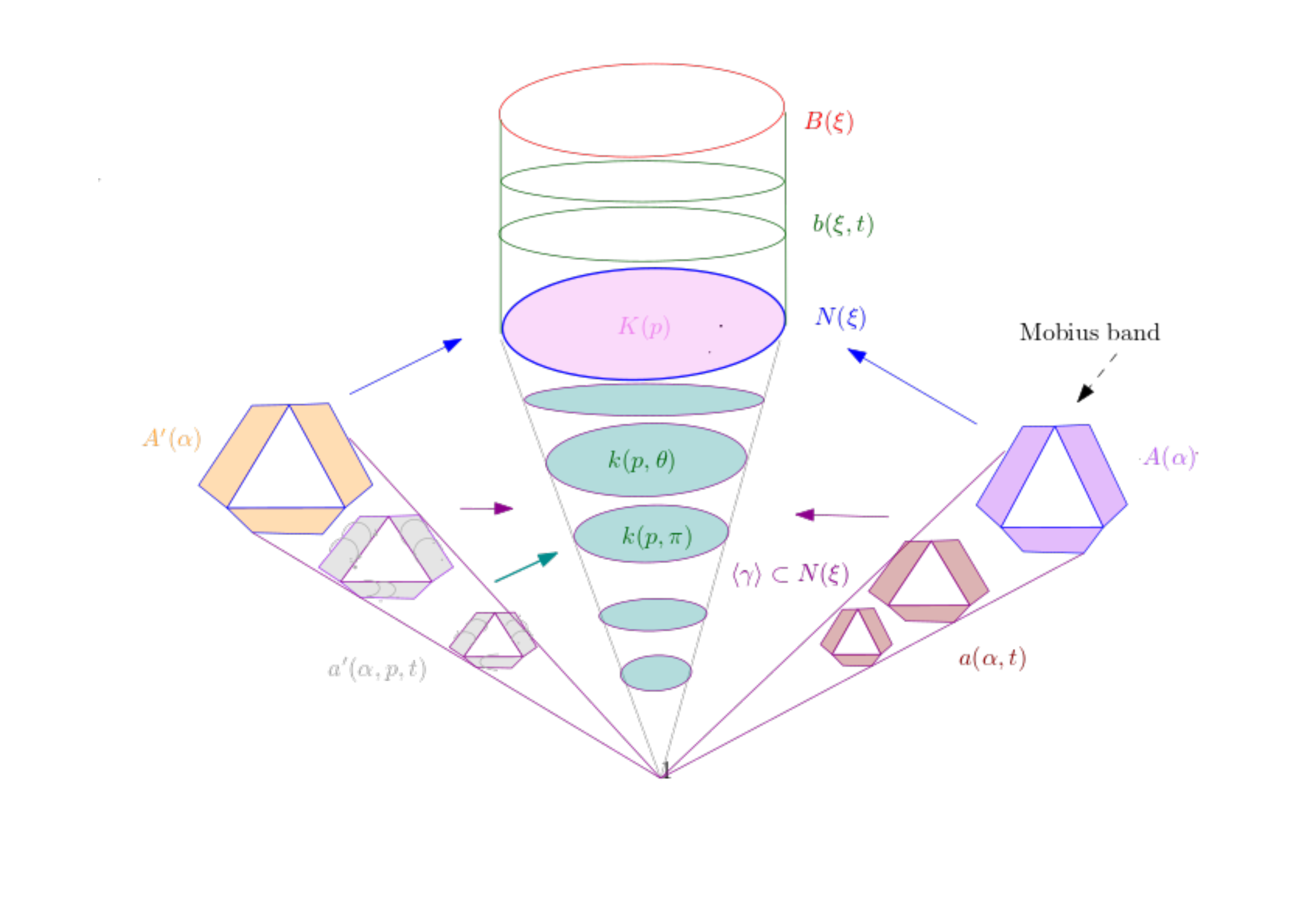}
    \caption{A schematic drawing of the topology of $\Sub_{elem}(G)$ by spaces of subgroups of the types listed in Proposition \ref{prop: subspace of elem subgroups}. For example, a blue arrow indicates the blue circle at the boundary of a Mobius band is glued to the blue $N(\xi)$.}
    \label{fig: sub_elem}
\end{figure}

The following is a quick consequence of the work above.

\begin{corollary}[Homotopy type of the space of elementary groups]\label{cor: homotopy type of sub elem}
$\Sub_{elem}(G)$ is homotopy equivalent to the subset of $\RR^3$ that is the union of the $2$-spheres $\bbS_n$, $n=2,3,\ldots,\infty$, where $\bbS_n$ has radius $1-1/n$ and is centered at $(1-1/n,0,0).$ In particular, $\Sub_{elem}(G)$ is simply connected.
\end{corollary}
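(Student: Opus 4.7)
The plan is to directly identify a deformation retract $\mathcal{S} \subset \Sub_{elem}(G)$ homeomorphic to the model union of spheres in $\RR^3$, using the complete description of Chabauty limits in Proposition \ref{prop: subspace of elem subgroups}. For each $n \in \{2, 3, \ldots, \infty\}$, define $\bbS_n \subset \Sub_{elem}(G)$ as follows: for finite $n \ge 2$, let $\bbS_n$ be the Chabauty closure of the $\mathbf{k}(p, 2\pi/n)$-disc, which by item (6) of the proposition is the one-point compactification of $\HH^2$ at the trivial group $\{1\}$ (the translation distance of a $2\pi/n$-rotation around $p$ diverges as $p \to \partial \HH^2$), hence a $2$-sphere through $\{1\}$. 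Let $\bbS_\infty$ be the union of the closure of the $K(p)$-disc (item 5) and the closure of the cyclic parabolic cylinder (item 3); each is a closed $2$-disc with boundary the $N$-circle, and their union along this common boundary is a $2$-sphere, passing through $\{1\}$ (the apex of the cyclic parabolic disc). Using Proposition \ref{prop: classification of elem subgroups}, the different isomorphism types of subgroups making up distinct $\bbS_n$ are pairwise disjoint, so the $\bbS_n$ pairwise intersect only at $\{1\}$, matching the geometry of the model union in $\RR^3$.

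To complete the proof, I would retract the remaining strata of $\Sub_{elem}(G)$ onto $\mathcal{S}$: the $B$-circle and $\mathbf{b}$-annulus (items 1--2), the $A(\alpha)$- and $A'(\alpha)$-M\"obius bands (item 4), and their infinite-cyclic/dihedral extensions $\mathbf{a}(\alpha, t)$ and $\mathbf{a'}(\alpha, t, p)$ (items 7--8). The $\mathbf{b}$-annulus and $B$-circle retract onto the $N$-circle by pushing $t \to \infty$, using $\mathbf{b}(\xi, t) \to N(\xi)$. The M\"obius bands $A$ and $A'$ cannot be retracted directly to their $N$-circle boundary, since the inclusion $\partial M \hookrightarrow M$ of a M\"obius band's boundary induces multiplication by $2$ on $\pi_1$, obstructing any such retract. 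Instead, the $3$-dimensional $\mathbf{a}$- and $\mathbf{a'}$-parameter spaces provide room to null-homotope each M\"obius band by pushing $t \to \infty$, sending groups to $\{1\} \in \bbS_\infty$; near the boundary $\partial \mathcal{G}$ of the parameter space, this null-homotopy must be modulated so that it passes continuously through the cyclic parabolic cylinder (all of which lies in $\bbS_\infty$) rather than jumping discontinuously to $\{1\}$. An analogous modulation handles $\mathbf{a'}$, accounting for the additional possibility in item (8) that certain limits are order-$2$ rotation groups $\mathbf{k}(p, \pi) \in \bbS_2$.

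The main obstacle will be arranging this modulated retraction continuously across the Chabauty ``corners'' where the $\mathbf{a}$- and $\mathbf{a'}$-extensions accumulate onto the $N$-circle and cyclic parabolic subgroups with rate-dependent limits. Here one has to exploit the precise rate-dependent convergence from items (7)--(8) of Proposition \ref{prop: subspace of elem subgroups}, which says that as $\alpha_i \to \partial \mathcal{G}$ and $t_i \to \infty$ at various rates, the limit subgroup lies in the $N$-circle, the cyclic parabolic cylinder, or $\{1\}$ -- all of which lie in $\mathcal{S}$. Once the retraction is established, $\Sub_{elem}(G) \simeq \mathcal{S}$, which is homeomorphic to the stated union of $2$-spheres in $\RR^3$ (sending each $\bbS_n$ to the sphere of radius $1-1/n$ centered at $(1-1/n,0,0)$ via a homeomorphism matching $\{1\} \leftrightarrow 0$), and simple connectedness follows from Van Kampen's theorem applied to the union of simply connected spheres sharing the common point $\{1\}$.
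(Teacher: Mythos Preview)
Your outline is sound but takes a different route from the paper's. The paper does not exhibit $\cup_n\bbS_n$ as a subspace and retract onto it; instead it first \emph{collapses} to a point the union of the $B(\xi)$, $\mathbf b(\xi,t)$, $N(\xi)$ groups together with $\{id\}$. After this quotient each disc $\HH^2\times\{n\}$ in $\mathcal K=\{K(p)\}\cup\{\mathbf k(p,\theta)\}\cup\{id\}$ has its frontier crushed to the basepoint, so $\mathcal K$ is already the union of spheres, and the remaining $\mathcal A$- and $\mathcal A'$-strata are disposed of by the naive $t\to\infty$ flow (contracting $\mathcal A$ to the basepoint, retracting $\mathcal A'$ onto the order-$2$ sphere $\bbS_2$). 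The collapse is exactly the shortcut around what you call ``the main obstacle'': once all the rate-dependent boundary limits of $\mathbf a(\alpha_i,t_i)$ and $\mathbf a'(\alpha_i,t_i,p_i)$ land at a single point, the $t\to\infty$ flow extends continuously to closures with no modulation needed. Your $\mathcal S$ instead retains the $N$-circle and cyclic parabolic groups as half of $\bbS_\infty$, so you must engage the rate-dependence explicitly---both in modulating the $\mathbf a,\mathbf a'$ retractions onto their frontiers, and in verifying that $\mathcal S$ is honestly \emph{homeomorphic} (not merely homotopy equivalent) to the $\RR^3$ model, since in your $\mathcal S$ the finite-$n$ spheres accumulate on \emph{all} of $\bbS_\infty$ rather than only at $\{1\}$, and this must match the nested tangency of the model spheres. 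Both checks can be carried out, but the paper's quotient trick makes them unnecessary.
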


Note that the subset of $\RR^3$ above is like a wedge sum of the spheres $\bbS_n$, since they all intersect at the origin, but as $n\to \infty$ the spheres $\bbS_n$ converge onto $\bbS_{\infty}$. The reader can compare this Corollary to the very similar Corollary 9.2 in Baik--Clavier \cite{baikclavier}\footnote{Note however that their assertion in the proof of Corollary 9.2 that $\mathbf C$ is homotopy equivalent to a wedge of spheres is not quite correct. They mean to talk about the space in the statement of Corollary \ref{cor: homotopy type of sub elem} instead.}.

\begin{proof}
The union of all the subgroups $B(\xi), {\bf b}(\xi,t), N(\xi), id$ is homeomorphic to a disk. Collapse this disk to the point $id$. The union $\mathcal K$ of all the groups $K(p)$, ${\bf k}(p,\theta)$, $id$ then collapses to a space homeomorphic to the subset $\cup_n \bbS_n\subset \RR^3$ mentioned in the  statement of the Corollary: the boundary of each disk $\HH^2 \times \{n\}$ collapses to a point, giving the sphere $\bbS_n$. The union $\mathcal A$ of all $A(\alpha), {\bf a}(\alpha,t), id$ is contractible (send $t\to \infty$ in the parametrization given in the proposition above), and this contraction extends to the closure in $\Sub_{elem}(G)$ after the collapse above. There is a similar deformation retraction of the union $\mathcal A'$ of all $A'(\alpha), {\bf a}'(\alpha,t,p), id$ onto the sphere ${\bf k}(p,\pi) \cup \{id\}$, which also extends to the closure after the collapse above. So, the whole space $\Sub_{elem}(G)$ is homotopy equivalent onto the collapsed $\mathcal K$, which is homotopy equivalent to $\cup_n \bbS_n$.
\end{proof}
\section{Background on Orbifolds}
\label{sec: orbifolds}
When $\Gamma < \PSL_2 (\RR)$  is discrete, the quotient $\Gamma \backslash \HH^2$ is most naturally interpreted as an \emph{orbifold}.   Since almost all of this paper is written using the language of orbifolds, we start by reviewing the definitions, both to orient the reader that mostly cares about manifolds, and to set our conventions. We mostly follow the approach in \cite{caramello2019introduction}, see also Thuston's notes \cite[Chapter 13]{Thurston} and Kleiner--Lott \cite{kleinerlott}. 

Let $S$ be a Hausdorff second countable topological space and let $\mathcal G$ be a pseudo-group of homeomorphisms of open subsets of some manifold $X$. An \emph{orbifold $(\mathcal G,X)$-atlas} on $S$ is a collection of open sets $\{U_i\}$ that covers $S$ and is closed under finite intersections, a collection of open sets $V_i \subset X$, each equipped with a $\mathcal G$-action $\Gamma_i \curvearrowright V_i$ %\todo{I don't think this is the arrow you mean. \\ IB: if you prefer a different arrow for action, feel free to change it everywhere.} 
by a finite group $\Gamma_i$, and a collection of maps $\phi_i$, called \emph{charts}, of the form
$$\begin{tikzcd}
V_i \arrow{rr} \arrow[bend left=30]{rrrr}{\phi_i} & & \Gamma_i \backslash V_i \arrow[rr] & & U_i,
\end{tikzcd}$$
where  the first map is the projection and the latter map is a homeomorphism,
 such that whenever $U_i \subset U_j$,  there is an injective homomorphism $h_{ij} : \Gamma_i \longrightarrow \Gamma_j$  and a $h_{ij}$-equivariant $C^r$-embedding $\psi_{ij} : V_i \longrightarrow V_j$, $\psi_{ij} \in \mathcal G$, such that the following diagram commutes:
$$
\begin{tikzcd}
	V_i \arrow{d}{\psi_{ij}} \arrow{rr}{\phi_i} & & U_i \arrow[hookrightarrow]{d} \\ V_j \arrow{rr}{\phi_j} & & U_j 
\end{tikzcd}
$$
We call any map $\psi_{ij}$  as above a \emph{transition map}  between the two charts. A \emph{$(\mathcal G,X)$-orbifold} with underlying space $S$ is obtained when we equip $S$ with a maximal such atlas. 

As an example, if $X$ is a $C^r$-manifold we can let $\mathcal H^r(X)$ be the pseudogroup of homeomorphisms between open subsets of $X$ that are $C^r$ with $C^r$ inverses. Any $(\mathcal H^r(X),X)$-orbifold is naturally a $(\mathcal H^r(\RR^n),\RR^n)$-orbifold; we call such orbifolds $C^r$-\emph{orbifolds} for short. Also, when $X = \RR^n$ or $\HH^n$, and $\mathcal G$ is the pseudogroup of homeomorphisms between open sets that are local isometries, we call $(\mathcal G,X)$-orbifolds \emph{Euclidean $n$-orbifolds} and \emph{hyperbolic $n$-orbifolds}, respectively.  An \emph{orbifold with boundary} is obtained just as above, except that $X$ is allowed to be a manifold with boundary. 
 In general, most orbifolds that one encounters in real life arise as \emph{global quotients} of manifolds: if $\Gamma$ acts  properly discontinuously by $\mathcal G$-diffeomorphisms on $X$, then $S := \Gamma \backslash X$ is naturally a $(\mathcal G,X)$-orbifold, see e.g.\ \cite[Proposition 13.2.1]{Thurston}.

If $S$ is an orbifold and $p\in S$, the \emph{local group} $G_p$ of $p$ is the stabilizer in $\Gamma_i$ of any point of $\phi_i^{-1}(p) \subset V_i$. Here, the isomorphism class of $G_p$ is well-defined, independent of the choice of point or chart. One says that $p$ is \emph{singular} if the local group $G_p$ is nontrivial, and $p$ is \emph{regular} otherwise. If $S,S'$ are two $(\mathcal G,X)$-orbifolds, a function $f : S \longrightarrow S'$ is \emph{represented by $\psi$ in local coordinates around $p\in S$} if there are charts $$\phi : V\longrightarrow U, \ \ \phi ' : V' \longrightarrow U',$$
 with $p\in U $ and $f(p) \in U'$ and associated group actions $\Gamma \curvearrowright V$ and $\Gamma' \curvearrowright V'$, a homomorphism $h : \Gamma  \longrightarrow \Gamma '$ and a $h$-equivariant map $\psi : V  \longrightarrow V'$ such that $\phi' \circ \psi = f \circ \phi$. We say $f$ is a $\mathcal G$-map if around every element of $S$ it is represented  in local coordinates by an element of $\mathcal G$. Note that $\mathcal G$-maps are  by definition local homeomorphisms. A \emph{$\mathcal G$-isomorphism} is just a $\mathcal G$-map with a $\mathcal G$-inverse. If $\mathcal G=C^0$, we call a $\mathcal G$-isomorphism a \emph{homeomorphism}, when $G = C^r, \ r>0$ we say \emph{$C^r$-diffeomorphism}, and when $\mathcal G$ is a pseudo-group of local isometries, we say \emph{isometry}. Note that all such maps preserve the orbifold structure; e.g.\ homeomorphisms of orbifolds are not just homeomorphisms of the underlying topological spaces.  Finally, a map between orbifolds is \emph{continuous}, or $C^r$, or \emph{holomorphic}, or \emph{isometric}, or \emph{Marxist}\footnote{Editor's note: Marxism is generally considered to be a global property, so should be deleted from this list.} if it is when represented in local coordinates.

Almost following \S 3.3 of \cite{caramello2019introduction}, fix $r\geq 0$ and recall that a surjective $C^r$ map $\pi : E \longrightarrow B$ of $C^r$-orbifolds is called a \emph {fiber orbibundle} if there is a third $C^r$-orbifold $F$ such that $B$ is covered by charts $\phi_i : V_i \longrightarrow U_i \subset B$ with associated $C^r$ actions $\Gamma_i \curvearrowright V_i$, and for each $i $ there is an action of $\Gamma_i$ on $F \times V_i$ that agrees with the given action on the second coordinate\footnote{In \cite{caramello2019introduction}, the action is required to be the product of an action on $F$ and the given one on $V_i$, but the more general version we give here is more natural, and is perhaps what the author meant to say.}, and a $C^r$-diffeomorphism $\Phi$ as in the commutative diagram below:
\[\begin{tikzcd}
\Gamma_i \backslash (F \times V_i) \arrow[rr,"\Phi_i"] \arrow[d] & & \pi^{-1}(U_i) \subset E \arrow[d] \\
\Gamma_i \backslash V_i \arrow[rr] & &  U_i \subset B
\end{tikzcd}
\]
Now any  $d$-orbifold $S$ of class $C^r$, where $r \geq 1$, has a \emph{tangent (orbi)bundle} $$\rho: TS \longrightarrow S,$$  where $TS$ is a $C^{r-1}$-orbifold with dimension $2d$ and the map $p$ is $C^{r-1}$. Each $S$-chart $\phi_i: V_i \longrightarrow U_i$ with associated group action $\Gamma_i \curvearrowright V_i$  determines a chart $d\phi_i : TV_i \longrightarrow TU_i$, where  the associated group action is $\Gamma_i \curvearrowright TV_i$, acting via the derivatives of the action on $V_i$.  Here, the fiber over $p\in U_i \subset S$ is $$TS_p:=\rho^{-1}(p) \cong G_p \backslash (TV_i)_{\phi_i^{-1}(p)},$$ called the \emph{tangent cone} at $p$. Here, $G_p \subset \Gamma_i$  is the local group at $p$, i.e.\ the $\Gamma_i$-stabilizer of $\phi^{-1}(p)$, and   $TS_p$ is not a vector space, but is the quotient of the vector space by a finite linear group. Sometimes, authors will refer to $(TV_i)_{\phi^{-1}(p)}$  equipped with the action of the local group as the \emph{tangent space} at $p$, in  contrast to the tangent cone defined above. (Note that we are using different notation here than in \cite{caramello2019introduction}.)
Finally, if $v : S \longrightarrow S'$  is a $C^1$-map of orbifolds, there is an associated derivative map $$df_p : TS_p \longrightarrow TS'_{v(p)}$$  defined  as usual by differentiating the map $f$ in coordinates.

An  \emph{orientation} on an orbifold $S$ is a choice of orientation for the domain of every chart $\phi : V \longrightarrow U \subset S$, such that all group actions and transition maps are orientation preserving. Similarly, a \emph{Riemannian metric} on $S$ is a choice of Riemannian metric on the domain of every chart, such that all group actions and transition maps preserve the metric. So, any hyperbolic or Euclidean orbifold comes equipped with a natural Riemannian metric.  A Riemannian metric on $S$ can be scaled by a function $S \longrightarrow \RR^+$, and an equivalence class of metrics up to scale is called a \emph{conformal class} of metrics, or a \emph{conformal structure} on $S$. See Chapter~4 of \cite{caramello2019introduction} for  an introduction to the Riemannian geometry of orbifolds.

If $S,S'$ are $(\mathcal G,X)$-orbifolds, a surjection $f: S' \longrightarrow S$ is a \emph{covering map} if for every $p\in S $ there is a chart $\phi : V \longrightarrow U$, with $V \subset X$ and $p\in U$, such that $f^{-1}(U)$ is a disjoint union of open sets $U' \subset S'$, and where there are charts $\phi' : V' \longrightarrow U' $ and an element $g : V \longrightarrow V'$ in $\mathcal G$ such that $\phi \circ g = f \circ \phi'$. For example, if $\Gamma$ acts properly discontinuously on $X$ then the quotient map $X \longrightarrow \Gamma \backslash X$ is a covering map. An orbifold is called \emph{good} if it is a quotient of a manifold by a properly discontinuous group, and \emph{bad} otherwise. Equivalently, good orbifolds are those whose \emph{orbifold universal cover} is a manifold. See e.g.\ \cite[\S 2.2]{kleinerlott} and \cite[\S 2.3]{caramello2019introduction} for details.

\subsection{Orientable $2$-orbifolds, and uniformization}
\label{sec: 2orbifolds}

Let $S$ be an orientable $2$-orbifold.  The local group of each singular point of $S$ is then cyclic. We call such singular points \emph{cone points}, and the \emph{order} of a cone point is the size of its local group.  Note that the underlying space of an orientable $2$-orbifold is always an  orientable surface. We sometimes write $T(m_1,\ldots,m_n)$ for an orbifold whose underlying space is homeomorphic to $T$, and which has cone points with orders $m_1,\ldots,m_n$.  However,  in general the orbifolds in this paper may be noncompact, with infinitely many cone points. We say $S$ has \emph{finite (topological) type} if it has the form $T(m_1,\ldots,m_n)$ for some finite type surface $T$, i.e.\ a surface $T$  obtained by removing finitely many points from a compact surface. Otherwise, $S$ has infinite type. Equivalently, $S$ has finite type exactly when its orbifold fundamental group $\pi_1 S$ is finitely generated. See \S 2.2 of \cite{caramello2019introduction} for more on orbifold fundamental groups. 

Almost  all orientable $2$-orbifolds $S$ are good: the only bad ones are the spheres with cone points $\bbS^2(m)$ and $\bbS^2(m,n), \ m\neq n$.  Good $S$ can be classified geometrically. Each is homeomorphic to a global quotient of exactly one of $\bbS^2$, $\RR^2$ or $\HH^2$, depending on its orbifold Euler characteristic, see e.g.\ \cite[\S 2.2]{kleinerlott} and \cite[\S 7.2.2]{primer}. Moreover, we have the following generalization of the geometric uniformization theorem for Riemann orbifolds.

\begin{theorem}[Orbifold Uniformization]\label{thm: uniformization}
Suppose $S$ is a good, smooth $2$-orbifold that is not covered by the $2$-sphere and fix a $C^0$-Riemannian metric $d$ on $S$ that is locally Lipschitz.  Then there is a  $C^0$-Riemannian metric $d'$ on $S$ in the conformal class of $d$ and a $C^1$-isometry from $(S,d')$ to some smooth Riemannian $2$-orbifold with constant curvature.  Moreover, this $d'$ is unique up to global scale.
\end{theorem}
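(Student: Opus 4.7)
The plan is to reduce the uniformization of $(S, d)$ to the classical uniformization theorem on the orbifold universal cover and then descend the resulting metric equivariantly. First I would pass to the orbifold universal cover $\tilde S$, which is a manifold because $S$ is good, and which (being simply connected and not $\bbS^2$) is diffeomorphic to $\RR^2$. Lifting $d$ produces a $\pi_1 S$-invariant, locally Lipschitz $C^0$-Riemannian metric $\tilde d$ on $\tilde S$; the deck group $\pi_1 S$ then acts on $\tilde S$ by $\tilde d$-isometries.

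Next I would uniformize $(\tilde S, \tilde d)$. A locally Lipschitz Riemannian metric on an oriented $2$-manifold admits local isothermal coordinates (obtained from solutions of the Beltrami equation $\bar\partial f = \mu\, \partial f$ with $\mu$ locally Lipschitz), endowing $\tilde S$ with a complex structure in the conformal class of $\tilde d$. The classical uniformization theorem for simply-connected Riemann surfaces then provides a conformal homeomorphism $\phi : \tilde S \to X$, where $X$ is either $\CC$ with the flat metric $g_X$ or $\HH^2$ with the hyperbolic metric $g_X$ (the $\bbS^2$ case being excluded), and $\phi^* g_X = e^{2u}\, \tilde d$ for some continuous function $u$ on $\tilde S$. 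The main obstacle I foresee is obtaining the claimed $C^1$-regularity of $\phi$: for a merely $C^0$ metric $\tilde d$ one would only get a quasiconformal $\phi$, and the Lipschitz hypothesis is precisely what lets Schauder / Ahlfors--Bers regularity for the Beltrami equation promote $\phi$ to a $C^1$-diffeomorphism.

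Having built $\phi$, I would check equivariance. For each $\gamma \in \pi_1 S$ the conjugate $\phi \circ \gamma \circ \phi^{-1}$ is a $C^1$ self-map of $X$ preserving the conformal class of $g_X$, hence a M\"obius transformation in the hyperbolic case (automatically an isometry) or a complex affine map $z \mapsto az + b$ in the flat case. A strict similarity with $|a| \neq 1$ has an attracting or repelling fixed point in $\CC$, so its cyclic subgroup cannot act properly discontinuously; hence in the flat case too $\pi_1 S$ acts on $X$ by isometries of $g_X$. Invariance of both $\tilde d$ and $g_X$ under $\pi_1 S$ then forces the conformal factor $u$ to be $\pi_1 S$-invariant, so $e^{2u}\, \tilde d$ descends to a $C^0$-Riemannian metric $d'$ on $S$ in the conformal class of $d$, and $\phi$ descends to a $C^1$-isometry from $(S, d')$ onto the smooth constant-curvature orbifold $\pi_1 S \backslash X$.

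Finally, for uniqueness up to scale I would lift any second solution to $\tilde S$: both pulled-back metrics are complete constant-curvature Riemannian metrics in the conformal class of $\tilde d$, and by classical rigidity of such metrics on $\CC$ and $\HH^2$ in a given conformal class they agree up to a global positive scalar, which must equal $1$ when the curvature is $-1$.
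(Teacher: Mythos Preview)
Your proposal is correct and follows essentially the same approach as the paper: both reduce to classical uniformization on a simply connected cover, use the Lipschitz hypothesis on the Beltrami coefficient to upgrade the uniformizing map to $C^1$ (the paper cites Lehto for this, while you invoke Schauder/Ahlfors--Bers regularity), and then argue that the deck group acts by isometries of the model geometry, the Euclidean case being handled by the observation that a strict similarity cannot appear in a properly discontinuous group. The only cosmetic difference is the order of operations: the paper applies the Measurable Riemann Mapping Theorem on $S$ first and then passes to the universal cover of the resulting Riemann surface, whereas you lift to the orbifold universal cover first and uniformize there.
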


 It will be important when talking about grafting below to have a version of the Uniformization Theorem that applies, like above, to  metrics that are only Lipschitz continuous.   Note that ideally one would like to just say that  the metric $d'$  produced in the theorem itself has `constant curvature'. But since $d'$ is only $C^0$,  one cannot take derivatives of it with respect to the original smooth structure.

\begin{proof}
 The Measurable Riemann Mapping Theorem, c.f.\ \cite[Proposition 4.8.12]{hubbard}, implies that there is a Riemann surface $R$ and a quasiconformal map $f: S \longrightarrow R$ that is conformal  when regarded as a map $(S,d) \longrightarrow R$. A result of Lehto \cite{lehto}  implies that a quasi-conformal map with locally Lipschitz dilatation is $C^1$, so by our assumption on $d$,  the map $f$ above is $C^1$. 
The universal cover $\tilde R$ is conformally equivalent to either  $\hat \CC$, $\CC$ or $\HH^2$ by the classical uniformization theorem, see e.g.\ \cite[Ch 1]{hubbard}, and by the assumption in the statement of the theorem, it is conformally equivalent to one of the latter two. In the hyperbolic case, the deck group $\Gamma$  acts conformally, and the only conformal automorphisms $\HH^2$ are isometries, c.f.\ \cite[Theorem A.3.3]{Benedettilectures}, so taking the quotient we get a conformal hyperbolic metric on $R$ in the conformal class of $d$, and we can let $d'$  be the pullback of this metric to $S$. The Euclidean case is similar, except that conformal automorphisms of $\CC$ are Euclidean similarities instead of isometries, see again \cite[Theorem A.3.3]{Benedettilectures}. However, all deck transformations of the orbifold cover $\tilde R \longrightarrow R$ are either finite order or fixed point free, and any such similarity of $\CC$ is an isometry, so the argument continues as before. In both cases, uniqueness of $d'$ up to scale follows from the fact that conformal automorphisms of $\CC$ and $\HH^2$ are similarities. 
\end{proof}

\subsection{Hyperbolic $2$-orbifolds}
We now restrict ourselves to the case of complete orientable hyperbolic $2$-orbifolds $X$, sometimes with geodesic boundary.  In the latter case, $X$ is an orientable $(\mathcal G, H)$-orbifold, where $H\subset \HH^2$  is a half plane and $\mathcal G$  is the pseudogroup of  homeomorphisms between open subsets of $H$ that are local isometries.  Below, we assume  a basic familiarity with hyperbolic geometry. For the uninitiated, \cite{Benedettilectures} is a good reference.

\subsubsection{Convex cores, ends, cusps, flares} Suppose that $X$  is as above, possibly with geodesic boundary. Then $X = \Gamma \backslash C$, for some group $\Gamma$ acting by isometries on a convex subset $C \subset \HH^2$. The \emph{limit set} $\Lambda(\Gamma) $ of $\Gamma$ is the set of accumulation points on $\partial \HH^2$ of any $\Gamma$-orbit in $C \subset \HH^2.$ Let $CH(\Gamma)\subset \HH^2$  be the convex hull of $\Lambda(\Gamma)$, i.e.\  the  intersection of all convex subsets of $C \subset \HH^2$ that limit onto $\Lambda(\Gamma) \subset \partial H^2$. The \emph{convex core\footnote{The reader may be unfamiliar with convex cores of orbifolds with boundary. Another way to think about this is to complete $X$ to an orbifold without boundary by  gluing flares to all geodesic boundary components and then taking the convex core of the result.}} $CC(X)$ of $X$ is the quotient $\Gamma \backslash CH(\Gamma)\subset X$. Note: 

The orbifold $X$ is called \emph{elementary} if the corresponding group $\Gamma$ is, see \S \ref{sec: sub elementary} for a discussion of elementary groups. Discrete, elementary groups are either trivial, a finite cyclic group of rotations, an infinite cyclic group of parabolic isometries, or an infinite cyclic or infinite dihedral group stabilizing a geodesic axis in $\HH^2$. So, an elementary orbifold $X$ is either $\HH^2$, a cone, an annulus, or a disc with two order $2$ cone points\footnote{These `generalized annuli' are discussed in \S \ref{sec: conformalannuli}.}. When $X$  is elementary, the convex core $CC(X)$ is either empty or is an admissible geodesic in $X$, in the sense defined after Lemma \ref{lem: geodesic representatives} below. Otherwise, $CC(X)$ is a $2$-suborbifold of $X$ with geodesic boundary. 

The components of $X \setminus CC(X)$  have two possible types: \emph{half-planes} and \emph{flares}. A half-plane is isometric to a half plane in $\HH^2$. A flare is isometric to the quotient of a half-plane $H$ by a hyperbolic type isometry with axis $\partial H$.  See \cite[\S 8]{Thurston} and \cite{Benedettilectures} for details  in the surface case. All the arguments easily generalize. 
 
 When $X$ has finite topological type, $CC(X)$ has finite volume and all its ends are cusps. Here, a \emph{cusp} is an end of $X$ that has a neighborhood isometric to an annulus $B / \langle \gamma\rangle $, where $B \subset \HH^2$ is an open horoball centered at $\xi \in \partial \HH^2$ and $\gamma$ is a parabolic isometry stabilizing $\xi$. So, an orbifold with finite topological type  has finitely many ends, all of which are either flares or cusps. In general, though, the space of ends of a hyperbolic $2$-orbifold can be a rather nasty Cantor set.

 \subsubsection{Curves, admissible homotopies and geodesics}
\label{sec: curves}
Suppose that $X$ is a hyperbolic orientable $2$-orbifold, possibly with geodesic boundary, and that $X_{reg} \subset X$ is the set of all regular points. A free homotopy $h_t : \bbS^1  \longrightarrow X,$ $t\in [0,1]$ is called \emph{admissible} if $h_t(\bbS^1) \subset X_{reg}$ for all $t\in (0,1).$ The following is Theorem 5.1 in \cite{tan2006generalizations}.

\begin{lemma}[Geodesic representatives]\label{lem: geodesic representatives}
Let $\gamma \subset X_{reg}$ be a simple closed curve. Then either
\begin{enumerate}
\item $\gamma$  bounds a disc in $X$ with a single cone point,
\item $\gamma$ bounds a cusp neighborhood of $X$ that is contained in $X_{reg}$,
    \item $\gamma$ is admissibly homotopic to a simple, closed geodesic in $X_{reg}$,
    \item $\gamma$ bounds a disc in $X$ with two order $2$ cone points, and is admissibly homotopic to a closed geodesic that twice traverses an arc connecting the two cone points.
\end{enumerate}
\end{lemma}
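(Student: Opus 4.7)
The plan is to pass to the orbifold universal cover $\pi\colon\HH^2 \to X$ (valid since hyperbolic $2$-orbifolds are good) and to study a single lift $\tilde \gamma \subset \HH^2$ of $\gamma$ together with its stabilizer $H < \pi_1^{\mathrm{orb}}(X)$. Because $\gamma \subset X_{\mathrm{reg}}$, the preimage $\pi^{-1}(\gamma)$ is an embedded $1$-submanifold of $\HH^2$, and because $\gamma$ is simple, $H$ acts freely on $\tilde\gamma$ and distinct lifts are pairwise disjoint. Consequently $\tilde\gamma$ is either a properly embedded line (when $H = \langle g \rangle$ is infinite cyclic) or a Jordan curve bounding a disk $D \subset \HH^2$ (when $H$ is trivial or finite cyclic).

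Suppose first $\tilde\gamma$ is a line. If the generator $g$ is parabolic with fixed point $\xi\in \partial \HH^2$, then the closed half-plane bounded by $\tilde \gamma$ and containing $\xi$ projects to a cusp neighborhood of $X$ bounded by $\gamma$; disjointness of lifts forces no fixed point of $\pi_1^{\mathrm{orb}}(X)\setminus\{1\}$ to lie inside this region, so the cusp neighborhood sits in $X_{\mathrm{reg}}$, giving case (2). If instead $g$ is hyperbolic with axis $\alpha$, the $\langle g\rangle$-equivariant shortest-path homotopy from $\tilde\gamma$ to $\alpha$ descends to an admissible homotopy of $\gamma$ onto the simple closed geodesic $\alpha/\langle g\rangle \subset X_{\mathrm{reg}}$ — case (3); again the pairwise disjointness of the $\pi_1^{\mathrm{orb}}(X)$-translates of $\tilde\gamma$ keeps the homotopy off of cone-point lifts.

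Now suppose $\tilde\gamma$ bounds a disk $D$. The image $\bar D := \pi(D)\subset X$ is a suborbifold disk $D(m_1,\ldots,m_k)$ whose cone points are the $\pi_1^{\mathrm{orb}}(X)$-orbits of fixed points inside $D$, and $\gamma$ represents $r_1 r_2\cdots r_k$ in $\pi_1^{\mathrm{orb}}(\bar D)\hookrightarrow \pi_1^{\mathrm{orb}}(X)$ (assuming $\gamma$ is essential, so $k\ge 1$). I split by the orbifold Euler characteristic $\chi^{\mathrm{orb}}(\bar D) = 1 - \sum(1 - 1/m_i)$. If $k=1$, then $r_1$ is elliptic and $\gamma$ has no closed geodesic representative — case (1). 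If $k=2$ and $(m_1,m_2)=(2,2)$ (Euclidean), then $r_1 r_2$ is a hyperbolic translation along the axis through the two fixed points, projecting to a closed geodesic in $X$ that twice traverses the arc between the two cone points — case (4). In all remaining subcases $\chi^{\mathrm{orb}}(\bar D) < 0$, so $\bar D$ carries its own hyperbolic structure with geodesic boundary; I would transport this boundary geodesic back to $X$ and identify it with the simple closed geodesic given by the axis of the hyperbolic element $r_1\cdots r_k \in \pi_1^{\mathrm{orb}}(X)$, arriving at case (3).

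The key technical obstacle is the last subcase, where I must verify that the closed geodesic in $X$ freely homotopic to $\gamma$ is both simple and lies in $X_{\mathrm{reg}}$. A priori some $\pi_1^{\mathrm{orb}}(X)$-translate of the axis could cross the axis itself, or the axis could pass through the fixed point of some other element. Ruling both out uses the pairwise disjointness of the $\pi_1^{\mathrm{orb}}(X)$-translates of $\tilde \gamma$: since $\tilde\gamma$ bounds $D$ and distinct lifts are disjoint, the $\pi_1^{\mathrm{orb}}(X)$-orbit of $D$ is a disjoint family, and within each translate of $D$ the axis sits in the convex core and — outside the listed exceptional Euclidean cases — avoids the remaining cone-point lifts. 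Once this is in hand, a straight-line interpolation inside $D$ between $\tilde \gamma$ and the axis, arranged equivariantly and away from cone-point lifts, gives the required admissible homotopy in case (3).
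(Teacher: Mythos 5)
First, a point of reference: the paper does not prove this lemma at all --- it is quoted as Theorem 5.1 of the cited Tan et al.\ paper --- so your argument has to stand on its own, and as written it has a genuine gap centered on case (4).

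Your opening dichotomy (Jordan curve when $[\gamma]$ is trivial or elliptic, properly embedded line when it has infinite order) is correct, but the case analysis then misplaces case (4). If $\gamma$ bounds a disc with $k\ge 2$ cone points, then $[\gamma]=r_1\cdots r_k$ is a product of elliptics about distinct points of $\HH^2$ and, since $\chi^{\mathrm{orb}}(D(m_1,\dots,m_k))\le 0$, this element has infinite order (for $(m_1,m_2)=(2,2)$ it is a hyperbolic translation along the line through the two fixed points). Hence $\tilde\gamma$ is a \emph{line}, not a Jordan curve: the sub-cases $k\ge2$ of your ``disk'' branch are vacuous (a finite stabilizer forces $k\le 1$, since the disc $D$ is precisely invariant and $H\backslash D$ is a disc with at most one cone point), so case (4) is never legitimately derived there. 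Where those curves actually live is your ``line, $g$ hyperbolic'' branch --- and there you assert that the axis $\alpha$ always descends to a simple closed geodesic in $X_{\mathrm{reg}}$, justified by ``disjointness of lifts keeps the homotopy off of cone-point lifts.'' That justification fails exactly in the degenerate case: if $e$ is an order-$2$ elliptic whose fixed point lies on $\alpha$ (equivalently, the maximal elementary subgroup containing $g$ is infinite dihedral), then $e$ swaps the two lifts of $\gamma$ flanking $\alpha$, so $e\tilde\gamma\cap\tilde\gamma=\emptyset$ is perfectly consistent with $\alpha$ running through cone-point lifts; $\pi(\alpha)$ is then the degenerate geodesic of case (4), not a geodesic in $X_{\mathrm{reg}}$. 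Read literally, your proof shows that case (4) never occurs, which is false. The repair is to analyze, in the hyperbolic branch, the stabilizer in $\pi_1^{\mathrm{orb}}(X)$ of the fixed-point pair of $g$: an elliptic of order $>2$ fixing a point of $\alpha$ and preserving $\alpha$ is impossible, and one not preserving $\alpha$ would force crossing lifts, so the stabilizer is cyclic (case (3)) or infinite dihedral (case (4)).

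A secondary point: even in the clean hyperbolic case you owe an argument that the region between $\tilde\gamma$ and $\alpha$ contains no elliptic fixed points, which is what admissibility of your straight-line homotopy requires. Disjointness of the translates of $\tilde\gamma$ does not obviously rule this out, since translates of that region can overlap without the boundary lines crossing; you need to use simplicity of $\gamma$ more seriously here (or construct the admissible homotopy differently).
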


Geodesics on $X$ arising from cases 3 and 4 above will be called \emph{admissible}. We call a simple closed geodesic in $X_{reg}$, as in 3, a \emph{regular geodesic}, and the geodesics in 4 are called \emph{degenerate geodesics}.  Importantly, we will always consider  admissible geodesics as maps $\bbS^1 \longrightarrow X$. So, a degenerate geodesic $\gamma$ is a loop whose image is an arc, and the \emph{length} of $\gamma$, usually denoted $\ell(\gamma)$, is twice the length of the arc. Often, we will parameterize our geodesics via length preserving maps
$$\bbS^1_{\ell(\gamma)} \longrightarrow X,$$
 where $\bbS^1_{\ell(\gamma)}$ is a circle with length $\ell(\gamma)$.  Admissible geodesics  always minimize intersection numbers in their admissible homotopy classes, c.f.\ Proposition 2.3 in \cite{Fanoni}.

\subsubsection{Pants decompositions}\label{sec: pants sec}
Hyperbolic $2$-orbifolds admit (generalized) pants decompositions, which we recall here for the reader's convenience, following Fanoni \cite{Fanoni} and Basmajian-Saric \cite{basmajian2019geodesically}.  The main difference from the surface case is that we allow  boundary geodesics (or cusps) in a pair of pants to be replaced by cone points.

\begin{definition}[Pairs of pants] A \emph{generalized pair of pants} is a genus zero orientable hyperbolic $2$-orbifold $P$ with geodesic boundary such that the number of boundary components, the number of cusps, and the number of cone points all sum to $3$. 
\end{definition} 

 Generalized pairs of pants are exactly the compact orientable hyperbolic $2$-orbifolds with geodesic boundary in which all admissible geodesics are peripheral. So, one could hope to construct a pants decomposition of a general $X$ by splitting along a maximal collection of admissible geodesics. There are two subtleties associated with this, which we now describe. 
 
 First, some elements of a maximal collection of admissible geodesics in a hyperbolic $2$-orbifold may be degenerate. In this case, there will be an adjacent pair of pants, but only its interior will be embedded in $X$.  To formalize this, we define a \emph{generalized embedded pair of pants} in $X$ to be a locally isometric map $P \longrightarrow X$  from a generalized pair of pants  that is an embedding on $int(P)$, and where the boundary components of $P$ map onto admissible geodesics in $X$. So, each boundary component is either embedded or maps two to one onto an arc connecting two order two cone points of $X$.
 
 Second, when $X$ has infinite type a collection of maximal admissible geodesics in $X$ may not be locally finite, so may accumulate somewhere in $X$.  As a result, there can be regions in the complement of such a collection that are noncompact.  Indeed, any generalized embedded pair of pants must lie in the convex core $CC(X)$, so the union of all such pants misses any half-plane component of $X\setminus CC(X).$ 

Here is a general statement that addresses both subtleties.
 
 \begin{theorem}[Pants decompositions for infinite type orbifolds]\label{thm: pantsdecomps}
  Suppose that $X$ is a nonelementary orientable hyperbolic $2$-orbifold, possibly with geodesic boundary, and let $\mathcal L$ be the union of all boundary components of the convex core $CC(X)$ that are biinfinite geodesics, rather than closed geodesics.  Then $CC(X) \setminus \mathcal L$ is a union of generalized embedded pairs of pants glued along their boundaries. Moreover, if $\mathcal S$ is any collection of pairwise disjoint, pairwise nonhomotopic admissible geodesics on $X$, then the boundaries of these pants can be taken to include all elements of $\mathcal S$.
 \end{theorem}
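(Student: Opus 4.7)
The plan is to extend $\mathcal S$ to a suitable maximal collection of admissible geodesics and argue by a classification-of-complementary-components argument that each component is an (embedded or slightly degenerate) generalized pair of pants.

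First, I would let $\mathcal T_0$ be the union of $\mathcal S$ with all closed geodesic boundary components of $CC(X)$; its elements are automatically pairwise disjoint and pairwise nonhomotopic, since the boundary geodesics are peripheral and cannot appear in $\mathcal S$ with the wrong multiplicity. Applying Zorn's lemma to the partially ordered set of pairwise disjoint, pairwise nonhomotopic collections of admissible geodesics contained in $CC(X)$ and extending $\mathcal T_0$, one obtains a maximal such collection $\mathcal T$. By Lemma~\ref{lem: geodesic representatives}, each element of $\mathcal T$ is either a regular simple closed geodesic or a degenerate geodesic that doubly traverses an arc between two order-$2$ cone points.

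Next, fix a connected component $P$ of $CC(X) \setminus (\mathcal L \cup \mathcal T)$. The key claim is that every essential admissible simple closed curve $\gamma$ whose free homotopy class has a representative in $P$ is peripheral in $P$, in the sense that either $\gamma$ bounds a disc containing one cone point, bounds a cusp neighborhood, or is homotopic to an element of $\mathcal T$ or a component of $\mathcal L$. Indeed, if not, the geodesic representative of $\gamma$ (guaranteed by Lemma~\ref{lem: geodesic representatives}) would be disjoint from $\mathcal T$ and nonhomotopic into it, contradicting maximality. From this, I would deduce that $P$ has finite topological type: any infinite-type orientable hyperbolic $2$-orbifold contains a non-peripheral essential simple closed curve (produced, for example, by exhibiting either a positive-genus finite-type subsurface or a finite-type subsurface with at least four ``boundary features''), so if $P$ were of infinite type it would violate peripherality. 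Hence $P$ is of finite type, and a Gauss--Bonnet / orbifold Euler characteristic count combined with the usual classification shows that a finite-type orientable hyperbolic $2$-orbifold in which every essential admissible simple closed curve is peripheral must be an annulus, a disc with one cone point, a disc with two order-$2$ cone points, or a generalized pair of pants. The first three alternatives correspond to elementary orbifolds; these are ruled out inside $CC(X) \setminus \mathcal L$ by the nonelementarity of $X$ together with the classification of elementary pieces of $X \setminus CC(X)$ given earlier, except precisely in the degenerate situation where the ``disc with two order-$2$ cone points'' appears as the image of a generalized pair of pants with a doubled boundary, which is exactly what the definition of a \emph{generalized embedded pair of pants} accommodates.

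The main obstacle is the infinite-type case: when $\mathcal T$ fails to be locally finite it may accumulate onto a geodesic lamination, and a priori a complementary component could have weird topology such as Cantor-set ends or infinite genus. The argument above rules this out via the peripherality dichotomy, but one has to be careful because admissible homotopies are required to avoid cone points, so the finite-type classification has to be applied to the orbifold $P$ (with its cone points) rather than to the underlying surface. A secondary subtlety is checking that the generalized pants fit together correctly along shared boundaries, including when a degenerate geodesic in $\mathcal T$ is two-sided with pants on either side, or when it is one-sided (i.e., the same pair of pants sits on both sides, doubly covering the arc); both cases are handled by parametrizing boundary components as length-preserving maps $\bbS^1_{\ell(\gamma)} \to X$ as in \S\ref{sec: curves}, so that the gluing is tautological.
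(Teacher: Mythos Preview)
The paper does not actually prove this theorem: it simply remarks that the argument is essentially the same as in the surface case and cites Baik--Kim \cite[Lemma~6.1]{baik2021complete}. Your proposal is therefore considerably more detailed than what appears in the paper, and your overall strategy --- extend $\mathcal S$ to a maximal collection via Zorn's lemma, then classify the complementary pieces --- is the standard one and is correct in outline.

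One point deserves cleaning up. You list the ``disc with two order-$2$ cone points'' among the possible small complementary components and then invoke the generalized-embedded-pants definition to absorb it. But such a disc cannot occur as a complementary region bounded by a geodesic in $\mathcal T$: its orbifold Euler characteristic is zero, so by Gauss--Bonnet it would have zero hyperbolic area. The degenerate phenomenon enters differently. When some $c \in \mathcal T$ is a \emph{degenerate} geodesic, its image in $X$ is an arc $\alpha$ between two order-$2$ cone points; cutting along $\alpha$ and taking the metric completion of the adjacent component produces a genuine geodesic boundary \emph{circle} that double-covers $\alpha$. That completed piece is then an honest generalized pair of pants, and the inclusion back into $X$ is a generalized embedded pair of pants in the sense of the paper's definition. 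So the subtlety lives in how you cut along degenerate geodesics, not in the list of possible complementary types. The annulus and once-punctured-disc alternatives are ruled out directly (the former by the pairwise-nonhomotopic hypothesis once you include the closed boundary geodesics of $CC(X)$ in $\mathcal T_0$, the latter because a curve bounding a disc with one cone point is not admissible by Lemma~\ref{lem: geodesic representatives}); invoking nonelementarity of $X$ is not the right mechanism here.

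The other soft spot is the finite-type step: the claim that an infinite-type orientable hyperbolic $2$-orbifold always contains a nonperipheral essential simple closed curve is true but should get a sentence of justification (e.g.\ exhaust by compact suborbifolds with boundary and observe that once the number of boundary components plus cusps plus cone points plus twice the genus exceeds three, a nonperipheral curve exists). With these two clarifications your argument goes through.
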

 
 Note that as $X$ is homeomorphic to the interior of its convex core, it is a formal consequence of the statement above that any such $X$ admits a topological  decomposition into generalized pairs of pants, as long as one allows some pants to have missing boundary components corresponding to compact components of $\partial CC(X)$, and one still allows pants to be nonembedded on the boundary.  The proof follows from essentially the same arguments as in the surface case. See Baik--Kim \cite[Lemma 6.1]{baik2021complete}.

\subsubsection{Teichm\"uller space and Fenchel Nielsen coordinates}
\label{sec: fenchelnielsen}
Let $S$ be an oriented hyperbolizable $2$-orbifold with finite topological type, possibly with boundary. The \emph{Teichm\"uller space} of $S$ is defined as the set of equivalence classes 
\[ \mathcal T(S)  := \left \{ \ (X,h) \ \Big | \ \substack{X \text{ is a finite volume oriented hyperbolic $2$-orbifold with} \\ \text{geodesic boundary, }  h : S \longrightarrow X \text{ is an \ o.p.\ homeomorphism }} \right \} / \ \sim\]
where here $(X,h)\sim (X',h')$ if there is an orientation preserving (o.p.) isometry $f : X \longrightarrow X'$ such that $(h')^{-1} \circ f \circ h$ is homotopic to the identity map. There is a natural topology on $\mathcal T(S)$ that can be defined in terms of pointwise convergence of holonomy representations, quasiconformal distortion, geodesic length functions, or smooth convergence of metric tensors. The references \cites{hubbard,primer} are standard when $S $ is a surface. For $2$-orbifolds, the reader can consult \cite[Chapter 13]{Thurston}, \cite[Chapter 7]{choi2012geometric},  or \cite{ohshika1987teichmuller}. The orbifold references deal only with \emph{compact} $2$-orbifolds with geodesic boundary, but all of the work there generalizes to the finite type setting if we allow boundary lengths to be zero. 

As for surfaces, there is a description of $\mathcal T(S)$ via Fenchel-Nielsen coordinates. Hyperbolize $S$ and use Theorem \ref{thm: pantsdecomps} to choose a pants decomposition, which we regard as a union $C$ of disjoint admissible geodesics that decomposes $S$ into generalized pairs of pants, and which includes all the boundary components of $S$. If $S$ has genus $g$, and $n$ punctures, and $k$ cone points, and $m$ boundary components, an Euler characteristic count shows that $C$ has $3g-3+n+m +k$ nonperipheral components and $m$ boundary components. For every component $\gamma \subset C$, we have a \emph{length coordinate} $\ell_\gamma : \mathcal T(S) \longrightarrow \RR_{>0},$
where $\ell_\gamma([(X,h)])$ is the length of the admissible geodesic homotopic to $h(\gamma) \subset X$. And for each nonperipheral $\gamma \subset C$, there is a \emph{twist coordinate} $\tau_\gamma : \mathcal T(S) \longrightarrow \RR $ defined by using a transversal curve to measure the amount of `twist' present in the way the two adjacent pants in $X$ are glued, exactly as in \cite[\S 10.6.1]{primer}. (Note that there is even a twist parameter associated to a pants curve that is a degenerate geodesic in $S$.) We then have:

\begin{theorem}[Fenchel-Nielsen coordinates]\label{thm: fenchelnielsen} The map 
\[\mathcal T(S) \longrightarrow \RR_{>0}^{3g-3+n+2m+k} \times \RR^{3g-3+n+m+k}\]
given by the length and twist coordinates is a homeomorphism. In particular, $\mathcal T(S)$ is homeomorphic to an open disk with dimension $6g+2n+2k+3m-6$.
\end{theorem}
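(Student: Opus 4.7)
The plan is to construct a continuous two-sided inverse to the length/twist map by assembling generalized pants, mirroring the classical surface argument. The version for compact orbifolds with geodesic boundary is already in \cite[Chapter 13]{Thurston} and \cite[Chapter 7]{choi2012geometric}; I would adapt their argument to the finite-type setting by treating cusps as parameter-free decorations on ends of pants pieces.

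The first step is a pants building-block lemma. For a fixed topological pair of pants $P$ whose three ends are marked as (a) a closed geodesic boundary, (b) a cusp, or (c) a cone point of a fixed order, the space of oriented hyperbolic structures on $P$ realizing these markings is parameterized homeomorphically by the lengths of the type-(a) ends, each ranging freely in $\RR_{>0}$. When no cone points are present this is classical hyperbolic trigonometry of right-angled hexagons and their cusped degenerations; when cone points are present one either passes to a finite orbifold cover, or invokes the corresponding lemma in the orbifold references above.

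Given the building-block lemma, I would define the inverse map as follows. Fix the pants decomposition $C \subset S$ produced by Theorem~\ref{thm: pantsdecomps}, arranged to contain every boundary component of $S$, together with an auxiliary system of transverse arcs used to normalize twist. From a coordinate vector $(\ell_\gamma, \tau_\gamma)$, assemble one hyperbolic pair of pants per complementary piece of $S \setminus C$ using the prescribed boundary lengths and the topologically determined cone/cusp data at the remaining ends, then glue pants along matching boundary curves with offset $\tau_\gamma$ measured against the transverse arc. Boundary components of $S$ lie in only one pants piece, so they receive no gluing and remain as geodesic boundary of the assembled hyperbolic orbifold $X$. The construction also produces a canonical homotopy class of marking $h : S \to X$, hence a point of $\mathcal T(S)$.

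Bijectivity and continuity then follow from routine checks. Uniqueness of admissible geodesic representatives (Lemma~\ref{lem: geodesic representatives}) shows that cutting any $[(X,h)]$ along the geodesic representatives of $h(C)$ recovers the same pants pieces and gluing data, so the two maps are indeed mutually inverse; continuity in both directions follows from smooth dependence of hexagon side lengths and pants gluings on their parameters. The main obstacle, and the only real deviation from the compact case treated in the orbifold references, is continuity of the pants construction at ends carrying cusps, which reduces to the classical cusped-hexagon limit since each cusp is a topologically fixed, parameter-free end. A dimension count $(3g-3+n+2m+k) + (3g-3+n+m+k) = 6g+2n+2k+3m-6$ finishes the statement.
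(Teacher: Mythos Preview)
Your proposal is correct and essentially matches what the paper does: the paper gives no self-contained proof of Theorem~\ref{thm: fenchelnielsen}, instead citing \cite[Corollary 13.3.7]{Thurston} and \cite[Theorem 7.0.1]{choi2012geometric} for the compact orbifold case and remarking that one should ``alter to allow cusps,'' with \cite{maskit2001matrices} offered as an alternative algebraic route. Your sketch is precisely that alteration spelled out, so you have in fact supplied more detail than the paper itself.
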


As a corollary, if we look at a subspace of Teichm\"uller space consisting of hyperbolic structures in which the boundary lengths are fixed, we get an open disk with dimension $6g+2n+2k+2m-6.$ For a proof of Theorem \ref{thm: fenchelnielsen}, see \cite[Corollary 13.3.7]{Thurston} or \cite[Theorem 7.0.1]{choi2012geometric}, altered to allow cusps. See also \cite{maskit2001matrices} for an algebraic approach to the general case.

\begin{remark}
In the previous section we saw that generalized pants decompositions exist even for infinite type orbifolds. One can also develop Fenchel-Neilsen coordinates for Teichm\"uller spaces of infinite type orbifolds, but there the theory is more subtle: for example, an arbitrary gluing of infinitely many pairs of pants need not be metrically complete. See e.g.\  \cites{alessandrini2011fenchel, alessandrini2012various} for a description of Fenchel-Nielsen coordinates on the Teichm\"uller spaces of certain infinite type surfaces.
\end{remark}

\subsubsection{Moduli space and its end(s)}

If $S$ is a finite type hyperbolizable $2$-orbifold, the \emph{moduli space} of $S$ is defined as \[\mathcal M(S) := \big \{ \text{ oriented, finite volume hyperbolic $2$-orbifolds } X \text{ homeomorphic to } S \big \} / \text{o.p.\ isometry}.\]
Just as in the surface case, $\mathcal M(S)$ can be considered as an orbifold that is the global quotient of the Teichm\"uller space $\mathcal T(S)$, which we discussed in  \S \ref{sec: fenchelnielsen}, by the \emph{mapping class group} \[\mathrm{Mod}(S) = \big \{\ \text{o.p.\ homeomorphisms } f: S \longrightarrow S \ \big \} / \text{ isotopy},\]
which acts properly discontinuously on $\mathcal T(S)$ from the right via $[(X,h)] \cdot [f] = [(X,h \circ f)]$. Here, we defined a point in Teichm\"uller space as a pair $(X,h: S \longrightarrow X)$, where $X$ is a priori an unoriented orbifold, but if we always equip $X$ with the pushforward of some fixed orientation on $S$, then the map from Teichm\"uller space to moduli space is just the projection $[(X,h)] \mapsto [X]$. See \cite{primer} for a reference in the surface case; the orbifold case is discussed in \cite{choi2012geometric} and \cite{Thurston}, for instance.

Given $\epsilon>0$, the \emph {$\epsilon$-thin part} of moduli space is the subset $\mathcal M^\epsilon(S)$ consisting of all $X$ that have an admissible closed geodesic with length less than $\epsilon$. The subsets $\mathcal M^\epsilon(S) \subset \mathcal M(S)$ have compact complements, by Mumford's compactness theorem (see \cite{primer} in the surface case, and the proof obviously generalizes), and we have $\cap_{\epsilon>0} \mathcal M^\epsilon(S)=\emptyset.$ The following proposition is probably well-known. We couldn't find a reference for it in the case of orbifolds, so although the proof is basically the same as for surfaces, we review it here to reassure the reader that all the curve complex arguments take place on the regular part of the orbifold. In the statement, a cusp is considered as a cone point of infinite order.

\begin{proposition}\label{prop: endofmoduli}
Suppose that $S$ is not a sphere with 3 or 4 cone points, possibly of infinite order. Then for small $\epsilon$, the subset $\mathcal M^\epsilon(S)$ is nonempty and path connected. Consequently, $\mathcal M(S)$ has one end.

If $S$ is a sphere with 3 cone points, then $\mathcal M(S)$ is a point and $\mathcal M^\epsilon(S)$ is empty for small $\epsilon$.

If $S$ is a sphere with four cone points, pick a partition $P$ of the orders of the cone points into two sets of two. For instance, if the orders are $2,2,3,\infty$ then there are two possible partitions, $\{\{2,2\},\{3,\infty\}\}$ and $\{\{2,3\},\{2,\infty\}\}.$ Let $\mathcal C^\epsilon_P \subset \mathcal M(S)$ be the subset consisting of all $[X]$ such that such that there is a closed geodesic on $X$ of length less than $\epsilon $ that separates $X $ into two pieces that induce the partition $P$ of cone point orders. Then for small $\epsilon $, these subsets $\mathcal C^\epsilon_P$ are the path components of $\mathcal M^\epsilon(S)$, and the number of ends of $\mathcal M(S)$ is the number of such partitions. In particular, if in the following table distinct letters represent distinct orders of cone points, we have
    \begin{center}
\begin{tabular}{ c|c } 
Orders & \# Ends  \\ 
 \hline
 a,a,a,a & 1  \\ 
 a,a,a,b & 1  \\ 
 a,a,b,b & 2 \\ 
 a,a,b,c & 2 \\
 a,b,c,d & 3
\end{tabular}
\end{center}

\end{proposition}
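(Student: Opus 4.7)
My approach would be to decompose $\mathcal M^\epsilon(S)$ via Fenchel--Nielsen coordinates and extract connectivity from a curve graph argument. For each admissible simple closed curve $\gamma \subset S$ (Lemma~\ref{lem: geodesic representatives}), let $\mathcal T^\epsilon_\gamma \subset \mathcal T(S)$ be the locus where the admissible geodesic homotopic to $\gamma$ has length less than $\epsilon$. Using Theorem~\ref{thm: pantsdecomps} to extend $\gamma$ to a pants decomposition, Theorem~\ref{thm: fenchelnielsen} presents $\mathcal T^\epsilon_\gamma$ as an open box in Fenchel--Nielsen parameter space, hence as a contractible, nonempty subset of $\mathcal T(S)$. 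Choosing $\epsilon$ below the Margulis constant, the thick--thin decomposition forces any two admissible geodesics of length less than $\epsilon$ on a common hyperbolic structure to be pairwise disjoint, so
\[
\mathcal T^\epsilon(S) = \bigcup_\gamma \mathcal T^\epsilon_\gamma, \qquad \mathcal T^\epsilon_\gamma \cap \mathcal T^\epsilon_{\gamma'} \neq \emptyset \iff \gamma, \gamma' \text{ admit disjoint representatives.}
\]
By the nerve principle, path components of $\mathcal T^\epsilon(S)$ correspond to connected components of the admissible curve graph $\mathcal C(S)$ whose vertices are homotopy classes of admissible simple closed curves and whose edges are pairs with disjoint representatives; hence components of $\mathcal M^\epsilon(S) = \mathcal T^\epsilon(S)/\mathrm{Mod}(S)$ are indexed by $\mathrm{Mod}(S)$-orbits of components of $\mathcal C(S)$.

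For the main case, I would prove that $\mathcal C(S)$ is connected whenever $S$ is not a sphere with at most four cone points, following the classical Hatcher--Thurston strategy: any admissible curve extends to a pants decomposition via Theorem~\ref{thm: pantsdecomps}, and any two pants decompositions of $S$ are connected by elementary moves that replace a single curve at a time. The surface argument adapts with some bookkeeping for the degenerate geodesics of Lemma~\ref{lem: geodesic representatives}(4) arising at pairs of order-2 cone points. Granting connectivity, $\mathcal M^\epsilon(S)$ is path-connected, and is nonempty since $S$ carries at least one admissible curve. Combined with compactness of $\mathcal M(S) \setminus \mathcal M^\epsilon(S)$ from Mumford's theorem and the fact that $\bigcap_{\epsilon > 0} \mathcal M^\epsilon(S) = \emptyset$, this yields that $\mathcal M(S)$ has exactly one end.

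The exceptional cases are handled directly. When $S$ is a sphere with three cone points, $S$ is itself a generalized pair of pants and carries no admissible simple closed curves, so $\mathcal M^\epsilon(S) = \emptyset$; Theorem~\ref{thm: fenchelnielsen} gives $\dim \mathcal T(S) = 0$, so $\mathcal M(S)$ is a single point. When $S$ is a sphere with four cone points, every admissible simple closed curve separates $S$ into two generalized pairs of pants, inducing an unordered partition of the four cone points into two pairs. Since any two essential simple closed curves on a four-holed sphere intersect, $\mathcal C(S)$ has no edges, so each vertex is its own component. The group $\mathrm{Mod}(S)$ permutes cone points only within classes of equal order, so two admissible curves lie in the same $\mathrm{Mod}(S)$-orbit iff they induce the same partition of the multiset of cone point orders. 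The components of $\mathcal M^\epsilon(S)$, and therefore the ends of $\mathcal M(S)$, are parametrized by such partitions; a direct enumeration in each row yields the table. The main technical obstacle I anticipate is verifying connectivity of the orbifold admissible curve graph, since the standard references do not explicitly cover orbifolds or degenerate geodesics; one plausible workaround is to pass to a finite regular manifold cover of $S$ (which exists by Selberg's lemma) and descend the surface-level result, taking care that admissible curves on $S$ lift and project appropriately.
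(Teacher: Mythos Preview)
Your framework is essentially the paper's: decompose $\mathcal T^\epsilon(S)$ as a union of Fenchel--Nielsen boxes $\mathcal T^\epsilon_\gamma$, observe that overlap corresponds to disjointness of curves, and reduce to curve-graph combinatorics modulo $\mathrm{Mod}(S)$. There is one genuine gap, however. Your ``main case'' includes the torus with one cone point, and you assert that $\mathcal C(S)$ is connected there. It is not: on a once-punctured torus (which is what $S_{reg}$ is in this case) any two distinct essential simple closed curves intersect, so $\mathcal C(S)$ is totally disconnected. Your proposed Hatcher--Thurston argument cannot help here either, because with a one-curve pants decomposition there is no bystander curve disjoint from both the old and new curves; that trick only kicks in once the pants decomposition has at least two curves. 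The paper separates this case out and handles it exactly as you handle the four-cone-point sphere: $\mathcal T^\epsilon(S)$ is the disjoint union of the $\mathcal T^\epsilon_\gamma$, but $\mathrm{Mod}(S)$ acts transitively on the $\gamma$ (they are all nonseparating on the punctured torus $S_{reg}$), so the quotient $\mathcal M^\epsilon(S)$ is connected. Your own nerve setup already accommodates this---you observed that components of $\mathcal M^\epsilon(S)$ are $\mathrm{Mod}(S)$-orbits of components of $\mathcal C(S)$---so the fix is simply to invoke that principle here rather than the false connectivity claim.

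One simplification over your proposal: rather than developing an admissible curve graph for orbifolds, worrying about degenerate geodesics, or passing to a Selberg cover, the paper just works with the ordinary curve complex of the punctured surface $S_{reg}$ obtained by deleting the cone points. Essential nonperipheral simple closed curves on $S_{reg}$ are precisely the curves on $S$ with admissible geodesic representatives (cases 3 and 4 of Lemma~\ref{lem: geodesic representatives}), so once $S_{reg}$ has complexity at least two the needed connectivity is the standard surface result \cite[Theorem~4.3]{primer}, with no orbifold bookkeeping.
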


\begin{proof}
Assume $S$ is not a sphere with three or four cone points, and also let's assume for the moment that $S$ is not a torus with one (possibly infinite order) cone point. Pick markings $h: S \longrightarrow X$ and $h' : S ' \longrightarrow X'$, let $S_{reg}$ be the regular part of $S$ and suppose that $\alpha,\alpha' \subset S_{reg}$ are essential, nonperipheral simple closed curves whose geodesic lengths\footnote{Recall that the geodesic length of $\alpha$ is the length of the admissible geodesic in $X$ homotopic to $h(\alpha)$, and the existence of such an admissible geodesic is discussed in Lemma \ref{lem: geodesic representatives}.} in $X,X'$ are less than $\epsilon$. By our assumption on $S$, the curve complex of  $S_{reg}$ is connected, c.f.\ \cite[Theorem 4.3]{primer}, so there is a sequence $\alpha=\alpha_0,\alpha_1,\ldots,\alpha_n = \alpha'$ of essential, nonperipheral simple closed curves on $S_{reg}$ where each $\alpha_i$ is disjoint from $\alpha_{i+1}$. Using Fenchel Nielsen coordinates, one can then join $(X,h)$ to $(X',h') $ with a path through $\epsilon$-thin surfaces by pinching $\alpha_1$ while keeping $\alpha_0$ short, then pinching $\alpha_2$ while keeping $\alpha_1$ short, etc... 

If $S$ is a torus with one cone point, let $\mathcal T^\epsilon(S) \subset \mathcal T(S)$ be the subset consisting of hyperbolic structures that have an admissible closed geodesic of less than $\epsilon$. Letting $\gamma$ range over all simple closed, nonperipheral essential curves in $S_{reg}$, we have that $\mathcal T^\epsilon(S)$ is a disjoint union $$\mathcal T^\epsilon(S) = \sqcup_\gamma \mathcal T^\epsilon_\gamma(S),$$ where each $\mathcal T^\epsilon_\gamma(S)$  consists of structures where $\ell(\gamma)<\epsilon$. By Fenchel-Nielsen coordinates, each $\mathcal T^\epsilon_\gamma(S) \cong (0,\epsilon) \times \RR$, and so is path connected, and the mapping class group $\mathrm{Mod}(S)$ acts transitively on the set of all $\gamma$. (On the punctured torus $S_{reg}$, all simple closed curves are nonseparating.) So while $\mathcal T^\epsilon(S)$ is disconnected, after taking the projection $\mathcal T(S) \longrightarrow \mathcal M(S)$ all the path components become identified, so the image $\mathcal M^\epsilon(S)$ in moduli space is path connected. 

If $S $ is a sphere with 3 cone points, then Fenchel-Nielsen coordinates (see Theorem \ref{thm: fenchelnielsen}) imply that $\mathcal M(S)$ is a point.

The case of a sphere with four cone points is the same as the case of a torus with one cone point, except that the $\mathrm{Mod}(S)$-orbits of essential, nonperipheral simple closed curves on $S_{reg}$ are exactly the sets of curves determining a particular partition of cone point orders.
\end{proof}

\subsubsection{Resolving singularities in finite covers}
 
 By Selberg's Lemma \cite{selberg1962discontinuous} any  finite type hyperbolic $2$-orbifold has a finite cover that is a surface.  In general, one cannot expect a $2$-orbifold to have a finite surface cover, since there can be a sequence of cone points in $X$ with orders going to infinity, and a cone point $p \in X$ will lift to a cone point in any finite cover whose degree is less than the order of $p$.  However, it is possible to resolve any \emph{finite} set of singularities of $X$  in a cover of sufficiently high degree.
 
 \begin{proposition}\label{prop: finitecovers}
 Suppose that $X $ is a hyperbolic orientable $2$-orbifold and that $F \subset X$  is a finite set.  Then there is a finite cover $\pi: \hat X \longrightarrow X$ such that each element of $\pi^{-1}(F)$ is a regular point of $\hat X.$
 \end{proposition}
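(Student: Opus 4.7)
The plan is to reduce to finding a finite index subgroup of the orbifold fundamental group avoiding conjugates of the local groups at the $p_i$, and then apply Selberg's lemma to a suitable compact sub-orbifold with boundaries capped off. Since any orientable hyperbolic $2$-orbifold is good (see Section~\ref{sec: 2orbifolds}), I would begin by writing $X = \Gamma \backslash \HH^2$ for some discrete $\Gamma \leq \PSL_2(\RR)$ and choosing lifts $\tilde p_i \in \HH^2$ with stabilizers $G_i = \mathrm{Stab}_\Gamma(\tilde p_i)$, each a finite cyclic rotation group. A finite index subgroup $H \leq \Gamma$ yields a finite cover $\hat X = H \backslash \HH^2 \to X$ in which the preimages of $p_i$ are parametrized by the double cosets $H \backslash \Gamma / G_i$, with local group $H \cap \gamma G_i \gamma^{-1}$ at the preimage corresponding to $H\gamma G_i$. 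Hence it suffices to produce $H$ of finite index with $H \cap \gamma G_i \gamma^{-1} = \{1\}$ for all $i$ and all $\gamma$, and by intersecting finitely many such subgroups I may treat one $G := G_i$ at a time.

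For this single $G$ of order $n$, I would choose a compact connected convex $2$-sub-orbifold $K \subset X$ with piecewise geodesic boundary containing $p$ in its interior; convexity guarantees that $\pi_1^{orb}(K) \hookrightarrow \Gamma$ is injective. By enlarging $K$ if necessary, I can arrange that the closed orbifold $K'$ formed by gluing a regular disc along each component of $\partial K$ has negative orbifold Euler characteristic, hence is a good hyperbolic orbifold. Van Kampen's theorem then gives a surjection $\pi_1^{orb}(K) \twoheadrightarrow \pi_1^{orb}(K')$ whose kernel is the normal closure of $\pi_1(\partial K)$; since capping does not alter the local structure at the interior cone point $p$, the image of $G$ remains cyclic of order $n$, namely the local group at $p$ in $K'$. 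Applying Selberg's lemma to the finitely generated Fuchsian group $\pi_1^{orb}(K')$ provides a torsion-free finite index normal subgroup, and thereby a surjection $\phi': \pi_1^{orb}(K') \twoheadrightarrow Q$ to a finite group $Q$ that is automatically injective on every finite cyclic subgroup, in particular on the image of $G$. Pulling back through the capping quotient produces $\phi: \pi_1^{orb}(K) \to Q$ that is injective on $G$ and trivial on $\pi_1(\partial K)$.

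To build $\hat X$, I would take the regular cover $\hat K \to K$ corresponding to $\phi$, of degree $|Q|$. Since $\phi$ kills each boundary loop, the boundary $\partial \hat K$ is a disjoint union of $|Q|$ copies of $\partial K$, naturally indexed by the deck group $Q$. I would then attach, for each $q \in Q$, one copy of $X \setminus \mathrm{int}(K)$ along the $q$-th copy of $\partial K$; the resulting map $\hat X \to X$, which is $\hat K \to K$ over $K$ and the identity on each attached copy, is a $|Q|$-fold cover. Every preimage of $p$ lies in $\hat K$, and is regular there because $\phi|_G$ is injective, as required. The hard part will be arranging that the capped orbifold $K'$ is simultaneously good and hyperbolizable; this is straightforward whenever $X$ has enough topology or additional cone points near $p$, and the few degenerate cases (for instance when $\Gamma$ is itself finite cyclic) can be handled directly, e.g.\ by taking $H = \{1\}$.
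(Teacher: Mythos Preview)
Your approach has a genuine gap that is not confined to ``a few degenerate cases.'' The problem is the step where you cap off $\partial K$ with regular discs and claim you can arrange the resulting closed orbifold $K'$ to have negative orbifold Euler characteristic. If $X$ has genus $0$, then every compact sub-orbifold $K$ has genus $0$, and if $K$ contains exactly one cone point (of order $n$), then $K'\cong\bbS^2(n)$ regardless of how many boundary components you cap---this is a bad orbifold with positive Euler characteristic. Concretely, take $X$ to be a twice-punctured sphere with a single cone point of order $n\ge 2$; this is a perfectly ordinary hyperbolic orbifold with $\Gamma\cong\ZZ*\ZZ/n\ZZ$, certainly not finite cyclic. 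More to the point, the homomorphism $\phi:\pi_1^{orb}(K)\to Q$ you seek simply cannot exist in this situation: writing $\pi_1^{orb}(K)=\langle c_1,\dots,c_b,x\mid c_1\cdots c_b\,x=1,\ x^n=1\rangle$, if $\phi(c_i)=1$ for all $i$ then the relation forces $\phi(x)=1$, so $\phi$ cannot be injective on $G=\langle x\rangle$.

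The paper sidesteps this obstruction by working in the opposite direction: rather than trying to kill the boundary loops, it deletes discs around the cone points in $F$ to obtain an orbifold $N$, invokes Lemma~\ref{lem: pi1surfaces} to write $\pi_1(N)$ as a free product in which each deleted-disc boundary $c_i$ generates its own infinite cyclic free factor, and then maps $\pi_1(N)\to\prod_i\ZZ/n_i\ZZ$ by sending $c_i$ to a generator of the $i$-th factor and the remaining free factors to $0$. In the resulting finite cover each $c_i$ lifts to an $n_i$-fold cover of itself, so one can glue regular discs back in. The compact case is handled directly by Selberg. Your idea could be repaired by capping $\partial K$ with discs carrying auxiliary cone points (so that $K'$ becomes, say, $\bbS^2(n,m,m)$ for large $m$) and then extending the cover over $X\setminus K$ using $m$-fold cyclic covers of the cusp/flare annuli, but as written the argument does not do this.
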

 
  In particular, this implies that any $X$ with finitely many cone points has a finite  cover that is a surface. Before proving  Proposition \ref{prop: finitecovers}, we record the following  standard fact about fundamental groups of noncompact surfaces with boundary.
  
  \begin{lemma}\label{lem: pi1surfaces}
    Suppose that $X$ is a noncompact orientable $2$-orbifold with boundary. Then the orbifold fundamental group $\pi_1 S$ is a free product of cyclic groups. Moreover, this decomposition can be taken so the conjugacy class of each boundary component of $X$ is represented by a generator for some infinite cyclic factor, and the conjugacy class of each cone point is represented by a generator for some finite cyclic factor.
  \end{lemma}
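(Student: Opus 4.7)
The underlying topological space $|X|$ is an orientable surface, possibly with boundary, and since $X$ is noncompact as an orbifold either $|X|$ is noncompact or has nonempty boundary (or both). Cone points of an orientable $2$-orbifold form a discrete subset of $|X|$, so they can be treated as isolated singular vertices of any triangulation. The plan is to realize $X$, up to homotopy equivalence in the orbifold sense, as a $1$-dimensional orbi-graph, and then read off $\pi_1 X$ via orbifold Seifert--van Kampen (equivalently, via Bass--Serre theory applied to the associated graph of groups).

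First, I would construct a graph $\Gamma \subset |X|$ onto which $|X|$ deformation retracts. Because $|X|$ is a non-closed surface, such a spine always exists, and by a careful choice of triangulation --- and, if $|X|$ is infinite type, an exhaustion by compact subsurfaces with boundary --- one can arrange that every cone point of $X$ appears as a vertex of $\Gamma$, and that every boundary component of $|X|$ is isotoped onto a simple loop $d_l \subset \Gamma$. Now collapse a maximal subtree of $\Gamma$, noting that edges between regular points carry no orbifold data; what remains is a wedge of loops at a single regular vertex $v_0$, together with one pendant edge from $v_0$ to each cone-point vertex $p_j$, where $p_j$ carries local group $\ZZ/n_j\ZZ$.

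This orbi-graph is a graph of groups with trivial edge groups, trivial vertex group at $v_0$, and cyclic vertex groups $\ZZ/n_j\ZZ$ at the cone points. Its orbifold fundamental group is the free product of the vertex groups together with a free group on the loops at $v_0$. Hence $\pi_1 X$ is a free product of finite cyclic factors $\ZZ/n_j\ZZ$ (one for each cone point) and infinite cyclic factors (one for each loop at $v_0$, including the genus and puncture loops). Since we arranged $\Gamma$ to contain a loop $d_l$ representing each boundary component of $|X|$, these loops are among the free generators of the wedge, and so give the distinguished infinite cyclic factors required by the lemma.

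The main obstacle is the infinite-type spine construction: one must ensure that the exhaustion of $|X|$ by compact subsurfaces produces a globally well-defined deformation retraction of $|X|$ onto $\Gamma$, and that the chosen boundary-loop representatives $d_l$ form a locally finite family in $\Gamma$. This is standard, going back to the Ker\'ekj\'art\'o--Richards classification of noncompact surfaces, so once the spine has been arranged the graph-of-groups computation is immediate.
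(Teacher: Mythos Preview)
Your approach is correct and closely parallels the paper's, though organized differently. The paper first deletes discs around all cone points to obtain a genuine noncompact surface $S$, applies Whitehead's spine argument to $S$ (citing Putman's exposition) to get a graph $Y\subset S$ containing every boundary component of $S$ as an embedded cycle, and then uses Van Kampen to glue the singular discs back in. You instead keep the cone points and build an orbi-spine directly, reading off $\pi_1$ via a graph of groups with trivial edge groups. Both routes rest on the same underlying noncompact-surface spine construction; the paper's detour through $S$ has the minor advantage that the delicate step is isolated in the manifold category, where Whitehead's argument is already written down.

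Two spots in your write-up deserve tightening. First, ``collapse a maximal subtree'' cannot literally yield a wedge with separate pendant edges to the cone points, since a spanning tree identifies all vertices; what you want is either to first arrange that each cone point is a leaf of $\Gamma$ (split any higher-valence cone vertex into a regular vertex plus a pendant singular edge), or simply to invoke the Bass--Serre computation directly without describing the collapsed graph. Second, the assertion that the boundary loops $d_l$ are ``among the free generators of the wedge'' is not automatic from their being embedded cycles: one must observe that the $d_l$ are \emph{pairwise disjoint} embedded cycles, and then choose the spanning tree to contain all but one edge of each $d_l$, so that each $d_l$ contributes exactly one petal. This is exactly the content of the noncompactness hypothesis (a compact pair of pants shows the claim fails otherwise), and the paper handles it by appealing to the specific structure of Whitehead's spine, in which the complement consists of half-open discs attached along arcs rather than full circles.
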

 \begin{proof}
  Let $S$  be the surface obtained from $X$ by deleting discs around all its cone points. We will show that $\pi_1 S$ is free and that representatives of the conjugacy classes corresponding to the boundary components of $S $ can be taken as part of the free basis. The lemma will then follow from this and Van Kampen's theorem. The approach we follow in the surface case is an argument of Whitehead \cite[Lemma 2.1]{whitehead1961immersion}, which is commonly cited as proving that fundamental groups of noncompact surfaces are free. It applies verbatim to give the stronger result we want above, but it is worth going through the argument briefly in order to see why.  We follow the excellent exposition written by Putman \cite{putmanspines}.
  
  Fix a triangulation of $S$, and let $D$ be the dual graph. As in \cite{putmanspines}, there is a subgraph $F$ of $D$ that includes all vertices of $D$, and where every component of $F$ is an infinite ray with finite trees attached at certain vertices. Set $Y$ to be the embedded graph in $S$ that is the union of all the edges of our triangulation that do not cross $F$, and note that $Y$ contains all of the boundary components of $S$ as embedded cycles. Then as in \cite{putmanspines}, $S$ is a union of regular neighborhoods $\nhd (F)$ and $\nhd (Y)$. The regular neighborhoods of the components of $F$ are discs with once-punctured boundaries, so retracting these discs onto their boundaries, we see that $S$ deformation retracts onto $\nhd(Y)$, and hence onto $Y$. Since $Y$ is a graph in which all boundary components of $S$ appear as embedded cycles, $\pi_1 S \cong \pi_1 Y$ is free and representatives of the boundary components can be taken as part of a free basis.
\end{proof}

  We are now ready for the proof of Proposition \ref{prop: finitecovers}.
 
\begin{proof} As remarked above, if $X$ is compact this follows from Selberg's lemma. So assume $X$ is noncompact.   Take $X$, and delete open disks around the cone points $\{p_1, ... p_k \}$ to obtain a new orbifold $N$.  By Lemma \ref{lem: pi1surfaces}, $$\pi_1 (N) \cong \langle c_1\rangle  \ast \cdots \ast \langle c_k\rangle  \ast G, $$ where the $c_i$ are the boundaries of the deleted disks and $G$ is a free product of cyclic groups. Define $$f: \langle c_1\rangle  \ast \cdots \ast \langle c_k\rangle  \ast G \to \ZZ/ n_1\ZZ \times \cdots \times \ZZ/ n_k\ZZ, \ \ \ f(c_i) = 1 \in \ZZ / n_i\ZZ, \ \ f(G) = 0,$$ where $n_i$ is the order of $c_i$ in $\pi_1 (X)$, i.e.\ the order of the cone point $p_i$.  The kernel of $f$ is a finite index subgroup in $\pi_1(N)$ and so it gives a finite index orbifold cover $\pi : \hat N \longrightarrow N$.  Glue a disk along each component of the preimage in $\hat N$ of each $c_i$ to give a new orbifold $\hat X$. On each component $\hat c_i \subset \pi^{-1}(c_i)$, the covering map $\pi$ restricts to an $n_i$-to-1 cover $\hat c_i \to c_i$. This can be extended to a $n_i$-to-1 orbifold cover from the new disk in $\hat X$ to the singular disk we deleted from $X$ to form $N$.  Piecing together these maps, we get an extended covering map $\hat X \longrightarrow N$. Since $\pi$ had finite degree, this map does too, and the preimages of all the $p_i$ consist completely of smooth points, the centers of the disks added in above.
\end{proof}

% Let $\Sigma \to  S(m_1,\ldots,m_n)$ be a cover by a surface of degree $m_1 \cdots m_n$.  Then the length of a regular geodesic is $\frac{1}{m_1 \cdots m_n}$ the length of its lift in $\Sigma$.  The length of a degenerate geodesic is $\frac{2}{m_1 \cdots m_n}$ the length of its lift in $\Sigma$. \\
% %
% \indent We denote by $X \dsm c$ the orbifold with geodesic boundary obtained by cutting $X$ along an admissible geodesic. Any curve on a cone surface $S(m_1,\ldots,m_n)$ is homotopic to a unique admissible geodesic \cite{Fanoni}, Proposition 2.2.  If $\mathcal{P}$ is a pants decomposition of $S=S(m_1,\ldots,m_n)$ with genus $g$ and $b$ boundary components, then $\mathcal{P}$ contains $3g-3 +n +b$ curves and $S \dsm \mathcal{P}$ is a collection of open pairs of pants, \cite{Fanoni} Proposition 2.4.  An orbifold is a cone surface without boundary components. 

% \textcolor{blue}{ Following \cite{CHK} example 2.39, we remind the reader of the notion of a tangent space to a 2-orbifold $S= \Gamma \backslash X$, where $X$ is a modeling space.  Notice $\Gamma$ acts naturally on the unit tangent bundles $UT( X)$ and the quotient orbifold $UT( X)/ \Gamma$ is the \emph{unit tangent bundle} of $S$. \\
% %
% \indent The notion of triangulating a manifold extends to triangulating a a differential orbifold.  Every 2-orbifold is the quotient of a surface by a finite group, so we simply require the triangulation of the orbifold to lift to a triangulation on the surface.  See \cite{Bonahon} discussion between Theorem 2.5 and Proposition 2.6. 
% }

\subsection{Injectivity radius and the thick-thin decomposition}
\label{sec: injrad and thick-thin}
Let $\Gamma < \PSL_2(\RR)$  be a discrete group, and let $X := \Gamma \backslash \HH^2$  be the quotient orbifold. Let $\Gamma^{\infty} \subset \Gamma$  be the subset consisting of all  infinite order elements, i.e.\ all elements of hyperbolic or parabolic type.  Given $q\in \HH^2$, we define the \emph{infinite order translation distance} of $q$ to be the minimum
$$\tau^\infty(q) := \min \Big \{ \, d(\gamma(q),q) \ \big | \ \gamma \in \Gamma^\infty \Big \},$$
 In the quotient, we define the \emph {infinite order injectivity radius} of $X$ at a point $p\in X$ to be $$\inj(X,p) := \frac 12
 \tau^\infty(\tilde p),  $$
 where $\tilde p \in \HH^2$ is any lift of $p$. Then given $\epsilon>0$, the \emph{$\epsilon$-thick} and \emph{$\epsilon$-thin parts} of $X$  are defined as
$$X_{\geq \epsilon} := \{ p\in X \ | \ \inj(X,p) \geq \epsilon \}, \ \ \ X_{< \epsilon} := \{ p\in X \ | \ \inj(X,p) < \epsilon \},$$
 respectively.  

\begin{theorem}[Thick Thin Decomposition] \label{thm: thickthin}  There is a constant $\epsilon_0>0$, called \emph{the Margulis constant for $\HH^2$},  with the following property. If $\epsilon<\epsilon_0$, the thin part $X_{< \epsilon}$ is a disjoint union of its connected components. For each component $U \subset X_{< \epsilon}$, there is a subgroup $\Lambda \subset \Gamma$ and a subset $W \subset \HH^2$ that is precisely invariant under $\Lambda < \Gamma$, such that $U = \Lambda \backslash W $ and where we are in one of the following three situations:
\begin {enumerate}
	\item[(C)] $U$  is homeomorphic to $\bbS^1 \times (0,\infty)$  and is a neighborhood of a single end $\xi$ of $X$.  The subset $W \subset \HH^2$ is a horoball, and $\Lambda $ is a cyclic  parabolic subgroup of $\Gamma$.  We call $U$ a \emph{cusp neighborhood} and $\xi$ a \emph{cusp}.
\item[(R)] $U$ is a metric neighborhood  of a regular geodesic $c \subset X$ such that $\length(c)<2\epsilon$, and  there is a homeomorphism $U \longrightarrow \bbS^1 \times (-1,1)$ taking $c$ to $\bbS^1 \times 0$. The subset $W \subset \HH^2$  is a metric neighborhood of a geodesic $\tilde c$  that is a lift of $c$, and $\Lambda < \Gamma$  is a  cyclic hyperbolic subgroup  consisting of all elements of $\Gamma$  that leave $\tilde c$  invariant. We call $U$ a \emph{regular Margulis tube}.
\item[(D)] $U$ is a metric neighborhood  of a  degenerate geodesic $c \subset X$ such that $\length(c)<2\epsilon$. If $p_c : \bbS^1 \longrightarrow X$ is a  parametrization of $c$ and $i : \bbS^1 \longrightarrow \bbS^1$  is a reflection such that if $p_c(i(x))=p_c(x)$ for all $x\in \bbS^1$, then there is a homeomorphism $$U \longrightarrow \bbS^1 \times [0,1]/ (x,0) \sim (i(x),0)$$  such that $p_c(x) \mapsto [(x,0)]$ for all $x \in \bbS^1$. The subset $W \subset \HH^2$  is a metric neighborhood of a geodesic $\tilde c$  that is a lift of $c$, and $\Lambda < \Gamma$  is an infinite dihedral subgroup  consisting of all elements of $\Gamma$  that leave $\tilde c$  invariant. We call $U$ a \emph{degenerate Margulis tube}.
\end {enumerate}
\end{theorem}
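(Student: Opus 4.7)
The plan is to deduce the theorem from the classical Margulis lemma for $\HH^2$: there exists a constant $\epsilon_0 > 0$ such that for every discrete $\Gamma < \PSL_2(\RR)$ and every $\tilde p \in \HH^2$, the subgroup $\langle \gamma \in \Gamma : d(\gamma \tilde p, \tilde p) < 2\epsilon_0\rangle$ is elementary. This is a standard consequence of the fact that commutators of near-identity isometries are even closer to the identity, combined with discreteness of $\Gamma$. Fix $\epsilon < \epsilon_0$ and let $\pi : \HH^2 \to X$ denote the quotient map.

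Given a component $U$ of $X_{<\epsilon}$, lift to a component $\tilde U$ of $\pi^{-1}(U)$ and set $\Lambda := \mathrm{Stab}_\Gamma(\tilde U)$. For each $\tilde q \in \tilde U$ the definition of $\inj$ supplies an infinite-order element $\gamma \in \Gamma$ with $d(\gamma\tilde q, \tilde q) < 2\epsilon$; by the Margulis lemma all such $\gamma$ at a fixed $\tilde q$ lie in a common elementary subgroup. Using path-connectedness of $\tilde U$, one checks that as $\tilde q$ varies in $\tilde U$ these elementary subgroups are all the same, so each such $\gamma$ belongs to $\Lambda$ and $\Lambda$ itself is elementary and contains an infinite-order element. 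Appealing to Proposition~\ref{prop: classification of elem subgroups}, $\Lambda$ is either cyclic parabolic fixing some $\xi \in \partial\HH^2$, cyclic hyperbolic with axis $\tilde c$, or infinite dihedral preserving an axis $\tilde c$.

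In each case one identifies the precisely invariant set $W$ and reads off the quotient. For $\Lambda = \langle g\rangle$ parabolic at $\xi$, the component $\tilde U$ is exactly the horoball $W = \{\tilde q : d(g\tilde q,\tilde q) < 2\epsilon\}$ based at $\xi$, and $\Lambda \backslash W$ is the cusp neighborhood of case~(C). When $\Lambda$ stabilizes an axis $\tilde c$, the component $\tilde U$ is a uniform metric neighborhood of $\tilde c$ whose width is determined by $\epsilon$ and the translation length of a generator of the translation part of $\Lambda$. If $\Lambda$ is cyclic, the quotient is an annular collar about the regular geodesic $\tilde c / \Lambda$, yielding case~(R); if $\Lambda$ is infinite dihedral, the order-two rotations introduce two fixed points on $\tilde c / \langle \text{translation}\rangle$, producing the quotient $\bbS^1 \times [0,1] / (x,0)\sim (i(x),0)$ with its two order-two cone points on the degenerate geodesic, yielding case~(D). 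Precise invariance ($\gamma \tilde U \cap \tilde U = \emptyset$ for $\gamma \notin \Lambda$) is immediate from the fact that $\tilde U$ is a single component of a $\Gamma$-invariant set, and disjointness of the components of $X_{<\epsilon}$ is tautological.

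The main obstacle is the ``local constancy'' claim: that the elementary subgroup witnessing $\inj(X,\pi(\tilde q)) < \epsilon$ cannot jump as $\tilde q$ varies in $\tilde U$. This reduces to showing that two genuinely distinct maximal elementary subgroups of a discrete $\Gamma < \PSL_2(\RR)$ cannot both contain an infinite-order element shortening a common point by less than $2\epsilon$, which itself follows from the Margulis lemma once $\epsilon$ is small (any such pair would generate a discrete subgroup of $\PSL_2(\RR)$ with two elements close to the identity, hence virtually nilpotent, forcing them into a single elementary subgroup). With this in hand, a covering argument along paths in $\tilde U$ pins down $\Lambda$ uniquely on each component and the geometric descriptions above complete the proof.
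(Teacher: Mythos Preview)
Your proposal is correct and follows essentially the same route as the paper: lift a component, take its stabilizer $\Lambda$, use the Margulis lemma plus connectedness to pin down a single set of fixed points, classify $\Lambda$ via the list of discrete elementary groups, and then identify $\tilde U$ with the explicit horoball or tubular neighborhood. The only place where the paper is noticeably more careful is in passing from ``the short infinite-order elements all lie in one elementary group $E$'' to ``the full stabilizer $\Lambda$ is elementary'': the paper observes that any $\gamma\in\Lambda$ conjugates a short element at $p$ to a short element at $\gamma(p)$, hence permutes the common fixed-point set, which forces $\gamma$ into the appropriate elementary group; you assert this step rather than argue it, but it is routine once noticed.
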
  

 \noindent In the theorem above, recall that $W \subset \HH^2$  is \emph {precisely invariant}  under a subgroup $\Lambda < \Gamma$ if $\gamma(W)=W$  for all $\gamma \in \Lambda$, and $\gamma(W) \cap W = \emptyset$  for all $\gamma \in \Gamma - \Lambda$.  Note that in this case,  the quotient $\Lambda \backslash W $ embeds in $X=\Gamma \backslash \HH^2$. Notice that since this thick-thin decomposition is only according to the infinite order elements, the cone points do not contribute.

 There are a number of statements of thick thin decompositions in the literature.  For manifolds, the reader can consult \cites{Benedettilectures,CEG} in the hyperbolic setting and \cite[\S 10]{BGS} in the setting of nonpositive curvature.  For orbifolds, there is a  less detailed statement in \cite{AX}, and a $3$-dimensional  statement in \cite[Theorem 7.16]{CHK}. Note that there is a choice  to be made when deciding how to handle  orbifolds: here we just disregard elliptics, but \cite{AX} and \cite{CHK} have slightly different conventions. However, all versions of the thick-thin decomposition follow readily from the Margulis Lemma, see pg 101 of \cite{BGS}, Theorem D.1.1 in \cite{Benedettilectures}, and the proof below. We would like to stress that while the statement of Theorem \ref{thm: thickthin} will not surprise experts in hyperbolic geometry, we were not able to find any equivalent statement in the literature.  In particular, we have never seen a treatment of the thick-thin decomposition for orbifolds that uses infinite order translation distance, nor a statement that explicitly identifies the types of thin parts in $2$-dimensions as above.

%\tinytodo{IB: after the paper is written, perhaps we can talk about how much of this proof to include. I'm inclined to just leave it like it is, since I don't know any reference that actually states it like above.  In particular, I don't know of any reference that uses `infinite order injectivity radius', so I wasn't completely confident that it would work until I wrote it down. However, we could probably abbreviate the proof if you both want that.}

\begin {proof}
We adapt the argument on pages 110--112 of \cite{BGS} to the case of orbifolds.

Let $W \subset \HH^2$ be a connected component of the preimage of $U \subset X$ in $\HH^2$. Since the subset $\Gamma^\infty \subset \Gamma$ is conjugation invariant, the function $\tau^\infty$ is $\Gamma$-invariant, so $\Gamma$   permutes the components of $\{ p \in \HH^2 \ | \ \tau^\infty(p) < \epsilon \}$. But $W$ is one of these components, so $W$ is precisely invariant under some subgroup $\Lambda < \Gamma$.  Note that 
\begin{enumerate} \item[(*)] if $\gamma \in \Gamma^\infty$ and  there is a point $p\in W$ with $d(\gamma(p),p) < \epsilon$, then $\gamma \in \Lambda$.
\end{enumerate} 
For in this situation, $\gamma$ also translates $\gamma(p)$ by less than $\epsilon$, so by the convexity of the distance function, $\gamma$  translates every point on the segment from $p $ to $\gamma(p)$ by less than $\epsilon$.  From this, we see that $\gamma(p)\in W$, implying that $\gamma \in \Lambda$.

 Now given $p\in W$, let $\Gamma_p  < \Gamma$  be the subgroup  generated by the set of all $\gamma \in \Gamma$ with $d(\gamma(p),p)<\epsilon$, and let  $\Gamma_p  ^\infty = \Gamma_p \cap \Gamma^ \infty$. By the Margulis lemma, c.f.\ pg 101 of \cite{BGS}, $\Gamma_p $ has a finite index nilpotent subgroup $\Delta_p < \Gamma_p$. If two nontrivial elements $\alpha,\beta \in \PSL_2 (\RR)$ commute, they have the same fixed points in $\HH^2 \cup \partial \HH^2$; as any nilpotent group has nontrivial center, it follows that the fixed points in $\HH^2 \cup \partial \HH^2$ of all  nontrivial elements of $\Delta_p$ are the same.  Consequently, \emph{all  elements of $\Gamma_p^\infty$ have the same fixed points in $\partial \HH^2$}, since any such  element has  a power that is a nontrivial element of $\Delta_p$. 

Now let $p,q\in W$.  We claim that the fixed points in $\partial \HH^2$ of  $\Gamma_p^\infty$ and $\Gamma_q^\infty$ are the same. Indeed, since $p\in W$, there is some $\gamma \in \Gamma^\infty $ with $d(\gamma(p),p)<\epsilon$. If $q$ is sufficiently close to $p$, then $d(\gamma(q),q)<\epsilon$ as well.   In this case, $\gamma \in \Gamma_p \cap \Gamma_q$, so from the previous paragraph, the fixed points of  infinite order elements in $\Gamma_p$ and $\Gamma_q$ are the same. The claim follows for arbitrary pairs of points $p,q \in W$ by connectedness.  So, by the classification of isometries of $\HH^2$,  we are in one of the following two situations.
\begin{enumerate}
	\item all  elements $\gamma \in \cup_{p\in W} \Gamma_p^\infty$ are hyperbolic type with some common axis $\tilde c \subset\HH^2$, or
\item all  elements $\gamma \in \cup_{p\in W} \Gamma_p^\infty$ are parabolic with a common fixed point $z \in \partial \HH^2$.
\end{enumerate}

Suppose now that $\gamma \in \Lambda$, the $\Gamma$-stabilizer of $W$. If $p \in W$, then $\gamma(p)\in W$ as well, so there is some $\alpha \in \Gamma^\infty$ with $d(\alpha(\gamma(p)),\gamma(p))<\epsilon$.  Setting $\beta =\gamma^{-1} \circ \alpha \circ \gamma$,  it follows that $d(\beta(p),p)<\epsilon$, so $\beta \in \Gamma_p^\infty$. Since $\alpha \in \Gamma_{\gamma(p)}^\infty$, the previous paragraph implies that $\alpha,\beta$  have the same fixed points in $\partial \HH^2$. But $\gamma$  conjugates one to the other, so $\gamma $ permutes these fixed points.  In case 1 above,  this means that either $\gamma $ is  a hyperbolic type isometries with axis $\tilde c$, or a rotation by $\pi$ around some point on $\tilde c$. In case 2, $\gamma$  must be a parabolic isometry fixing $z$, for it is easy to check that if $\gamma$ is hyperbolic type, then the group $\langle \gamma, \alpha \rangle < \Gamma$  is not discrete. (See pg 112 of \cite{BGS}.)  Translating this to a statement about the entire group $\Lambda$, it follows that either 
\begin{enumerate}
\item[(C)] $\Lambda$ is a  cyclic group of parabolic isometries fixing $z\in \HH^2$,
\item[(R)] $\Lambda$ is a cyclic group of hyperbolic type isometries with axis $\tilde c,$ or
\item[(D)] $\Lambda$ is  an infinite dihedral group of isometries stabilizing $\tilde c$.
\end{enumerate} Indeed, note that by $(*)$ above, $\Lambda$ contains any $\Gamma_p^\infty$, so cannot consist only of elliptic elements.

We now describe $W$ explicitly.  First, consider the set
$$V := \{p\in \HH^2 \ | \ d(\gamma(p),p)<\epsilon \text{ for some } \gamma \in \Lambda \cap \Gamma^\infty\}.$$
 In case (C), $V$ is a horoball centered at $z$, while in cases (R), (D), $V$ is a metric neighborhood of $\tilde c$. By $(*)$, $$W \subset V \subset \{p \in \HH^2 \ | \ \tau^\infty(p)<\epsilon\}.$$
But in all three cases, $V$ is connected and $W$  is a component of $ \{p \in \HH^2 \ | \ \tau^\infty(p)<\epsilon\}$, so $W=V$.  This finishes the proof, as the descriptions of the homeomorphism types of $U = \Lambda \backslash W$ follow immediately.
  \end {proof}

\subsection{Discrete groups and  vectored $2$-orbifolds}
\label{sec: convergence section}

A \emph {vectored, oriented hyperbolic $2$-orbifold} is a pair $(X,v)$, where $X $ is an oriented hyperbolic $2$-orbifold and $v \in T^1X$.  We say that $(X,v)$ and $(X',v')$ are \emph {isometric}  if there is a  (differentiable) orientation preserving isometry $f :X \to X'$ such that $d f(v) = v'$.  
As in the introduction, there is a natural bijection
\begin{equation} \label{eq: bijection} \Sub_d(G) : = \{ \text { discrete } \Gamma < G \ \} \longrightarrow \{ \text { vectored, oriented hyperbolic } 2\text{-orbifolds } (X,v) \ \} \ / \ \text {isom},\end{equation}
 defined by fixing a base vector $v_{base} \in T\HH^2$ and mapping $\Gamma < G$ to the pair $(X,v)$, where $X := \Gamma \backslash \HH^2$ and $v$ is the projection of $v_{base}$  under the derivative of the quotient map. See e.g.\ \cite[Chapter E]{Benedettilectures}, noting that in our context of oriented hyperbolic $2$-orbifolds, a choice of base vector $ v$ determines an orthonormal frame $(v,w)$, where $w$ is  the vector obtained by rotating $v$ by $\pi/2$ counterclockwise. We will write
$$(X,v) \longmapsto \Gamma(X,v)$$
 for the inverse of this map.

 There is a natural \emph{smooth topology}  on the set of isometry classes of vectored oriented Riemannian orbifolds, where $(X_i,v_i) \to (X_\infty,v_\infty)$ if there is a sequence $R_i \to \infty$ and smooth embeddings
\begin{equation} \label{eq: smooth convergence maps}
\psi_i : B_{X_\infty}(p_\infty,R_i) \longrightarrow X_i,
\end{equation}
 where $p_\infty$ is the base point of $v_\infty$, such that each $\psi_i$ is orientation preserving, $d\psi_i^{-1}(v_i) = v_\infty\in TX$ for all $i$, and where if $g_i$ is the Riemannian metric on $X_i$, we have $\psi_i^* g_i \to g_\infty$ in the $C^\infty$ topology on  $X_\infty$.  We refer to the reader to \cite[Ch 10]{petersen2006riemannian} and the appendix in \cite{urms} for more information about the smooth topology in the manifold setting.  We will often refer to the maps $\psi_i$  as the \emph{almost isometric maps} coming from smooth convergence. Note that in order to say that the $\phi_i$ are more and more isometric as $i\to \infty$, we pull back all the metrics to the domain, which is a fixed space.  If both the domain and the co-domain are fixed,  though, this is essentially equivalent to the following lemma saying that the maps converge smoothly to an isometry: 
 
 \begin{lemma}[Metric convergence implies convergence of maps]\label{lem: convergingmaps}
 Suppose that $X,Y$ are orbifolds, $U \subset X$ is open and connected, and $\phi_i : U \longrightarrow Y$ is a sequence of embeddings such that $\phi_i^* g_Y \to g_X$, where $g_X,g_Y$ are the Riemannian metrics on $X,Y$. If for some $p \in X$, the sequence $(\phi_i(p))$ is precompact, then $(\phi_i)$ has a subsequence  that smoothly converges to an isometric embedding.
 \end{lemma}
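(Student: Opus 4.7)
The plan is to extract a $C^0$-convergent subsequence via Arzel\`a--Ascoli, and then bootstrap to smooth convergence using that isometric embeddings intertwine exponential maps. First, the smooth convergence $\phi_i^* g_Y \to g_X$ implies in particular $C^0$ convergence, so the pointwise operator norms $\|d\phi_i\|$ are uniformly bounded on compact subsets of $U$. Hence the family $\{\phi_i\}$ is locally equicontinuous, and combined with precompactness of $\phi_i(p)$, a diagonal Arzel\`a--Ascoli argument extracts a subsequence (still denoted $\phi_i$) converging in $C^0$ on compact subsets of $U$ to a continuous map $\phi_\infty\colon U \to Y$.

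Next I would extract a further subsequence so that the basepoints $q_i := \phi_i(p) \to q := \phi_\infty(p)$ and the derivatives $d\phi_i|_p$, viewed in a fixed local trivialization of $TY$ near $q$, converge to a linear map $L\colon T_pX \to T_qY$. The pointwise convergence $\phi_i^*g_Y|_p \to g_X|_p$ forces $L$ to be a linear isometry. Each $\phi_i$ is an isometric embedding from $(U,\phi_i^*g_Y)$ into $(Y,g_Y)$, so it maps $\phi_i^*g_Y$-geodesics to $g_Y$-geodesics, giving the intertwining identity
\[
\phi_i \circ \exp^{\phi_i^*g_Y}_p (v) \;=\; \exp^{g_Y}_{q_i}\!\bigl( d\phi_i|_p(v)\bigr)
\]
on a small ball $B \subset T_pX$. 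Smooth convergence of metrics implies smooth convergence of the geodesic flows and hence of the exponential maps: $\exp^{\phi_i^*g_Y}_p \to \exp^{g_X}_p$ smoothly on $B$, and $\exp^{g_Y}_{q_i} \to \exp^{g_Y}_q$ smoothly near the origin. Since the right-hand side of the identity therefore converges smoothly in $v$ to $\exp^{g_Y}_q \circ L$, so does the left-hand side, and we conclude that on a neighborhood of $p$,
\[
\phi_i \longrightarrow \phi_\infty \;=\; \exp^{g_Y}_q \circ L \circ (\exp^{g_X}_p)^{-1}
\]
in the $C^\infty$ topology, with the limit manifestly a smooth isometric embedding.

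To propagate smooth convergence to the rest of $U$, I would apply the same argument at each $p' \in U$. There the $C^0$ limit $\phi_\infty(p')$ is already fixed, and uniqueness of the exp-map intertwining forces any subsequential limit of $d\phi_i|_{p'}$ to be the unique linear isometry compatible with the known $C^0$ limit $\phi_\infty$; hence no further subsequence is needed, and smooth convergence holds on a neighborhood of $p'$. Covering any compact subset of $U$ by finitely many such neighborhoods gives global smooth convergence. The orbifold setting introduces no real complication: the entire argument runs in local orbifold charts, with equivariance under local group actions preserved automatically since each $\phi_i$ is already equivariant.

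The main obstacle is the orchestration in the second paragraph: one wants smoothness of $\phi_\infty$ and $C^\infty$ convergence to emerge together, rather than upgrading $C^0$ convergence to $C^k$ for each $k$ via a separate bootstrap. Using the exp-map intertwining accomplishes this cleanly, and the auxiliary fact needed, smooth convergence of exponential maps under smooth convergence of Riemannian metrics, reduces in local charts to smooth dependence of the geodesic ODE on its metric coefficients.
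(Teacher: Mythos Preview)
Your argument is correct and takes a genuinely different route from the paper. The paper lifts each $\phi_i$ to the $k$-th iterated tangent bundle $T^kU \to T^kY$ equipped with iterated Sasaki metrics, invokes a result from \cite{urms} to say that $C^k$ convergence of metrics gives uniform bi-Lipschitz bounds on these lifts, and then applies Arzel\`a--Ascoli at the level of $T^kU$; since the components of the lifted map encode the $k$-th partial derivatives of $\phi_i$ in any chart, this yields $C^k$ convergence directly, and a diagonal argument finishes.

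Your approach instead exploits the tautological observation that each $\phi_i$ is an isometric embedding of $(U,\phi_i^*g_Y)$ into $(Y,g_Y)$ and hence intertwines exponential maps, reducing smooth convergence of the $\phi_i$ to smooth dependence of the geodesic ODE on its coefficients and initial data. This is more elementary and self-contained (no appeal to the iterated-Sasaki machinery), at the cost of the propagation step, where you must argue that the $C^0$ limit $\phi_\infty$ pins down the linear isometry at each $p'$ uniquely and thereby upgrades subsequential to full convergence of $d\phi_i|_{p'}$. That step is fine: differentiating $\exp_{q'}\circ L'\circ(\exp_{p'})^{-1}=\phi_\infty$ at $p'$ determines $L'$. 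The paper's method buys uniformity in $k$ for free and avoids the point-by-point propagation, while yours avoids importing external lemmas and makes the role of the limiting isometric embedding more transparent.
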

 
  There are probably a number of proofs of this fact.  Here is one phrased in terms of the machinery in Abert-Biringer \cite{urms}.
 
 \begin{proof}
  As described in the appendix to \cite{urms}, fix $k$ and consider the $k^{th}$ iterated tangent bundles $T^kX,T^kY$  equipped with their iterated Sasaki metrics. Then each $\phi_i$ induces a map $\phi_i^k : T^k U \longrightarrow T^kY$. Since $\phi_i^* g_Y \to g_X$ smoothly, so in particular in the $C^k$-topology, Corollary A.7 in \cite{urms} implies that $(\phi_i^k)^* g_Y^k \to g_X^k$ in the $C^0$-topology. In other words, the maps $\phi_i^k$ are uniformly bi-Lipschitz. As $\phi_i(p)$ is precompact in $Y$, if $p^k$ is the element of the zero section of $T^kX$ that lies over $p$, then $\phi_i^k(p^k)$ is precompact in $T^kY$. Arzela-Ascoli then implies that after passing to a subsequence,  we may assume that $(\phi_i^k)$ converges uniformly on compact subsets of $T^kX.$   In any fixed coordinate chart, all $k^{th}$  partial derivatives  of $\phi^i$ appear as components of $\phi_i^k$, so it follows that $\phi_i$ converges in the $C^k$ topology. A diagonal argument produces a smoothly convergent subsequence, and the limit is clearly an isometry.
 \end{proof}

\begin{remark}
	 In the definition of the smooth topology, it is equivalent to require that $d\psi_i^{-1}(v_i) \to v_\infty\in TX$ instead of $d\psi_i^{-1}(v_i) = v_\infty\in TX$.  The reason is that if $d\psi_i^{-1}(v_i) \to v_\infty$, there are diffeomorphisms $\phi_i : X_\infty \longrightarrow X_\infty$ that are the identity outside of a small neighborhood of $p_\infty$, that $C^\infty$-converge to the identity on $X_\infty$, and where $d\phi_i(v_\infty) = d\psi_i^{-1}(v_i)$. (Such $\phi_i$ can be constructed explicitly in a local chart.) Then $\psi_i \circ \phi_i$ are maps as required in the definition of smooth convergence. \label{rem: points can converge}
\end{remark}

 Our first result in this section is that when the left side of \eqref{eq: bijection} is endowed with the  Chabauty topology and the right side the smooth topology,  the map in \eqref{eq: bijection} is a homeomorphism:

\begin {proposition}[Chabauty vs smooth convergence]
	\label{prop: conv thm} Suppose that $(X_i,v_i),$ $i=1,\ldots,\infty$ are vectored, oriented hyperbolic $2$-orbifolds, $\Gamma_i := \Gamma(X_i,v_i) $, and $p_i$ is the basepoint of $v_i$.  Then $(X_i,v_i)\to (X_\infty,v_\infty)$  in the smooth topology if and only if $\Gamma_i \to \Gamma_\infty$  in the Chabauty topology.

\end {proposition}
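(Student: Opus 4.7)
The plan is to prove the two directions separately, following the template familiar from the surface case (as in \cite[I.4.1]{CEG}), with care taken at cone points for the orbifold setting. Throughout, I take $\tilde p_\infty \in \HH^2$ to be the common basepoint of $v_{base}$.

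\textbf{Smooth $\Rightarrow$ Chabauty.} Given almost isometric maps $\psi_i : B_{X_\infty}(p_\infty,R_i) \to X_i$ from smooth convergence, I would first lift them to maps $\tilde\psi_i : B_{\HH^2}(\tilde p_\infty,R_i) \to \HH^2$ fixing $\tilde p_\infty$ and $v_{base}$. Such a lift exists and is unique because each $\psi_i$ is an orientation-preserving local diffeomorphism matching $v_\infty$ to $v_i$, and $\HH^2$ is the universal cover of both orbifolds. Since $\psi_i^* g_i \to g_\infty$ smoothly and both quotient metrics are hyperbolic, $\tilde\psi_i^* g_{\HH^2} \to g_{\HH^2}$ on compact subsets of $\HH^2$; by Lemma~\ref{lem: convergingmaps} applied to these lifts, together with the fixing of $v_{base}$, we get $\tilde\psi_i \to \operatorname{id}_{\HH^2}$ smoothly. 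Now for each $\gamma \in \Gamma_\infty$ and large $i$, the point $\tilde\psi_i(\gamma\tilde p_\infty)$ lies in $\pi_i^{-1}(p_i) = \Gamma_i\cdot\tilde p_\infty$ (because $\pi_i\circ\tilde\psi_i = \psi_i\circ\pi_\infty$), so there is a unique $\gamma_i\in\Gamma_i$ sending $(\tilde p_\infty,v_{base})$ to $(\tilde\psi_i(\gamma\tilde p_\infty),\, d\tilde\psi_i(d\gamma(v_{base})))$. Since $\tilde\psi_i\to\operatorname{id}$ smoothly, $\gamma_i\to\gamma$ in $G$, verifying Chabauty condition (1). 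For condition (2), if $\gamma_{i_k}\in\Gamma_{i_k}$ has a limit $\gamma\in G$, then $\gamma_{i_k}(\tilde p_\infty)$ lies in $\tilde\psi_{i_k}(B(\tilde p_\infty,R_{i_k}))$ for large $k$, and the preimage under $\tilde\psi_{i_k}$ is a point of $\Gamma_\infty\cdot\tilde p_\infty$ converging to $\gamma(\tilde p_\infty)$ with matching derivative; hence $\gamma$ is a limit of a sequence in $\Gamma_\infty$, and closedness of $\Gamma_\infty$ gives $\gamma\in\Gamma_\infty$.

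\textbf{Chabauty $\Rightarrow$ Smooth.} This is the harder direction; I would have to manufacture the maps $\psi_i$ out of Chabauty closeness. Fix $R>0$ and let $F_R=\{\gamma\in\Gamma_\infty: d(\gamma\tilde p_\infty,\tilde p_\infty)\le 3R\}$, a finite set by discreteness. Chabauty convergence gives, for $i$ large, an injection $F_R\hookrightarrow\Gamma_i$, $\gamma\mapsto\gamma_i$, with $\gamma_i\to\gamma$; condition (2) guarantees this assignment exhausts the elements of $\Gamma_i$ that translate $\tilde p_\infty$ less than $2R$. The goal is to build a smooth embedding $\tilde\psi_i : B(\tilde p_\infty,R)\to\HH^2$ that is close to the identity in $C^\infty$ and $F_R$-equivariant in the sense that $\tilde\psi_i(\gamma x)=\gamma_i\tilde\psi_i(x)$ whenever $\gamma,x,\gamma x$ make this equation meaningful. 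Such an equivariant perturbation can be constructed by a standard partition-of-unity averaging argument: choose a $\Gamma_\infty$-invariant locally finite open cover of $\HH^2$ by small balls on which one has explicit local approximations to the identity that intertwine $\gamma$ with $\gamma_i$, then average them equivariantly using a $\Gamma_\infty$-invariant partition of unity pulled back from $X_\infty$. The map $\tilde\psi_i$ descends to $\psi_i : B_{X_\infty}(p_\infty,R')\to X_i$ on a slightly smaller ball, and $\psi_i^*g_i\to g_\infty$ smoothly because $\tilde\psi_i\to\operatorname{id}$ does. A final application of Remark~\ref{rem: points can converge} adjusts $\psi_i$ by a small diffeomorphism near $p_\infty$ so that $d\psi_i^{-1}(v_i)=v_\infty$ exactly. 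Letting $R\to\infty$ and diagonalizing produces the required sequence.

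\textbf{Main obstacle.} The main technical subtlety is the equivariant construction of $\tilde\psi_i$ near cone points, which is what distinguishes the orbifold case from the manifold case treated in \cite{CEG}. If $\gamma\in F_R$ is an elliptic rotation of order $n$ around $\tilde q\in\HH^2$, one needs to know that $\gamma_i$ is also an order-$n$ rotation around some $\tilde q_i$ close to $\tilde q$. This follows from Chabauty convergence and the structure of $\PSL_2(\RR)$: rotation angle and fixed point vary continuously on the elliptic locus, and the order-$n$ elements form a closed subset. Once this matching is in hand, one builds $\tilde\psi_i$ near $\tilde q$ out of the explicit local identification $\HH^2/\langle\gamma\rangle\cong\HH^2/\langle\gamma_i\rangle$ given by conjugating $\gamma$ to $\gamma_i$; gluing these local equivariant pieces with the partition of unity across fundamental-domain boundaries is where the bulk of the bookkeeping lives, but no new ideas beyond those in \cite{CEG} are required.
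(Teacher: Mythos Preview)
Your Smooth $\Rightarrow$ Chabauty direction is essentially the same as the paper's: lift the $\psi_i$ to $\tilde\psi_i$ fixing $v_{base}$, show $\tilde\psi_i\to\id$, and deduce $\Gamma_i\to\Gamma_\infty$. The paper packages the last step via Lemma~\ref{lem: same limit} (checking the two Chabauty conditions at once using the $(\Gamma_\infty,\Gamma_i)$-equivariance of $\tilde\psi_i$), but this is the same argument you give.

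For Chabauty $\Rightarrow$ Smooth, however, the paper takes a genuinely different route. Rather than building the almost-isometric maps $\psi_i$ by an equivariant averaging construction, the paper shows that the bijection $(X,v)\mapsto\Gamma(X,v)$ is \emph{proper}: given a compact $\mathcal K\subset\Sub_d(G)$, a short argument shows there is a neighborhood of $1\in G$ meeting each $\Gamma\in\mathcal K$ trivially, whence a uniform lower bound on $\mathrm{vol}\,B_X(p,1)$ for the corresponding $(X,v)$; a precompactness theorem for Riemannian orbifolds (Theorem~3.5 of \cite{lu}) then gives compactness of the preimage. Since a proper continuous bijection into a locally compact metric space is a homeomorphism, the reverse implication follows for free.

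Your direct construction is the classical approach and can be made to work, but it is considerably more laborious in the orbifold setting, exactly for the reason you flag: the equivariant smoothing near elliptic fixed points is genuine bookkeeping, and ``partition-of-unity averaging'' glosses over the fact that the local pieces you are averaging must themselves already be $\langle\gamma\rangle$-equivariant (not just close to equivariant) for the average to descend. The paper's properness argument sidesteps all of this entirely, at the cost of importing an external orbifold compactness theorem. Your approach has the virtue of being more self-contained and parallel to \cite{CEG}, but the paper's is much shorter.
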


The reader may be more familiar with the equivalence between the Chabauty topology on  the space of discrete, torsion-free subgroups of $\mathrm{Isom}(\HH^n)$ and the \emph{Gromov-Hausdorff topology} on the space of framed hyperbolic $n$-manifolds, see e.g.\ \cite{CEG}, or alternatively \cite[\S 3]{7sam} for a statement in the context of general symmetric spaces of noncompact type. For hyperbolic {manifolds}, the Gromov-Hausdorff topology is the same as the (framed) smooth topology---this is part of the argument in \cite[\S 3]{7sam}, but see also \cite[Ch E]{Benedettilectures}.    It follows from the proof below that these two topologies also agree on orientable hyperbolic $2$-orbifolds, or more generally hyperbolic $n$-manifolds with isolated singularities, but as we do not need the Gromov-Hausdorff topology in this paper, we do not discuss it further.  %However, we  would not be surprised if the Gromov-Hausdorff topology and the smooth topology coincided in general for framed orbifolds modeled on a fixed symmetric space of noncompact type. it's

Before starting the proof of Proposition \ref{prop: conv thm}, we record the following lemma, which gives a useful criterion for when two sequences of subgroups of $G$ converge to the same limit in the Chabauty topology.  It  will be applied in the proof of Proposition \ref{prop: conv thm} with one of the sequences being constant, turning it into a convergence criterion, but we will find the more general statement useful later.

\begin{lemma}[Sequences with the same limit]\label{lem: same limit}  Suppose we have two sequences   $\Gamma_i,\Delta_i$ of closed subgroups of $G$, a nested sequence of  compact sets $C_1 \subset C_2 \subset \cdots $ of $\HH^2$ with  $\cup_i C_i = \HH^2$, and a sequence of  embeddings $$f_i : C_i \longrightarrow \HH^2$$  such that $f_i \to id$  uniformly on compact subsets of $\HH^2$,  and such that the maps $f_i$ are \emph{$(\Gamma_i,\Delta_i)$-equivariant}, in the sense that if $B \subset C_i$ is an open ball, we have that
\begin{itemize}
	\item if $\gamma_i \in \Gamma_i$  satisfies $\gamma_i(B) \subset C_i$, there is some $\delta_i \in \Delta_i$ with $\delta_i \circ f_i = f_i \circ \gamma_i $ on $B$, 
\item if $\delta_i \in \Delta_i$  satisfies $\delta_i(f_i(B)) \subset f_i(C_i)$, there is some $\gamma_i \in \Gamma_i$ with $\delta_i \circ f_i = f_i \circ \gamma_i $ on $B$.
\end{itemize}
Then if $\Gamma_i \to \Gamma $ in the Chabauty topology, we have $\Delta_i \to \Gamma$ as well.
\end{lemma}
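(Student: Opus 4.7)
The plan is to verify the two defining conditions for Chabauty convergence $\Delta_i \to \Gamma$ directly, by transferring elements of $\Gamma_i$ back and forth to elements of $\Delta_i$ via the equivariance of the maps $f_i$. As a preliminary observation, I would first show that any compact set $K \subset \HH^2$ is eventually contained in $f_i(C_i)$. Since $C_i \nearrow \HH^2$ we have $K \subset \operatorname{int}(C_i)$ for large $i$, and then invariance of domain applied to the topological embedding $f_i$, combined with the uniform estimate $f_i \to \id$ on compact sets, shows that $f_i(\operatorname{int}(C_i))$ swallows $K$ for large $i$.

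For the first Chabauty condition, I would fix $\gamma \in \Gamma$, and use $\Gamma_i \to \Gamma$ to produce $\gamma_i \in \Gamma_i$ with $\gamma_i \to \gamma$ in $G$. Choose a small open ball $B \subset \HH^2$ with compact closure. For large $i$, both $\bar B$ and $\gamma_i(\bar B)$ lie in $C_i$: the latter because $(\gamma_i)$ is convergent in $G$, and $G$ acts properly on $\HH^2$, so the sets $\gamma_i(\bar B)$ lie in a single compact piece eventually covered by $C_i$. The first equivariance bullet then provides $\delta_i \in \Delta_i$ with $\delta_i \circ f_i = f_i \circ \gamma_i$ on $B$. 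Picking two distinct points $p_1, p_2 \in B$, one has $f_i(p_k) \to p_k$ and $f_i(\gamma_i(p_k)) \to \gamma(p_k)$, so $\delta_i$ sends the pair $(f_i(p_1), f_i(p_2))$ to a pair converging to $(\gamma(p_1), \gamma(p_2))$. Since an orientation-preserving isometry of $\HH^2$ is determined by its action on two distinct points, this is enough to conclude $\delta_i \to \gamma$ in $G$.

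For the second Chabauty condition, I would take any subsequential limit $\delta_{i_k} \to \delta$ of a sequence $\delta_{i_k} \in \Delta_{i_k}$ and show that $\delta \in \Gamma$. Choose again a small open ball $B$ with compact closure. Then $\delta_{i_k}(\bar B)$ is contained in a fixed compact neighborhood $K'$ of $\delta(\bar B)$ for all large $k$, and by the preliminary observation $K' \subset f_{i_k}(C_{i_k})$ eventually; since $f_{i_k}(\bar B)$ is close to $\bar B$, this likewise gives $\delta_{i_k}(f_{i_k}(B)) \subset f_{i_k}(C_{i_k})$. The second equivariance bullet then produces $\gamma_{i_k} \in \Gamma_{i_k}$ with $\delta_{i_k} \circ f_{i_k} = f_{i_k} \circ \gamma_{i_k}$ on $B$, and the two-point argument of the previous paragraph, applied in reverse, gives $\gamma_{i_k} \to \delta$ in $G$. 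Since $\Gamma_{i_k} \to \Gamma$ in the Chabauty topology, we conclude $\delta \in \Gamma$.

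The main technical nuisance will be the preliminary observation that $f_i(C_i)$ eventually exhausts any compact set, because the hypothesis only gives uniform-on-compact convergence of the $f_i$ without any smooth or bi-Lipschitz control. Once this is in place, the rest of the argument is a direct unpacking of definitions: the isometry property of elements of $G$ is what lets one upgrade convergence on a small ball to convergence in the Chabauty topology, and the equivariance of $f_i$ is exactly the bridge needed to swap $\Gamma_i$ and $\Delta_i$.
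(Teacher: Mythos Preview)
Your proposal is correct and follows essentially the same approach as the paper's proof: both verify the two Chabauty conditions directly by using the equivariance of the $f_i$ to transfer elements between $\Gamma_i$ and $\Delta_i$, and both note that the images $f_i(C_i)$ exhaust $\HH^2$. The only cosmetic difference is that you pin down convergence $\delta_i \to \gamma$ via the action on two distinct points, whereas the paper simply observes that an isometry converging on an open ball converges in $G$; these are equivalent formulations of the same rigidity fact.
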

\begin{proof}
Let $\gamma \in \Gamma$. Pick a sequence $\gamma_i \in \Gamma_i $ with $\gamma_i \to \gamma$. If $B \subset \HH^2$ is an open ball, we know that $B, \gamma_i(B) \in C_i$  for all large $i$. By hypothesis, there is some $\delta_i \in \Delta_i$ with $\delta_i \circ f_i = f_i \circ \gamma_i $ on $B$. But $f_i\to id$ and $\gamma_i \to \gamma$, implying that $\delta_i \to \gamma$ on $B$. Since $B$ is an open ball and $\delta_i$ is an isometry of $\HH^2$, we have $\delta_i \to \gamma$ in $G$.

Next, suppose $\delta_i \in \Delta_i$  for all $i$, and that  some subsequence $\delta_{i_j} \to \delta \in G$.   Pick some open ball $B \subset \HH^2$. Since the sets $C_i$ exhaust $\HH^2$ and $f_i \to id$,  the images $f(C_i)$ also exhaust $\HH^2$. So, as $(\delta_{i_j})$ converges,  for large $j$ we can assume that
$f_{i_j}(B), \delta_{i_j}(f_{i_j}(B)) \subset f(C_{i_j}).$
 By hypothesis, there are $\gamma_{i_j} \in \Gamma_{i_j}$ with  $\delta_{i_j} \circ f_{i_j} = f_{i_j} \circ \gamma_{i_j} $ on $B$.  Since $\delta_{i_j}\to \delta$ and $f_{i_j} \to id$, we have $\gamma_{i_j} \to \delta$ on $B$, and hence globally. So, $\delta$ is an  accumulation point of elements of the $\Gamma_i$, and hence $\delta\in \Gamma$  as desired.\end{proof}

 We are now ready to prove Proposition \ref{prop: conv thm}.

\begin {proof}[Proof of  Proposition \ref{prop: conv thm}]
 We will show that the  bijection $(X,v) \longmapsto \Gamma(X,v)$ is proper and continuous.  Since the space $\Sub_d(G)$ of discrete subgroups is a metric space, it will follow that the bijection is a homeomorphism. 

 We first show the map is proper.  Let $\mathcal K \subset \Sub_d(G)$ be compact.  

\begin{claim}Then there is some open neighborhood $U \ni 1$ in $G$  such that $\Gamma \cap U = \{1\}$  for all $\Gamma \in \mathcal K$. 	
\end{claim}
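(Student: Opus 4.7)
The plan is to argue by contradiction. If no such $U$ exists, then since $G$ is first countable we may shrink a neighborhood basis at $1$ to extract sequences $\Gamma_n \in \mathcal K$ and $\gamma_n \in \Gamma_n\setminus\{1\}$ with $\gamma_n \to 1$ in $G$. Since $\mathcal K$ is compact and metrizable, I can pass to a subsequence along which $\Gamma_n$ Chabauty-converges to some $\Gamma \in \mathcal K \subset \Sub_d(G)$, which is by hypothesis discrete.

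The next step is the standard trick for promoting ``an element close to $1$'' into ``an element in a fixed annulus around $1$''. Using discreteness of $\Gamma$, I fix a relatively compact open neighborhood $V \ni 1$ with $\Gamma \cap \overline V = \{1\}$, and then, by continuity of multiplication at $(1,1)$, a smaller open neighborhood $W \ni 1$ with $W \cdot W \subset V$. For $n$ large enough that $\gamma_n \in W$, let $k_n \geq 2$ be the smallest integer with $\gamma_n^{k_n} \notin W$; since $\gamma_n^{k_n - 1} \in W$, the factorization $\gamma_n^{k_n} = \gamma_n^{k_n - 1}\cdot \gamma_n$ shows that $\gamma_n^{k_n} \in W\cdot W \subset V$. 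Thus $\gamma_n^{k_n} \in V \setminus W$.

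The main technical point is to check that $k_n$ is finite, i.e.\ that powers of any sufficiently small nontrivial element of $G = \PSL_2(\RR)$ must eventually leave $W$. This follows from the classification of isometries of $\HH^2$: if $\gamma_n$ is hyperbolic or parabolic, its translation distance at a fixed basepoint grows with $k$ without bound, while if $\gamma_n$ is elliptic of rotation angle $\theta_n \in (0,2\pi)$ about a point, then for small enough $W$ the first $k$ with $k\theta_n \pmod{2\pi}$ outside $(-\pi/2,\pi/2)$ gives a rotation by an angle bounded below that is forced outside $W$. Having produced $\gamma_n^{k_n} \in V \setminus W$, compactness of $\overline V$ gives a subsequential limit $\gamma_n^{k_n} \to g \in \overline V \setminus W$; in particular $g \neq 1$. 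By condition (2) in the definition of Chabauty convergence, this accumulation point $g$ of elements of $\Gamma_n$ lies in $\Gamma$, contradicting $\Gamma \cap \overline V = \{1\}$ and completing the proof.
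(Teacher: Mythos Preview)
Your proof is correct and follows the same strategy as the paper: argue by contradiction, extract $\Gamma_n \to \Gamma \in \mathcal K$ with nontrivial $\gamma_n \to 1$, then take powers of $\gamma_n$ to land in a fixed ``annulus'' around the identity, yielding a nontrivial accumulation point in $\Gamma$. The only difference is the mechanism for trapping the power: the paper works in exponential coordinates, writing $\gamma_n = \exp(X_n)$ and choosing $m_n$ so that $\|m_n X_n\|$ lies in a fixed shell $[\epsilon/4,\epsilon/2]$ (which makes the existence of such a power automatic and avoids any case analysis), whereas you use the $W\cdot W \subset V$ escape trick and then invoke the classification of isometries of $\HH^2$ to check that powers of a nontrivial element must eventually leave $W$.
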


\begin{proof}
Indeed, if there were not,  then there would be elements and groups $1 \neq \gamma_n \in \Gamma_n \in \mathcal K$ with $\gamma_n \to 1$ and $\Gamma_n \to \Gamma \in \mathcal K$. Since $\Gamma $ is  discrete, in exponential coordinates around $1\in G$  there is some $\epsilon >0$  such that $\exp^{-1}( \Gamma) \cap B(0,\epsilon)= \{0\}$. Since $\exp^{-1}(\gamma_n) \to 0$, there are integers $m_n$ such that $||m_n \exp^{-1}(\gamma_n)|| \in [\epsilon/4,\epsilon/2]$ for all $n$. But $m_n \exp^{-1}(\gamma_n)= \exp^{-1}(\gamma_n^{m_n}) \in \exp^{-1}(\Gamma_n)$, and a subsequence of $\exp^{-1}(\gamma_n^{m_n})$ limits to an element of $\exp^{-1}(\Gamma)$, contradicting that $\exp^{-1}( \Gamma) \cap B(0,\epsilon)= \{0\}$.
\end {proof}

Let $ \pbase \in \HH^2$  be the basepoint of the fixed $v_{base}\in T\HH^2$ chosen at the beginning of the section. The set of isometries $\gamma \in \PSL_2(\RR)$  such that $\gamma(B(\pbase,1)) \cap B(\pbase,1) \neq \emptyset$ is compact, so it follows from the claim that there is some $N < \infty$  such that if $\Gamma \in  \mathcal K$, then $$\# \big \{ \gamma \in \Gamma  \ | \ \gamma(B(\pbase,1)) \cap B(\pbase,1) \neq \emptyset \big \} \leq N.$$
 As a result, there is some $\delta>0$  such that if $\Gamma \in \mathcal K$ and $\Gamma = \Gamma(X,v)$, then $vol(B_X(p,1)) \geq \delta$, where $p \in X$  is the base point of $v$. So\footnote{Since each $X$  is always an  orientable hyperbolic $2$-orbifold, its singularities are isolated. Condition (i) of \cite[Theorem 3.5]{lu} says that the $p^{th}$-derivatives of the metric tensor should only depend on $p$, and not on the point or the orbifold. This is obvious for hyperbolic orbifolds, since these derivatives can just be calculated at any fixed point in $\HH^2$.}, by Theorem 3.5 of \cite{lu},  the set of all $(X,v)$ with $\Gamma(X,v) \in \mathcal K$ is compact in the smooth topology. Hence, the map $(X,v) \longmapsto \Gamma(X,v)$  is proper.

 We now show that $(X,v) \longmapsto \Gamma(X,v)$  is continuous. So, suppose that $(X_i,v_i)\to (X_\infty,v_\infty)$  in the smooth topology and set $\Gamma_i := \Gamma(X_i,v_i)$.  Let $\psi_i : B_{X_\infty}(p_\infty,R_i) \longrightarrow X_i,$  be maps  as in \eqref{eq: smooth convergence maps}  witnessing smooth convergence, choose universal covers $\pi_i : \HH^2 \longrightarrow X_i$ with $\pi_i(\pbase) = p_i$ and $d\pi(v_{base})=v_i$, and lift the $\psi_i$ to maps
$$\tilde \psi_i : B_{\HH^2}(\pbase,R_i) \longrightarrow \HH^2, \ \ \pi_i \circ \tilde \psi_i = \psi_i \circ \pi_\infty, \ \ d\tilde\psi_i(v_{base})=v_{base}.$$
As $i\to\infty$, the maps $\tilde \psi_i$ converge in the $C^\infty$ topology to an isometry of $\HH^2$. And since the limit must fix $v_{base}$,  we have $\tilde \psi_i \to id$. The embeddings $\tilde \psi_i$ are obviously $(\Gamma_\infty, \Gamma_i)$-equivariant in the sense of Lemma \ref{lem: same limit}, so $\Gamma_i \to \Gamma_\infty$  as desired.
\end{proof}

 We will also need the following criterion for smooth convergence in Theorem \ref{thm: contgraft}.

\begin{proposition}
[Convergence via quasi-conformal maps]\label{prop: conv via quasiconformal} Suppose that $(X_i,v_i),$ $i=1,\ldots,\infty$ are vectored, oriented hyperbolic $2$-orbifolds, and $p_i$ is the base point of $v_i$.  Suppose that there are $R_i,R_i' \to \infty$, $\delta>0$, and $K_i\to 1$, and for large $i$,  $K_i$-quasiconformal embeddings
$$\psi_i : B_{X_\infty}(p_\infty,R_i) \longrightarrow X_i,$$
such that $B_{X_i}(p_i,R_i') \subset \psi_i (B_{X_\infty}(p_\infty,R_i))$,  the restriction of $\psi_i $ to $B_{X_\infty}(p_\infty,\delta)$ is conformal, and $d\psi_i^{-1}(v_i)\to v_\infty \in TX_\infty$. Then $(X_i,v_i)\to (X_\infty,v_\infty)$  smoothly.
\end{proposition}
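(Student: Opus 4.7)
The plan is to convert the statement into Chabauty convergence via Proposition~\ref{prop: conv thm} and Lemma~\ref{lem: same limit}. Set $\Gamma_i := \Gamma(X_i,v_i)$ and fix universal covers $\pi_i : \HH^2 \to X_i$ with $\pi_i(\pbase) = p_i$ and $d\pi_i(v_{\mathrm{base}}) = v_i$. Since $B_{X_i}(p_i,R_i') \subset \psi_i(B_{X_\infty}(p_\infty,R_i))$, the point $q_i := \psi_i^{-1}(p_i)$ is well-defined, and the hypothesis $d\psi_i^{-1}(v_i) \to v_\infty$ forces $q_i \to p_\infty$. Choose lifts $\tilde q_i \in \HH^2$ of $q_i$ with $\tilde q_i \to \pbase$, and let $\tilde\psi_i$ be the lift of $\psi_i$ to the component of $\pi_\infty^{-1}(B_{X_\infty}(p_\infty,R_i))$ containing $\tilde q_i$ that satisfies $\tilde\psi_i(\tilde q_i) = \pbase$. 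As $R_i \to \infty$ these domains exhaust $\HH^2$. The $\tilde\psi_i$ are $K_i$-quasiconformal embeddings, are genuinely conformal on the fixed ball $B_{\HH^2}(\pbase,\delta/2)$ for large $i$ (by the conformality hypothesis on $\psi_i$ near $p_\infty$ and since $\tilde q_i \to \pbase$), and are $(\Gamma_\infty,\Gamma_i)$-equivariant in the sense of Lemma~\ref{lem: same limit}, since distinct lifts of the same orbifold map differ by deck transformations.

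\textbf{Main step: $\tilde\psi_i \to \mathrm{id}$.} By standard compactness of normal families of $K$-quasiconformal maps (e.g.\ Lehto--Virtanen), the normalization $\tilde\psi_i(\tilde q_i) = \pbase$ with $\tilde q_i \to \pbase$ and $K_i \to 1$ ensure that every subsequence of $(\tilde\psi_i)$ contains a further subsequence converging locally uniformly on $\HH^2$ to a $1$-quasiconformal, hence conformal, map $\Phi : \HH^2 \to \HH^2$. The covering hypothesis makes the same true for the $\tilde\psi_i^{-1}$, and the limits are mutually inverse, so $\Phi$ is a biholomorphism of $\HH^2$, i.e.\ an orientation-preserving isometry fixing $\pbase$---a rotation about $\pbase$. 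To identify the rotation, use that $\tilde\psi_i$ is holomorphic on $B_{\HH^2}(\pbase,\delta/2)$ for large $i$: Cauchy estimates promote uniform convergence to $C^\infty$ convergence there, so $d\tilde\psi_i|_{\tilde q_i} \to d\Phi|_{\pbase}$. Lifting the hypothesis $d\psi_i^{-1}(v_i) \to v_\infty$ produces vectors $\tilde w_i \to v_{\mathrm{base}}$ based at $\tilde q_i$ with $d\tilde\psi_i|_{\tilde q_i}(\tilde w_i) = v_{\mathrm{base}}$; passing to the limit gives $d\Phi|_{\pbase}(v_{\mathrm{base}}) = v_{\mathrm{base}}$, so $\Phi = \mathrm{id}$. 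As every subsequence has a further subsequence converging to the identity, so does the full sequence.

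\textbf{Conclusion.} Lemma~\ref{lem: same limit}, applied with the constant sequence $\Gamma_\infty \to \Gamma_\infty$, the sequence $\Delta_i := \Gamma_i$, and equivariant maps $\tilde\psi_i \to \mathrm{id}$, yields $\Gamma_i \to \Gamma_\infty$ in $\Sub(G)$. Proposition~\ref{prop: conv thm} then delivers the desired smooth convergence $(X_i,v_i) \to (X_\infty,v_\infty)$.

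\textbf{Main obstacle.} The delicate point is pinning down the quasiconformal limit as the identity. Global quasiconformality is too weak to yield pointwise convergence of derivatives, and without the hypothesis that $\psi_i$ is genuinely conformal on the fixed disc $B(p_\infty,\delta)$, the limit $\Phi$ could a priori be any rotation about $\pbase$, producing a different Chabauty limit. The conformality on a fixed neighborhood of the basepoint is precisely what Cauchy estimates need to upgrade $C^0$ convergence to $C^\infty$ convergence of derivatives at $\pbase$, and thereby to verify the derivative condition that rules out nontrivial rotations.
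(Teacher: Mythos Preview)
Your proof is correct and follows essentially the same route as the paper: lift the $\psi_i$ to quasiconformal embeddings of large balls in $\HH^2$, use compactness (the paper's Lemma~\ref{lem: compactquasi}) together with $K_i\to 1$ to extract a conformal limit, invoke conformality on the fixed $\delta$-ball plus the Cauchy integral formula to pin the limit as the identity, and then apply Lemma~\ref{lem: same limit}. The only cosmetic difference is that the paper lifts $\psi_i\circ\pi_\infty$ from the ball $B_{\HH^2}(\pbase,R_i)\subset\HH^2$, which is manifestly simply connected, whereas you phrase the domain as a component of $\pi_\infty^{-1}(B_{X_\infty}(p_\infty,R_i))$; the paper's choice sidesteps having to argue that this component is simply connected, but the content is the same.
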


Its proof relies on the following lemma.

\begin{lemma}[Compactness for quasi-conformal maps of large subsets of $D$]\label{lem: compactquasi}
 Suppose that $B_1 \subset B_2 \subset \cdots$ is a nested sequence of compact subsets of $\HH^2$, with $\cup_i B_i = \HH^2$, and we have a sequence $$f_i : B_i \longrightarrow \HH^2$$ of $K$-quasiconformal embeddings such that $(f_i(0))$ is precompact in $D$ and $f_i(B_i) \supset B_i'$, where $B_i'$ is another nested sequence of compact sets exhausting $\HH^2$. Then some subsequence of $(f_i)$  converges uniformly on compact sets to a $K$-quasiconformal homeomorphism $f : \HH^2 \longrightarrow \HH^2$. 
\end{lemma}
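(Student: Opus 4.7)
The plan is to apply the classical compactness (normal family) theorem for $K$-quasiconformal maps into a bounded domain, extract a subsequential limit by a diagonal argument, and then use the hypothesis that $f_i(B_i) \supset B_i'$ together with the precompactness of $(f_i(0))$ to upgrade the limit from a $K$-quasiconformal map to a $K$-quasiconformal homeomorphism of all of $\HH^2$. The only real subtlety is showing the limit is non-constant and surjective, which I would handle by simultaneously analyzing the inverse maps $f_i^{-1}$.

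First, identify $\HH^2$ with the unit disk $D \subset \CC$. Since each $f_i$ is a $K$-quasiconformal embedding $B_i \to D$, the image omits the exterior $\CC \setminus \overline{D}$, a fixed set of positive logarithmic capacity; the classical compactness theorem for $K$-quasiconformal maps (e.g.\ \cite{lehto}, or Ahlfors–Lehto–Virtanen) then gives uniform equicontinuity of $\{f_i\}$ on any compact $C \subset \HH^2$ once $C \subset B_i$, which holds for $i$ large since the $B_i$ exhaust $\HH^2$. A diagonal argument on an exhaustion of $\HH^2$ by compact sets extracts a subsequence (still called $f_i$) converging uniformly on compact subsets of $\HH^2$ to a continuous map $f : \HH^2 \to \overline{D}$. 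Since $(f_i(0))$ is precompact in the open disk $\HH^2 = D$, we have $f(0) \in \HH^2$.

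Second, apply the same procedure to the inverse maps $g_i := f_i^{-1} : f_i(B_i) \to B_i \subset D$. These are also $K$-quasiconformal embeddings into $D$, and on any fixed compact $C \subset \HH^2$ they are eventually defined since $C \subset B_i' \subset f_i(B_i)$ for large $i$. Moreover $g_i(f_i(0)) = 0$, so the base point sequence is trivially precompact. A further diagonal extraction yields $g_i \to g$ uniformly on compacta, with $g : \HH^2 \to \overline{D}$ continuous and $g(f(0)) = 0 \in \HH^2$. For any compact $C \subset \HH^2$, uniform convergence gives $g(C) \subset \overline{D}$ compact, and since $f_i \circ g_i = \mathrm{id}$ on $C$ for $i$ large, passing to the limit via the uniform convergence of $f_i$ on a compact neighborhood of $g(C)$ yields $f \circ g = \mathrm{id}$ on $C$. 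Symmetrically $g \circ f = \mathrm{id}$ on all of $\HH^2$. Hence $f$ is a bijection onto $\HH^2$, in particular non-constant, and both $f$ and $g$ take values in $\HH^2$ (their composites being the identity rules out any boundary values).

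Finally, a non-constant locally uniform limit of $K$-quasiconformal maps is itself $K$-quasiconformal; this is the standard stability statement for the distributional Beltrami equation under locally uniform convergence, see \cite[Ch.\ II]{lehto} or the references cited elsewhere in our paper. Combined with the bijectivity just established, $f$ is a $K$-quasiconformal homeomorphism $\HH^2 \to \HH^2$, completing the proof. The main technical point to verify carefully is the passage $f_i \circ g_i \to f \circ g$: this requires knowing that $g_i(C)$ is contained in a fixed compact neighborhood of $g(C)$ for $i$ large, which follows from the locally uniform convergence $g_i \to g$. Everything else is a straightforward application of well-known facts about quasiconformal normal families.
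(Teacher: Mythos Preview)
Your argument is correct and takes a genuinely different route from the paper's proof. The paper works directly with the intrinsic Poincar\'e metrics $d_{B_i}, d_{f_i(B_i)}, d_{B_i'}, d_{\HH^2}$: it invokes the two-sided distortion estimate of \cite[Theorem~4.4.1]{hubbard} to obtain inequalities of the form $\tfrac12\delta(d_{\HH^2}(x,y)) \le d_{\HH^2}(f_i(x),f_i(y)) \le 2\Delta(d_{\HH^2}(x,y))$ on compacta, using that $d_{B_i}\to d_{\HH^2}$ and $d_{B_i'}\to d_{\HH^2}$; the upper bound gives equicontinuity (hence Arzel\`a--Ascoli), while the lower bound plus the image-exhaustion hypothesis directly forces the limit to be a homeomorphism. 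Your approach instead applies a Montel-type normal-family theorem to both $(f_i)$ and $(f_i^{-1})$ and shows the two limits are mutual inverses. This is arguably more conceptual and avoids the Poincar\'e-metric bookkeeping, at the cost of relying on the constant/nonconstant dichotomy for limits of $K$-quasiconformal maps. One small point to clean up: as written, the step ``pass to the limit via uniform convergence of $f_i$ on a compact neighborhood of $g(C)$'' presupposes $g(C)\subset D$, which you only justify afterwards via $f\circ g=\mathrm{id}$; to avoid this circularity, first establish $g\circ f=\mathrm{id}$ (which only needs $f(0)\in D$, guaranteed by the precompactness hypothesis, together with the equicontinuity of the $g_i$ near $f(0)$ to rule out $f$ being constant), deduce that $f$ is open and hence $f(\HH^2)\subset\HH^2$, and then run the $f\circ g=\mathrm{id}$ argument.
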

\begin{proof}
	This is  similar to Corollary 4.4.3 in \cite{hubbard}, except that the $f_i$ are not homeomorphisms $\HH^2 \longrightarrow \HH^2$,  but are  embeddings defined on a sequence of balls that exhaust $\HH^2$, and where the images also exhaust $\HH^2$.  However, even in this setting, we can use the results of that section (\S 4.4) of \cite{hubbard} to prove the desired result. 

Let $d_{B_i}, d_{B_i'}, d_{f_i(B_i)}, d_{\HH^2}$ be the Poincar\'e metrics on the given domains in $\CC$.  Then
$$B_i' \subset f_i(B_i) \subset D \implies d_{B_i'} \geq d_{f_i(B_i)} \geq d_{\HH^2},$$
since the given metrics are also the Kobayashi metrics on their domains, see e.g.\ pg 287 of \cite{hubbard}.  Moreover, since the sets $B_i$ exhaust $\HH^2$, we have $d_{B_i} \to d_{\HH^2}$ uniformly on compact subsets of $D$. (One  way to see this is to work in the disk model, and to note that $d_{B_i}$ is sandwiched between $d_{\HH^2}$ and the Poincar\'e metric associated to a disk of Euclidean radius $r_i < 1$, where $r_i \to 1$ as $i\to \infty$. The latter Poincar\'e metric is the pullback of $d_{\HH^2}$ by the map $z \mapsto z/r_i$, so certainly converges to $d_{\HH^2}$.) Similarly, $d_{B_i'} \to d_{\HH^2}$ as well.

Applying Theorem 4.4.1 of \cite{hubbard} to $f_i$ and $f_i^{-1}$, there are universal homeomorphisms $\delta, \Delta : \RR_+ \longrightarrow \RR_+$  such that  for all $i$, we have $$\delta ( d_{B_i}(x,y)) \leq d_{f_i(B_i)}(f_i(x),f_i(y)) \leq \Delta ( d_{B_i}(x,y)), \ \ \forall x,y\in B_i.$$
  From this and the facts that  $d_{\HH^2} \leq d_{f_i(B_i)} \leq d_{B_i'}$ and $d_{B_i}\to d_{\HH^2}$, and $d_{B_i'}\to d_{\HH^2}$,  we have that for every compact subset $C \subset D$, the inequalities
\begin{equation}\label{eq: inequalities}
	\frac 12 \delta ( d_{D}(x,y)) \leq d_{D}(f_i(x),f_i(y)) \leq 2 \Delta ( d_{D}(x,y)) \ \  \forall x,y\in C,
\end{equation}
hold as long as $i$  is sufficiently large.  So, $(f_i)$ is uniformly equicontinuous on compact sets. As $(f_i(0))$ is precompact in $D$, Arzela-Ascoli then implies that  a subsequence of $(f_i)$ converges uniformly to a map $f : D \longrightarrow D$.  The lower bound in \eqref{eq: inequalities}, together with the assumption that the images of the $f_i$ exhaust $D$, implies that $f$ is a homeomorphism. \end{proof}

	\begin {proof} [Proof of Proposition \ref{prop: conv via quasiconformal}]
Fix  universal covering maps $$\pi_i : \HH^2 \longrightarrow X_i,$$
such that $d(\pi_i)(v_{base}) = v_i \in TX_i$ for all $i$, where $v_{base}$ is our usual fixed base vector in $\HH^2$, with basepoint $\pbase.$ Let $$B_i := B_{\HH^2}(\pbase,R_i) \subset \HH^2$$  be the ball with hyperbolic radius $R_i$ around $\pbase$.  Since $B_i$  is simply connected, there are lifts
$$\begin{tikzcd}
& & & \HH^2 \arrow{d}{\pi_i} \\ B_i \arrow{rr}{\pi_\infty} \arrow[rrru,dotted, "f_i", bend left=15] &  & B_{X_\infty}(p_\infty,R_i) \arrow{r}{\psi_i} & X_i	
\end{tikzcd}
$$
such that $f_i(\pbase) \to \pbase$.  Note that $f_i : B_i \hookrightarrow \HH^2$  is a $K_i$-quasiconformal embedding, and that
$$f_i(B_i) \supset B_i' := B_{\HH^2}(\pbase,R_i').$$
Lemma \ref{lem: compactquasi} implies that  after passing to a subsequence, the maps $f_i$ converge uniformly on compact subsets of $\HH^2$ to a homeomorphism $f : \HH^2 \longrightarrow \HH^2$. Since $K_i\to 1$, it follows that $f$ is a $1$-quasiconformal, i.e.\ a M\"obius transformation. (See the proof of Corollary 4.4.3 in \cite{hubbard}  for details.)  
Moreover, since $\psi_i$  is conformal on the $\delta$-ball around $p_\infty$, the lifts $f_i$ are conformal on a fixed ball around $\pbase$. Since the $f_i$ uniformly converge to $f$, the Cauchy integral formula implies $$df(v_{base}) = \lim_i df_i(v_{base}) = v_{base},$$ so $f=id$. If $\Gamma_i := \Gamma(X_i,v_i)$, $i=1,\ldots,\infty$ are the corresponding deck groups, the maps $f_i$ are  $(\Gamma_\infty,\Gamma_i)$-equivariant, so Lemma \ref{lem: same limit}  implies that $\Gamma_i \to \Gamma_\infty$.
\end{proof}

We end this section with two lemmas that help us adjust the almost isometric maps given by smooth convergence to have certain desired properties, i.e.\ sending a given geodesic to a geodesic, and being isometric in a small neighborhood of a basepoint.

\begin{lemma}[Sending geodesics to geodesic]\label{lem: geodesicspreserved}
Let $X_i$ be hyperbolic $2$-orbifolds, $i=1,2,\ldots,\infty$, let $c_\infty$ be a simple, closed geodesic in $X_\infty$, and let $U \supset c_\infty$ be any open neighborhood of $c_\infty$. Suppose $$\phi_i : U \longrightarrow X_i$$
are embeddings with $\phi_i^* g_i \to g_\infty$, where $g_i$  is the  Riemannian metric on $X_i$. Then there is a new sequence $$\psi_i : U \longrightarrow X_i$$  of embeddings  with $\psi_i^* g_i \to g_\infty$ such that $\phi_i = \psi_i$ except on a neighborhood of $c_\infty$ whose closure is contained in $U$, and where if $p : \bbS^1 \longrightarrow X_\infty$ is a constant speed parametrization of $c_\infty$, the composition $\psi_i \circ p$ is a  constant speed parametrization of a geodesic in $X_i$ for all large $i$.
\end{lemma}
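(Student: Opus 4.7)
The plan is to construct $\psi_i = \phi_i \circ h_i$, where $h_i : U \to U$ is a self-embedding that is the identity outside a small neighborhood of $c_\infty$ whose closure is contained in $U$, and which converges smoothly to the identity as $i \to \infty$. Since $\phi_i^* g_i \to g_\infty$ in $C^\infty$ and $h_i \to \mathrm{id}_U$ smoothly, we immediately get $\psi_i^* g_i = h_i^*(\phi_i^* g_i) \to g_\infty$. The content is to arrange that $\psi_i \circ p$ parametrizes a geodesic in $X_i$ at constant speed.

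First I would locate the target. Because $\phi_i^* g_i \to g_\infty$ smoothly on a relatively compact neighborhood $V$ of $c_\infty$, the metrics $\phi_i^* g_i$ on $V$ converge smoothly to a hyperbolic metric for which $c_\infty$ is a closed geodesic with hyperbolic holonomy. A standard implicit function / perturbation argument (or equivalently, the fact that the admissible geodesic representative in an admissible homotopy class depends smoothly on the metric once the holonomy is loxodromic) produces, for all large $i$, a unique simple closed admissible geodesic $\gamma_i \subset V$ of the metric $\phi_i^* g_i$, freely homotopic to $c_\infty$ through admissible loops, with $\gamma_i \to c_\infty$ smoothly. Equivalently $c_i := \phi_i(\gamma_i)$ is the geodesic representative in $X_i$ of the class of $\phi_i(c_\infty)$, and it lies inside $\phi_i(V)$ for large $i$.

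Next I would build $h_i$ using Fermi coordinates in the metric $\phi_i^* g_i$. Fix a small $\epsilon>0$ and let $F_\infty : \bbS^1_{\ell(c_\infty)} \times (-\epsilon,\epsilon) \to V$ be Fermi coordinates of $g_\infty$ about $c_\infty$, chosen so that $F_\infty(\ell(c_\infty)\,t, 0) = p(t)$. For large $i$, one has Fermi coordinates $F_i : \bbS^1_{L_i} \times (-\epsilon, \epsilon) \to V$ of the metric $\phi_i^* g_i$ about $\gamma_i$, where $L_i := \ell_{\phi_i^* g_i}(\gamma_i) = \ell_{g_i}(c_i)$; these can be chosen so that $F_i \to F_\infty$ in $C^\infty$ (e.g.\ by fixing a basepoint on $\gamma_i$ that converges smoothly to $p(0)$ and a transverse unit vector that converges smoothly to the corresponding one for $c_\infty$). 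Define $h_i$ on $F_\infty(\bbS^1_{\ell(c_\infty)} \times (-\epsilon/2,\epsilon/2))$ by
\[
  h_i\bigl( F_\infty(s,r) \bigr) \ = \ F_i\bigl( (L_i/\ell(c_\infty))\,s,\, r \bigr).
\]
Then $h_i$ sends $c_\infty$ to $\gamma_i$ so that $\phi_i \circ h_i \circ p$ parametrizes $c_i$ at constant $g_i$-speed $L_i$, and $h_i \to \mathrm{id}$ in $C^\infty$ on a neighborhood of $c_\infty$. Finally, use a radial cutoff in the $r$-coordinate: pick $0 < \delta_1 < \delta_2 < \epsilon/2$ and interpolate between $h_i$ on $|r| \le \delta_1$ and the identity on $|r| \ge \delta_2$ via a fixed smooth cutoff, using an ambient chart where $F_\infty$ and $F_i$ are both close to it; since both $F_\infty$ and $F_i$ are $C^\infty$-close and agree on the $s$-axis up to reparametrization, the interpolation still satisfies $h_i \to \mathrm{id}$ smoothly. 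Extend $h_i$ by the identity off $V$, obtaining an embedding of $U$ into itself which is a diffeomorphism for large $i$.

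Setting $\psi_i := \phi_i \circ h_i$ now gives all the required properties. The expected main obstacle is purely a packaging issue: the construction of Fermi coordinates $F_i$ that converge smoothly to $F_\infty$ requires the existence and smooth convergence of $\gamma_i$, which in turn uses that the holonomy of $\phi_i^* g_i$ along $c_\infty$ converges to a hyperbolic element (so the geodesic representative is unique and varies smoothly). In the degenerate case, where $c_\infty$ is a closed geodesic whose image is an arc between two order-two cone points, one works instead on the two-fold orientation cover of a neighborhood of the arc, carries out the above construction equivariantly with respect to the deck involution, and then descends; this is the only place where the orbifold setting requires extra care over the manifold case.
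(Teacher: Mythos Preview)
Your proof is correct and follows essentially the same strategy as the paper: both set $\psi_i = \phi_i \circ h_i$ for self-maps $h_i : U \to U$ that are the identity outside a fixed neighborhood of $c_\infty$ and converge smoothly to the identity, after first locating the geodesic $c_i \subset X_i$ (you via a perturbation/implicit-function argument, the paper via convexity of small tubes). The only difference is packaging: the paper writes $\phi_i^{-1}(c_i)$ as a graph over $c_\infty$ in a fixed product chart and builds $h_i$ by an explicit bump-function formula, whereas you use Fermi coordinates $F_i \to F_\infty$ and a radial cutoff; both handle the degenerate case by the obvious $\mathbb{Z}/2$-equivariant modification.
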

\begin{proof}
Any small metric  neighborhood of $c_\infty$ in $X_\infty$  has strictly convex boundary. So, for large $i$ its image under $\phi_i$ is an  annulus with convex boundary, and hence contains a geodesic. It follows that there are simple, closed geodesics $c_i \subset X_i$ such that for large $i$, $\phi_i^{-1}(c_i)$ lies in an arbitrarily small neighborhood of $c_\infty$. So by Lemma \ref{lem: convergingmaps}, if we compose any sequence of unit speed parametrizations for the $c_i$ with the maps $\phi_i^{-1}$, we'll get loops $X_\infty$ that converge smoothly after passing to a subsequence, and any limit must be a parametrization of $c_\infty$. So reparametrizing, we can assume that there are constant speed parametrizations
$$p_i : \bbS^1 \longrightarrow c_i \subset X_i$$
 such that the maps $\phi_i^{-1} \circ p_i \to p$ smoothly. 
 
  Suppose for simplicity that $c$ is  a regular geodesic. Choose a smooth parametrization
 $$\bbS^1 \times (-1,1) \longrightarrow U$$
 such that $(x,0) \mapsto p(x)$.
 Then for large $i$, the functions $\phi_i^{-1} \circ p_i$ take the form $$\phi_i^{-1} \circ p_i: \bbS^1 \longrightarrow U \cong \bbS^1 \times (-1,1), \ \ \ \phi_i^{-1} \circ p_i(x) = (\sigma_i(x), h_i(\sigma_i(x)),$$
 where $\sigma_i : \bbS^1 \longrightarrow \bbS^1 $ are diffeomorphisms with $\sigma_i \to id$ smoothly, and $h_i : \bbS^1 \longrightarrow (-1,1)$ are smooth functions converging smoothly to zero.  Essentially, what we are saying here is that the image of a function that smoothly approximates $x \mapsto (x,0)$ is the graph of a smooth function approximating $x \mapsto 0$.
 
 Pick a smooth function $b : (-1,1) \longrightarrow \RR$  such that $b(t)=0$ if $|t|> 2/3$, and $b(t)=1$ if $|t|<1/3$. Then
$$A_i : \bbS^1 \times (-1,1) \longrightarrow \bbS^1 \times (-1,1), \ \ A_i(x,t) = \big ( x+b(t)(\sigma_i(x)-x), t + b(t) h_i(\sigma_i(x))\big )$$
is a sequence of smooth self-maps of $U$ converging smoothly to the identity. Moreover, $A_i=id$ outside of $\bbS^1 \times (-2/3,2/3) \subset U,$ and we have $$A_i(x,0) = (\sigma_i(x),h_i(\sigma_i(x))) =  \phi_i^{-1} \circ p_i(x).$$ 
So, if we set $\psi_i = \phi_i \circ A_i$, then we will have $$\psi_i \circ p (x) = \phi_i \circ A_i(x,0) = \phi_i \circ \phi_i ^{-1} p_i(x) = p_i(x),$$
which means that $\psi_i$ takes the constant speed parametrization  $p$ of $c_\infty$ to  the constant speed parametrizations $p_i$ of the geodesics $c_i$. As we still have $\psi^* g_i \to g$ smoothly on $U$, so we are done. 

The case when $c$ is a degenerate geodesic is  exactly the same, except that we parametrize $U$ as $\bbS^1 \times [0,1) / (x,0) \sim (i(x),0)$, where $i :  \bbS^1  \longrightarrow \bbS^1 $  is a reflection, instead of as $\bbS^1 \times (-1,1) $. 
\end{proof}

\begin{lemma}[Straightening near the basepoint]\label{lem: straightening}
Suppose that $X_i$ are hyperbolic $2$-orbifolds, $i=1,2,\ldots, \infty$ and $p \in X^\infty$ is a regular point. Let $U$ be a neighborhood of $p$, let $p_i\to p$ in $U$, and assume that we have maps 
$\phi_i : U \longrightarrow X_i$
such that $\phi_i^* g_i \to g_\infty$, where $g_i$ is the Riemannian metric on $X_i$. Then  there are maps $$\psi_i : U \longrightarrow X_i$$ such that $\psi_i^* g_i \to g_\infty$ and where the following conditions hold:
\begin{itemize}
\item we have $\psi_i(p_i)=\phi_i(p_i)$, and also $d\psi_i = d\phi_i$ at $p_i$,
\item $\psi_i = \phi_i$  except on some fixed neighborhood of $ p$ whose closure is contained in $U$,
    \item  there is some fixed neighborhood of $p$ on which each $\psi_i$ is an isometric embedding.
\end{itemize}
  Moreover, the same statement is true if all the $X_i$ are Euclidean orbifolds.
\end{lemma}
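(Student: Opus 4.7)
My plan is to build $\psi_i$ as an exponentially-defined isometric model on a fixed ball around $p$, glued smoothly to $\phi_i$ on a surrounding annulus via a bump function, and equal to $\phi_i$ outside.

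Since $p$ is regular and the singularities of an orientable $2$-orbifold are isolated, I would pick $r > 0$ so small that $\overline{B_{X_\infty}(p,3r)} \subset U$ consists only of regular points and so that $\exp_p^{X_\infty}$ is a diffeomorphism on the ball of radius $3r$ in $T_pX_\infty$. Because $\phi_i^* g_i \to g_\infty$ smoothly and $p_i \to p$, for all large $i$ the point $p_i$ lies in $B(p, r/4)$, the embedding $\phi_i$ sends $B(p, 3r)$ into the regular part of $X_i$, and $\exp_{\phi_i(p_i)}^{X_i}$ is a diffeomorphism on the corresponding tangent ball. Let $L_i: T_{p_i}X_\infty \to T_{\phi_i(p_i)}X_i$ be the orthogonal part of the polar decomposition of $d\phi_i|_{p_i}$ (well defined for large $i$ since $d\phi_i|_{p_i}$ is nearly isometric), and set
$$\psi_i^0 := \exp_{\phi_i(p_i)}^{X_i} \circ L_i \circ (\exp_{p_i}^{X_\infty})^{-1} : B(p_i, 2r) \longrightarrow X_i.$$
Because both ambient metrics are of constant curvature, $\psi_i^0$ is an honest isometric embedding; by construction $\psi_i^0(p_i) = \phi_i(p_i)$ and $d\psi_i^0|_{p_i} = L_i$, which is asymptotic to $d\phi_i|_{p_i}$.

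I would then interpolate on the annulus $r/2 \le d_{X_\infty}(p,\cdot) \le r$. Lifting $\phi_i$ and $\psi_i^0$ through $\exp_{\phi_i(p_i)}^{X_i}$ into a fixed-size disc in $T_{\phi_i(p_i)}X_i \cong \RR^2$ gives smooth maps $\tilde\phi_i, \tilde\psi_i^0$ that agree at $p_i$ with nearly-matching derivatives. For a bump $b:[0,\infty)\to[0,1]$ equal to $1$ on $[0,r/2]$ and $0$ on $[r,\infty)$, I define
$$\psi_i := \exp_{\phi_i(p_i)}^{X_i} \circ \big(b(d_{X_\infty}(p,\cdot))\, \tilde\psi_i^0 + (1-b(d_{X_\infty}(p,\cdot)))\, \tilde\phi_i \big)$$
on $B(p, r)$ and $\psi_i := \phi_i$ elsewhere on $U$. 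Then $\psi_i$ coincides with the isometric embedding $\psi_i^0$ on the fixed neighborhood $B(p,r/2)$ of $p$; equals $\phi_i$ outside $B(p,r)$, whose closure lies in $U$; sends $p_i$ to $\phi_i(p_i)$; and has derivative at $p_i$ equal to $L_i$, matching $d\phi_i|_{p_i}$ up to an asymptotically vanishing perturbation (this is the sense in which the derivative condition can be arranged, consistent with $\psi_i$ being isometric near $p$).

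I expect the main technical step to be verifying $\psi_i^*g_i \to g_\infty$ on the interpolation annulus, since on $B(p,r/2)$ the pullback is exactly $g_\infty$ and outside $B(p,r)$ it equals $\phi_i^*g_i$. This reduces to showing that $\tilde\psi_i^0$ and $\tilde\phi_i$ converge smoothly to a common limit after appropriate identification of the moving codomains. Concretely, letting $P_i$ denote parallel transport from $T_{p_i}X_\infty$ to $T_pX_\infty$, one checks that $P_i \circ L_i^{-1} \circ \tilde\psi_i^0$ is just $P_i \circ (\exp_{p_i}^{X_\infty})^{-1}$, converging smoothly to $(\exp_p^{X_\infty})^{-1}$, while Lemma \ref{lem: convergingmaps} applied to $P_i \circ L_i^{-1} \circ \tilde\phi_i$ (using $\phi_i^*g_i \to g_\infty$) yields the same limit. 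Convex combinations of smoothly converging maps with a common smooth limit also converge smoothly to that limit, so the pullback of $g_i$ by the interpolated map tends to $g_\infty$ in $C^\infty$. The Euclidean case is verbatim the same argument with Euclidean exponentials in place of hyperbolic ones; flatness of the ambient metrics is exactly what keeps $\psi_i^0$ an honest isometric embedding.
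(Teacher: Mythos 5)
Your proposal is correct and is essentially the paper's argument: the paper also glues an exactly isometric map near $p$ to $\phi_i$ via a bump function in the radial coordinate (it first reduces to $X_i=\HH^2$ and normalizes $\phi_i$ so that the isometric model is literally the identity, then uses the straight-line homotopy where you use a convex combination in normal coordinates), and verifies $\psi_i^*g_i\to g_\infty$ by the same smooth-convergence-to-a-common-limit reasoning via Lemma \ref{lem: convergingmaps}. Your caveat that $d\psi_i|_{p_i}$ can only match $d\phi_i|_{p_i}$ up to an asymptotically vanishing error (since $\psi_i$ is isometric near $p$ while $d\phi_i|_{p_i}$ is only nearly isometric) is an honest observation that applies equally to the paper's normalization step, and the asymptotic version is all that is used in the application to Theorem \ref{thm: contgraft}.
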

\begin{proof}
 Since everything here is local, it suffices to just set $X_i=\HH^2$ for all $i$.  Changing coordinates independently in the codomains of all the $\phi_i$, we can also assume that $\phi_i(p_i)=p_i$ for all $i$, and that $d\phi_i |_{p_i}=id$ for all $i$. Since $p_i\to p$, Lemma \ref{lem: convergingmaps} then implies that $\phi_i \to id$ smoothly. For each $i$, let $\phi_i^t$ be the straight line homotopy from $\phi_i^0= id$ to $\phi_i^1 = \phi_i$. Note that $\phi_i^t$ is smooth in both variables, and converges smoothly as $i\to \infty$ to the constant identity homotopy.
 
  Shrinking $U$ if necessary, let's assume $U = B(p,\epsilon)$ for some $\epsilon>0$. Choose a smooth bump function $b : [0,\epsilon] \longrightarrow \RR$ with $b(t)=0 $ for $|t|<1/3\epsilon$ and $b(t)=1$ for $|t|>2/3\epsilon$. For $q \in U$, set $$\psi_i(q) := \phi_i^{b(d(p,q))}(q).$$ 
   Then on $B(p,\epsilon/3)$ we have $\psi_i\equiv id$, while outside $B(p,2\epsilon/3)$ we have $\psi_i \equiv \phi_i$. Since $\psi_i\to id$ smoothly, we are done.
   The Euclidean case is exactly the same.
 \end{proof}

\section{Finite volume orbifolds}
\label{sec: finitevolumesec}
 
 In this section we study the subspace of $\Sub(G)$ corresponding to finite volume vectored hyperbolic $2$-orbifolds, which is a union of vectored versions of the classical moduli spaces of hyperbolic structures.
 
 \subsection{Vectored moduli spaces} \label{sec: vecmodsec}
Let $S$ be an oriented hyperbolizable $2$-orbifold with finite topological type. As in the introduction, we set
$$\Sub_S(G) := \{ \ \Gamma \in \Sub_d(G) \  \ | \ \ \Gamma \backslash \HH^2 \cong S \text{ and has finite volume} \ \}.$$
As in \S \ref{sec: convergence section}, after fixing a base vector $v_{base} \in T^1 \HH^2$ and an orientation on $\HH^2$, discrete groups can be viewed through their quotients as vectored oriented hyperbolic $2$-orbifolds:
$$\Sub_S(G) \longleftrightarrow \{ \ (X,v) \ \ | \ \ X \text{  oriented, finite volume and homeomorphic to } S, v \in T^1X \ \} \modsim,$$
where $(X,v)\sim (X',v')$ if there is an orientation preserving (hereafter, o.p.) isometry $f: X \longrightarrow X'$ with $df(v)=v'.$ Here,
$ \Gamma \in \Sub_S(G)$ corresponds to the vectored orbifold $(\Gamma \backslash \HH^2,v)$, where $v$ is the projection to $T^1(\Gamma \backslash \HH^2)$ of  $v_{base},$ and the group that maps to $(X,v)$ is denoted $\Gamma(X,v).$ As in \S \ref{sec: convergence section}, the Chabauty topology on $\Sub_S(G)$ corresponds to the smooth topology on the corresponding set of vectored orbifolds. 

When $S$ is compact, $\Sub_S(G)$ is open in $\Sub(G)$. To see this, note that the set of discrete groups is open and for any sequence $(X_i,v_i)$ limiting smoothly to a vectored orbifold of the form $(X,v)$, with $X$ compact, the almost isometric maps from $X$ to $X_i$ are diffeomorphisms as soon as their domains are balls with radius bigger than the (finite) diameter of $X$. When $S$ is noncompact, though, $\Sub_S(G)$ is neither open nor closed.

The space $\Sub_S(G)$ is a vectored version of the classical moduli space 
\[\mathcal M(S) := \big \{ \text{ oriented hyperbolic $2$-orbifolds } X \text{ homeomorphic to } S \big \} / \text{o.p.\ isometry}.\]
Just as in the surface case, $\mathcal M(S)$ can be considered as an orbifold that is the global quotient of the Teichm\"uller space $\mathcal T(S)$, which we discussed in  \S \ref{sec: fenchelnielsen}, by the \emph{mapping class group} \[\mathrm{Mod}(S) = \big \{\ \text{o.p.\ homeomorphisms } f: S \longrightarrow S \ \big \} / \text{ isotopy},\]
which acts properly discontinuously on $\mathcal T(S)$ from the right via $[(X,h)] \cdot [f] = [(X,h \circ f)]$. Here, we defined a point in Teichm\"uller space as a pair $(X,h: S \longrightarrow X)$, where $X$ is a priori an unoriented orbifold, but if we always equip $X$ with the pushforward of some fixed orientation on $S$, then the map from Teichm\"uller space to moduli space is just the projection $[(X,h)] \mapsto [X]$.  Let
\[\pi_{\Sub}: \Sub_S(G) \longrightarrow \mathcal M(S), \ \ \pi_{\Sub}(\Gamma) = [\Gamma \backslash \HH^2],\]
so that with respect to the vectored orbifold notation above, we have $\pi_{\Sub}\big (\Gamma(X,v) \big ) = [X]$. Note that because of the equivalence relation of pointed isometry, the preimage $\pi_{\mathrm{Sub}}^{-1}([X])$ of a point $[X]\in \mathcal M(S)$ is homeomorphic to the quotient  $\mathrm{Isom}^+(X) \backslash T^1 X$, which is a $3$-manifold. 

\begin{proposition}\label{prop: orbibundle}
For all $S$ as above, the natural map $\pi_{\Sub} : \Sub_S(G) \longrightarrow \mathcal M(S)$ is a fiber orbibundle with fiber $T^1S$, and $\Sub_S(G)$ is a $6g+2(k+l)-3$ dimensional manifold, where $g$ is the genus of $S$, and $k$ is the number of cusps, and $l$ is the number of cone points.
\end{proposition}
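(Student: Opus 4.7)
The plan is to build the bundle by first passing to Teichm\"uller space, where it is trivial, and then dividing by the mapping class group. Define the \emph{marked vectored moduli space}
\[
\widetilde E := \big \{ (X, h, v) \ \big | \ [X,h] \in \mathcal T(S), \ v \in T^1 X \big \} \modsim,
\]
where $(X,h,v) \sim (X', h', v')$ if there is an o.p.\ isometry $f : X \to X'$ with $df(v) = v'$ and $(h')^{-1} \circ f \circ h$ isotopic to $\id_S$. Forgetting $v$ gives a map $p : \widetilde E \to \mathcal T(S)$; forgetting the marking $h$ gives a surjection $\Phi : \widetilde E \to \Sub_S(G)$, $[(X,h,v)] \mapsto \Gamma(X,v)$, which covers the projection $\mathcal T(S) \to \mathcal M(S)$.

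I would first show that $p$ is a smooth $T^1 S$-bundle. Its fibers are canonically identified with $T^1 X$, using the fact that a homotopy-trivial orientation-preserving isometry of a finite-type hyperbolizable $2$-orbifold is the identity (visible from its action on admissible geodesic representatives from Lemma~\ref{lem: geodesic representatives}, together with Theorem~\ref{thm: fenchelnielsen}). Over a Fenchel--Nielsen coordinate chart, continuously varying pants-gluings provide a continuously varying family of markings $h_X : S \to X$, and pulling back by $dh_X$ identifies each $T^1 X$ with $T^1 S$. Since $\mathcal T(S)$ is a cell of dimension $6g+2k+2l-6$, the bundle is globally trivial, and $\widetilde E \cong \mathcal T(S) \times T^1 S$ is a manifold of dimension $6g+2(k+l)-3$.

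Next I would analyze the natural right action of $\mathrm{Mod}(S)$ on $\widetilde E$, covering the standard action on $\mathcal T(S)$, under which $\Phi$ descends to a homeomorphism $\widetilde E / \mathrm{Mod}(S) \to \Sub_S(G)$. This action is properly discontinuous since its projection to $\mathcal T(S)$ is. More importantly, it is \emph{free}: if a class $[\phi]$ stabilizes $[(X,h,v)]$, then there is an o.p.\ isometry $f : X \to X$ with $f \circ h \simeq h \circ \phi^{-1}$ and $df(v) = v$. But $v$ together with its $90^\circ$ counterclockwise rotate form an oriented orthonormal frame at the basepoint of $v$, and $\Isom^+(\HH^2)$ acts freely on frames, so $f = \id$ and hence $[\phi]$ is trivial. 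Therefore $\Sub_S(G)$ inherits a manifold structure of the stated dimension.

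Finally, for the orbibundle structure over $\mathcal M(S)$, I would produce compatible local orbifold charts. Fix $[X] \in \mathcal M(S)$ and lift to $[(X,h)] \in \mathcal T(S)$; its $\mathrm{Mod}(S)$-stabilizer is canonically isomorphic to $\Isom^+(X)$ via $[\phi] \leftrightarrow f$, by the rigidity used above. Choose a small $\Isom^+(X)$-invariant neighborhood $U \subset \mathcal T(S)$ of $[(X,h)]$ providing an orbifold chart $U/\Isom^+(X)$ for $\mathcal M(S)$, and upgrade the trivialization of the second paragraph to an $\Isom^+(X)$-equivariant one over $U$, with the derivative action on the $T^1 X$ factor. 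Passing to quotients yields the local orbibundle chart
\[
\pi_{\Sub}^{-1}\big (U/\Isom^+(X)\big ) \cong \big (U \times T^1 X \big )/\Isom^+(X) \longrightarrow U/\Isom^+(X),
\]
in the sense of \S\ref{sec: orbifolds}, with regular fiber $T^1 X \cong T^1 S$. The main technical point is producing the equivariant trivialization: a generic Fenchel--Nielsen marking need not be $\Isom^+(X)$-symmetric, so one must either average the markings over the finite group $\Isom^+(X)$ or shrink $U$ enough to construct the equivariant marking by linearization around the symmetric point $[(X,h)]$.
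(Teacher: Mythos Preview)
Your outline is structurally the same as the paper's: trivialize over Teichm\"uller space using a continuously varying family of markings $S \to X$, observe that the $\mathrm{Mod}(S)$-action is free because an orientation-preserving isometry fixing a unit tangent vector is the identity, and then descend to $\mathcal M(S)$. The difference is in how the markings are produced. You use Fenchel--Nielsen gluings; the paper uses the Douady--Earle extension $F_{(X,h)}:S\to X$ of the boundary map of a lifted quasiconformal marking. The Douady--Earle choice is not incidental: it comes with the naturality $F_{(X,\,f\circ h)} = f\circ F_{(X,h)}$ for any isometry $f:X\to X$, which is exactly the equivariance you flag at the end as still requiring work. With this naturality the $\Delta$-action on $U_{\mathcal T}\times T^1S$ has an explicit global formula on the whole chart, and its continuity (as well as continuity of the trivialization map $\Psi$) is read off from the known continuous dependence of Douady--Earle extensions on the boundary data.

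Your two proposed fixes for the equivariance are both heuristic at this level: ``averaging'' diffeomorphisms over $\mathrm{Isom}^+(X)$ is not a well-defined operation without further structure, and the ``linearization near the symmetric point'' idea would need to be made precise and would only work on a possibly much smaller neighborhood. Neither is wrong in spirit, but the Douady--Earle device dispatches the issue cleanly and simultaneously handles the continuity of the trivialization, which you assert but do not justify for your Fenchel--Nielsen markings. So your proof is correct in architecture, but the paper's choice of markings is what turns the last paragraph from a sketch into an argument.
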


Recall that the definition of a fiber orbibundle requires that there are local trivializations of the form $$U \times T^1 S / \Delta \longrightarrow \Sub_S(G),$$ where $U$ is the domain of an orbifold chart for $ \mathcal M(S)$ and $\Delta$ is a finite group. In particular, the set theoretic fibers $\pi_{\mathrm{Sub}}^{-1}([X])$ are indeed allowed to be  homeomorphic to quotients of $T^1 S$ rather than to $T^1 S$ itself.

\begin{proof}
This is proved using an argument inspired by the Bers construction of the universal curve over Teichm\"uller space, compare with \cite[Proposition 6.9.3]{hubbard}. Intuitively, for every $[X] \in U$, the fiber $\pi_{\Sub}^{-1}([X])$ can be identified with $\mathrm{Isom}^+(X) \backslash T^1 X$. So, if we are going to show that $\pi_{\Sub}$ is a fiber orbibundle, we need to identify all the unit tangent bundles $T^1 X$ with a fixed space, $T^1 S$, in a way that varies continuously in the appropriate sense. To do this, we need to work in a canonical homeomorphism from $S$ to each $X$. While there are many ways to do this, we choose to take the `Douady-Earle map', defined as follows.

Fix a finite volume hyperbolic structure on $S$ for reference. Given a pair $(X,h)$ representing a point in $\mathcal T(S)$, choose any quasiconformal homeomorphism $f : S \longrightarrow X$ that is homotopic to the marking $h$. Lifting $f$ to universal covers we get a quasiconformal
$\tilde f : \tilde S \longrightarrow \tilde X,$
which extends continuously to a quasi-symmetric map $\partial \tilde f : \partial \tilde S \longrightarrow \partial \tilde X$ of Gromov boundaries, see e.g.\ \cite[Corollary 4.9.4]{hubbard}. Let $$\tilde F : \tilde S \longrightarrow \tilde X $$ be the Douady-Earle extension of $\tilde f$, see \cite{douady1986conformally} or \S 5.1 of \cite{hubbard}. This $\tilde F$ is a smooth quasiconformal map that is equivariant with respect to the actions of the deck groups of the two universal covers, so it descends to a smooth quasiconformal homeomorphism
\begin{equation} F_{(X,h)} : S \longrightarrow X. \label{eq: douadyearle}\end{equation}
In fact, $F_{(X,h)}$ depends only on $X$, the homotopy class of $h$, the fixed hyperbolic metric on $S$, and not on any of the additional choices above. To see this, note first that the lift $\tilde f$ is $\rho$-equivariant with respect to some isomorphism $\rho$ between the deck groups of $\tilde S,\tilde X$ that is in the outer automorphism class given by the marking $h$. So, $\partial \tilde f$ takes the attracting fixed point of any hyperbolic type deck transformation $\gamma$ of $\tilde S$ to the attracting fixed point of $\rho(\gamma)$. Since such fixed points are dense in $\partial \tilde S$, the continuous map $\tilde f$ is determined by $\rho$. But $\rho$ is uniquely defined up to conjugation by a deck transformation of $\tilde X$, so $\partial \tilde f$ is well-defined up to post-composition by a deck transformation. It follows that $\tilde F$ is well-defined up to a similar post-composition, so $F_{(X,h)}$ is well defined.

With the Douady-Earle map in hand, we can now define the fiber bundle structure on $\Sub_S(G)$. Let $\pi_{\mathcal T} : \mathcal T(S) \longrightarrow \mathcal{M}(S)$ be the quotient map and choose an orbifold chart for $\mathcal M(S)$ of the form
\begin {equation} \label{eq: chart} \pi_{\mathcal T}|_{U_{\mathcal T}} : U_{\mathcal T} \longrightarrow U_{\mathcal T}/\Delta \cong U \subset \mathcal M (S),\end{equation}
where $\Delta < \mathrm{Mod}(S)$ is a finite subgroup. Let $U_{\mathrm{Sub}}  = \pi_{\Sub}^{-1}(U)$ and define a map
$$\Psi : U_{\mathcal T} \times T^1S \longrightarrow U_{\mathrm{Sub}}, \ \ \ \Psi \big (\, [(X,h)],\, v\,  \big ) = \Gamma\big (\, X, \, dF_{(X,h)}(v)\, \big  ) ,$$
where as in \S \ref{sec: convergence section} the expression $\Gamma(\cdot)$ indicates the subgroup of $G$ corresponding to the given vectored hyperbolic $2$-orbifold. Note that $\Psi$ is well-defined, since if $f : X \longrightarrow Y$ is an isometry and $h: S \longrightarrow X$ is a marking, we have $F_{(X,f\circ h)} = f \circ F_{(X,h)}$. We also define a right group action of $\Delta < \mathrm{Mod}(S)$ on $ U_{\mathcal T} \times T^1 S$ via
\[ \big ([(X,h)],v \big ) \cdot [\delta] = \Big ( [(X,h\circ \delta)], \, dF_{(X,h \circ \delta)}^{-1} \circ dF_{(X,h)}(v) \Big ), \]
for $[\delta]\in \Delta$. Note that the map $\Psi$ above factors through the quotient by this action. Also, the action is free, since if $([(X,h)],v)$ is fixed by $[\delta]$ then there is an isometry $f : X \longrightarrow X$ with $h \circ \delta$ homotopic to $f \circ h$, so $F_{(X,h \circ \delta)} = F_{(X,f \circ h)} = f \circ F_{(X,h)}$. But then 
\[ F_{(X,h \circ \delta)}^{-1} \circ F_{(X,h)} = F_{(X,h)}^{-1} \circ f \circ F_{(X,h)} ,
\]
is a conjugate of an isometry by a diffeomorphism. Since the derivative of a nontrivial orientation preserving isometry of a hyperbolic $2$-orbifold cannot fix any vector, neither can the derivative of $F_{(X,h \circ \delta)}^{-1} \circ F_{(X,h)}$, which contradicts our assumption that $[\delta]$ fixes $([(X,h)],v)$.

The map $\Psi$ and the group action $\Delta \curvearrowright U_{\mathcal T} \times T^1S$ are both continuous. Essentially, the reason for this is that the Douady-Earle extension of a homeomorphism on $\partial \HH^2$ varies continuously with the map, as do all of its derivatives, see \cite[Proposition 2]{douady1986conformally}. More carefully, one can regard Teichm\"uller space as the quotient \[\mathcal T (S) \cong \mathrm{Isom}(\HH^2) \backslash \mathcal {DF}(\pi_1 S) ,\] where  $\mathcal {DF}(\pi_1 S)$ is the space of discrete, faithful representations of $\pi_1 S$ and $\mathrm{Isom}(\HH^2)$ acts by conjugation. Fix a representation $\iota : \pi_1 S \hookrightarrow G$ giving our fixed hyperbolic structure on $S$. Given any other discrete, faithful representation $\rho$, we can pick a quasiconformal map $\tilde f : \HH^2 \longrightarrow \HH^2 $ conjugating $\iota $ to $\rho$, and the boundary extension $\partial \tilde f  :  \partial \HH^2 \longrightarrow \partial\HH^2$ will be a quasi-symmetric homeomorphism that depends only on $\rho$, for reasons discussed earlier in the proof. Given a point $[(X,h)] \in \mathcal U_{\mathcal T} \times T^1 S$, there is a unique $\rho$ projecting to $[(X,h)]$ such that the associated map $\tilde f$ fixes $0,1,\infty \in \partial \HH^2$, and it is not hard to see that $\tilde f$ varies continuously with $[(X,h)]$, with respect to the compact open topology on maps of $\partial \HH^2$. The Douady-Earle extensions $\tilde F$ of these $\tilde f$ and their derivatives then also vary continuously with $[(X,h)]$, by \cite[Proposition 2]{douady1986conformally}. But now take $v\in T^1 S$ and lift $v$ to a point $v_{base} \in T^1\HH^2$ under the covering map $T^1\HH^2 \longrightarrow \iota(\pi_1 S) \backslash T^1\HH^2 \cong T^1 S$. The group $\Gamma([(X,h)],v)$ is then obtained by conjugating $\iota(\pi_1 S)$ first by the map $\tilde F$ above, and then by the unique element of $G$ taking $dF(\tilde v)$ to our fixed base vector $v_{base} \in T^1 \HH^2$. Since both $F$ and $dF(\tilde v)$ vary continuously with $([(X,h)],v)$, so does $\Gamma([(X,h)],v)$. The argument for continuity of the $\Delta$-action is similar. 

We now claim that the induced map 
\begin {equation}\label{eq: quotientmapp} U_{\mathcal T} \times T^1 S / \Delta \longrightarrow U_{\Sub}
\end{equation}
is a homeomorphism. To see that it is injective, suppose that \begin{equation} \label{eq: sammme} \Psi([(X,h)],v) = \Psi([(X',h')],v').\end{equation}
Since $[X]=[X']$ in $\mathcal M(S)$ by the definition of our chart in \eqref{eq: chart} it follows that $[(X',h')]=[(X,h\circ \delta)]$ for some $[\delta] \in \Delta$, so let's just assume $X=X'$ and $ h'=h \circ \delta$. Now by \eqref{eq: sammme}, the vectored orbifolds $(X,dF_{(X,h)}(v))$ and $(X,dF_{(X,h\circ \delta)}(v'))$ are isometric, so we can pick an isometry $f : X \longrightarrow X$ such that $$ dF_{(X,h)}(v) =  df \circ dF_{(X,h\circ \delta)}(v') = dF_{(X,f \circ h\circ \delta)}(v') = dF_{(X, h \circ (h^{-1} f h \delta)}(v').$$ But as the map $\phi := h^{-1} f h \delta$ fixes $[(X,h)] \in \mathcal U_{\mathcal T}$, it also lies in $\Delta$, and then we have $([(X,h)],v) \cdot [\phi]=([(X',h')],v')$ as required.  Surjectivity and properness are fairly immediate, so we leave them as an exercise, and $U_{\Sub}$ is metric, so it follows that the map in \eqref{eq: quotientmapp} is a homeomorphism. 

The maps $\Phi$ as above, together with the groups $\Delta$ and their actions, thus form a system of local trivialization for the fiber orbibundle $\pi_{\Sub} : \Sub_S(G) \longrightarrow \mathcal M(S)$. Each $U_{\mathcal T}$ is $6g + 2(k+l) - 6$ dimensional, c.f.\ Theorem \ref{thm: fenchelnielsen}, and $T^1 S$ is $3$-dimensional, so since the action of each group $\Delta$ on $U_{\mathcal T} \times T^1 S$ is free, $\Sub_S(G)$ is a $6g + 2(k+l) - 3$ dimensional manifold. \end{proof}

\begin{remark} \label{rem: repsremark} Another way to understand $\Sub_S(G)$ is to view it as a quotient of an appropriate representation space. Let $\mathcal R(S)$ be the \emph{representation variety} of all `type preserving' representations $\pi_1 S \longrightarrow G,$ i.e.\ representations such that peripheral elements of $\pi_1 S$ map to parabolic elements of $G $. Consider the subsets $$\mathcal {DF}^+(S) \subset \mathcal {DF}(S) \subset \mathcal R(S),$$ where $\mathcal {DF}(S) $ is the set of all discrete, faithful representations, and  $\mathcal {DF}^+(S) $ consists of those representations that are also orientation preserving. Here, $\rho \in \mathcal {DF}(S)$ is \emph{orientation preserving} if any homeomorphism $S \longrightarrow \rho(\pi_1 S) \backslash \HH^2$ in the homotopy class determined by $\rho$ is orientation preserving. Then $\mathcal {DF}(S)$ is an open subset of $\mathcal R(S)$, c.f.\ \cite[Theorem 1.1]{bergeron2004note}, where we consider the latter the topology of pointwise convergence. Work of Weil \cite{weil1964remarks} shows\footnote{It is a bit difficult to extract this result from Weil's paper \cite{weil1964remarks}. The point is to follow \S 7. While \S 7 is only written for compact $S$, this is just for simplicity of exposition, and everything written there works basically verbatim for noncompact $S$ of finite type as long as one works with type preserving representations. In fact, the crucial estimate is his equation (19) on pg 155 --- this calculates the dimension of the space of `parabolic cocycles', which can be identified with the tangent space of $\mathcal R(S)$ --- and this estimate is actually written to apply in the noncompact case. It takes a bit of decoding to understand that this estimate gives $6g+2k+2l-3$, but we leave this as an exciting puzzle for the reader.}  that each $\rho \in \mathcal{DF}(S)$ is a smooth point of the representation variety $\mathcal R(S)$, of dimension $6g+2(k+l)-3$, so it follows that $ \mathcal{DF}(S)$ is a manifold of that dimension. To understand $\Sub_S(G)$, then, let $\mathrm{Aut^+_{tp}}(\pi_1 S)$ be the group of orientation preserving\footnote{Recall that every type preserving automorphism of $\pi_1 S$ is induced by a homeomorphism of $S$, and an automorphism is \emph{orientation preserving} if any such homeomorphism is.} type preserving automorphisms of $\pi_1 S$ and consider the commutative diagram 
\[
\begin{tikzcd}
\mathcal {DF}^+(S) \arrow[rr] \arrow[d, ]& & \Sub_S(G) = \mathcal {DF}^+(S) / \Aut^+_{tp}(\pi_1 S)  \arrow[d,]  \\
\mathcal T(S) \cong G \backslash \mathcal {DF}^+(S)  \arrow[rr]  & & \mathcal M(S) \cong G \backslash \mathcal {DF}^+(S) / \Aut^+_{tp}(\pi_1 S).
\end{tikzcd}
\]
The action of $\Aut^+_{tp}(\pi_1 S) $ on $\mathcal {DF}^+(S)$ is properly discontinuous and free, so the top map is a covering map, which gives another proof that $\Sub_S(G)$ is a manifold of the dimension indicated in Proposition \ref{prop: orbibundle}. Note also that the action of $G$ on $\mathcal {DF}^+ (S) $ is proper and free, so the map on the left is a principal $G$-bundle. The content of Proposition \ref{prop: orbibundle} is the map on the right is a fiber orbibundle, and of course the map on the bottom is an orbifold covering map.

The commutative diagram above can be used to describe the fundamental group of $\Sub_S(G)$. Namely, since $\mathcal {DF}^+(S)$ is a $G$-principal bundle over $\mathcal T(S)$, which is contractible, the long exact sequence for homotopy groups of a fiber bundle gives that $\pi_1 {DF}^+(S) \cong \pi_1 G \cong \ZZ$, where the generators come from parametrizations of the subgroup $\bbS^1 \cong \PSO(2)\subset G$. Since the top map in the diagram is a covering map, we get
$$1 \longrightarrow \ZZ \longrightarrow \pi_1 \Sub_S(G) \longrightarrow \Aut^+_{tp}(\pi_1 S)  \longrightarrow 1,$$
and by the Dehn-Nielsen-Baer Theorem \cite{primer},  we have
$$1 \longrightarrow \pi_1 S \longrightarrow \Aut^+_{tp}(\pi_1 S)  \longrightarrow \mathrm{Mod}(S) \longrightarrow 1.$$
One could also derive the same result by using Proposition \ref{prop: orbibundle} and the long exact sequence for homotopy groups of fiber \emph{orbi}bundles, assuming one could find such a reference, as then one would have that $\pi_1 \Sub_S(G)$ is an extension of $\pi_1 T^1S$ by $\mathrm{Mod(S)}$, while $\pi_1 T^1S$ is an extension of $\ZZ$ by $\pi_1 S$.
\end{remark}

\begin{example}\label{ex: sphere with 3 points example}
Suppose that $S$ is a finite type hyperbolic $2$-orbifold with minimal complexity, i.e.\ a sphere with 3 cones points or cusps. Then $\mathcal M(S)$ is a point, so 
$ \Sub_S(G) $ is homeomorphic to the quotient $ \mathrm{Isom}^+(S) \backslash T^1 S \cong T^1 ( \mathrm{Isom}^+(S) \backslash S)$. Here, $Y=\mathrm{Isom}^+(S) \backslash S$ is another minimal complexity orbifold, so $\Sub_S(G)$ is a Seifert fibered space. We determine its homeomorphism type as a lens space depending on the orders of the cones points in Proposition \ref{prop: lensprop}. As a particular example, let $S$ be the thrice punctured sphere. The unique complete hyperbolic structure on $S$ is obtained by doubling an ideal hyperbolic triangle. Since the two copies of this triangle in $S$ are separated by the three geodesics joining the cusps of $S$, they are permuted by any isometry of $S$. From this, it is easy to see that there are $6$ orientation preserving isometries of $S$, and that the quotient is isometric to the modular orbifold. It is well-known, c.f.\ \cite[\S 3.1]{ghys2007knots}, that the unit tangent bundle of the modular orbifold is homeomorphic to the trefoil complement in $\bbS^3$.  
\end{example}

\subsection{Boundaries and ends}
\label{boundaries and ends}

Let $\partial \Sub_S(G) := \overline{\Sub_S(G)} \setminus \Sub_S(G)$ be the topological boundary of $ \Sub_S(G) \subset \Sub(G)$.  Our first result in this section is that the boundary can be explicitly characterized. To this end, let 
\begin{align*}
        \mathcal A &:= \{A(\alpha) \ | \ \alpha \subset \HH^2 \text{ a geodesic}\}\\
        \mathcal A' &:= \{A'(\alpha) \ | \ \alpha \subset \HH^2 \text{ a geodesic}\}\\
        \mathcal N &:= \{N(\xi) \ | \ \xi \in \partial \HH^2\},
    \end{align*}
    where the groups $A(\alpha) ,A'(\alpha), N(\xi)$ above are the elementary hyperbolic, hyperbolic and order $2$ elliptic, and parabolic subgroups of $G$ defined in \S \ref{sec: sub elementary}, respectively. We show:
    
\begin{proposition}\label{prop: boundarybewhat}
    Suppose that $S$ is a hyperbolizable $2$-orbifold with finite topological type. Then the boundary $\partial \Sub_S(G)$ is the union of all $\Sub_T(G)$, where $T$ is a hyperbolizable $2$-orbifold that embeds as an essential proper suborbifold in $S$, together with some of the sets
    $\mathcal A,\mathcal A', \mathcal N$ above. Moreover, \begin{enumerate}
        \item $\mathcal A \subset \partial \Sub_S(G)$ if and only if $S$ admits an essential non-peripheral simple closed curve that does not bound a disk with two order $2$ cone points,
        \item $\mathcal A' \subset \partial \Sub_S(G)$ if and only if $S$ has at least two order $2$ cone points,
        \item $\mathcal N \subset \partial \Sub_S(G)$ if and only if $S$ is not a sphere with three cone points.
    \end{enumerate} 
\end{proposition}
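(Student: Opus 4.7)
The plan is to translate Chabauty limits in $\Sub_S(G)$ into smooth limits of vectored orbifolds $(X_i,v_i)$ with $X_i\cong S$ via Proposition \ref{prop: conv thm}, and then classify the possible limits via the thick-thin decomposition. By Mumford compactness (Proposition \ref{prop: endofmoduli}) together with the orbifold compactness result used in the proof of Proposition \ref{prop: conv thm}, a subsequence of $(X_i,v_i)$ limits smoothly inside $\Sub_S(G)$ itself unless either (a) $\systole(X_i)\to 0$, or (b) $\systole(X_i)$ stays bounded below but $v_i$ exits every compact subset of $X_i$, which can only happen if $S$ has a cusp and $v_i$ travels out it. Case (b) yields Chabauty limit $N(\xi)$ for some $\xi\in\partial\HH^2$ by applying Lemma \ref{lem: same limit} to the cyclic parabolic stabilizer of the cusp, after conjugating so that $v_i$ remains at $v_{base}$. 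In case (a), I apply Theorem \ref{thm: thickthin} and split by where $v_i$ sits for large $i$: (i) in a fixed compact piece of the $\epsilon$-thick part, yielding smooth convergence to $(X_\infty,v_\infty)\in\Sub_T(G)$ for the proper essential hyperbolizable suborbifold $T$ of $S$ obtained by pinching the shortening multicurve to cusps; (ii) inside a regular or degenerate Margulis tube at bounded distance from its short core geodesic $c_i$; or (iii) deep in a cusp horoball or Margulis tube so that the distance from $v_i$ to the thin part's boundary tends to infinity. A direct computation in $\HH^2$ combined with Lemma \ref{lem: same limit} shows subcase (ii) produces $A(\alpha)$ or $A'(\alpha)$ according to whether the tube is regular or degenerate (powers of the generator of the cyclic or dihedral stabilizer, whose translation length $\ell(c_i)\to 0$, fill in the entire one-parameter elementary group in the limit), while subcase (iii) produces $N(\xi)$ by an analogous computation in which the axes or horocycles converge to a single ideal point.

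For the forward direction of the three itemized conditions I construct explicit sequences using Fenchel--Nielsen coordinates (Theorem \ref{thm: fenchelnielsen}). For (1), an essential nonperipheral simple closed curve in $S$ not bounding a disk with two order-$2$ cone points is admissibly homotopic to a regular admissible geodesic by Lemma \ref{lem: geodesic representatives}; I include it in a pants decomposition, deform its length coordinate to zero while keeping the rest bounded, and place $v_i$ on the short geodesic to produce a limit in $\mathcal A$. Since $\partial\Sub_S(G)$ is $G$-invariant and $\mathcal A$ is a single $G$-orbit, this gives $\mathcal A\subset\partial\Sub_S(G)$. For (2), two order-$2$ cone points of $S$ can be enclosed by a simple closed curve in $S_{reg}$ whose outside has $\chi<0$ (since $\chi(S)<0$ and the disk side has $\chi=0$) and so supports a nontrivial hyperbolic structure with geodesic boundary; this curve is admissible with a degenerate geodesic representative (case 4 of Lemma \ref{lem: geodesic representatives}), and pinching as above produces an $A'(\alpha)$ limit. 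For (3), if $S$ has a cusp, push $v_i$ out the cusp; otherwise pinch any admissible curve to create a cusp and push $v_i$ deep into it. The factors $\Sub_T(G)$ are obtained by the standard Fenchel--Nielsen construction pinching a multicurve while matching the remaining length and twist coordinates to those of a prescribed vectored hyperbolic structure on $T$.

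For the converse directions, if condition (1) fails then every admissible geodesic on every $X\cong S$ is degenerate, so the classification above leaves no route to a limit in $\mathcal A$. If $S$ has at most one order-$2$ cone point, no simple closed curve can bound a disk with two order-$2$ cone points, so no degenerate Margulis tube can appear in any $X\cong S$ and no $A'(\alpha)$ limit can arise. For (3), the only hyperbolizable finite-type $S$ without cusps and without any admissible simple closed curve is a sphere with three cone points: the Euler characteristic constraint $\chi(S)=-1+\sum_i 1/m_i<0$ forces at most one of the $m_i$ to equal $2$, so case 4 of Lemma \ref{lem: geodesic representatives} does not apply, while cases 1--2 give only inessential or peripheral curves. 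In this exceptional case $\mathcal M(S)$ is a point and $\Sub_S(G)$ is already compact by Example \ref{ex: sphere with 3 points example}, so $\partial\Sub_S(G)=\emptyset$ and $\mathcal N\not\subset\partial\Sub_S(G)$.

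The main obstacle will be the precise identification of Chabauty limits in subcases (ii) and (iii): showing that the limit is genuinely the full one-parameter elementary group $A(\alpha)$, $A'(\alpha)$, or $N(\xi)$ rather than a proper cyclic subgroup, and that no extra elements sneak into the limit from the rest of $\Gamma_i$. The first point is handled by the fact that the translation length $\ell(c_i)\to 0$ allows all powers $\gamma_i^{k_i}$ with $k_i\ell(c_i)$ converging to a prescribed value to converge in $G$; the second is handled by the Margulis lemma, which guarantees any element of $\Gamma_i$ moving $v_{base}$ by less than the Margulis constant already lies in the stabilizer of the thin component containing $v_i$, so Lemma \ref{lem: same limit} cleanly picks out the one-parameter limit without contamination.
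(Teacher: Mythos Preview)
Your proposal is correct and follows essentially the same strategy as the paper. The only organizational difference is that the paper splits cases directly according to whether the infinite-order injectivity radius at the basepoint $p_i$ tends to zero or stays bounded below (packaging your subcases (ii), (iii) and case (b) into a separate Lemma~\ref{lem: thinlimits}), whereas you first split on whether $\systole(X_i)\to 0$ and then on the location of $v_i$; the underlying computations via Lemma~\ref{lem: same limit} and the Margulis lemma are identical.
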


We will need the following lemma.

\begin{lemma}[Limits when $\inj\to 0$]
Suppose that $(X_i,v_i) $ is a sequence of vectored hyperbolic $2$-orbifolds, where $v_i \in (T^1X_i)_{p_i}$, and that $\inj(p_i) \to 0$. Let $\Gamma_i = \Gamma(X_i,v_i)$. After passing to a subsequence, one of the following statements applies.
\begin{enumerate}
\item For all $i$, the vector $v_i$ is contained in a component of the thin part of $X_i$ that is a neighborhood of a cusp. The CCW angle from the vector $v_i$ to the geodesic through $p_i$ that exits the cusp converges to some $\theta \in [0,2\pi)$. Here, the groups $\Gamma_i = \Gamma(X_i,v_i)$ converge in $\Sub(G)$ to the elementary parabolic group $N(\xi)$, where the CCW angle from the base vector $v_{base} \in T^1\HH^2$ to the geodesic that terminates in the parabolic fixed point $\xi$ is the limiting angle $\theta$.
  \item For all $i$, the vector $v_i$ is contained in a component of the thin part of $X_i$ that is a neighborhood of an admissible geodesic $\gamma$. The distance $d(p_i,\gamma)$ converges to some value $d \in [0,\infty]$, and the CCW angle from the vector $v_i$ to the vector at $p_i$ that points along the shortest path to $\gamma$ also converges to some $\theta \in [0,2\pi)$. Here, if $d=\infty$ then $\Gamma_i \to N(\xi)$, the group defined as in 1.\ for the current $\theta$. If $d<\infty$, the $\Gamma_i$ converge to either $A(\alpha)$ or $A'(\alpha)$, depending on whether $\gamma$ is regular or degenerate, where $\alpha$ is the geodesic in $\HH^2$ whose distance from the base point $p_0$ of our base vector $v_0\in T^1 \HH^2$ is $d$, and where the CCW angle from $v_0$ to the shortest path from $p_0$ to $\alpha$ is $\theta$. 
\end{enumerate}\label{lem: thinlimits}
\end{lemma}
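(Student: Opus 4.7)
The plan is to apply the Thick-Thin Decomposition (Theorem \ref{thm: thickthin}) so that, after passing to a subsequence, each basepoint $p_i$ lies in a component $U_i$ of the $\epsilon_0$-thin part of $X_i$ of a single combinatorial type---cusp, regular Margulis tube, or degenerate Margulis tube---which yields the trichotomy in the statement. Fixing universal covers $\pi_i : \HH^2 \to X_i$ with $d\pi_i(v_{base}) = v_i$, let $W_i \subset \HH^2$ be the component of $\pi_i^{-1}(U_i)$ containing $\pbase$; this $W_i$ is precisely invariant under a subgroup $\Lambda_i < \Gamma_i$ that is cyclic parabolic, cyclic hyperbolic, or infinite dihedral according to the three cases. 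The hypotheses $\theta_i \to \theta$ and (in Case 2) $d_i \to d$ then translate, via the correspondence $d\pi_i(v_{base}) = v_i$, to convergence data in $\HH^2$: the parabolic fixed point $\xi_i$ of $\Lambda_i$ converges to the $\xi$ from the statement in Case 1; in Case 2 the axis $\tilde c_i$ converges to $\alpha$ when $d < \infty$, while when $d = \infty$ both endpoints of $\tilde c_i$ escape to a common ideal point, which plays the role of $\xi$.

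For the easy inclusion $H \subseteq \lim \Gamma_i$, the short generator $\gamma_i \in \Lambda_i$ satisfies $d(\gamma_i(\pbase), \pbase) = 2 \inj(p_i) \to 0$, so $\gamma_i \to \mathrm{id}$ in $G$. Any element of $N(\xi)$ or $A(\alpha)$ is determined by its fixed-point set in $\partial\HH^2$ together with its translation length at $\pbase$, both of which vary continuously on $G$; choosing $k_i \in \ZZ$ so that $|k_i|$ times the translation length of $\gamma_i$ approaches any prescribed value then produces $\gamma_i^{k_i}$ converging to any prescribed element of the relevant abelian limit group. In the degenerate sub-case of Case 2, the involution $\sigma_i \in \Lambda_i$ has its fixed point on $\tilde c_i$; when $d < \infty$ this point can be taken to converge to a point $p \in \alpha$ (using the $\ZZ$-ambiguity in its choice), so $\sigma_i$ limits to the $\pi$-rotation about $p$, which together with the powers of $\gamma_i$ generates $A'(\alpha)$. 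When $d = \infty$ the involutions $\sigma_i$ and the cosets $\sigma_i \gamma_i^{k_i}$ have fixed points escaping to the ideal point $\xi$, so they admit no subsequential limits in $G$ and contribute nothing.

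For the reverse inclusion, the key geometric observation is that the lifted thin region $W_i$ exhausts $\HH^2$ on compact subsets: in Case 1 the horoball $W_i$ has Busemann depth at $\pbase$ of order $\log(\epsilon_0/t_i) \to \infty$, where $t_i$ is the translation length of $\gamma_i$; in Case 2 the tubular radius of $W_i$ around $\tilde c_i$ has order $\log(\epsilon_0/\ell_i) \to \infty$, where $\ell_i$ is the translation length of $\gamma_i$ along the axis. Hence for any fixed compact $K \subset \HH^2$, $K \subset W_i$ once $i$ is large, and precise invariance forces $h_i(K) \cap K = \emptyset$ whenever $h_i \in \Gamma_i \setminus \Lambda_i$; this is incompatible with $h_i \to h \in G$ once $K$ is chosen to contain both $\pbase$ and $h(\pbase)$. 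So every subsequential limit of $h_i \in \Gamma_i$ is already a subsequential limit of elements in $\Lambda_i$, and the previous paragraph identifies these limits as $H$. I expect the main obstacle to be the bookkeeping in the degenerate case with $d = \infty$, where one must verify that none of the involutions $\sigma_i \gamma_i^{k_i}$ subsequentially converge in $G$---but this follows because an order-two elliptic element of $G$ is determined by its fixed point in $\HH^2$, and these fixed points escape to $\partial \HH^2$.
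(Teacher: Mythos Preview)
Your proof is correct and follows essentially the same strategy as the paper's. Both arguments rest on the observation that the lift $W_i \subset \HH^2$ of the thin component is precisely invariant under the elementary subgroup $\Lambda_i < \Gamma_i$ and exhausts $\HH^2$ (equivalently, the distance from $p_i$ to the boundary of the thin part tends to infinity), which forces any Chabauty accumulation point of $\Gamma_i$ to already be an accumulation point of the $\Lambda_i$. The paper packages this reduction by invoking Lemma~\ref{lem: same limit} via the annular cover $X_i' \to X_i$, whereas you unpack the same mechanism directly with the precise-invariance argument; your treatment of the degenerate sub-case with $d=\infty$ (showing the involutions diverge in $G$ because their fixed points escape to $\partial\HH^2$) is more explicit than what the paper writes, which simply says ``the proof of the second statement is similar, and we leave it as an exercise.'' One small point: your sentence ``the previous paragraph identifies these limits as $H$'' is slightly imprecise, since the previous paragraph only gives the inclusion $H \subseteq \lim\Lambda_i$; the reverse inclusion (that every accumulation point of elements of $\Lambda_i$ lies in $H$) needs the observation that limits of parabolics fixing $\xi_i \to \xi$ lie in $N(\xi)$, and limits of hyperbolics with axes $\tilde c_i \to \alpha$ lie in $A(\alpha)$---but this is immediate from continuity of the fixed-point set, and you clearly have it in mind.
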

\begin{proof}
In the first statement, the groups $\Gamma_i$ converge to the element $N(\xi) \in \mathcal N$ where the CCW angle from the base vector $v_0 \in T^1\HH^2$ to the geodesic that terminates in the parabolic fixed point $\xi$ is the limiting angle $\theta$. This is proved as follows. Since the distance from $p_i$ to the boundary of the thin part goes to infinity, Lemma \ref{lem: same limit} implies that the $\Gamma_i$ converge to the same limit  as the groups $\Gamma_i'=\Gamma(X_i',v_i')$, where $X_i' \longrightarrow X_i$ is the annular cover corresponding to the component of the thin part in which $v_i$ lies, and $v_i' \in T^1 X_i'$ is the lift of $v_i$ that lies in the thin part of the cover. But the $\Gamma_i'$ are just cyclic groups of parabolics, and since their generators translate the base point of the fixed vector $v_0\in T^1 \HH^2$ by an amount tending to zero, the $\Gamma_i'$ converge to a continuous group of parabolics, which is easily seen to be the one identified above. The proof of the second statement is similar, and we leave it as an exercise for the reader. 
\end{proof}

\begin{proof}[Proof of Proposition \ref{prop: boundarybewhat}]
We first show that every element of $\partial \Sub_S(G)$ lies in one of the sets $\mathcal N, \mathcal A, \mathcal A'$ or $\Sub_T(G)$ above. Suppose that $\Gamma_i = \Gamma(X_i,v_i)$ is a sequence in $\Sub_S(G)$ with $\Gamma_i \to \Gamma \in \partial \Sub_S(G)$. Let $\inj (p_i)$ be the infinite order injectivity radius at the basepoint $p_i$ of $v_i$, as discussed in \S \ref{sec: injrad and thick-thin}. After passing to subsequence, we can assume that either $\inj(p_i)\to 0$ or $\inf \inj(p_i) >0$. In the first case, Lemma \ref{lem: thinlimits} shows that $\Gamma$ lies in either $\mathcal N, \mathcal A, $ or $\mathcal A'$. So, we may assume the second case holds.

When $\inf \inj(p_i) \geq \epsilon >0$, the limit $\Gamma = \lim_i \Gamma_i$ is discrete. Indeed, the assumption says that the amount that the basepoint $p_0 \in \HH^2$ is translated by any infinite order element of $\Gamma_i$ is at least $\epsilon$, and since all the $\Gamma_i$ are isomorphic to $\pi_1 S$, there is a fixed upper bound to the order of any elliptic element in any $\Gamma_i$. Hence, every nontrivial element of $\Gamma_i$ must translate the base vector $v_0\in T^1 \HH^2$ by some amount that is bounded below independent of $i$, which implies that the limit group $\Gamma$ is discrete.

Write $\Gamma=\Gamma(X,v)$, so that $(X_i,v_i) \to (X,v)$ smoothly. Since the volumes of the $X_i$ are all the same, the orbifold $X$ has finite volume. (Indeed, if it had infinite volume it would admit compact sets of arbitrarily large volume, which could be embedded in the approximating orbifolds.) Take some compact core $Y \subset X$ whose boundary components have length less than $1$. Since $(X_i,v_i) \to (X,v)$ smoothly, there are almost isometric embeddings $\phi_i : \nhd _2(Y) \longrightarrow X_i$ for large $i$, where $\nhd_2(Y)$ is the $2$-neighborhood in $X$. The images $\phi_i(Y)$ are \emph {essential} suborbifolds of $X_i$. Indeed, if $\gamma \subset \partial Y$ is a boundary component, for large $i$ the curve $\phi_i(\gamma)$ has length less than $1$ and does not contract to a point within its $1$-neighborhood (which is contained in $\phi_i(\nhd_2(Y))$). This implies that $\phi_i(\gamma)$ is essential in $X_i$, for otherwise we could lift it to $\HH^2$ and contract it to a point within its $1$-neighborhood using a coning construction. So, $(X,v) \in \Sub_T(G)$, where $T$ is a hyperbolizable $2$-orbifold that embeds as an essential proper suborbifold in $S$.

\medskip

We now show that if $T$ is a hyperbolizable $2$-orbifold that embeds as an essential proper suborbifold in $S$, then \emph{every} element of $\Sub_T(G)$ lies in $\partial \Sub_S(G)$. The easiest way to do this is to fix an embedding $T \subset S$ and extend a generalized pants decomposition of $T$ to a generalized pants decomposition of $S$. (See \S \ref{sec: pants sec}.) Pick transversals in $T$ for all the curves in our pants decomposition of $T$, and pick transversals in $S$ for the remaining curves in the pants decomposition of $S$. Given a finite volume $2$-orbifold $Y$ marked with a homeomorphism $h: T \longrightarrow Y$, we can extend the Fenchel Nielsen coordinates (see \S \ref{sec: fenchelnielsen}) for $(Y,h)$ to a set of Fenchel Nielsen coordinates for $S$, while taking the lengths of the components of $\partial T$ to zero. This produces a sequence of marked hyperbolic $2$-orbifolds $(X_i, h_i : S \longrightarrow X_i)$. For each $i$, let $Y_i \subset X_i$ be the suborbifold with geodesic boundary in the homotopy class of $h_i(T)$, and pick a vector $v_i \in T^1 Y_i$ that lies in the thick part of $Y_i$. Then as in the previous direction of the proposition, after passing to a subsequence the sequence $(X_i,v_i)$ converges smoothly to some finite volume vectored hyperbolic $2$-orbifold $(X,v)$.

If $K \subset X$ is any compact core, then for large $i$ there are almost isometric embeddings $$\phi_i : K \longrightarrow X_i$$ sending $v$ to $v_i$. Since $\length(\partial Y_i) \to 0$ and $v_i$ lies in the thick part of $Y_i$, we have $d(v_i,\partial Y_i) \to \infty$. Hence, for large $i$ the image $\phi_i(K) \subset Y_i$. The components of $\phi_i(\partial K)$ are essential in $Y_i$, by the same argument as in the previous direction of the proposition. Moreover, there is a universal lower bound for the length of any nonperipheral geodesic in $Y_i$, since any such geodesic must either intersect or be one of the chosen generalized pants curves, which all have fixed lengths independent of $i$. So, if $K$ is chosen large enough so that its boundary components are very short, the components of $\phi_i(\partial K)$ must be peripheral in $Y_i$, implying that $\phi_i( K)$ is a compact core for $Y_i$. In particular, $X$ is homeomorphic to $T$ and there is a natural marking $f : T \longrightarrow X$ that one can produce as a composition $$T \overset{h}{\longrightarrow} h(T) \cong Y_i \cong \phi_i(K) \overset{\phi_i^{-1}}{\longrightarrow} K \cong X,$$
where the $\cong$ are given by isotopies or by homeomorphisms between an orbifold and a compact core. With respect to this marking, the Fenchel-Nielsen coordinates of $(X,f)$ are exactly those of $(Y,h)$, the two marked orbifolds are equivalent. Hence, we have shown that for any finite volume $Y$ homeomorphic to $T$, there is a sequence $(X_i,v_i) \in \Sub_S(G)$ converging smoothly to $(Y,v)$ for some $v\in T^1 Y$. To realize an \emph{arbitrary} vector $w\in T^1 Y$ as the basevector of a limit from $\Sub_S(G)$, just pull back $w$ under the  almost isometric maps given by the convergence $(X_i,v_i)\to (Y,v)$ to get a new sequence of base vectors for the $X_i$.

Finally, we verify the last three statements of the proposition. For 1, suppose $S$ admits an essential, nonperipheral simple closed curve $\gamma$ that does not bound a disk with two order $2$ cone points. Extend $\gamma$ to a generalized pants decomposition of $S$ and choose a sequence of Fenchel Nielsen coordinates where $\ell(\gamma) \to 0$. This produces a sequence of orbifolds $X_i$ homeomorphic to $S$ with regular geodesics $\gamma_i$ whose lengths tend to zero. By Lemma \ref{lem: thinlimits}, if we base the $X_i$ at vectors $v_i \in (T^1X_i)_{p_i}$ in the Margulis tubes around the $\gamma_i$ and choose the distances $d(p_i,\gamma_i)$ and the vectors $v_i$ appropriately, we can arrange that $(X_i,v_i)$ converges to any desired element of $\mathcal A$. It is also clear from Lemma \ref{lem: thinlimits} that the only way to have a sequence in $\Sub_S(G)$ limit to an element of $\mathcal A$ is if the base vectors are chosen inside the Margulis tubes around short regular geodesics. So, 1 follows.

Assertion 2 is almost the same. One just has to note that there is a degenerate geodesic in $S$ exactly when there are a pair of order $2$ cone points.

For 3, note that whenever $S$ has an essential nonperipheral simple closed curve $\gamma$, we can realize every element of $\mathcal N$ as a limit of a sequence $(X_i,v_i)$ in $\Sub_S(G)$. Namely, use Fenchel-Nielsen coordinates to create an orbifold $X_i$ in which $\gamma$ is homotopic to a geodesic $\gamma_i$ of length $\frac 1i$, and take the base vector $v_i\in (T^1X_i)_{p_i}$ to lie in the corresponding Margulis tube, making sure that $d(p_i,\gamma_i)\to \infty$ but $\inj (p_i) \to 0$. By Lemma \ref{lem: thinlimits}, a subsequential limit of such a sequence always exists, and is contained in $\mathcal N$, and by varying the angle the vector $v_i$ makes with the shortest path to $\gamma_i$, any element of $\mathcal N$ can be so realized. 

So, if $\mathcal N \not \subset \Sub_S(G)$, there can be no nonperipheral simple closed curves on $S$, implying that $S$ is a sphere with $3$ total cone points and punctures. If $S$ has punctures, one can realize every element of $\mathcal N$ by choosing some hyperbolic $X \cong S$ and then taking a sequence of base vectors exiting a cusp of $X$, using argument similar to those in the rest of this proof. So, the only way we can have $\mathcal N \not \subset \Sub_S(G)$ is if $S$ is a sphere with three cone points. And for such $S$, $\Sub_S(G)$ is compact (see Example \ref{ex: sphere with 3 points example}) so certainly $\mathcal N$ is not in the boundary of $\Sub_S(G)$. 
\end{proof}

 In {low complexity} cases one can completely describe the topology of $\overline {\Sub_S(G)}$. Thanks are due to Tali Pinksy for telling us about the case of the modular curve. Ideas that are somewhat similar to those we use in the proof appear in \S 3.3 of her paper \cite{pinsky2014templates}.  We denote lens spaces by $L(p,q)$.

\begin{proposition}\label{prop: lensprop}
Let $S $ be a sphere with three total cone points and cusps. Then $\overline {\Sub_S(G)} = \Sub_S(G) \cup \mathcal N$, and the possible homeomorphism types of the closure are as follows.
\begin{enumerate}
    \item If $S$ is compact, $\overline {\Sub_S(G)} = \Sub_S(G) \cong T^1 S / \mathrm{Isom}^+(S)$.
    \item If $S$ has one cusp and two cone points of orders $n\neq k$, then $$\overline {\Sub_S(G)} \cong L(nk-n-k,n-1).$$ Regarding $\Sub_S(G) \cong T^1 S$, the group $\pi_1 \overline{\Sub_S(G)}$ is generated by either of the two singular fibers of $T^1 S \longrightarrow S$. The loop $\mathcal N \subset \overline{\Sub_S(G)}$ is homotopic to a regular fiber, and therefore to the $n^{th}$ (or $k^{th}$) power of a generator for $\pi_1 $.
    \item If $S$ has one cusp and two cone points of order $k$, then $\overline {\Sub_S(G)} \cong L(k-2,1)$. Regarding $ {\Sub_S(G)} \cong T^1 S / \mathrm{Isom}^+(S)$, the group $\pi_1 \overline {\Sub_S(G)}$ is generated by the projection of the singular fiber of $T^1 S\longrightarrow S$ to the quotient by $\mathrm{Isom}^+(S)$. Again, $\mathcal N$ is homotopic to a regular fiber, and is the $k^{th}$-power (or the square) of a generator.
    \item If $S$ has two cusps and one cone point of order $k$, then $\overline {\Sub_S(G)} \cong L(2k-2,1)$. Regarding $ {\Sub_S(G)} \cong T^1 S / \mathrm{Isom}^+(S)$, the group $\pi_1 \overline {\Sub_S(G)}$ is generated by \emph{half} the singular fiber of $T^1 S\longrightarrow S$. Here, we mean the projection to the quotient of a shortest path in $T^1S_p$, where $p$ is the cone point, from some vector $v \in T^1S_p$ to its negative $-v$. Again, $\mathcal N$ is homotopic to a regular fiber, and is the $2k^{th}$-power (or the square) of a generator.
    \item If $S$ is a thrice punctured sphere, then $\overline {\Sub_S(G)} \cong \bbS^3$.
\end{enumerate}
\end{proposition}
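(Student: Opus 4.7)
The plan has four parts. First, I would verify that $\overline{\Sub_S(G)} = \Sub_S(G) \cup \mathcal N$ using Proposition~\ref{prop: boundarybewhat}: since any essential simple closed curve on $S$ is peripheral, neither $\mathcal A$ nor $\mathcal A'$ lies in $\partial \Sub_S(G)$, while $\mathcal N \subset \partial \Sub_S(G)$ exactly when $S$ has a cusp. When $S$ is compact this gives case~(1), together with Theorem~\ref{thm: fenchelnielsen} (which shows $\mathcal M(S)$ is a point) and Proposition~\ref{prop: orbibundle}.

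Next, for cases (2)--(5), the identification of $\mathcal M(S)$ as a point gives $\Sub_S(G) \cong T^1S/\mathrm{Isom}^+(S) \cong T^1 Y$, where $Y = S/\mathrm{Isom}^+(S)$ is again a sphere with at most three cone points or cusps. I would compute $\mathrm{Isom}^+(S)$ case by case: trivial in (2), $\ZZ/2$ swapping the two equal-order cone points in (3), $\ZZ/2$ swapping the two cusps in (4), and $S_3$ permuting the three cusps in (5), in which case $Y$ is the modular orbifold $\bbS^2(2,3,\infty)$. In all four cases $T^1 Y$ is a Seifert fibered $3$-manifold with a torus end over each cusp of $Y$, and taking the closure amounts to Dehn filling each such end by a solid torus with core $\mathcal N$.

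The crucial third step is to identify the filling slope. By Lemma~\ref{lem: thinlimits}(1), a sequence of vectors $v_i$ exiting the cusp at limiting angle $\theta$ (relative to the outward cusp direction) converges in $\Sub(G)$ to the parabolic $N(\xi) \in \mathcal N$, where $\xi$ depends only on $\theta$ and not on the horocyclic position of the basepoint. On the torus cross-section of the cusp in $T^1Y$, which is naturally (horocycle) $\times$ ($T^1$-fiber), this shows that the horocycle bounds a disk in the filling solid torus, i.e.\ is its meridian, while the $T^1$-fiber wraps $\mathcal N$ once. In particular, $\mathcal N$ is freely homotopic to a regular Seifert fiber of $T^1Y \to Y$, as claimed in (2)--(4).

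Finally, I would carry out the standard Dehn filling computation for Seifert fibered spaces. The Seifert invariants of $T^1 Y$ are determined by the orders of its cone points and the Euler number of the circle bundle $T^1 Y \to Y$, which in turn is read off from the hyperbolic area of $Y$ via Gauss--Bonnet. Filling along the horocycle kills the regular fiber class and produces a Seifert fibered space over $\bbS^2$ with at most two exceptional fibers, hence a lens space. Case~(5) then recovers the classical fact that meridian filling of the trefoil complement gives $\bbS^3$. The hardest part will be the case-by-case bookkeeping: pinning down the Seifert invariants (the sign and twist at each exceptional fiber, and the global Euler number), comparing the horocyclic meridian with the meridian used in the standard surgery formula, and identifying which power of the resulting $\pi_1$-generator represents $\mathcal N$ and which (possibly half) singular fiber generates $\pi_1$ in case~(4).
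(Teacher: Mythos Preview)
Your overall architecture matches the paper's: both first reduce cases (3)--(5) to case (2) by passing to the quotient $Y = S/\mathrm{Isom}^+(S)$, and both identify the gluing of $\mathcal N$ to $T^1 Y$ by observing (via Lemma~\ref{lem: thinlimits}) that a sequence of unit vectors exiting the cusp converges in $\Sub(G)$ according only to the angle with the outward direction, so that the horocycle direction collapses and the regular fiber becomes~$\mathcal N$.

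The genuine difference is in how the lens space is identified in case~(2). You propose the standard Seifert surgery calculus: compute the Seifert invariants of $T^1 Y \to Y$ (exceptional fibers of multiplicities $n,k$ and an Euler number coming from $\chi^{orb}(Y)$ via Gauss--Bonnet), then fill the cusp torus along the horocycle section to get a Seifert space over $\bbS^2$ with two exceptional fibers. The paper instead builds an explicit genus-$1$ Heegaard splitting: it compactifies $S$ to $S\cup\{\infty\}$, takes the bi-infinite geodesic $\gamma$ separating the two cone points, and shows that the preimage $T=\pi^{-1}(\gamma\cup\{\infty\})$ is a torus bounding two solid tori $\pi^{-1}(D_k),\pi^{-1}(D_n)$. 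It then writes down the meridians $m_k = k\alpha_k-\beta$ and $m_n = n\alpha_n-\beta$ in an explicit basis $(\alpha_k,\beta)$ of $H_1(T)$, using that the outward normal field on the boundary of a small disc around a cone point of order $m$ is $m$-covered by a field that rotates once relative to a constant field. The change of basis then reads off $L(nk-n-k,\,n-1)$ directly. Your route is more systematic and would generalize more easily; the paper's is more elementary and makes the generators of $\pi_1$ (the singular fibers) visible with no extra work.

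One phrasing to fix: ``filling along the horocycle kills the regular fiber class'' is not what happens. The horocycle is the curve that becomes null-homotopic; the regular fiber survives as the core of the filling solid torus, i.e.\ as $\mathcal N$, and merely acquires \emph{finite order} in $\pi_1$ of the resulting lens space. This is exactly what you want, and is consistent with the last sentence of each of your cases (2)--(4), but the earlier sentence should be corrected.
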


In particular, if $S$ is the modular curve, where $(n,k)=(2,3)$, then case 2 implies that $$\overline {\Sub_S(G)} \cong L(1,1)\cong \bbS^3.$$ Also, note that $L(p,q)\cong L(-p,q)$ and $L(p,q) = L(p,\pm q^{\pm 1})$, where inversion is taken mod $p$. Since $$(n-1)(k-1) = (nk-k-n)+1,$$ the homeomorphism type of $L(nk-n-k,n-1)$ in case 2 is indeed symmetric in $n$ and $k$, as one would expect. Moreover, note that $k(n-1) = (nk-n-k) + n$, so multiplication by $(n-1)$ gives an automorphism of $\ZZ / (nk-n-k) \ZZ \cong \pi_1 L(nk-n-k,n-1)$ that takes the congruence class of $k$ to that of $n$, implying that the statement about $\mathcal N$ is also symmetric in $k,n$. 

Here, case 2 is in some sense the important one. As we will see during the proof, the closure has the same topological type when the orbifold is $S$ as it does when the orbifold is $S / \mathrm{Isom}^+(S)$. In cases 3--5, $S$ has nontrivial isometry group, and the quotient is an orbifold as in case 2. 

\begin{proof} As the compact case is trivial, let's first assume that $S$ is a sphere with two cone points $p_k,p_n$ of distinct orders $k\neq n$, and one cusp. Then $\mathrm{Isom}^+(S) = 1$, so the natural map $T^1 S \longrightarrow \Sub_S(G), \ \ v \mapsto \Gamma(X,v)$ is a homeomorphism, as discussed in Example \ref{ex: sphere with 3 points example}. So, Proposition \ref{prop: boundarybewhat} says that $$\overline \Sub_S(G) = \Sub_S(G) \cup \mathcal N \cong T^1 S \cup \mathcal N.$$
Analyzing the proof of that proposition, one also sees that a sequence $v_i \in T^1 S$ that exits the cusp converges to a point in $\mathcal N$ exactly when the angles that the $v_i$ make with the \emph{outward direction} converge. Here, the `outward direction' is the direction of the shortest geodesic from the base point of $v_i$ to the cusp, which is well-defined for large $i$. For simplicity, we will think of elements of $\mathcal N$ as these limiting angles. 

% Pick a conformal parameterization $D \setminus \{0\} \hookrightarrow U$ of a cusp neighborhood $U \subset S$, where $D\subset \CC$ is the unit disc, circles around the origin map to horocycles, and rays coming into the origin map to geodesic rays. Then we get an induced parameterization $D \setminus \{0\} \times S^1 \longrightarrow T^1 U,$
% where the last $S^1$ coordinate records the CCW angle from the outward direction to the given vector. Pick a parametrization $S^1 \longrightarrow \mathcal N$ where $\theta\in S^1$ maps to the one parameter unipotent group stabilizing the endpoint on $\partial \HH^2$ of the geodesic ray from the base point\footnote{Recall that we have a standard choice of base point and base vector that determines the dictionary between vectored $2$-orbifolds and discrete subgroups of $\mathrm{Isom}^+(\HH^2)$.} of $\HH^2$ that is at an angle of $\theta$ CCW from the base vector. From the discussion above, these parametrizations combine to give a total parameterization
% $$D \times S^1 \longrightarrow T^1U \cup \mathcal N,$$
% where $T^1 U$ is glued to $\mathcal N$ as it is when regarded as a subset of $\Sub_S(G) \cong T^1 S$. In other words, $\Sub_S(G) \cup \mathcal N$ is a Dehn filling of $\Sub_S(G)\cong T^1 S$ where the meridians are loops in $T^1 S$ that lie over horocycles and where the angle with the outward direction is fixed.

Let us analyze more carefully how $\mathcal N$ is glued to $T^1S$ above. Topologize $S \cup \{\infty\} $ as the one-point compactification, and extend the bundle projection continuously to a map $$\pi: T^1 S \cup \mathcal N  \longrightarrow S \cup \{\infty\}.$$ It turns out $\pi$ is a fiber bundle in a neighborhood of $\infty$, but it is not necessary to work this out now. Let $\gamma $ be the bi-infinite geodesic ray on $S$ that separates the two cone points, and consider the preimage $$T := \pi^{-1}(\gamma \cup \infty).$$ This $T$ is a torus: indeed, if we trivialize the bundle $T^1 S|_{\gamma}$ so that $\gamma'$ is a constant field, then the two ends are glued with a $\pi$-rotation when we add in $\mathcal N$. The loop $\gamma \cup \infty$ divides $S \cup \infty$ into two halves. Drawing $S$ as in Figure \ref{fig: heegaard}, let us call the two halves the \emph{$k$-side} and the \emph{$n$-side}, and write $D_k, D_n$. Let us also orient $S$ so that tangent spaces on the `front' of $S$ in Figure \ref{fig: heegaard} are oriented CCW.

\begin{figure}
    \centering
    \includegraphics{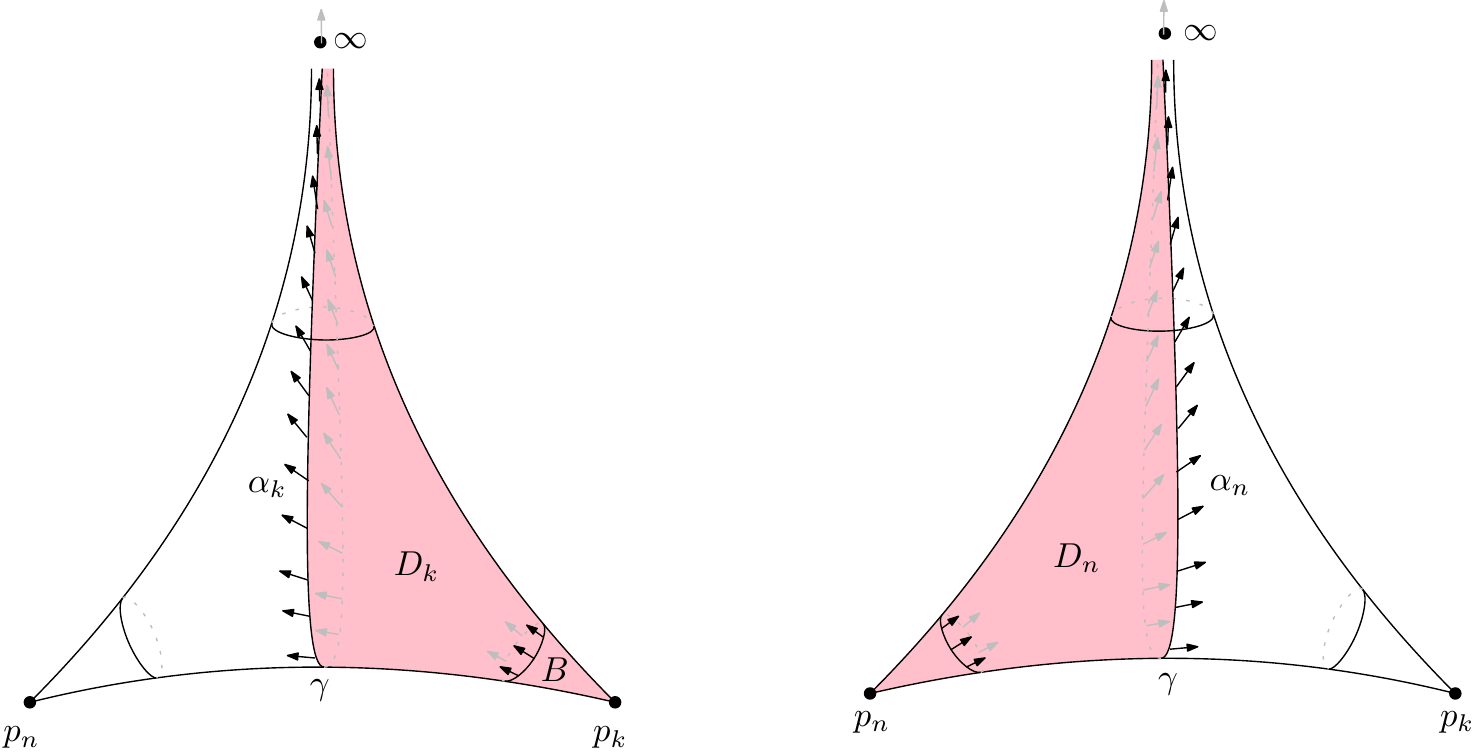}
    \caption{The subdivision of $S \cup \{\infty\}$ into $D_k$ and $D_n$ along $\gamma \cup \{\infty\}$, along with the sections $\alpha_k$ and $\alpha_n$ of $\pi$ along $\gamma \cup \{\infty\}$. Also drawn are the small metric neighborhoods $B$ around the cone points that we use in the proof, along with the outward normal fields along their boundaries, which are homotopic to $\alpha_k,\alpha_n$.}
    \label{fig: heegaard}
\end{figure}

The $k$-side $D_k \subset S$ is a closed disc with a single cone point $p_k$. For any $x\in D_k$ there is a unique geodesic from $p_k$ to $x$. The tangent fields of these geodesics combine to give a section $\sigma_k$ of the projection $\pi$ over $ {D_k} \setminus \{ p_k\}$. Here, we take the `unique geodesic from $p_k$ to $\infty$' to mean the geodesic from $p_k$ to the cusp, and we define $\sigma_k(\infty)$ to be the outward direction in $\mathcal N$. So defined, $\sigma_k$ is continuous over $D_k \setminus \{p_k\}$. It follows that $\pi^{-1}(D_k)$ is a solid torus: indeed, if $B$ is a small metric ball around $p_k$, the complement $D_k \setminus B$ is an annulus $A$, and $T^1 B$ is a solid torus that is attached along its boundary to $\pi^{-1}(A)$, which is homeomorphic to $A \times \bbS^1$ via the map that takes $(x,v)$ to $(x,\theta)$, where $\theta$ is the CCW angle from $\sigma_k(x)$ to $v$. Similarly, $\pi^{-1}(D_n)$ is a solid torus, so $T$ is a Heegaard splitting of $T^1S \cup \mathcal N$. 

To identify the homeomorphism type of the space $T^1 S \cup \mathcal N$, then, it suffices to concretely determine a pair of meridians $m_k, m_n$ on the two sides. Focusing on the $k$-side first, let us fix a convenient basis for $H_1(T)$. Give $\gamma \cup \infty = \partial D_k$ the boundary orientation, and take the first basis element to be $\alpha_k$, which is defined as the restriction of $\sigma_k$ to $\gamma \cup \infty$, oriented compatibly with $\gamma \cup \infty$. The second element is $\beta$, the $\pi$-fiber over a point of $\gamma$, with the orientation induced by that of $S$. To understand the meridian coming from the $k$-side, it is easier to replace the torus $T$ with unit tangent bundle over the boundary $\partial B$ of a small metric ball $B \subset D_k$ around the cone point $p_k$. Then within $\pi^{-1}(D_k \setminus p_k)$, the curve $\alpha_k$ is homotopic to the restriction of $\sigma_k$ to $\partial B$, i.e.\ the outward normal vector field over $\partial B$, considered as an oriented closed curve in a way compatible with the boundary orientation on $\partial B$. The ball $B$ is $k$-covered by a metric disk $\tilde B \subset \HH^2$; we equip the upper half plane $\HH^2$ with the `standard' CCW orientation and assume the covering map preserves orientation. The outward normal field over $\partial B$ is then $k$-covered by the outward normal field to $\partial \tilde B$, again oriented compatibly with the boundary orientation on $\partial \tilde B$. The meridian on $T^1 \partial B$ for the solid torus $T^1 B$ is $1$-covered (i.e., parametrized)  by a \emph{constant} vector field on $\partial \tilde B$, since the latter is homotopic to a point in $T^1 \tilde B$, and its projection is an essential curve on $T^1 \partial B$. Here, `constant' means in terms of any trivialization of $T^1 \HH^2$. But with respect to such a trivialization, the outward normal field over $\partial \tilde B $ rotates by $2\pi$ CCW, so the constant field (which covers the meridian) differs from it (which covers $k$ times the outward normal) by subtracting a copy of the CCW oriented fiber. Tracing all this back to $T$, we get that $$m_k := k \alpha_k - \beta \in H_1(T)$$ is a meridian for $D_k$. The same argument works verbatim for the $n$-side, giving a meridian 
$$m_n := n \alpha_n-\beta\in H_1(T)$$
for $D_n$, where $\alpha_n$ is defined in the same way as $\alpha_k$. To relate the curves $\alpha_k,\alpha_n$, note that the two are oriented in ways that are compatible with opposite orientations of $\gamma \cup \infty$, so it is easier to relate $\alpha_n$ and $-\alpha_k$, say. Moving along $-\alpha_k$, the vector rotates by $\pi$ clockwise in comparison to $\gamma$, while moving along $\alpha_n$ it rotates CCW by $\pi$ in comparison to $\gamma'$. So, $\alpha_n = -\alpha_k + \beta$. From this we get $$m_n = n(-\alpha_k+\beta) - \beta = -n\alpha_k + (n-1)\beta.$$
Finally, recall that $L(p,q)$ is constructed by gluing two solid tori with longitude-meridian framings together so that the meridian on one becomes the $(p,q)$-curve on the other. So, if we change bases on $T$, using instead the curves $\alpha_k$ and $m_k=k\alpha_k-\beta$, then writing
$$m_n = -n\alpha_k + (n-1)\beta = p \alpha_k + q (k\alpha_k - \beta)$$
we see that $q=1-n$ and $-n=p+kq \implies p=k(n-1)-n = kn-k-n$, so $$T^1S \cup \mathcal N \cong L( kn-k-n, 1-n) \cong L( kn-k-n, n-1)$$ as desired. Note also that the singular fiber $T^1S_{p_k}$ is homotopic to the outward normal field on $\partial B$ and hence to $\alpha_k$, which is a longitude for the solid torus $\pi^{-1}(D_n)$. So, the singular fiber $T^1S_{p_k}$ generates $\pi_1 (T^1 S \cup \mathcal N)$. Similarly, the singular fiber $T^1 S_{p_n}$ is a generator. Since $\mathcal N$ is homotopic to a regular fiber on $T^1 S$, it is the $n^{th}$ (or $k^{th}$) power of a generator.

We now reduce cases 3--5 to case 2. Suppose $S$ is an arbitrary sphere with $3$ total cone points or cusps.  As described in Example \ref{ex: sphere with 3 points example}, we can regard $\Sub_S(G) \cong T^1S / \mathrm{Isom}^+(S).$ The action of $\mathrm{Isom}^+(S)$ respects the gluing to $\mathcal N = \partial\Sub_S(G) $, so we have
$$\Sub_S(G) \cup \mathcal N \cong \Sub_{S'}(G) \cup \mathcal N,$$
where $S' = S / \mathrm{Isom}^+(S)$. Here, as $S'$ is a lower complexity orbifold with no orientation preserving isometries, it must be a sphere with two cone points of distinct orders  and one cusp, as in case 2. So, to prove cases 3--5 all one has to do is identify $S'$ for each given $S$. If $S$ has two cone points of the same order $k$ and one cusp, $\mathrm{Isom}^+(S) \cong \ZZ/2\ZZ$ and the nontrivial element fixes the cusp, exchanges the two cone points, and fixes another regular point on $S$. So, $S'$ has cone points of orders $k,2$ and one cusp, and case 3 follows from case 2, since the singular fibers of $T^1 S$ project to those of $T^1 S'$. If $S$ has two cusps and one cone point of order $k$, the only nontrivial orientation preserving isometry fixes the cone point, exchanges the two cusps, and fixes one additional regular point, so $S'$ has cone points of orders $2k,2$ and one cusp. So, case 2 proves case 4, noting that half the singular fiber of $T^1 S$ projects to the singular fiber of $T^1 S'$. Finally, if $S$ is a thrice punctured sphere, it is well known that $S'$ is the modular curve, so case 5 follows similarly. 
\end{proof}

 Let $\epsilon>0$ be smaller than the Margulis constant. If $S$ has finite volume, define the \emph{$\epsilon$-end  neighborhood} of $\Sub_S(G)$  to be the subset $\Sub_S^\epsilon(G)$ consisting of all groups $\Gamma=\Gamma(X,v) \in \Sub_S(G)$ such that for the corresponding vectored orbifolds $(X,v)$, either
\begin{enumerate}
	\item the \emph{systole} of $X$, i.e. the length of the shortest admissible geodesic, is less than $\epsilon$, or 
\item the vector $v$ lies in a component of the $\epsilon$-thin part of $X$ that is a neighborhood of a cusp.
\end{enumerate}
\noindent Note that these two cases are not mutually exclusive. Also, in 2 it is not really necessary to say that the component of the thin part is a cusp neighborhood, since if it is a Margulis tube then 1 holds. However, stating it as above reflects better the way that we think of membership in $\Sub_S^\epsilon(G)$. It is easy to verify that $\Sub_S^\epsilon(G)$ is open in $\Sub_S(G)$, that the complement $\Sub_S(G) \setminus \Sub_S^\epsilon(G)$ is compact, and that the complementary compact subsets exhaust $\Sub_S(G)$ as $\epsilon\to 0$. 

\begin{proposition}[Ends of vectored moduli spaces]\label{prop: oneend}
Suppose that $S$ is a finite type hyperbolizable $2$-orbifold and let $\epsilon>0$ be smaller than the Margulis constant. 
\begin{enumerate}
\item If $S$ is not a sphere with 3 or 4  cone points, the $\epsilon$-end neighborhood $\Sub_S^\epsilon(G)$ is path connected, and $\Sub_S(G)$ has one end.
    \item If $S$ is a sphere with three  cone points, then as described in Example \ref{ex: sphere with 3 points example}, $\Sub_S(G)\cong T^1 S / \mathrm{Isom}^+(S)$, which is compact, and then $\Sub_S^\epsilon(G)$ is empty for all small $\epsilon$. 
    \item If $S$ is a sphere with four cone points, pick a partition $P$ of the orders of the cone points into two sets of two. For instance, if the orders are $2,2,3,5$ then there are two possible partitions, $\{\{2,2\},\{3,5\}\}$ and $\{\{2,3\},\{2,5\}\}.$ Let $\mathcal C_P \subset \Sub_S^\epsilon(G)$ be the subset consisting of all $\Gamma(X,v)$ such that such that there is a closed geodesic on $X$ of length less than $\epsilon $ that separates $X $ into two pieces that induce the partition $P$ of cone point orders. Then for small $\epsilon $, these subsets $\mathcal C_P$ are the path components of $\Sub_S^\epsilon(G)$, and the number of ends of $\Sub_S(G)$ is the number of partitions, as in Proposition \ref{prop: endofmoduli}.
    % if in the following table distinct letters represent distinct orders of cone points, we have
%     \begin{center}
% \begin{tabular}{ c|c } 
% Orders & \# Ends  \\ 
%  \hline
%  a,a,a,a & 1  \\ 
%  a,a,a,b & 1  \\ 
%  a,a,b,b & 2 \\ 
%  a,a,b,c & 2 \\
%  a,b,c,d & 3
% \end{tabular}
% \end{center}
\end{enumerate}
\end{proposition}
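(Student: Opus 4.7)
My plan is to reduce everything to the fiber orbibundle $\pi_{\Sub} : \Sub_S(G) \longrightarrow \mathcal M(S)$ from Proposition \ref{prop: orbibundle} together with the structure of $\mathcal M^\epsilon(S)$ from Proposition \ref{prop: endofmoduli}. Write $\Sub_S^{\epsilon,\mathrm{sys}}(G) := \pi_{\Sub}^{-1}(\mathcal M^\epsilon(S))$ for the subset where the systole is $<\epsilon$ and $\Sub_S^{\epsilon,\mathrm{cusp}}(G)$ for the subset where $v$ lies in a cusp neighborhood, so that $\Sub_S^\epsilon(G) = \Sub_S^{\epsilon,\mathrm{sys}}(G) \cup \Sub_S^{\epsilon,\mathrm{cusp}}(G)$. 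Each fiber of $\pi_{\Sub}$ has the form $T^1X/\Isom^+(X)$ and is path connected, so the preimage under $\pi_{\Sub}$ of any path connected subset of $\mathcal M(S)$ is itself path connected.

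Case 2 will follow immediately from Example \ref{ex: sphere with 3 points example}: the unique hyperbolic $X$ is compact and rigid with bounded-below systole, so $\Sub_S^\epsilon(G)$ is empty for small $\epsilon$. Case 3 is almost as quick: since $S$ is compact, $\Sub_S^{\epsilon,\mathrm{cusp}}(G) = \emptyset$ and hence $\Sub_S^\epsilon(G) = \pi_{\Sub}^{-1}(\mathcal M^\epsilon(S))$; Proposition \ref{prop: endofmoduli} case 3 identifies the path components of $\mathcal M^\epsilon(S)$ as the $\mathcal C_P^\epsilon$, and pulling back gives that the $\mathcal C_P = \pi_{\Sub}^{-1}(\mathcal C_P^\epsilon)$ are the path components of $\Sub_S^\epsilon(G)$.

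For case 1, if $S$ has no cusps then $\Sub_S^\epsilon(G) = \pi_{\Sub}^{-1}(\mathcal M^\epsilon(S))$ and Proposition \ref{prop: endofmoduli} makes $\mathcal M^\epsilon(S)$ path connected, so I am done. When $S$ has cusps, given $(X_0,v_0), (X_1,v_1) \in \Sub_S^\epsilon(G)$ my plan is to build a path between them by a three-stage deformation. First, using Fenchel-Nielsen coordinates, I will reduce each $(X_i, v_i)$ to a nearby element in which $X_i$ simultaneously has a short curve and $v_i$ lies in a cusp: if $v_i$ is already in a cusp but $X_i$ has no short curve, I pinch a pants curve while continuously tracking the cusp so $v_i$ stays inside; if instead $X_i$ already has a short curve, I move $v_i$ within $X_i$ into a cusp, which keeps us in $\Sub_S^{\epsilon,\mathrm{sys}}(G)$ throughout. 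Second, I deform $X_0 \to X_1$ along any path in $\mathcal M(S)$ (which is always path connected), keeping $v$ in a cusp throughout by continuous cusp-tracking; the short-curve condition may fail on intermediate structures, but the cusp condition keeps us in $\Sub_S^{\epsilon,\mathrm{cusp}}(G) \subset \Sub_S^\epsilon(G)$. Third, I move $v$ through the thick part of $X_1$ to reach $v_1$, staying in $\Sub_S^{\epsilon,\mathrm{sys}}(G)$ throughout thanks to the short curve of $X_1$.

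The end statements will then follow because the complements $\Sub_S(G) \setminus \Sub_S^\epsilon(G)$ form a compact exhaustion of $\Sub_S(G)$ as $\epsilon \to 0$, so the number of ends of $\Sub_S(G)$ equals the number of path components of $\Sub_S^\epsilon(G)$ for small $\epsilon$: zero in case 2, one in case 1, and the number of partitions in case 3. The main technical obstacle will be rigorously justifying the continuous cusp-tracking through Teichm\"uller space; this is essentially standard, following from continuity of holonomy representations and the Douady-Earle parametrization used in the proof of Proposition \ref{prop: orbibundle}, but care must be taken because the cusp neighborhood is a geometric object that deforms continuously only when a peripheral conjugacy class is held fixed.
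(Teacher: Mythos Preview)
Your approach is essentially the same as the paper's, and correct. The paper streamlines your cusped case slightly: instead of first arranging that each $(X_i,v_i)$ satisfies \emph{both} conditions (short curve and cusp vector), it only reduces to the cusp-vector condition---dragging $v_i$ into a cusp, which stays in $\Sub_S^\epsilon(G)$ since the short-curve condition then holds automatically---and then connects the two cusp-based points via a Douady--Earle family $(X_t,dF_t(w))$ in Teichm\"uller space, after choosing markings that send a fixed cusp $c$ of $S$ to the cusps in question (using that $\mathrm{Mod}(S)$ acts transitively on cusps). The key technical claim, which you correctly flag as the main obstacle, is that for a compact family $\{F_t\}$ there is a fixed neighborhood $U$ of $c$ in $S$ with $F_t(U)$ in the $\epsilon$-thin part for all $t$; the paper proves this by a short compactness/continuity argument. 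One small wrinkle in your write-up: your stage~3 as stated (``move $v$ through the thick part of $X_1$\ldots thanks to the short curve of $X_1$'') only covers the case where the \emph{original} $X_1$ already had a short curve; when it did not, you modified $X_1$ in stage~1 by pinching, so you must also reverse that pinching at the end. This is clearly what you intend and costs nothing, but should be said.
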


\begin {proof}
First, assume that $S $ has a cusp, but is not a sphere with $3$ total cone points or cusps. We will show that $\Sub_S^\epsilon(G)$ is path connected. Since $S$ has a cusp, any vectored orbifold $(X,v)$ where $X$ has systole less than $\epsilon$ can be connected within $\Sub_S^\epsilon(G)$ to a vectored orbifold where the vector lies in a cuspidal component of the $\epsilon$-thin part, just by dragging $v$ into a cusp of $X$. So, it suffices to show how to connect two points of $\Sub_S^\epsilon(G)$ that are of the second type in its definition.

So, pick $\Gamma(X_0,v_0), \Gamma(X_1,v_1) \in \Sub_S^\epsilon(G)$ and assume that both $v_i$ lie in components of the $\epsilon $-thin parts that are neighborhoods of cusps $c_i$ of $X_i$. Choose markings 
$$h_i : S \longrightarrow X_i, \ \ i=0,1$$
such that $c_0,c_1$ are the $h_i$-images of a single cusp $c$ of $S$. (Here, we are using that the orientation preserving mapping class group acts transitively on the cusps of $S$.) Choose a path $(X_t,h_t) \in \mathcal T(S)$ from $(X_0,h_0)$ to $(X_1,h_1)$, say using Fenchel-Nielsen coordinates. Fixing a hyperbolic structure on $S$, let $$F_t : S \longrightarrow X_t$$ be the Douady-Earle map homotopic to $h_t$, as defined in \eqref{eq: douadyearle} during the proof of Proposition \ref{prop: orbibundle}. As in that proof, the Douady-Earle maps vary smoothly with $t$, and the map $$[0,1] \times T^1S \longrightarrow \Sub_S(G), \ \ (t,v) \mapsto \Gamma(X_t,dF_t(v))$$ is continuous. 

We claim that there is some neighborhood $U \subset S$ of the cusp $c$ such that for all $t\in [0,1]$ and all $p\in U$, we have $\inj(X_t, F_t(p)) < \epsilon.$ Indeed, let $T_t \subset X_t$ be the $\epsilon$-thin part. If our claim fails, we have $t_i\in [0,1]$ and a sequence $p_i\in S$ that exits the cusp $c$ such that for all $i$, we have $F_{t_i}(p_i) \not \in T_{t_i}$. Passing to a subsequence, suppose $t_i \to t$, and choose a neighborhood $V \subset S$ of $c$ such that $\overline V \subset F_t^{-1}(T_t)\subset S$. Since the $v_i$ exit $S$, after discarding finitely many $i$, we can assume that $p_i \in V$ for all $i$. But each $F_{t_i}^{-1}(T_{t_i})$ is an annular neighborhood of the cusp $c$ that does not contain $p_i$, so $F_{t_i}^{-1}(T_{t_i})$ cannot contain $\partial V$. Hence, we can pick points $q_i \in \partial V$ such that $F_{t_i}(q_i) \not \in T_{t_i}$ for all $i$. Passing to a subsequence, we have $q_i\to q \in \partial V$. But then by definition of $V$, we have $\inj (X_t,F_t(q)) < \epsilon$, so continuity of injectivity radius implies that $\inj_{X_{t_i}}(F_{t_i}(q_i)) < \epsilon$ for all large $i$, contradicting our assumption that $F_t(q_i) \not \in T_{t_i}$.

Returning to our original goal, pick some vector $w\in T^1 U$, where $U$ is as in the previous paragraph. The image $F_0(U)$ is an annular neighborhood of the cusp $c_0$ that lies in the $\epsilon$-thin part of $X_0$, so it is contained in the component of the thin part that is a neighborhood of $c$. So, there is a path in $T^1X_0$ from $v_0$ to $F_0(w)$ that lies over that component. Similarly, construct a path from $v_1$ to $F_1(w)$. Regarding these as paths of vectored orbifolds, where the underlying orbifold happens to be fixed, we get paths in $\Sub_S^\epsilon(G)$. And if we concatenate these paths on either side of the path $$t \longmapsto \Gamma(X_t,dF_t(w)),$$
we get a path in $\Sub_S^\epsilon(G)$ joining $
\Gamma(X_0,v_0)$ to $\Gamma(X_1,v_1)$ as desired.

\medskip

The case where $S$ is a sphere with three cone points is handled as in Example \ref{ex: sphere with 3 points example}. Namely, after equipping $S $ with a hyperbolic structure, we have $\Sub_S(G) \cong T^1 S / \mathrm{Isom}^+(S).$ So as $S$ is compact, so is $\Sub_S(G)$, and we are done. 

\medskip

Suppose now that $S $ is compact, but is not a sphere with three cone points. The projection \\
$\pi : \Sub_S(G) \longrightarrow \mathcal M(S)$ is a fiber orbibundle with connected fibers, and compactness of $S$ implies that $\Sub_S^\epsilon(G)$ is the preimage of $\mathcal M^\epsilon(S)$, since option 2 in the definition of $\Sub_S^\epsilon(G)$ is never relevant. So, the desired statement follows directly from Proposition \ref{prop: endofmoduli}.
\end{proof}

Finally, for the convenience of the reader we mention the following statement, which is one of the main results of this paper, and which is important for the calculations in the next section.

\begin{theorem}\label{thm: deformation retract thm}
			Suppose $S$ has finite topological type. Then  for small $\epsilon$, there  is a deformation retraction in $\overline{\Sub_S(G)}$
$$\Sub_S^\epsilon(G) \cup \partial \Sub_S(G) \longrightarrow \partial \Sub_S(G).$$ 
\end{theorem}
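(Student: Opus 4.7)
The plan is to realize the deformation retraction as the combination of two geometric flows on $\Sub_S^\epsilon(G)$: conformal grafting along the short admissible geodesics (of length $<\epsilon$) of the underlying orbifold, which pinches each such geodesic to a cusp as the grafting length tends to infinity, and outward geodesic flow of basevectors sitting in cuspidal components of the $\epsilon$-thin part. Running both flows to their $t=1$ limits produces vectored hyperbolic orbifolds lying in $\partial\Sub_S(G)$: pinched geodesics become cusps so the underlying orbifold degenerates to some essential proper sub-orbifold $T\subsetneq S$, landing in $\Sub_T(G)$, and the outward flow produces groups converging to elements of $\mathcal N\subset\partial\Sub_S(G)$, exactly as described by Lemma~\ref{lem: thinlimits} and Proposition~\ref{prop: boundarybewhat}.

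Concretely, I would fix smooth cutoffs $\phi\colon[0,\epsilon]\to[0,1]$ with $\phi(\epsilon)=0$ and an analogous $\psi$ on the infinite-order injectivity radius of a vector in a cuspidal thin part. For $\Gamma(X,v)\in\Sub_S^\epsilon(G)$ and $t\in[0,1)$, insert a Euclidean cylinder of length $L(t,\ell)=\phi(\ell)\cdot t/(1-t)$ along each admissible geodesic of length $\ell<\epsilon$, uniformize to obtain $X_L^{hyp}$, and simultaneously translate $v$ along the outward horocyclic geodesic in each cuspidal $\epsilon$-thin component by distance $\psi(\inj)\cdot t/(1-t)$. Set $H_t(\Gamma(X,v))$ to be the group associated to the resulting vectored hyperbolic orbifold, defining $H_1$ as the Chabauty limit. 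Continuity of $H_t$ on the interior $\Sub_S^\epsilon$ is essentially Theorem~\ref{thm: contgraft} combined with Proposition~\ref{prop: conv thm} (which identifies the smooth and Chabauty topologies), together with smoothness of geodesic flow; the inclusion $H_1(\Sub_S^\epsilon)\subset\partial\Sub_S$ is immediate from the known large-$L$ behavior of grafting and Lemma~\ref{lem: thinlimits}.

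The main obstacle will be arranging continuity of $H_t$ across the boundary $\partial\Sub_S(G)$ while simultaneously enforcing the retract identity $H_t|_{\partial\Sub_S(G)}=\mathrm{id}$. The choice $\phi(\epsilon)=0$ already makes the multicurve of grafted geodesics vary continuously as lengths cross the threshold. For a sequence $\Gamma_i\in\Sub_S^\epsilon$ in which some short curves $\gamma_i\subset X_i$ pinch to cusps in the limit $\Gamma_\infty\in\Sub_T(G)$, the grafting lengths $L(t,\ell_i)=\phi(\ell_i)\cdot t/(1-t)$ stay bounded as $\ell_i\to 0$, and Propositions~\ref{prop: modelprop} and \ref{prop: globalesstimates}, together with Theorem~\ref{thm: contgraft}, show that the uniformized grafted orbifolds still converge smoothly to $\Gamma_\infty$, the bounded Euclidean cylinders being absorbed into the emerging cusps after uniformization. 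To obtain the retract identity on the boundary strata, one further damps the grafting and push amounts by a weight that is continuous on $\Sub_S^\epsilon\cup\partial\Sub_S$ and vanishes on $\partial\Sub_S(G)$, and then verifies case by case on the strata $\Sub_T(G)$ and the elementary components $\mathcal A,\mathcal A',\mathcal N$ that the deformation reduces to the identity; managing this compatibility, together with the simultaneous interaction of grafting with cusp flow and with the various kinds of thin parts, is where the bulk of the technical work will reside.
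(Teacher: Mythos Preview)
Your overall strategy---graft along the short curves and push basevectors out cusps, using Theorem~\ref{thm: contgraft}, Propositions~\ref{prop: modelprop} and~\ref{prop: globalesstimates}, and Proposition~\ref{prop: limits in collars}---is exactly the paper's strategy. But there is a genuine gap in how you arrange continuity across $\partial\Sub_S(G)$, and your proposed fix (damping by a weight vanishing on the boundary) does not work.

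The issue is this. Your grafting length $L(t,\ell)=\phi(\ell)\cdot t/(1-t)$ depends only on $\ell$. Now take a limit point $\Gamma(X,v)\in\Sub_T(G)\subset\partial\Sub_S(G)$ where $X$ itself has an admissible geodesic $c$ of length $\ell(c)<\epsilon$ (such points are dense in each stratum $\Sub_T(G)$). For approximating $\Gamma(X^i,v^i)\in\Sub_S^\epsilon(G)$, the almost isometric images $c^i\subset X^i$ also have length close to $\ell(c)$, so you graft along them with length $\phi(\ell(c))\cdot t/(1-t)>0$. Thus $H_t(\Gamma(X^i,v^i))$ converges not to $\Gamma(X,v)$ but to the grafted version of $X$, violating $H_t|_{\partial}=\mathrm{id}$. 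Your damping weight $w$ with $w|_{\partial}=0$ addresses this for fixed $t<1$, but then at $t=1$ the product $w(\Gamma^i)\cdot t^i/(1-t^i)$ is of the form $0\cdot\infty$ along sequences $(\Gamma^i,t^i)\to(\Gamma,1)$ with $\Gamma\in\partial$: by choosing the rates suitably one can make this product limit to anything in $[0,\infty]$, so $H$ is not continuous at $(\Gamma,1)$. Said differently, the damping must vanish on $\partial$ to get the retract identity, but must be positive on the interior so that $H_1$ lands in $\partial$; the two requirements are incompatible with joint continuity at $t=1$.

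The paper's resolution is to make the grafting cutoff depend on the \emph{systole} rather than on an absolute scale: one sets $\sigma(X)=\min\{\mathrm{systole}(X),\epsilon\}$ and
\[
L_t(X,c)=\mathfrak h\bigl(t\cdot\phi(\ell(c)/\sigma(X))\bigr),
\]
with $\phi$ supported on $[0,2]$. Then only curves of length at most $2\sigma(X)$ are ever grafted. For a sequence approaching a discrete boundary point $\Gamma(X,v)\in\Sub_T(G)$ one has $\sigma(X^i)\to 0$, so any curve in $X^i$ coming from a fixed-length curve of $X$ eventually has $\ell/\sigma(X^i)>2$ and is not grafted at all; only the genuinely pinching curves are grafted, and these recede to infinity from the basevector, so Theorem~\ref{thm: contgraft} (with $\mathcal S^\infty=\emptyset$ and condition~4) gives convergence to the ungrafted limit. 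The same systole-dependence governs the cusp flow: one pushes out only the $\sigma$-thin part of the cusp, which recedes arbitrarily deep as $\sigma\to 0$, so basevectors at bounded injectivity radius are eventually unmoved. This single device---cutting off at the systolic scale rather than at a fixed $\epsilon$---is the missing idea in your proposal.
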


The proof of Theorem \ref{thm: deformation retract thm} is given in \S \ref{sec: retractsec}, and uses our work on grafting from \S \ref{sec: graftingsec}. The proof does not rely on anything from \S \ref{sec: trivialS}, so we make use of the result in the next section .

\subsection{Loops in the closure of $\Sub_S(G)$}

\label{sec: trivialS}

Let $S$ be a finite type orientable hyperbolizable $2$-orbifold, and fix a finite volume hyperbolic metric on $S$. In Proposition \ref{prop: boundarybewhat}, we saw that the closure of $\Sub_S(G) \subset \Sub(G)$ was obtained by attaching the vectored moduli spaces $\Sub_T(G)$ corresponding to lower complexity orbifolds $T$ that embed in $S$ with geodesic boundary, and some of the sets $\mathcal A, \mathcal A', \mathcal N$ of elementary subgroups. Here, we investigate more closely how all these spaces are glued together and calculate the fundamental group.

\begin{theorem}\label{thm: vankampen} Suppose that either a four-punctured sphere or a once-punctured torus embeds in $S$ as the interior of a surface with geodesic boundary. Then $\overline {\Sub_S(G)}$ is simply connected.
\end{theorem}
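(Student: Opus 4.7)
The plan is to run an induction on the complexity of suborbifolds, essentially as sketched in the introduction, but with care for the base case. For a finite type hyperbolizable $2$-orbifold $T$, let $\cplx(T)$ be a complexity (e.g.\ $-\chi(T)$ plus a weighting for cone points) chosen so that every essential proper suborbifold embedded in $S$ has strictly smaller complexity than $S$. By Proposition \ref{prop: boundarybewhat}, the space $M := \overline{\Sub_S(G)}$ is the union of $\Sub_S(G)$ itself, certain $\Sub_T(G)$ for essential proper suborbifolds $T \hookrightarrow S$, and a subset $\mathcal E \subset \mathcal N \cup \mathcal A \cup \mathcal A'$ of elementary groups. Let $M_\cplx \subset M$ be the union of $\mathcal E$ and all $\Sub_T(G)$ with $\cplx(T) \leq \cplx$. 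Then $M_0 \subset M_1 \subset \cdots \subset M_{\cplx(S)} = M$, and I will prove by induction on $\cplx$ that $M_\cplx$ is simply connected.

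For the base case, $M_0$ consists of $\mathcal E$ together with the vectored moduli spaces $\Sub_T(G)$ for the handful of lowest complexity hyperbolizable orbifolds $T$ that embed in $S$ (spheres with three cone points/cusps and possibly a small number of others). Using Example \ref{ex: sphere with 3 points example}, Proposition \ref{prop: lensprop}, and the explicit description of $\Sub_{elem}(G)$ in Corollary \ref{cor: homotopy type of sub elem}, each of these pieces has a well-understood fundamental group, and the hypothesis that a four-punctured sphere or a once-punctured torus embeds in $S$ as the interior of a surface with geodesic boundary forces enough structure on $\mathcal E$ (cases 1,2,3 of Proposition \ref{prop: boundarybewhat}) that the Van Kampen analysis for $M_0$ together with a direct killing argument (as described below) suffices to show $\pi_1 M_0 = 1$.

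For the inductive step, assume $\pi_1 M_{\cplx-1} = 1$ and fix $T$ with $\cplx(T) = \cplx$. By Proposition \ref{prop: boundarybewhat}, $\partial \Sub_T(G) \subset M_{\cplx-1}$, and by Theorem \ref{thm: deformation retract thm}, $\partial \Sub_T(G)$ has a neighborhood deformation retract inside $\overline{\Sub_T(G)}$. Thus, using Theorem \ref{thm: deformation retract thm} again inside $M$, the pair $(M_\cplx, M_{\cplx-1})$ is cofibrant, and an open thickening lets me apply Van Kampen: $\pi_1 M_\cplx$ is a quotient of the free product of $\pi_1 M_{\cplx-1} = 1$ with the $\pi_1 \Sub_T(G)$'s, with relations identifying loops pushed in from $\partial \Sub_T(G)$. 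So it suffices to show that every loop in $\Sub_T(G)$ is nullhomotopic in $M_\cplx$. By Remark \ref{rem: repsremark}, $\pi_1 \Sub_T(G)$ is an extension of $\mathrm{Mod}(T)$ by $\pi_1 T^1 T$; the fiber loops can be killed using Proposition \ref{prop: oneend} (path-connectedness of $\Sub_T^\epsilon(G)$) and the fact that any nontrivial cycle in a fiber can be pushed through the thin part to a loop in $\partial\Sub_T(G) \subset M_{\cplx-1}$. The mapping class group part is handled by the observation that $\mathrm{Mod}(T)$ is generated by Dehn twists and half Dehn twists about simple closed curves $\gamma \subset T$; the loop in $\Sub_T(G)$ corresponding to such a twist is realized by a Fenchel--Nielsen path that varies only the twist coordinate of $\gamma$, and using Theorem \ref{thm: contgraft} (or directly via Fenchel--Nielsen coordinates) one can homotope this loop within $\overline{\Sub_T(G)} \subset M_\cplx$ by simultaneously pinching $\ell(\gamma) \to 0$; the endpoint in $\partial \Sub_T(G) \subset M_{\cplx-1}$ is then nullhomotopic by the inductive hypothesis.

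The main obstacle will be the base case $M_0$: the description in Proposition \ref{prop: boundarybewhat}(1)--(3) must be combined with Proposition \ref{prop: lensprop} to carefully analyze how the various lens spaces and the elementary strata $\mathcal N, \mathcal A, \mathcal A'$ are glued, and to verify that the hypothesis on $S$ (namely that a four-punctured sphere or a once-punctured torus embeds with geodesic boundary) produces enough short nonperipheral curves on suborbifolds to kill the generating loops of $\pi_1$ of those lens spaces via the pinching argument above. A secondary technical point will be verifying that the pinching homotopy of a twist loop is continuous into $\overline{\Sub_T(G)}$ with a prescribed limit in $\partial\Sub_T(G)$; this is precisely the content of Theorem \ref{thm: contgraft} applied with grafting length $L \to \infty$, so that no new work is needed beyond the grafting theory already established in \S \ref{sec: graftingsec}.
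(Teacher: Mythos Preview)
Your inductive step essentially matches the paper's claims (a) and (b): you use Theorem~\ref{thm: deformation retract thm} to set up Van Kampen, and you kill the generators of $\pi_1 \Sub_T(G)$ (fiber, $\pi_1 T$, and $\mathrm{Mod}(T)$ parts) by pinching curves. That part is fine and is exactly what the paper does.

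The gap is in your base case. You assert that $\pi_1 M_0 = 1$, but this is not true in general, and the paper explicitly warns against this (see the footnote in the introduction: ``indeed $M_0$ may not be [simply connected]''). The space $M_0$ is a union of $\mathcal N$, $\mathcal A$, possibly $\mathcal A'$, and the closures $\overline{\Sub_T(G)}$ for spheres $T$ with three total cone points and cusps. By Proposition~\ref{prop: lensprop}, these closures are lens spaces $L(p,q)$, typically with nontrivial cyclic $\pi_1$, all glued along $\mathcal N$. Your ``direct killing argument'' for these loops is pinching a nonperipheral curve, but the pieces of $M_0$ have complexity zero: there are \emph{no} nonperipheral essential curves on a sphere with three marked points, so there is nothing to pinch inside $M_0$.

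The paper's workaround is to replace your base case by a different claim (its claim (c)): one exhibits a generating set for $\pi_1 M_0$ whose elements are freely homotopic \emph{in the full space $M$} into $\Sub_T(G)$ for some $T$ of complexity $\geq 1$. For instance, the generator of $\pi_1 \mathcal A$ is realized as half a tangent-circle over a fixed point of an involution on a four-punctured sphere or once-punctured torus; the singular fiber generating $\pi_1$ of a lens space $\overline{\Sub_T(G)}$ is obtained by degenerating a higher-complexity orbifold to $T$. These higher-complexity loops are then killed by your (correct) claim (a). Combined with (b), this shows $\pi_1 M$ is normally generated by elements that are all trivial in $M$, hence $\pi_1 M = 1$; but one never proves, and does not need, that the intermediate $M_\cplx$ are simply connected.
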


\begin{proof} 
Let $M := \overline{ \Sub_S(G). }$ By Proposition \ref{prop: boundarybewhat}, $M$ is the union of all $\Sub_T(G)$, where $T$ embeds as the interior of a suborbifold of $S$ with geodesic boundary, together with the sets $\mathcal A,\mathcal N$ of elementary groups, and possibly $\mathcal A'$ if $S$ has at least two cone points of order 2. 

For each finite type orbifold $T$, let  $\cplx(T)$ be the \emph {complexity} of $S$, defined as the complex dimension of the Teichmuller space of $T$, or alternatively $3g-3+n$, where $n$ is the total number of cone points and cusps. Let $M_\cplx \subset M$ be the union of all the elementary groups in $M$, together with all $\Sub_T(G)$ where $T$ has complexity at most $\cplx$. So, we have a filtration $ M_0 \subset \cdots \subset M_{\cplx(S)}=M.$
Note that $M_0$ is the union of $\mathcal N, \mathcal A,$ possibly $\mathcal A'$, and all $\Sub_T(G)$ where $T$ is a sphere with $3$ total cone points and cusps that embeds in $S$.

Our proof of Theorem \ref{thm: vankampen} will follow from three claims:
\begin{enumerate}
        \item[(a)] For any $T$ as above with complexity $\cplx \geq 1$, the inclusion $\Sub_T(G) \hookrightarrow M_\cplx$ is trivial on $\pi_1$.
    \item[(b)] For each $\cplx$, the group $\pi_1 M_\cplx$ is normally generated by the image of $\iota_*: \pi_1 M_{\cplx-1} \longrightarrow \pi_1 M_\cplx$, where $\iota : M_{\cplx-1} \longrightarrow M_\cplx$ is the inclusion.
    \item[(c)] There is a generating set for $\pi_1 M_0$ consisting of loops that are all freely homotopic in $M$ into $\Sub_T(G)$ for some $T$ as above with complexity $\xi \geq 1$.

\end{enumerate}
Assuming all three claims, the theorem is proved as follows: repeated applications of (b) imply that $\pi_1 M$ is normally generated by $\pi_1 M_0$. By (c) there is a generating set whose elements are freely homotopic into spaces $\Sub_T(G)$, and hence are all trivial by (a). Hence $\pi_1 M$ is trivial.

\medskip

Let us work on (a) first. It suffices to construct a generating set for $\pi_1 \Sub_T(G)$ consisting of loops that are null-homotopic in $M_\cplx$. As discussed in Remark \ref{rem: repsremark}, we have the horizontal short exact sequence, which fits together with another vertical short exact sequence 
\begin{center} 
\begin{tikzcd} 
& & & 1 \arrow[d] & \\
& &  & \pi_1( T) \arrow[d] & \\
1 \arrow[r] & \ZZ \arrow[r] & \pi _1 \Sub_T(G) \arrow[r] & \Aut^+_{tp}(\pi_1 T) \arrow[d] \arrow[r] & 1\\
& & & \mathrm{Mod} (T)  \arrow[d] & \\
& & & 1 & \\
\end{tikzcd} 
\end{center} 
%
%$$1 \longrightarrow \pi_1 T^1 T \longrightarrow \pi_1 \Sub_T(G) \longrightarrow \mathrm{Mod}(T) \longrightarrow 1,$$
%and there is a further short exact sequence
%$$1 \longrightarrow \ZZ \longrightarrow \pi_1 T^1 T \longrightarrow \pi_1 T \longrightarrow 1,$$
where $\pi_1 T$ is the orbifold fundamental group of $T$ and $\ZZ$ is generated by a regular fiber of $T^1 T$.  Our goal is to use these short exact sequences to give generators for $\pi_1 \Sub_T(G)$, and to show that these generators are all nullhomotopic in $\overline{\Sub_T(G)}$. Namely, we construct generators by starting with a generator for $\ZZ$, then adding in any elements of $\pi_1 \Sub_S(G)$ that project to generators for $\pi_1(T) \subset \Aut^+_{tp}(\pi_1 T)$, then adding in any elements of $\pi_1 \Sub_S(G)$ that project to generators for $\mathrm{Mod}(T)$. 
%\begin{itemize} 
%\item elements of $\pi_1(T^1_p T) \cong \ZZ$ are given by rotating the vector, we show this is homotopic to $\mathcal{N}$
%\item elements of $\pi_1(T)$ are given by following simple closed curves on $T$, we show these become nullhomotpic in $\overline{\Sub_T(G)}$
%\item elements of the mapping class group are given by Dehn twists and half Dehn twists around curves in $T$, we show these also become nullhomoptopic in $\overline{\Sub_T(G)}$.
%\end{itemize} 

First, a generator for the subgroup $\ZZ  \subset \pi_1 \Sub_T(G)$ is obtained by fixing a hyperbolic orbifold $X$ homeomorphic to $T$, a point $p\in X$, and considering the loop of all $\Gamma(X,v)$ with $v\in TX_p$. To contract this loop in $M_\cplx$, pinch some curve on $X$ to a cusp, homotoping our loop to a loop of the form $\Gamma(Y,v)$ with $v\in TY_q$, where $Y$ is an orbifold with a cusp that embeds in $T$. Moving the basepoint $q$ out the cusp homotopes the new loop to the loop $\mathcal N $ of one-parameter parabolic groups. By our assumptions on $S$, the thrice punctured sphere $S_{0,3}$ embeds in $S$ as the interior of a suborbifold with geodesic boundary. Hence, $\overline{\Sub_{S_{0,3}}(G)}$ is a subset of $M_\cplx$. But Proposition \ref{prop: lensprop} says that $\overline{\Sub_{S_{0,3}}(G)} \cong \bbS^3$ and that it contains $\mathcal N$, so $\mathcal N$ is homotopically trivial in $M_\cplx$. This implies that our original generator for $\Sub_T(G)$ is homotopically trivial in $M_\cplx$.\footnote{We have chosen to be less formal with our homotopies here to improve readability. Of course, if one wanted to write down all the details here, one would have to say (for instance) how the basepoint moves when we pinch to get a cusp. One way to approach this is to use the Douady-Earle maps from Proposition \ref{prop: orbibundle}, but we leave the details to the reader.}

Next, we describe loops in $\Sub_S(G)$ that project to generators of $\pi_1(T)$. Let $T'$ be the surface obtained by removing all the cone points of $T$, then $\pi_1 T' \twoheadrightarrow \pi_1 T$, so we get that $\pi_1 T$ is generated by simple, closed, essential, nonperipheral curves on $T'$. If $\gamma$ is such a curve, we can fix a hyperbolic orbifold $X\cong T$, and regard $\gamma$ as a parameterized geodesic $\gamma : [0,1] \longrightarrow X$, where $\gamma(0)=\gamma(1)$. Then the loop $$\Gamma_t = \Gamma(X,\gamma'(t)), \ \  t \in [0,1]$$ in $\Sub_T(G)$ projects to an associated generator for $\pi_1 \Sub_S(G)$. To contract this loop within $M_\cplx$, just pinch $\gamma$. If we parameterize the resulting loops appropriately, then we get a homotopy from the original loop to the \emph{element} of $\mathcal A$ (or $\mathcal A'$ if $\gamma$ is degenerate) that leaves invariant the axis through the fixed basevector for $\HH^2$ used in the group/orbifold correspondence. So, we have our nullhomotopy.

Finally, $\mathrm{Mod}(T)$ is isomorphic to the subgroup of  $\mathrm{Mod}(T')$ consisting of elements that permute only cusps and cone points with the same order. So by results of Dehn and Lickorish \cites{dehn, lickorish} on mapping class groups of surfaces, $\mathrm{Mod}(T)$ is generated by Dehn twists around essential, nonperipheral simple closed curves on $T'$, and half twists around essential simple closed curves on $T'$ that bound a disk in $T$ with either two punctures or two cone points of the same order. Fixing again a curve $\gamma$ on $T'$, we now describe a loop in $\Sub_T(G)$ that projects to the Dehn twist around $\gamma$. Fix a hyperbolic orbifold $X \cong T, $ regard $\gamma$ as a geodesic on $X$, and create a path $\Gamma_t:=\Gamma(X_t,v_t)$, where $X_t$ as follows. Cut $X$ along $\gamma$ to create an orbifold $X \dsm \gamma$ with two boundary components, fix a vector $v$ tangent to one of the two boundary components, let $X_t$ be the orbifold obtained by gluing the two components with a twist by $t \cdot \length(\gamma)$, so that $X_1$ is isometric to $ X_0$, and let $v_t$ be the projection of $v$. This loop in $\Sub_T(G)$ can be contracted to a point in $M_\cplx$ by pinching $\gamma$, just as in the previous case. The case of a half Dehn twist is similar.

\medskip

We now turn to part (b). Note that $$M_\cplx =  M_{\cplx-1} \cup \bigcup_T \Sub_{T}(G) ,$$ where $T$ ranges over the finitely many orbifolds with complexity $\cplx$ that embed in $S$ as the interior of an orbifold with geodesic boundary. Fixing some small $\epsilon>0$, consider now the open cover
 $$\left \{ \ M_{\cplx-1} \cup \bigcup_{T \text{ as above }} \Sub_T^\epsilon(G) \right \} \cup \Big \{ \Sub_T(G) \ | \ T \text{ as above } \Big \} \ $$ of $M_\cplx$. The $\Sub_T(G)$ are all path connected and disjoint, and each intersects the first element of the cover in $\Sub_T^\epsilon(G)$, which is path connected by Proposition \ref{prop: oneend}. So it follows from Van Kampen's Theorem that $\pi_1 M_\cplx$ is generated by the fundamental groups of the elements of the cover. In light of part (a), this implies that the fundamental group of the first element of the cover surjects onto $\pi_1 M_\cplx$. But Theorem~\ref{thm: deformation retract thm} says that for each $T$ there is a deformation retraction $$\Sub_T^\epsilon(G) \cup \partial \Sub_T(G) \longrightarrow \partial \Sub_T(G) \subset M_{\cplx-1},$$ so combining all these deformation retractions, we get that every loop in the first element of the cover is homotopic into $M_{\cplx-1}$. Hence, $\pi_1 M_{\cplx-1}$ normally generates $\pi_1 M_\cplx$ as required. This proves (b).
 
 \medskip

We now prove part (c). First, note that $M_0$ is the union of $\mathcal A, \mathcal N, $ possibly $\mathcal A'$, and the closures $\overline {\Sub_T(G)}$, where $T$ is a sphere with 3 total cone points and cusps, and at least one cusp, that embeds in $S$. By Lemma \ref{prop: lensprop}, all these spaces are lens spaces glued along $\mathcal N$, which is connected, so we can make a generating set for $\pi_1 M_0$ by picking a generator for the (cyclic) fundamental group of each space. 

First, $\pi_1 \mathcal A$ is generated by the loop $A(\alpha_t), \ t\in [0,\pi]$, where $\alpha_t \subset \HH^2$ is the geodesic through our basepoint for $\HH^2$ that makes a CCW angle of $t$ with our basevector. By our assumption on $S$, either a four punctured sphere or a punctured torus embeds in $S$. Let $T$ be a four-punctured sphere, and assume $T$ embeds, so that $\Sub_T(G) \subset M$. Equip $T$ with the hyperbolic structure obtained by doubling a regular ideal quadrilateral. Let $\gamma$ be a simple closed geodesic obtained by doubling the common perpendicular to a pair of opposite sides of the quadrilateral. Rotating the quadrilateral by $\pi$ gives an orientation preserving involutive isometry $f$ of $T$ that has two fixed points on $\gamma$. Let $p$ be one of these fixed points and let $v_t$ be a shortest path in $T^1 T_p$ from some $v_0$ to $v_1 = -v_0$.  Then the path $\Gamma_t:=\Gamma(T,v_t), \ \ t\in [0,1]$ is actually a loop in $\Sub_T(G)$, since $df(v_0)=v_1$, and is homotopic in $M$ to the loop $A(\alpha_t)$ above, just by pinching the geodesic $\gamma$ while keeping the basevectors on the geodesic. The case where a punctured torus embeds in $S$ is similar; we just take a hyperbolic structure corresponding to a opposite-side gluing of a regular ideal quadrilateral, rotate the quadrilateral by $\pi$ around its center to get an involution, and look at the loop corresponding to half the tangent space to the center.

Similarly, $\pi_1 \mathcal A'$ is generated by the loop $A'(\alpha_t),$ with $\alpha_t $ as in the previous paragraph. If $\mathcal A' \subset M$, then $S$ has two order two cone points $p,q$. Pick a hyperbolic structure on $S$ and join these cone points by a geodesic segment $\gamma$. The double $2\gamma$ of $\gamma$ is a degenerate admissible geodesic on $S$. By pinching $2\gamma$, we get a homotopy in $M$ from the loop $\Gamma_t := \Gamma(S,v), \ \ v\in T^1S_p$ (essentially, a copy of the singular fiber over the order two cone point $p$) to our loop $A'(\alpha_t)$. 

Next, let $T$ be a sphere with two cone points of orders $n, k$, and one cusp, that embeds in $S$. (It is fine here if $n=k$.) By Proposition \ref{prop: lensprop}, $\overline{\Sub_T(G)} \cong T^1 S \cup \mathcal N$ is a lens space, and its fundamental group is generated by either of the singular fibers in $T^1 S$, say the order $k$ cone point. But by degenerating $S$ to $T$ via pinching an appropriate curve, we get a homotopy from a loop $\Gamma_t := \Gamma(S,v), \ \ v \in TS_p$, where $S$ is hyperbolized and $p$ is a cone point of order $k$, to our generator for $\overline{\Sub_T(G)} $.

Finally, let $T$ be a sphere with a cone point of order $k$ and two cusps, and assume that $T$ embeds in $ S $. Here, Proposition \ref{prop: lensprop} implies that $\pi_1 \overline{\Sub_T(G)}$ is generated by half the singular fiber. The assumptions on $S $ imply that there is some orbifold $R$ with $\Sub_R(G) \subset M$ such that $R$ is either
\begin{itemize}
    \item a sphere with four cusps and an order $k$ cone point,
    \item a torus with one cusp and one order $k$ cone point.
\end{itemize}
Let's focus on the former case for a moment. Hyperbolize $R$ as the double of a hyperbolic pentagon $P$ with 4 ideal vertices and one vertex of angle $\pi/k$, in such a way that $P$ is symmetric about the bisector of the $\pi/k$ angle. There is 1 degree of freedom in this construction; for instance, if the non-ideal vertex is at the origin in the disk model and the angle bisector ray is the positive vertical axis, the pentagon is specified once either of the two vertices that are not adjacent to the non-ideal vertex is chosen on $\partial \HH^2$. The reflection through this angle bisector induces an orientation preserving involution of $R$ that exchanges the two copies of $P$, and fixes the order $k$ cone point. Varying the parameter in the construction of the hyperbolic metric on $R$ pinches a pair of geodesics that are exchanged by the involution, so that the resulting orbifolds converge smoothly to $T$ when the basepoints are chosen to be the singular points. So, a loop in $\Sub_R(G)$ consisting of half the singular fiber will then be homotopic in $M$ to our generator for $\Sub_T(G) \subset M_0$. The case of the torus with one puncture and a cone point is similar.
\end{proof}

\section{Grafting}
\label{sec: graftingsec}
In this section we discuss \emph{(conformal) grafting} of  hyperbolic orbifolds, see e.g.\ \cite{mcmullen}, \cite{hensel} and \cite{bourque} in the case of surfaces, and study how it interacts with the smooth topology. 

Let $X$ be a hyperbolic orbifold and let $\mathcal S$ be a  proper collection of pairwise disjoint simple closed geodesics, possibly degenerate. Here, `proper' means that the inclusion  into $X$ of the disjoint union  of all the geodesics is a proper map, or  equivalently, that only finitely many of the geodesics intersect a given compact subset of $X$. In later sections, we will often replace properness by the stricter assumption that the lengths of all the curves in $\mathcal S$ are universally bounded. Pick a function
$$L: \mathcal S \longrightarrow [0,\infty].$$ From this data, we construct a new orbifold $X_{L}$ equipped with a piecewise hyperbolic/Euclidean metric, which can be uniformized  using Theorem \ref{thm: uniformization} to create a hyperbolic orbifold $X_{L}^{hyp}$. Below, we are going to careful specify all the gluing maps in our construction, in order to set notation for \S \ref{sec: retractsec}.

For each geodesic $c\in \mathcal S$, choose first a fixed isometric parametrization $$p_c : \bbS^1_{\ell(c)} \longrightarrow X,$$ where $\ell(c)$ is the hyperbolic length of $c$ and $\bbS^1_{\ell(c)}$ is a circle with length $\ell(c)$.  If $c$ is  \emph{regular}, then $p_c$ is an embedding. If $c$ is \emph{degenerate} then there is a reflection $$i : \bbS^1_{\ell(c)} \longrightarrow \bbS^1_{\ell(c)}$$ such that $p_c(i(x))=p_c(x)$ for all $x\in \bbS^1_{\ell(c)}$, and where $p_c$ takes the two fixed points of $i$ to distinct order two singular points of $X$.

Next, it will be convenient below to supplement $\mathcal S$ with  the set $\mathcal S_\pm $ of all pairs $(c,\sigma)$, where $c\in \mathcal S$ and $\sigma$ is a \emph{side} of $c$, i.e.\  a component of $\nhd(c) \setminus c$, where  $\nhd(c)$  is a regular neighborhood of $c$. Instead of using the notation of pairs, though, we will write elements of $\mathcal S_\pm $ as $c_+$ or $c_-$, where $c$ is the underlying geodesic. When $c$ is a regular geodesic and ${c}_+$ is a choice of side, the other side will be denoted as $c_-$, but in general $c_+$ can refer to an arbitrary side, as there is no fixed sign assignment. The function $L$ induces a function $ \mathcal S_\pm  \longrightarrow [0,\infty]$, which we also call $L$. This perspective will allow us to treat degenerate and regular geodesics more similarly, shortening the proofs in \S \ref{sec: retractsec}.

For each $c_+\in \mathcal S_\pm $, we now define  a Euclidean orbifold $C_{c_+}$ as follows. 
\begin{itemize}
\item If 	$L(c)=\infty$, set $C_{c_+}$ to be the half-infinite Euclidean annulus $$C_{c_+} := \bbS^1_{\ell(c)} \times  [0,\infty).$$
\item  If $L(c)<\infty$, set $C_{c_+}$ to be either $$\bbS^1_{\ell(c)} \times [0,L(c)] \ \ \text{ or } \ \ \ \bbS^1_{\ell(c)} \times  [0,L(c)] \ \Big / \ (x,0) \sim (i(x),0),$$
 depending on whether $c$  is regular or degenerate.  So, in the regular case,   $C_{c_+} $  is an annulus, and in the degenerate case, $C_{c_+}$  is a disk with two order $2$  singular points. \end{itemize}

Let $X \dsm \mathcal S $ be the hyperbolic $2$-orbifold with geodesic boundary obtained by cutting $X$ along each  element of $\mathcal S $.  For each $c_+\in \mathcal S^\pm  $, we abusively write $c_+ \subset \partial (X \dsm  \mathcal S) $ to denote the boundary component that is adjacent to the preferred side of $c_+$, and we lift our parametrization $p_c$ to give a parametrization of this $c_+$ as well. 

\begin {definition}[The piecewise grafted surface]\label{def: graftingdef}
 Create a piecewise hyperbolic/Euclidean orbifold $X_{L}$ by gluing together $X \dsm \mathcal S $ and the generalized Euclidean annuli $C_{c_+}$ as follows.
\begin {itemize}
\item If $L(c) < \infty$, glue $C_{c_+}$ to  the  corresponding boundary component $c_+ \subset \partial (X \dsm  \mathcal S) $ via the map $$(x , L(c)) \in \partial C_{c} \longleftrightarrow p_c(x) \in  c_+ \subset \partial (X \dsm \mathcal S).$$
 Moreover, if $c$ is regular and $c_+,c_- \in \mathcal S_\pm$ are $c$ equipped with its two side preference,  we also glue  $C_{c_+}$  to $C_{c_-}$   via the map $$(x , 0) \in \partial C_{c_+} \longleftrightarrow  (x,0) \in \partial C_{ c_-}.$$
\item If $L(c)=\infty$, glue  $C_{c_+}$ to  the  corresponding boundary component $c_+ \subset \partial (X \dsm  \mathcal S) $ via
$$(x,0) \in \partial C_{c_+} \longleftrightarrow p_c(x) \in c_+ \subset \partial (X \dsm \mathcal S).$$
\end {itemize}
Finally, if $c\in \mathcal S$ is regular and $c_+,c_-$ are the two side preferences, we set
$$C_c := C_{c_+} \cup C_{c_-}$$
and we set $C_c := C_{c_+}$ when $c$ is degenerate and $c_+$ is the unique side preference. So, when $L(c)<\infty$ and $c$ is regular, $C_c$ is the entire grafted in annulus around $c$. When $L(c)<\infty$ and $c$ is degenerate, $C_c$ is a  generalized annulus. And when $L(c)=\infty$, then $C_c$ is a pair of half-open annuli. Let $$\iota : X \dsm \mathcal S  \longrightarrow X_{L}$$ be the obvious inclusion\footnote{If some $L(c)=0$, then $\iota_t$ is not   injective on the boundary of $\mathcal S_\pm $}.
\end {definition}

Below, it is important to regard $X_L$ as having a tangent space, so let us be a little more careful with our gluings here. Recall that the \emph{strip model} for the hyperbolic plane is the open strip in $\CC$ defined by $|\mathfrak{Im}(z)|<\pi/2$, which we endow with the hyperbolic metric $ds = \sec(y) \sqrt{dx^2+dy^2}$.  In this model, the real axis is a geodesic and the natural parametrization has unit speed. So, given a (say, nonsingular) point in $X_L$ that lies on the gluing locus,  we can create a chart around $p$ by mapping a neighborhood of the corresponding point in $X \dsm \mathcal S$ isometrically to a half-disk in the strip model of the form $$\{ x+iy \ | \ y\geq 0, \ x^2+y^2<\epsilon\},$$
and mapping a neighborhood of the associated point in $C_{c}$ isometrically to the lower half of that disk
$$\{ x+iy \ | \ y\leq 0, \ x^2+y^2<\epsilon\},$$
 which we consider equipped with Euclidean metric. These charts together with charts for the pieces we are gluing form a smooth atlas for $X_L$.  In coordinates near the gluing locus, our piecewise metric is of the form
 $$ds = \begin{cases}
 \sec(y) \sqrt{dx^2+dy^2} & y\geq 0\\ 
 \sqrt{dx^2+dy^2} & y \leq 0,
 \end{cases}
 $$
 and therefore is $C^1$ since $\frac d{dy}\sec(y)=0$ at $y=0.$  
 
 By Theorem \ref{thm: uniformization}, there is a $C^0$-Riemannian metric $d^{hyp}$ in the conformal class of $d$ that is hyperbolic, in the sense that there is an $(Isom(\HH^2), \HH^2)$-structure on $X_L$ that is $C^1$-compatible with the original structure and where the charts are $d^{hyp}$-isometries.  Note that since $X$ is hyperbolic, it is not covered by the $2$-sphere, so Theorem \ref{thm: uniformization} applies. A priori, it could output a Euclidean metric, but this is obstructed topologically unless $X$ is a convex-cocompact generalized annulus, in which case it is obvious that the grafted surface is conformal to a hyperbolic orbifold rather than a Euclidean one.
 
 \begin{definition}[Hyperbolizing $X_L$]
 We write the orbifold $X_{L}$ as $X_{L}^{hyp}$ when it is equipped with the metric $d^{hyp}$ and the associated hyperbolic structure.  Note that since all  structures are $C^1$-compatible, the tangent space $TX_L$ is canonically identified with $TX_L^{hyp}$. We call both $X_L$ and $X_L^{hyp}$ the \emph{grafted orbifolds}.
 \end {definition}

 Here is a first result about the geometry of $X_L^{hyp}$.

\begin{claim}\label{claim: geodesic claim}
Let $c\in \mathcal S$. If $L(c)<\infty$, there is a geodesic $\hat c \subset X_{L}^{hyp}$  in the homotopy class of $c$. If $L(c)=\infty$,  the homotopy class of $c$ consists of loops that are homotopic out a cusp of $X_{L}^{hyp}$. 
\end{claim}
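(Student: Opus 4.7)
The plan is to treat the two cases separately. When $L(c) < \infty$ the grafting at $c$ is topologically and orbifold-theoretically invisible, so a geodesic representative will be produced by appealing to Lemma \ref{lem: geodesic representatives}. When $L(c) = \infty$ the grafted half-cylinders produce conformal punctures of $X_L$ that must be shown to become cusps of $X_L^{hyp}$ under the uniformization of Theorem \ref{thm: uniformization}.

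For the first case, I would observe that the piece $C_c$ grafted in at $c$ has the same orbifold type as a regular neighborhood of $c$ in $X$: a Euclidean annulus when $c$ is regular, and a disk with two order-$2$ cone points when $c$ is degenerate. Hence $X_L$ agrees with $X$ as an orbifold in a neighborhood of $\iota(C_c)$; any modifications to the orbifold type at other curves $c'$ with $L(c') = \infty$ happen elsewhere. A representative of the homotopy class of $c$ may then be taken as a circle $\mathbb{S}^1_{\ell(c)} \times \{s\}$ inside $C_c$ with $s$ chosen away from the two cone points (if any). This loop is simple, lies in the regular part of $X_L^{hyp}$, was essential in $X$ and remains so in $X_L^{hyp}$, and is not homotopic into any cusp: the cusps of $X$ were unchanged, and any \emph{new} cusps of $X_L^{hyp}$ arise from components of $C_{c'}$ with $L(c') = \infty$ and are separated from $C_c$ by an essential part of $X \dsm \mathcal S$. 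Lemma \ref{lem: geodesic representatives} then supplies $\hat c$: case 3 if $c$ is regular and case 4 if $c$ is degenerate.

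For the second case, I plan to show that each half-infinite cylinder $C_{c_+} = \mathbb{S}^1_{\ell(c)} \times [0,\infty)$ contributes a conformal puncture of $X_L$ that becomes a cusp of $X_L^{hyp}$. The explicit change of coordinates $(x,t) \longmapsto e^{-2\pi t/\ell(c)}\, e^{2\pi i x/\ell(c)}$ conformally identifies $(C_{c_+}, g_{Eucl})$ with a once-punctured closed disk, realizing the end of $C_{c_+}$ as a conformal puncture of $X_L$. Following the proof of Theorem \ref{thm: uniformization}, the metric $d^{hyp}$ is the pushforward of the hyperbolic metric on $\mathbb{H}^2$ under a conformal covering $\mathbb{H}^2 \to X_L^{hyp}$; since the complete hyperbolic structure on a conformally punctured disk is precisely the standard cusp, this puncture corresponds to a parabolic fixed point of the deck group, and hence to a cusp of $X_L^{hyp}$. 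The loop $\iota(c_+)$, being the boundary circle $\mathbb{S}^1 \times \{0\}$ of $C_{c_+}$, is freely homotopic inside $C_{c_+}$ to $\mathbb{S}^1 \times \{t\}$ for each $t > 0$, and these loops exit the cusp as $t \to \infty$; the same argument applies to $\iota(c_-)$ when $c$ is regular. The main obstacle is the step identifying the conformal puncture with a genuine cusp under uniformization, but this is a clean consequence of the construction of $d^{hyp}$ in Theorem \ref{thm: uniformization} and the classical fact that horoball quotients are the unique complete hyperbolic structures on punctured disks.
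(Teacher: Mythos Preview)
Your argument is correct and follows essentially the same route as the paper's proof, just unpacked in more detail. The paper compresses everything into one sentence: on a hyperbolic orbifold a simple closed curve has a geodesic representative unless it bounds an annulus conformally equivalent to a punctured disk, and since $c$ was already a geodesic in $X$, this happens precisely when $L(c)=\infty$. Your Case~1 applies Lemma~\ref{lem: geodesic representatives} after checking the hypotheses, and your Case~2 makes the conformal punctured-disk identification explicit; both are exactly the ingredients behind the paper's one-liner.
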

\begin{proof}
	On a hyperbolic orbifold, a simple closed curve  is homotopic to a geodesic unless it bounds an annulus that is conformally equivalent to a punctured disk. Since the original curve $c$ is a geodesic, the latter happens if and only if $L(c)=\infty$. \end{proof}

 More generally, however, uniformization can be quite mysterious! In the rest of this section, we  try to further understand the geometry of $X_{L}^{hyp}$, keeping our arguments as elementary and general as possible.

\begin{remark}
Note that  in this section, we are grafting  generalized annuli of length $L(c)$ along \emph{each side} of a regular geodesic $c$. The reader familiar with grafting on hyperbolic surfaces may initially think that some of our estimates below are off by a factor of two, and this is why.
\end{remark}

\subsection{Generalized Conformal Annuli} \label{sec: conformalannuli}
A $2$-orbifold $A$ is a \emph{generalized annulus} if it is either homeomorphic to an annulus $\bbS^1 \times I$, where $I $ is some (closed, open, or half-open) interval, or homeomorphic to an open or closed disk with two order $2$ singular points. In the latter case, $A$ is  homeomorphic to the quotient $$\bbS^1 \times I / (x,t) \sim (i(x),R(t)) ,$$
where $i : \bbS^1 \longrightarrow \bbS^1$ is a reflection of $\bbS^1$ and $R : I \longrightarrow I$ is a reflection through the midpoint of $I$. Note that $A$ is then double covered by the annulus $\bbS^1 \times [-1,1]$.  Often, we will write generalized conformal annuli as $$A:=\bbS^1 \times I \modsim,$$ where  the equivalence relation $\sim$ is trivial if $A$ is an annulus, and is the equivalence relation above otherwise.

As before, we let $\bbS^1_\ell$ be a circle of length $\ell$. Given an interval $I \subset \RR$, we consider the  generalized annulus $\bbS^1_\ell \times I \modsim$ with the conformal structure associated to the product metric. Its \emph{modulus} is
$$mod(\bbS^1_\ell \times I \modsim) := \length(I)/\ell.$$ By uniformization, any conformal generalized annulus $A$ is conformally equivalent to a generalized annulus $\bbS^1_\ell \times I\modsim$  as above,  and the latter is well-defined up to scale. So, we can extend the definition of modulus to all generalized conformal annuli by requiring it to be a conformal invariant. Modulus is monotonic in the sense that if $f: A \longrightarrow B$ is a conformal embedding then $mod(A) \leq mod(B)$.

Every generalized conformal annulus admits a complete hyperbolic or Euclidean metric with geodesic boundary, and this metric is unique up to isometry. On the annuli $\bbS^1_\ell \times I\modsim$,  the metric can be  described explicitly.  For example, if $\ell>0$ the annulus $\bbS^1_\ell \times (-\pi/2,\pi/2)$   admits the conformal metric
$$ds = \sec \left (  y\right ) \sqrt{dx^2+dy^2}$$
One can easily check that this metric is complete and hyperbolic, and that $\bbS^1_\ell \times 0$ is a geodesic with length $\ell$. This metric is also invariant under the  involution $(i,R)$ above, and hence gives a metric on the  quotient generalized conformal annulus $\bbS^1_\ell \times (-\pi/2,\pi/2) \modsim$. Similarly, the annulus $\bbS^1_\ell \times (0,\infty)$ admits  the conformal metric
$$ds = 1/y \sqrt{dx^2+dy^2},$$
which is complete and hyperbolic, and where the $\infty$-end is a cusp.

Note that in the examples above,
\begin{equation}
	mod(\bbS^1_\ell \times (-\pi/2,\pi/2) )  = \pi/\ell.
\label{eq: modeq}
\end{equation}
and the modulus of a degenerate annulus it covers is also $\pi/\ell$. It follows that \emph{the modulus of a complete generalized hyperbolic annulus with a core geodesic of length $\ell$ is $\pi/\ell$.}

If $A$ is a  generalized conformal annulus and $$\phi : A \longrightarrow \bbS^1_\ell \times I\modsim$$  is a uniformizing map, a subannulus $B \subset A$ is called \emph{straight} if it has the form $B = \phi^{-1}(\bbS^1_\ell \times J)$ for some subinterval $J\subset I$.  The following lemma from conformal analysis is well known. 

\begin{lemma}[see Lemma 4.4 in \cite{bourque}] \label{lem: straight} If $A$ is a conformal (generalized) annulus and $B \subset A$ is a subannulus with modulus $m \geq 1$,  there is a straight subannulus $C \subset A$ with $mod(C)=m-1$ such that $C \subset B$.
\end{lemma}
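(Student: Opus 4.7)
I would prove this standard-looking conformal-geometric statement by uniformizing $A$, producing $C$ explicitly, and estimating moduli. First, uniformize $A$ as a Euclidean cylinder $\bbS^1_\ell \times I \modsim$, so that ``straight'' sub-annuli are exactly horizontal slabs. In the generalized (degenerate) case, I would lift to the orientation double cover, where $A$ becomes a genuine annulus, and observe that the $\ZZ/2\ZZ$-involution preserves the candidate $C$ we construct, so that it descends. In the universal cover $\tilde{A} = \RR \times I$, the two boundary components of $B$ lift to $\ell$-periodic Jordan arcs $\tilde{\gamma}_1$ (below) and $\tilde{\gamma}_2$ (above) separating the two ends of the strip, and it is natural to define
\[
s_1 := \sup\{t : (x,t) \in \tilde{\gamma}_1\}, \qquad s_2 := \inf\{t : (x,t) \in \tilde{\gamma}_2\}.
\]
Provided $s_1 < s_2$, I take $C$ to be the descent of the horizontal slab $\bbS^1_\ell \times (s_1, s_2)$: a direct separation argument in the strip (any point with $s_1 < t < s_2$ lies above $\tilde{\gamma}_1$ and below $\tilde{\gamma}_2$, hence inside $\tilde{B}$) shows $C \subset B$, and $\mod(C) = (s_2 - s_1)/\ell$.

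The content of the lemma is then the modulus estimate $(s_2 - s_1)/\ell \geq m - 1$. I would prove this via a harmonic/conformal analysis: let $F : \bbS^1_\ell \times (0, \ell m) \to B$ be a conformal uniformization and lift to $\RR \times (0, \ell m) \to \RR \times I$, writing $F(y + is) = \alpha(y,s) + i\beta(y,s)$. The equivariance $F(z+\ell) = F(z) + \ell$ together with the Cauchy--Riemann equations force the circular mean $\bar{\beta}(s) := \ell^{-1}\int_0^\ell \beta(y,s)\,dy$ to satisfy $\bar{\beta}'(s) \equiv 1$, so that $\bar{\beta}(\ell m) - \bar{\beta}(0) = \ell m$. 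Since $\bar{\beta}(0) \leq s_1$ and $\bar{\beta}(\ell m) \geq s_2$, rearranging yields
\[
\ell m \;=\; (s_2 - s_1) \;+\; \bigl[s_1 - \bar{\beta}(0)\bigr] \;+\; \bigl[\bar{\beta}(\ell m) - s_2\bigr],
\]
and the problem reduces to bounding the two bracketed nonnegative ``mean-to-extremum'' terms by $\ell$ in total. Each is the conformal modulus contribution of a pinched ``crescent'' region of $B$ lying on one side of the critical horizontal line, whose two boundary arcs touch by the very definition of $s_i$. A classical Grötzsch-type extremal length argument --- carried out by reflecting across the critical horizontal line and applying a length-area inequality in the unfolded domain --- gives the bound $\ell/2$ on each, from which the lemma follows. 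Finally, if the resulting modulus $(s_2 - s_1)/\ell$ exceeds $m-1$, shrink $C$ concentrically to a sub-slab of modulus exactly $m - 1$.

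The main obstacle is the extremal-length bound on the pinched crescents: while the harmonic-analytic reduction above is clean and essentially formal, the required sharp bound (equivalently, the classical fact that an annulus whose two boundary components touch has modulus at most $1/2$) is where the real conformal geometry enters, and requires care near the pinch points.
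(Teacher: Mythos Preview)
The paper does not prove this lemma at all; it simply cites Lemma~4.4 of Bourque. So there is no ``paper's proof'' to compare against, and what matters is whether your outline stands on its own.

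Your setup is correct and clean: uniformizing $A$, taking the maximal horizontal slab $(s_1,s_2)$ inside $B$, and reducing via the harmonic mean identity $\bar\beta(\ell m)-\bar\beta(0)=\ell m$ to the estimate
\[
[s_1-\bar\beta(0)]+[\bar\beta(\ell m)-s_2]\le \ell
\]
is exactly the right reduction. The equivariance and Cauchy--Riemann computation giving $\bar\beta'\equiv 1$ is fine.

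The gap is precisely where you locate it, but your proposed justification does not close it. The assertion ``an annulus whose two boundary components touch has modulus at most $1/2$'' is not a correct statement as written. If the two boundary curves share a \emph{common point}, the annulus degenerates and has modulus $0$; if they merely touch a common horizontal line at \emph{different} points, the modulus can be arbitrarily large (take $\gamma_1$ with a thin spike up to $y=s_1$ at $x=0$ and $\gamma_2$ with a thin spike down to $y=s_1$ at $x=\ell/2$). Neither reading gives the inequality you need. Moreover, the quantity $s_1-\bar\beta(0)=\max_{\gamma_1}\mathrm{Im}-\mathrm{mean}_{\gamma_1}\mathrm{Im}$ is \emph{not} in any obvious way the modulus of a crescent region: it is a difference of boundary values of a harmonic function, and relating it to an extremal length of a touching region requires an argument you have not supplied. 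Your reflection-across-$\{y=s_1\}$ suggestion does not obviously produce a configuration to which a standard Gr\"otzsch bound applies, since the reflected crescent and the original need not combine into an annulus with the right separating properties.

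In short: the reduction is good, but the crescent bound needs an actual proof, and the heuristic you cite is not one. If you want to pursue this route, you will need a genuine estimate relating $\max-\mathrm{mean}$ of $\mathrm{Im}\,F$ on a boundary circle to $\ell$ for univalent $F$ with the given equivariance; alternatively, consult Bourque's argument directly.
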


 Finally, if $A$ is a conformal annulus,  the \emph{$D$-truncation} of $A$ is obtained by deleting neighborhoods of all its ends  that are straight subannuli of modulus $D$.  For instance, the $2$-truncation of the half open annulus $\bbS^1_2 \times [0,9)$ is the subannulus $\bbS^1_2 \times [0,5)$, and the $2$-truncation of $\bbS^1_1 \times (-3,3)$ is $\bbS^1_1\times (-1,1)$.

 As a corollary of the lemma above, we have  the following two results under truncations. 

\begin{lemma}\label{lem: truncation contained}
	 Suppose that $A$ is an open annulus and $B \subset A$  is a subannulus  such that each component of $A \setminus B$ has  modulus at most $D$. Then the $(D+1)$-truncation of $A$ is  contained in $B$.
\end{lemma}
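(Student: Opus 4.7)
My plan is to uniformize $A$ as $\bbS^1_\ell \times (0,M)$ with $M = \ell\cdot mod(A)$, so that the $(D{+}1)$-truncation is explicitly the straight subannulus $T = \bbS^1_\ell \times [(D{+}1)\ell,\, M-(D{+}1)\ell]$. I may as well assume $T \neq \emptyset$, else the conclusion is vacuous. By symmetry it then suffices to show that each component $B_\pm$ of $A \setminus B$ adjacent to an end $E_\pm$ is contained in the complementary straight end-neighborhood $S_\pm$ of modulus $D{+}1$; I will treat the component $B_+$ at the end $E_+ = \{M\}$, the other case being identical.

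The approach is to apply Lemma \ref{lem: straight} to $A$ with the complementary subannulus $A_- := A \setminus \overline{B_+}$, which is an essential subannulus of $A$ sharing the end $E_-$. Provided $mod(A_-) \geq 1$, the lemma yields a straight subannulus $C \subset A_-$ with $mod(C) = mod(A_-) - 1$; inspecting the proof of Lemma \ref{lem: straight}, one may take $C$ to share its $E_-$-end with $A_-$, so $C = \bbS^1_\ell \times (0,\,\ell(mod(A_-)-1))$ in the uniformization. The disjointness $C \cap B_+ = \emptyset$ then forces
\[
  B_+ \;\subset\; \bbS^1_\ell \times \bigl(\ell(mod(A_-)-1),\, M\bigr),
\]
which is the straight $E_+$-neighborhood of modulus $mod(A) - mod(A_-) + 1$.

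The desired containment $B_+ \subset S_+$ therefore reduces to the bound $mod(A_-) \geq mod(A) - D$, equivalently $mod(A) \leq mod(B_+) + mod(A_-)$ for the parallel decomposition of $A$ into $B_+$ and $A_-$ across their common boundary circle. This is a reverse to Gr\"otzsch's inequality for a parallel decomposition, and I expect it to be the main technical point, since the standard Gr\"otzsch bound runs the other way. My plan is to verify it via the extremal-metric characterization of modulus: the extremal cylinder metric on $A$ restricts to admissible metrics on $B_+$ and on $A_-$ whose conformal widths add up to that of $A$, giving the required additive bound. The degenerate case $mod(A_-) < 1$ is absorbed by the same estimate, as it forces $mod(A) < D+1 \leq 2(D{+}1)$ and hence $T = \emptyset$.
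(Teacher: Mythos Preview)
Your argument has a genuine gap at precisely the step you flag as ``the main technical point'': the inequality $mod(A) \le mod(B_+) + mod(A_-)$ is the reverse of the Gr\"otzsch inequality, and for a partition of $A$ along a non-straight core curve it is simply false --- Gr\"otzsch gives $mod(B_+) + mod(A_-) \le mod(A)$, with equality only when the separating curve is straight. The extremal-metric sketch does not establish the reverse direction. In the flat metric on $A = \bbS^1_\ell \times (0,M)$, the minimal crossing length of $A_-$ is $\min_{p \in \gamma} y(p)$ and that of $B_+$ is $M - \max_{p \in \gamma} y(p)$, where $\gamma$ is the separating curve; these two minima are achieved at \emph{different} points of $\gamma$ once $\gamma$ is not horizontal, so they sum to $M - (\max_\gamma y - \min_\gamma y)$ rather than to $M$. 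Thus restricting the extremal metric only recovers the standard Gr\"otzsch direction, not the one you need. Without that inequality you cannot bound $mod(A_-)$ from below by $mod(A)-D$, and hence cannot control how far the straight subannulus $C \subset A_-$ reaches toward the $E_+$-end. (There is also a second unjustified step: the statement of Lemma~\ref{lem: straight} does not assert that the straight subannulus can be chosen to share an end with $A$.)

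The paper's route is different: it applies Lemma~\ref{lem: straight} to $B$ itself rather than to the complement $A_-$, obtaining a straight $|B|_A \subset B$, and then argues that each straight component of $A \setminus |B|_A$ has modulus at most $D+1$, which immediately forces the $(D+1)$-truncation into $|B|_A \subset B$.
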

\begin{proof}
 We can assume that $mod(A) \geq 2(D+1)$, since otherwise the claim is vacuously true, so $mod(B)\geq 2$. By Lemma \ref{lem: straight}, there is a  straight subannulus $|B|_A \subset A$  that is contained in $B$ and where $$mod(|B|_A) \geq mod(B) -1.$$
 The moduli of both components of $A\setminus |B|_A$ are at most $D+1$, so the $D$-truncation of $A$  is contained in $|B|_A \subset B$.
\end{proof}

\begin {lemma}[Quasi-monotonicity of truncation]
 Suppose that $D>0$, that $A$ is an open annulus and $B \subset A$ is an open subannulus. Then the $(D+2)$-truncation of $B$ is contained in the $D$-truncation of $A$.\label{lem: quasimonotone}
\end {lemma}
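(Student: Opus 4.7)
The plan is to apply Lemma \ref{lem: straight} to each of the two components of $B \setminus B'$, where $B'$ denotes the $(D+2)$-truncation of $B$. We may assume $\mathrm{mod}(B) > 2(D+2)$, for otherwise $B'$ is empty and the statement is vacuous. Under this assumption, $B \setminus B'$ consists of two straight subannuli $K_-, K_+$ of $B$, each of modulus $D+2$ and adjacent to opposite ends of $B$. Regarding each $K_\pm$ as a subannulus of $A$, Lemma \ref{lem: straight} produces a straight subannulus $|K_\pm|_A$ of $A$ with $|K_\pm|_A \subset K_\pm$ and $\mathrm{mod}(|K_\pm|_A) \geq D+1$.

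Fix a uniformization $A \cong \bbS^1_\ell \times (0,M)$, so that $|K_\pm|_A = \bbS^1_\ell \times (a_\pm, b_\pm)$ for disjoint intervals $(a_\pm, b_\pm) \subset (0,M)$. After possibly interchanging $K_-$ and $K_+$, we may assume $b_- \leq a_+$. For each sign $\sigma \in \{-,+\}$, denote by $V_i^{\sigma}$ the component of $A \setminus |K_\sigma|_A$ adjacent to the end $e_i$ of $A$. Then $A \setminus (|K_-|_A \cup |K_+|_A)$ has three components: $V_1^-$, the middle slab $W := \bbS^1_\ell \times (b_-, a_+)$, and $V_2^+$.

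The key step is to show that $B' \subset W$. Since $B'$ is connected and disjoint from $|K_-|_A \cup |K_+|_A$, it lies in exactly one of these three components. Suppose for contradiction that $B' \subset V_1^-$. The set $B \setminus K_- = B' \cup K_+$ is connected, being a straight subannulus of $B$, so it contains a path from a point of $B'$ to a point of $K_+$; this path lies in $B \setminus K_- \subset A \setminus |K_-|_A$. But $K_+$ is connected, disjoint from $|K_-|_A$, and contains $|K_+|_A \subset V_2^-$ (using $b_- \leq a_+$), whence $K_+ \subset V_2^-$. Thus the path connects $V_1^-$ to $V_2^-$ without crossing $|K_-|_A$, contradicting that $|K_-|_A$ separates $A$. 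The case $B' \subset V_2^+$ is ruled out symmetrically using $B \setminus K_+ = K_- \cup B'$, leaving $B' \subset W$.

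Once $B' \subset W$ is established, the inequalities $b_- \geq a_- + (D+1)\ell \geq (D+1)\ell$ and $a_+ \leq b_+ - (D+1)\ell \leq M - (D+1)\ell$, which come directly from $\mathrm{mod}(|K_\pm|_A) \geq D+1$, yield $W \subset \bbS^1_\ell \times (D\ell, M - D\ell)$, which is exactly the $D$-truncation of $A$. The only real obstacle in the argument is the topological separation step in the previous paragraph; the rest is a direct verification once Lemma \ref{lem: straight} has been invoked.
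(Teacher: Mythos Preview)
Your argument is correct and proceeds along a genuinely different line from the paper's. The paper applies Lemma~\ref{lem: straight} first to $B\subset A$ to obtain a straight-in-$A$ subannulus $|B|_A\subset B$, takes its $D$-truncation $(|B|_A)^D$ (which sits in $A^D$ because $|B|_A$ is straight in $A$), and then applies Lemma~\ref{lem: straight} a second time to $(|B|_A)^D\subset B$ to get a straight-in-$B$ subannulus; the final step is to check that each of the two complementary pieces in $B$ has modulus at most $D+2$, which forces $B^{D+2}$ inside. You instead apply Lemma~\ref{lem: straight} to the two end-slabs $K_\pm$ of $B\setminus B^{D+2}$, straightening each separately inside $A$, and then run a connectedness/separation argument to pin $B^{D+2}$ into the middle region $W$ between $|K_-|_A$ and $|K_+|_A$. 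What your route buys is that the per-side modulus bound is completely transparent: each $|K_\pm|_A$ has modulus $D+1$ and is anchored near one end of $A$, so $W$ visibly lies in $A^D$. The paper's route is shorter to state but the per-side bound on the complementary pieces of $|(|B|_A)^D|_B$ in $B$ is the step that carries the weight. One cosmetic remark: when you write $B\setminus K_- = B'\cup K_+$, this is literally correct with your conventions ($K_\pm$ being the half-open components of $B\setminus B'$), but all you actually use is that $B\setminus K_-$ is connected, contains $B'$, and contains $|K_+|_A$; phrasing it that way would avoid any ambiguity about boundary circles.
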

\begin {proof} Let $B^{D+2}$ and $A^D$ be the respective truncations of $B$ and $A$. Note that if the modulus of $B$ is less than $2(D+2)$, the lemma is true vacuously. So, applying Lemma \ref{lem: straight}, there is a straight subannulus $|B|_A \subset A$ with $|B|_A \subset B$ and $mod(|B|_A) \geq mod(B)-1$.  The $D$-truncation $(|B|_A)^D$ of $|B|_A$ is contained in $A^D$. On the other hand, let $|(|B|_A)^D|_B \subset B$ be a straight subannulus that is contained in $(|B|_A)^D$ with modulus at least $mod((|B|_A)^D)-1$.   Each component of the complement of $|(|B|_A)^D|_B$ in $B$  is an annulus with modulus at most $D+2$. So, we have $|(|B|_A)^D|_B \supset B^{D+2}$.  It follows that
$$ B^{D+2} \subset |(|B|_A)^D|_B \subset (|B|_A)^D \subset A^D.\qedhere$$\end {proof}

\subsection{Standard Collars}

\label{sec: collarsec}

Let $c$ be a simple  closed geodesic on a hyperbolic surface $X$ of length $\ell: =\ell(c)$. The Collar Lemma \cite[Lemma 13.6]{primer} says that the  open metric neighborhood $A$ of $c$ with radius 
\begin{equation}\label{eq: def of Mc}
M_{\ell}:=\sinh^{-1}\left ( \frac 1{\sinh(\ell/2)}\right )%\frac 12 \log \frac {\cosh(\ell/2)+1}{\cosh(\ell/2)-1}
\end{equation}
is an open generalized annulus. We call this neighborhood $A$ the \emph{standard collar} of $c$. The proof of \cite[Lemma 13.6]{primer} also shows that disjoint geodesics have disjoint standard collars. A simple estimate shows:

\begin{lemma}[see Proposition 6.1 in \cite{douady1993proof}]\label{lem: modestimate}
We have $\pi/\ell - 1 \leq mod(A) \leq \pi/\ell$.
\end{lemma}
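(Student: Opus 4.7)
My plan is to compute the modulus of the standard collar essentially explicitly, using Fermi coordinates, and then reduce the claimed inequalities to two elementary facts about the Gudermannian function $\mathrm{gd}(t) = \int_0^t \mathrm{sech}(u)\, du$.

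First, I would work in Fermi coordinates $(s,t)$ around $c$, in which the hyperbolic metric of $X$ has the standard form $dt^2 + \cosh^2(t)\, ds^2$, with $s \in \bbS^1_{\ell}$ and $t \in (-M_\ell, M_\ell)$. Rewriting this as $\cosh^2(t)\bigl(ds^2 + (dt/\cosh t)^2\bigr)$ shows that, conformally, the collar $A$ is isometric to $\bbS^1_\ell \times \bigl(-\mathrm{gd}(M_\ell), \mathrm{gd}(M_\ell)\bigr)$ with the Euclidean product metric, since $\int_0^t \mathrm{sech}(u)\, du = \mathrm{gd}(t)$. Therefore
\[
\mathrm{mod}(A) \;=\; \frac{2\,\mathrm{gd}(M_\ell)}{\ell}.
\]

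Next I would simplify $\mathrm{gd}(M_\ell)$ using the defining relation $\sinh(M_\ell) = 1/\sinh(\ell/2)$. A short calculation gives $\cosh(M_\ell) = \coth(\ell/2)$, so $\cos(\mathrm{gd}(M_\ell)) = 1/\cosh(M_\ell) = \tanh(\ell/2) = \sin(\mathrm{gd}(\ell/2))$, and hence
\[
\mathrm{gd}(M_\ell) \;=\; \tfrac{\pi}{2} - \mathrm{gd}(\ell/2).
\]
Substituting back yields the clean formula $\mathrm{mod}(A) = \bigl(\pi - 2\,\mathrm{gd}(\ell/2)\bigr)/\ell$.

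From here both inequalities are essentially instantaneous. The upper bound $\mathrm{mod}(A) \le \pi/\ell$ is equivalent to $\mathrm{gd}(\ell/2) \ge 0$, which is true because $\mathrm{gd}$ vanishes at $0$ and has positive derivative $\mathrm{sech}$. The lower bound $\mathrm{mod}(A) \ge \pi/\ell - 1$ is equivalent to $\mathrm{gd}(\ell/2) \le \ell/2$, which holds because $\mathrm{gd}'(u) = \mathrm{sech}(u) \le 1$ with equality only at $0$, so $\mathrm{gd}(x) \le x$ for all $x \ge 0$. I expect the main (mild) obstacle to be nothing conceptual, only bookkeeping: carefully verifying the identity $\cosh(M_\ell) = \coth(\ell/2)$ from the definition of $M_\ell$ in \eqref{eq: def of Mc}, and recalling the Gudermannian identity $\int \mathrm{sech} = \mathrm{gd}$ so that the change of coordinates $t \mapsto \mathrm{gd}(t)$ is recognized as giving a flat cylinder.
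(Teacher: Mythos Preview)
Your proof is correct. The paper does not actually give its own proof of this lemma: it simply cites Proposition~6.1 of Douady--Douady. So there is nothing to compare against directly.

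A couple of remarks that tie your argument into the paper's setup. First, your Gudermannian function is exactly the inverse of the paper's function $\Sec(y) = \int_0^y \sec(t)\,dt$, so your formula $\mathrm{mod}(A) = 2\,\mathrm{gd}(M_\ell)/\ell$ is the same as $2\rho_\ell/\ell$ in the paper's notation (\S\ref{sec: collarsec}), where $\rho_\ell = \Sec^{-1}(M_\ell)$. Second, the upper bound $\mathrm{mod}(A) \le \pi/\ell$ also follows immediately from monotonicity of modulus together with \eqref{eq: modeq}: the standard collar $A$ embeds conformally in the complete hyperbolic annulus with core geodesic of length $\ell$, which has modulus exactly $\pi/\ell$. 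Your derivation via $\mathrm{gd}(\ell/2) \ge 0$ is of course equally valid, and has the virtue of coming out of the same computation as the lower bound.
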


By a standard Jacobi field estimate, 
$\length(\partial A) = \ell \cosh(M_{\ell})$.
One can then compute that as $\ell\to 0$, we have $\length(\partial A) \to 2.  $
And if we define the \emph{standard collar} of a cusp to be the open horoball neighborhood whose frontier has length $2$, then this standard collar is an embedded annulus and is disjoint from the standard collar around any other cusp or simple closed geodesic. This can be proved similarly to the usual Collar Lemma, but one can also see that it is true via a limiting argument, by  approximating cusps by short geodesics.  

\paragraph*{The conformal parametrization.} The standard collars can be conformally parametrized as follows. If $c_+$ is a simple closed geodesic on $X$, equipped with a preferred side, let $p_c : \bbS^1_{\ell} \longrightarrow c$ be an  isometric parametrization. We can now parametrize the standard collar $A$ of $c$ by the conformal map $$\bbS^1_{\ell} \times (-\rho_{\ell},\rho_{\ell})\modsim \longrightarrow A$$ that restricts to  $p_c$ on $\bbS^1_{\ell} \times 0$ and takes $\bbS^1_{\ell} \times [0,\rho)\modsim$ to the preferred side of $c_+$. The number $ 0\leq \rho_{\ell} <\pi/2$ can be given explicitly by $$ \rho_{\ell} := \Sec^{-1}(M_{\ell})$$ where $$\Sec(y) := \int_0^y\sec(t)dt.$$
To make this parametrization an isometry its domain is endowed with the  hyperbolic metric $$ds = \sec(y) \sqrt{dx^2+dy^2}$$ discussed in \S \ref{sec: conformalannuli}.  More concretely, if $c$ is regular, this map takes a point $$(x,t) \in \bbS^1_{\ell} \times (-\rho,\rho)$$
to the point $p\in A$ that orthogonally projects to $p_c(x) \in c$, where $d(p, p_c(x)) = \Sec(|t|)$, and where $p$ lies on the preferred side of $c_+$ if $t>0$. 

 On the other hand, the \emph{conformal parametrization} of standard collar $A$ of a cusp is given by the isometry 
$$\bbS^1 \times (1/2,\infty) \longrightarrow A,$$
where the domain is considered with the hyperbolic metric $ds = 1/y \sqrt{dx^2+dy^2}$. Note that the circle $\bbS^1 \times 1/2 \subset \bbS^1 \times (0,\infty)$ has length $2$ in this metric, as required.  We can write this parametrization explicitly if we have some other intrinsic way of thinking of the cusp, e.g.\  we can measure depth in the cusp using injectivity radius, but it is not so important to do this here.

 Finally, it will be convenient below to have a one-sided version of standard collars that works for geodesics with a  preferred side choice. As in the beginning of \S \ref{sec: graftingsec}, if $c_+$ is a geodesic in $X$ equipped with a preferred side, there is a natural boundary component, which we just call $c_+ \subset X \dsm c$, that projects to $c$ and is adjacent to the left of the preferred side of $c$. Define the \emph{standard collar} $A \subset X \dsm c$ of the boundary component $c_+ \subset \partial(X \dsm c)$ to be its $M_{\ell}$-neighborhood in $X \dsm c$. Informally, $A$ is just the half of the standard collar of $c$ that lies on the preferred side.  After choosing a parametrization $p_c$ of $c$, the one-sided standard collar can be parameterized similarly to the two-sided one, via  the isometry 
$$\bbS^1_{\ell} \times [0,\rho)\modsim \longrightarrow A, \ \ \ 0\leq \rho <\pi/2$$
that restricts to  $p_c$ on $\bbS^1_{\ell} \times 0$.  Here, if $c$ is  regular then $\sim$ is a trivial equivalence relation, and if $c$ is degenerate then $(x,0) \sim (i(x),0)$, where $i$ is a reflection.

\paragraph*{The semi-hyperbolic parametrization.} Later on it will be useful to consider a parametrization of the standard collars of the form $\bbS^1 \times I$ in which 
the fibers $\{x\}\times I$ are unit-length parametrizations of geodesics.

If $A$ is the standard collar of a curve $c$ of length $\ell$, the \emph{semi-hyperbolic parametrization} of $A$ is given by $$ \bbS^1_{\ell} \times (-M_{\ell},M_{\ell}) \to A$$
where the map sends $(x,y)$ to the point in $A$ whose nearest point projection to the geodesic $c$ is $p_c(x)$, and whose distance from $c$ is $y$.

Since we have explicitly expressed the hyperbolic metric in the conformal parametrization, we can now deduce the transition map between them to be $$\Id\times \Sec : \bbS^1_{\ell} \times (-\rho_\ell,\rho_\ell) \to \bbS^1_{\ell}\times (-M_{\ell},M_{\ell}).$$

Similarly, if $A$ is a standard collar of a cusp, then the \emph{semi-hyperbolic parametrization} of $A$ is given by $\bbS^1_{\ell} \times (0,\infty)$ where the second coordinate is now the distance to the boundary curve $\partial A$.
The transition map between the conformal parametrization and the semi-hyperbolic is given by $$\Id\times \ln(2\cdot):\bbS^1_{\ell} \times (1/2,\infty) \to \bbS^1_{\ell}\times (0,\infty). $$

\subsection{The distance into extended standard collars}\label{sec: distance in extended collars}

If $c\in \mathcal S$, the \emph{extended standard collar} of $c$  is defined to be  $$ A^{ext} = \iota(A\dsm c) \cup C_c,$$
where $A \subset X$ is  the standard collar of $c$ and $C_c$ is the  generalized annulus grafted along $c$, as defined in \S \ref{sec: collarsec}. 
We will also find it convenient to have a one-sided version of the extended standard collar. If $c_+ \in \mathcal S_\pm$ is one of our grafting curves equipped with a preferred side, the \emph{extended standard collar} of $c_+$ is 
$$A^{ext} := \iota(A) \cup C_{c_+} \subset X_L,$$
where $A \subset X \dsm \mathcal S$ is the (one-sided) standard collar of $c_+$, see \S \ref{sec: collarsec}, and $C_{c_+}$ is the grafted-in generalized annulus from the beginning of \S \ref{sec: graftingsec}.

\begin{proposition}[Distance into the truncation]\label{prop: modelprop}
Suppose that $\ell(c) \leq \mathcal L$ for all $c\in \mathcal S$. Then there is some constant $M=M(\mathcal L)$ as follows. Let $A^{ext}\subset X_L^{hyp}$ be either 
\begin{enumerate}
\item the extended standard collar of some $c\in \mathcal S$, or
\item  the inclusion $A^{ext} = \iota(A)$ of the standard collar of a cusp of $X$.
\end{enumerate} Then with respect to the hyperbolic metric $d^{hyp}$ on $X_L^{hyp}$, the  distance from the $D$-truncation of $A^{ext}$ to the $X_L^{hyp} \setminus A^{ext}$ is at least $\ln(D-2) - M(\mathcal L)$.
\end{proposition}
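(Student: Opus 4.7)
The plan is to first handle the cusp case (option 2) directly: since cusps are disjoint from the grafting locus, the standard cusp collar embeds isometrically into $X_L^{hyp}$ with its Poincar\'e metric $(1/y)|dz|$ on $\bbS^1\times(1/2,\infty)$, and the $D$-truncation $\bbS^1\times(D+1/2,\infty)$ lies at hyperbolic distance $\ln(2D+1)\ge\ln(D-2)$ from $\partial A^{ext}$. For the main case (option 1) where $A^{ext}$ is the extended collar of a closed geodesic $c\in\mathcal S$ with $L(c)<\infty$, I would invoke Claim~\ref{claim: geodesic claim} to obtain a closed geodesic $\hat c\subset X_L^{hyp}$ homotopic to $c$, and pass to the annular cover $\HH^2/\langle T\rangle$ of $X_L^{hyp}$ corresponding to $\langle\hat c\rangle$---this is a complete hyperbolic annulus of core length $\ell^{hyp}:=\ell(\hat c)$. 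Since $A^{ext}$ is $\pi_1$-injective in $X_L^{hyp}$ with fundamental group $\langle\hat c\rangle$, the inclusion lifts to an isometric embedding $A^{ext}\hookrightarrow\HH^2/\langle T\rangle$, and any path from the truncation to $X_L^{hyp}\setminus A^{ext}$ corresponds to a path in $\HH^2/\langle T\rangle$ from the lift $\tilde p$ to $\partial\tilde A^{ext}$, reducing the problem to a distance estimate in the complete hyperbolic annulus.

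Working in conformal coordinates $\bbS^1_{\ell^{hyp}}\times(-\pi/2,\pi/2)$ of $\HH^2/\langle T\rangle$ with metric $\sec(y)\sqrt{dx^2+dy^2}$, the distance from $(x_p,y_p)$ to a horizontal circle $\bbS^1\times\{y_0\}$ equals $|\operatorname{Sec}(y_0)-\operatorname{Sec}(y_p)|$, realized by the vertical segment (seen easily in semi-hyperbolic coordinates, where vertical distance equals coordinate difference). By modulus monotonicity $\mathfrak m(A^{ext})=2(L+\rho_\ell)/\ell\le\pi/\ell^{hyp}$. Two applications of Lemma~\ref{lem: straight} are central: first to the $+$-half $\tilde A^{ext,+}\subset\HH^2/\langle T\rangle$ (of modulus $\mathfrak m_+=(L+\rho_\ell)/\ell$), yielding that $\partial_+\tilde A^{ext}$ has minimum $y$-coordinate at least $(\mathfrak m_+-1)\ell^{hyp}$; and second to a modulus-$D$ region separating the truncation from $\partial_+\tilde A^{ext}$, producing a straight barrier $\bbS^1_{\ell^{hyp}}\times(y_1,y_2)$ with $y_2-y_1=(D-1)\ell^{hyp}$ and $y_2$ placed as close to $\pi/2$ as possible.

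The hyperbolic width $\operatorname{Sec}(y_2)-\operatorname{Sec}(y_1)$ of the barrier bounds the desired distance below. Using $\operatorname{Sec}(\pi/2-s)=\ln\cot(s/2)$ together with the uniform estimate $(\pi-2\rho_\ell)/\ell\le C(\mathcal L)$ valid for $\ell\le\mathcal L$ (which follows from $\rho_\ell\to\pi/2$ as $\ell\to 0$ and continuity of $\rho$ in $\ell$), direct computation in the "saturated" regime $\mathfrak m_+\ell^{hyp}=\pi/2$ yields a width of $\ln(D-2)-M(\mathcal L)$ where $M$ depends only on $\mathcal L$. The main technical obstacle is the non-saturated case, where $\ell^{hyp}$ is strictly smaller than $\pi/\mathfrak m$ so that $\tilde A^{ext}$ leaves "extra room" $U_\pm\subset\HH^2/\langle T\rangle$; a straight barrier confined to $\tilde A^{ext}$ may then sit at $y$-coordinates where the $\sec$-density is close to $1$, degrading the estimate. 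To overcome this, I would combine the in-$A^{ext}$ barrier with $U_+$ via Gr\"otzsch superadditivity, obtaining a straight barrier of modulus $D+\mathfrak m(U_+)-1$ that can be placed with its upper end arbitrarily close to $y=\pi/2$, so that the $\operatorname{Sec}$-estimate remains strong and uniform in $\ell^{hyp}$.
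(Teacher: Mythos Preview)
Your proposal has a genuine gap, and it is exactly the ``non-saturated case'' you flag at the end. The Gr\"otzsch superadditivity trick does not rescue the argument: once you combine the modulus-$D$ barrier inside $\tilde A^{ext}$ with $U_+$, the resulting region separates the $D$-truncation from the \emph{end} $y=\pi/2$ of the annular cover, not from $\partial_+\tilde A^{ext}$ (which now lies in the interior of the combined region). So the hyperbolic width of your enlarged straight barrier bounds the distance to $y=\pi/2$, which is infinite and useless; it says nothing about the distance to $\partial_+\tilde A^{ext}$. To locate $\partial_+\tilde A^{ext}$ near $y=\pi/2$ you genuinely need a uniform bound on $\mathfrak m(U_+)$, and this is precisely the content of the Collar Comparison Lemma~\ref{lem: collarlem}, whose proof is a global quasiconformal argument (comparing $X_L^{hyp}$ to a bounded grafting of $X$) that cannot be replaced by local modulus manipulations inside the single annular cover. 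Concretely: if $\mathfrak m(U_+)$ were allowed to be huge, then $\partial_+\tilde A^{ext}$ could sit at $y$-coordinates bounded away from $\pi/2$, where $\sec(y)$ is bounded, and the hyperbolic width of the modulus-$D$ barrier would be only $\approx D\ell^{hyp}$, which can be made arbitrarily small.

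There is also an error in your cusp case: the inclusion $\iota(A)\hookrightarrow X_L^{hyp}$ is only \emph{conformal}, not isometric. The metric $d^{hyp}$ on $\iota(A)$ is the restriction of the uniformized hyperbolic metric on all of $X_L^{hyp}$, not the standalone Poincar\'e metric $(1/y)|dz|$ of the cusp annulus. So the direct computation you give does not apply; the cusp case needs the same Collar Comparison input as the geodesic case (the paper treats it as ``similar'').

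The paper's proof proceeds exactly along the lines you set up --- lift to the annular cover, use quasi-monotonicity of truncation (Lemma~\ref{lem: quasimonotone}) to place the $D$-truncation inside the $(D-2)$-truncation of $\hat Y$ --- but then invokes Lemma~\ref{lem: collarlem} to place $\partial\tilde A^{ext}$ outside the $(N(\mathcal L)+1)$-truncation of $\hat Y$. The $\sec$-integral between these two straight circles is then estimated via $\sec(\pi/2-t)\ge 1/t$, and crucially the $\ell^{hyp}$ factors cancel in the resulting logarithm. Your framework is correct; what is missing is exactly Lemma~\ref{lem: collarlem}.
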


This gives a stronger, uniform version of \S 4 of Bourque \cite{bourque}, which says that the hyperbolic distance across the entire grafting annulus goes to infinity when longer and longer annuli are grafted along a fixed curve in a fixed surface. The constant $M(\mathcal L)$ can  be determined explicitly by tracking through the proofs. Also, note that Proposition \ref{prop: modelprop}  can be applied to the inclusion of the standard collar around some simple closed geodesic $c$ disjoint from all the grafting curves. Indeed, such a curve can be formally added to $\mathcal S$, setting $L(c)=0$, and then the statement of the proposition applies. 

We briefly record the following corollary, out of interest to the reader. A version of this result that is more convenient for our purposes is given later in Proposition \ref{prop: limits in collars}.

\begin{corollary}[Modeling the uniformized metric]\label{cor: metric comparison cor}
  Using the notation of Proposition \ref{prop: modelprop}, given $\epsilon>0$  there is some $D=D(\mathcal L,\epsilon)>0 $ such that inside the $D$-truncation of $A^{ext}$, we have
 $(1-\epsilon)\cdot \hat d \leq d^{hyp} \leq (1+\epsilon)\cdot \hat d$, where $\hat d$  is the complete hyperbolic metric on $A^{ext}$.
\end{corollary}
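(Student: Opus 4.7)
The plan is a pointwise Schwarz--Pick comparison. Since $d^{hyp}$ is obtained by uniformizing a metric in the same conformal class as the intrinsic hyperbolic metric $\hat d$ on $A^{ext}$, the inclusion $A^{ext}\hookrightarrow X_L^{hyp}$ is a holomorphic map of hyperbolic Riemann orbifolds, so the classical Schwarz--Pick lemma immediately yields the infinitesimal upper bound $d^{hyp}\le \hat d$ on $A^{ext}$ (hence the right-hand inequality, with room to spare). The real content is in the reverse direction, where I would use Proposition \ref{prop: modelprop} to force enough room around deep truncation points so that a reverse Schwarz--Pick estimate applies.

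Fix $p$ in the $D$-truncation of $A^{ext}$ and set $R=\ln(D-2)-M(\mathcal L)$, so that Proposition \ref{prop: modelprop} guarantees $d^{hyp}(p, X_L^{hyp}\setminus A^{ext})\ge R$. Lift to the universal cover $\pi:\HH^2\to X_L^{hyp}$ and pick $\tilde p\in \pi^{-1}(p)$. Since $\pi$ is a local isometry for $d^{hyp}$, the hyperbolic ball $B_{\HH^2}(\tilde p,R)$ projects into $B_{d^{hyp}}(p,R)\subset A^{ext}$; being connected, it lies in a single component $U$ of $\pi^{-1}(A^{ext})\subset \HH^2$. The covering $\pi|_U:U\to A^{ext}$ is a local conformal isomorphism, and under it $\lambda_U$ (the Poincar\'e density of $U$ viewed as its own hyperbolic surface) equals the pullback of the density of $\hat d$, while $\lambda_{\HH^2}|_U$ equals the pullback of the density of $d^{hyp}|_{A^{ext}}$. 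Hence comparing $d^{hyp}$ and $\hat d$ at $p$ reduces to comparing $\lambda_{\HH^2}|_U$ and $\lambda_U$ at $\tilde p$.

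Now I would invoke domain monotonicity of the Poincar\'e metric: since $B_{\HH^2}(\tilde p,R)\subset U$, we have $\lambda_U(\tilde p)\le \lambda_{B_{\HH^2}(\tilde p,R)}(\tilde p)$, and an explicit computation in the disc model centered at $\tilde p$ identifies the latter with $\lambda_{\HH^2}(\tilde p)/\tanh(R/2)$. Combined with the Schwarz--Pick upper bound $\lambda_{\HH^2}|_U\le \lambda_U$, this yields
\[\tanh(R/2)\ \le\ \frac{\lambda_{\HH^2}(\tilde p)}{\lambda_U(\tilde p)}\ \le\ 1,\]
so that the infinitesimal ratio of $d^{hyp}$ to $\hat d$ at $p$ lies between $\tanh(R/2)$ and $1$. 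Choosing $D=D(\mathcal L,\epsilon)$ large enough that $\tanh(R(D)/2)\ge 1-\epsilon$ then finishes the proof. The main obstacle is already absorbed by Proposition \ref{prop: modelprop}; once its distance bound is in hand, the remaining ingredients (Schwarz--Pick and the disc model computation) are routine, and the possible presence of orbifold cone points in $A^{ext}$ is harmless since $A^{ext}$ is good and its orbifold universal cover is $\HH^2$, so the entire argument is read equivariantly.
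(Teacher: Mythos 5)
Your proof is correct, and it takes a genuinely different route from the one the paper sketches (the paper in fact omits a careful proof, saying only that it follows from the principle that two conformal hyperbolic metrics on a domain $V$ are nearly equal at points far from $\partial V$ with respect to both, which it derives softly from compactness of conformal maps and the fact that conformal automorphisms of the disk are isometries; essentially the argument reused in Proposition \ref{prop: limits in collars}). You instead run a quantitative two-sided Schwarz--Pick comparison: the inclusion $A^{ext}\hookrightarrow X_L^{hyp}$ gives $d^{hyp}\le \hat d$ for free, and for the reverse bound you lift to $\HH^2$, use Proposition \ref{prop: modelprop} to place the ball $B_{\HH^2}(\tilde p,R)$ inside the component $U$ of $\pi^{-1}(A^{ext})$, and compare Poincar\'e densities via $\lambda_{\HH^2}\le\lambda_U\le\lambda_{B_{\HH^2}(\tilde p,R)}=\lambda_{\HH^2}/\tanh(R/2)$. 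This is sound (including the orbifold cases, read $\Gamma$-equivariantly), and it is arguably cleaner than the paper's sketch: you only need the distance to $X_L^{hyp}\setminus A^{ext}$ to be large in \emph{one} of the two metrics, since the other direction is handled by monotonicity, and you get an explicit $D(\mathcal L,\epsilon)$ from $\tanh\bigl(\tfrac12(\ln(D-2)-M(\mathcal L))\bigr)\ge 1-\epsilon$. What the paper's softer formulation buys is generality --- it applies to \emph{any} two conformal hyperbolic metrics on $V$ without one being the restriction of an ambient complete metric --- but that generality is not needed here. One small point worth making explicit in a final write-up: the inequality you prove is the pointwise comparison of the Riemannian metrics (densities), which is the intended reading of the corollary given the paper's conventions; if one wanted the corresponding inequality for distances between pairs of points of the $D$-truncation, the upper bound is immediate but the lower bound would need a further word about geodesics possibly leaving the truncation.
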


Notice $\hat d$ is explicit, while the metric $d^{hyp}$ is obtained through the black box of uniformization. 
An explicit expression of a closely related hyperbolic metric $\check{d}$ is presented in \S\ref{sec: parametrization of extended collar neighborhoods}.
The proof of the corollary uses the fact that if a point $p$ lies in some domain $V$ and there are two conformal hyperbolic metrics on $V$ such that $d(p,\partial V)$ is large with respect to \emph{both} metrics, then the two metrics are almost identical near $p$.  This in turn is a consequence of compactness theorems for conformal maps and the fact that every conformal automorphisms of the disk is a hyperbolic isometry. We omit a careful proof of the corollary, since we  never use it in our work and we employ  essentially  the same argument to prove Proposition \ref{prop: limits in collars} below.

\medskip

 We will need the following result for the proof of Proposition \ref{prop: modelprop}. 
\begin{lemma}[Collar comparison]\label{lem: collarlem}
Let $A^{ext}\subset X_L^{hyp}$ be the extended standard collar around some curve of $\mathcal S$, or the inclusion of the standard collar around some cusp of $X$, just as in Proposition \ref{prop: modelprop}. Let $\hat A$ be the standard collar of the corresponding geodesic or cusp in $X_L^{hyp}$, where we get a geodesic in the first case if $L(c)<\infty$, and a cusp otherwise. Then $\hat A$  contains the $3$-truncation of $A^{ext}$.

 Moreover, if  all the curves in $\mathcal S$ have length at most $\mathcal L$,  there is an explicit constant $N(\mathcal L)$  such that $A^{ext}$  contains the $N(\mathcal L)$-truncation of $\hat A$.
\end{lemma}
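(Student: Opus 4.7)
The plan is to compare the conformal moduli of $\hat A$ and $A^{ext}$ inside the maximal conformal annular neighborhood $V$ of $\hat c$ (or of the cusp, in the relevant cases) in $X_L^{hyp}$. Both $\hat A$ and $A^{ext}$ are essential embedded sub-annuli of $V$. The key a priori bound comes from Schwarz--Pick: the conformal inclusion $V \hookrightarrow X_L^{hyp}$ is contracting for the complete hyperbolic metrics, so the core of $V$ in its own intrinsic metric has length at least $\ell(\hat c)$, giving $mod(V) \leq \pi/\ell(\hat c)$ (with the analogous depth bound for a punctured disk neighborhood in the cusp case).

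For the first containment, Lemma~\ref{lem: modestimate} gives $mod(\hat A) \geq \pi/\ell(\hat c) - 1$, so $mod(V) - mod(\hat A) \leq 1$. The Gr\"otzsch inequality applied to the decomposition $V = B_1 \cup \overline{\hat A} \cup B_2$ (where the $B_i$ are the components of $V \setminus \overline{\hat A}$) yields $mod(B_1) + mod(B_2) \leq 1$, so each $B_i$ has modulus at most $1$. Since each component of $A^{ext}\setminus \hat A$ sits inside some $B_i$, each has modulus at most $1$, and Lemma~\ref{lem: truncation contained} (applied with $B$ equal to the component of $\hat A \cap A^{ext}$ containing the core) gives that the $2$-truncation of $A^{ext}$ is contained in $\hat A$, which implies the weaker $3$-truncation statement.

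For the second containment, we apply the same Gr\"otzsch decomposition of $V$, this time around $A^{ext}$, to conclude that each component of $V \setminus A^{ext}$ (and hence of $\hat A \setminus A^{ext}$) has modulus at most $mod(V)-mod(A^{ext})$. The modulus $mod(A^{ext})$ is computed from the piecewise hyperbolic/Euclidean structure of Definition~\ref{def: graftingdef}: for regular $c$ it equals $2\rho_{\ell(c)}/\ell(c) + 2L(c)/\ell(c)$, with analogous explicit formulas in the degenerate and cusp-of-$X$ cases. The plan is to bound the gap $mod(V)-mod(A^{ext})$ uniformly in $\mathcal L$ by comparing $X_L^{hyp}$ to the model case of a complete hyperbolic flare annulus of core length $\ell(c)$ with the same grafting prescription, where the explicit uniformization gives $\ell(\hat c) = \pi\ell(c)/(\pi+2L(c))$ and thus a gap of order $\pi/\ell(c) - 2\rho_{\ell(c)}/\ell(c)$, which is bounded by an explicit function of $\mathcal L$. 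The general case is compared to this model by noting that the only way to make $V$ larger than $A^{ext}$ is through conformal ``room'' lying in $\iota(X \setminus A)$, and the $\mathcal L$-bound on $\ell(c)$ combined with Lemma~\ref{lem: modestimate} for the original collar forces this extra room to contribute at most a bounded modulus.

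The main obstacle will be the second containment, specifically establishing that the gap $mod(V) - mod(A^{ext})$ is uniformly bounded in $\mathcal L$: the intrinsic length $\ell(\hat c)$ depends on the global geometry of $X$, so one cannot merely invoke Schwarz--Pick once, and the comparison to the flare-annulus model must be made carefully (perhaps by constructing an explicit quasiconformal map between a neighborhood of $\hat c$ in $X_L^{hyp}$ and the model, with dilatation depending only on $\mathcal L$, and using that such a quasiconformal map distorts moduli by a bounded factor). The cusp variants of both containments follow the same strategy, with conformal punctured-disk neighborhoods replacing conformal annuli and the Schwarz--Pick bound applied to horoball neighborhoods in place of collar neighborhoods.
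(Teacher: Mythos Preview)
Your treatment of the first containment is correct and essentially identical to the paper's: both arguments lift to the annular cover $\hat Y$ (your $V$) of $\hat c$ in $X_L^{hyp}$, use that $\hat A$ fills all but modulus $\leq 1$ of $\hat Y$ (Lemma~\ref{lem: modestimate}), and then invoke one of the truncation lemmas. The paper uses Lemma~\ref{lem: quasimonotone} directly; you use Lemma~\ref{lem: truncation contained}. Either yields the $3$-truncation statement.

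The second containment, however, has a genuine gap that you yourself flag. Your plan is to bound $mod(V)-mod(A^{ext})$, and you assert that the only conformal ``room'' in $V$ beyond $A^{ext}$ lies in $\iota(X\setminus A)$. This is false in general: a component $B$ of $\hat A\setminus A^{ext}$ can run through the grafting cylinders $C_{c'}$ of \emph{other} curves $c'\in\mathcal S$, and those cylinders can have arbitrarily large $L(c')$. So neither your flare-annulus model nor the proposed quasiconformal comparison to it gives a bound depending only on $\mathcal L$; any naive qc map from $X_L^{hyp}$ to a bounded model would have dilatation depending on the $L(c')$.

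The paper resolves this with a bootstrap that you are missing. First apply the already-proved first containment to \emph{every} curve in $\mathcal S$: the $3$-truncation of each extended collar lies in the corresponding standard collar in $X_L^{hyp}$. Since standard collars of disjoint geodesics are disjoint, $B\subset\hat A$ is disjoint from the $3$-truncation of every other extended collar. Hence $B$ embeds conformally in the orbifold $Z$ obtained by grafting only modulus-$3$ annuli (rather than the full $L$) along each side of every curve of $\mathcal S$. Now $Z$ is $K$-quasiconformal to the \emph{original} $X$ with $K$ depending only on $\mathcal L$ (stretch each standard collar over the added modulus-$6$ piece), so $mod(B)\leq K\cdot mod(\phi^{-1}(B))$, and the latter is bounded by lifting to the annular cover of $c$ in $X$. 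This is the missing idea: reduce to bounded grafting by using part one for all curves simultaneously.
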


In case 1, note that if $\ell(c)$ is short, its standard collar $A \subset X$ has large modulus, so the lemma says that the $3$-truncation of  the extended standard collar $A^{ext}$ contains the entire grafting cylinder $C_{c_+}$. One can compare this with Hensel's proof of  \cite[Lemma 4.1 (iii)]{hensel}, the main point of which is to prove that if $c$ is short, its grafting cylinder is contained in the hyperbolic standard collar $\hat A$. The proof below, using quasi-monotonicity of truncation, is  much shorter than Hensel's proof, though. 

 These kinds of results are intimately tied to estimates of  geodesic lengths under grafting.  For instance, in case 1 when $L(c)<\infty$, Lemma~\ref{lem: modestimate} implies that $mod(A^{ext})$ and $mod(\hat A)$ are  each within $1$ of $(\pi + 2L(c))/ \ell(c)$ and $\pi / \ell(\hat c)$, respectively, where $\hat c$  is the geodesic in $X_L^{hyp}$ in the homotopy class of $c$, and the lemma above  estimates the difference between $mod(A^{ext})$ and $mod(\hat A)$. The  interested reader can compare  the resulting estimates on $\ell(\hat c)$ to those given in Proposition 3.4 of Diaz-Kim \cite{diaz2012asymptotic}, and Hensel's Lemma 4.1 \cite{hensel}.

\begin{proof}
Let us assume we are in case 1 and that  $L(c)<\infty$. Case 2 and the $L(c)=\infty$ case are similar.
Let $$Y := \bbS^1_{\ell( \gamma)} \times (-\pi/2,\pi/2) \longrightarrow X, \ \ \ \ \hat Y := \bbS^1_{\ell(\hat c)} \times (-\pi/2,\pi/2)\longrightarrow X_L^{hyp}$$
  be the annular covers  corresponding to $c$ and $\hat c$. Then $A^{ext}$ and $\hat A $ lift  homeomorphically to subannuli $$\mathrm{Lift}( A^{ext},\hat Y), \qquad \mathrm{Lift}(\hat A,\hat Y) \subset \hat Y.$$ By Lemma \ref{lem: modestimate}, we have $\mathrm{Lift}(\hat A,\hat Y) \supset \hat Y^1$, the $1$-truncation of $\hat Y$. By quasi-monotonicity of truncation (Lemma \ref{lem: quasimonotone}), it follows that the $3$-truncation of $\mathrm{Lift}( A^{ext},\hat Y)$ lies inside $\hat Y^1$, and hence in $\mathrm{Lift}(\hat A,\hat Y)$. So, the $3$-truncation $A^{ext,3}\subset A^{ext}$ lies in $\hat A$.

Now let $B$  be one of the components of $\hat A \setminus A^{ext,3}$. We want to bound $mod(B)$  from above. Apply  the first part of the proof to all the other curves in $\mathcal S$,  so that all the $3$-truncations of their extended standard collars are contained in the  corresponding standard collars in $X_L^{hyp}$. Since standard collars of disjoint geodesics on a hyperbolic surface are disjoint,  it follows that $B$ cannot intersect the $3$-truncation of the extended standard collar around any other grafting curve.  As a consequence, if we let $Z$ be the orbifold obtained by grafting modulus $3$ generalized annuli along each side of every curve in $ \mathcal S$, then there is a conformal embedding $$i: B \longrightarrow Z.$$
%\tinytodo{So, I'm not completely convinced that you don't get a uniform bound  instead of this $N(\mathcal L)$.  I mean,  the fact that $B$ can  can intersect  the other grafting cylinders a bit shouldn't really multiply its modulus.  But I'm not sure how to control the modulus of $B$ without using the quasi-conformal maps.  In any case, this point doesn't really matter for our argument.}
The standard collar of any curve $\gamma \in \mathcal S$ has modulus $m_\gamma:=\Sec^{-1}(M_\gamma)/\ell(\gamma)$, where $M_\gamma$  is as defined in \S \ref{sec: collarsec}. If we stretch each such annulus over its union with the modulus $6$ annulus that is grafted in to form $Z$, we get a $K$-quasiconformal homeomorphism $$\phi : X \longrightarrow Z, \ \text{ where } \  K=(6+m)/m, \  \text{ and } \ m = \sup_{\gamma \in \mathcal S} m_\gamma.$$
Since $K$-quasiconformal maps  between annuli distort moduli by a factor of at most $K$, we get that $$mod(B)=mod(i(B)) \leq K \cdot mod(\phi^{-1}\circ i(B)) \subset X.$$
But if we lift $\phi^{-1}\circ i(B)$ to the annular cover $Y \longrightarrow X$, we have an annulus $\mathrm{Lift}(\phi^{-1}\circ i(B),Y)$ that is contained in  a component of $Y \setminus \mathrm{Lift}(A^{ext,3},Y)$. Each such component has modulus at most $4$, so $mod(\phi^{-1}\circ i(B)) \leq 4$,  and hence $mod(B) \leq K\cdot 4$.   It then follows that the $(4K+1)$-truncation of $\hat A$ is contained in $A^{ext,3}$. So, unraveling all the estimates above, the lemma follows with  the constant $$N(\mathcal L) := 4 \cdot \frac{6+m}{m} +1, \qquad m =\sup_{\gamma \in \mathcal S}\ \Sec^{-1}\left (\sinh^{-1}\left ( \frac 1{\sinh(\ell(\gamma)/2)} \right )\right )/\ell(\gamma) .\qedhere$$
\end{proof}

We now are ready to prove Proposition \ref{prop: modelprop}.

\begin{proof}[Proof of Proposition \ref{prop: modelprop}]
As in the statement, assume that every curve in $\mathcal S$  has length at most $\mathcal L$. 

First, assume that $c\in \mathcal S$ and $L(c)<\infty$.  Let $\hat c$ be the geodesic on $X_L^{hyp}$  in the homotopy class of $c$ and let $$\hat Y := \bbS^1_{\ell(\hat c)} \times (-\pi/2,\pi/2)\longrightarrow X_L^{hyp}$$
 be the  annular cover corresponding to $\hat c$.  Lift $A^{ext}$  homeomorphically to a subannulus $\mathrm{Lift}(A^{ext}, \hat Y) \subset  \hat Y.$ By Lemma \ref{lem: collarlem}, $A^{ext}$  contains the $N(\mathcal L)$-truncation of $\hat A$. So, by Lemma \ref{lem: modestimate}, $\mathrm{Lift}(A^{ext}, \hat Y)$  contains the $(N(\mathcal L)+1)$-truncation of $\hat Y$, which is the subannulus 
$$\bbS^1_{\ell(\hat c)} \times \big (-\pi/2 + (N(\mathcal L)+1)\ell(\hat c), \ \pi/2 - (N(\mathcal L)+1)\ell(\hat c)\big ) \ \ \subset \ \ \hat Y.$$
 Now given some large $D>0$, let $A^{ext,D}$ be the $D$-truncation of $A^{ext}$, and let $\mathrm{Lift}(A^{ext,D}, \hat Y)$ be its lift to $\hat Y$. By Lemma \ref{lem: quasimonotone}, $\mathrm{Lift}(A^{ext,D}, \hat Y)$ is contained in the $(D-2)$-truncation of $\hat Y$, which is the subannulus 
$$\bbS^1_{\ell(\hat c)} \times \big (-\pi/2 + (D-2)\ell(\hat c), \ \pi/2 - (D-2)\ell(\hat c)\big ) \ \ \subset \ \ \hat Y.$$
 It follows that the  hyperbolic distance between $A^{ext,D}$ and the frontier of $A^{ext}$ is at least 
$$R:= \int_{\pi/2-(D-2)\ell(\hat c)}^{\pi/2 - (N(\mathcal L)+1)\ell(\hat c)} \sec(t) \, dt \geq \int_{(D-2)\ell(\hat c)}^{(N(\mathcal L)+1)\ell(\hat c)} 1/t \, dt = \ln(D-2) - \ln(N(\mathcal L)+1), $$
where the first  inequality uses that $\sec(\pi/2-t) \geq 1/t$. Setting $M(\mathcal L)=\ln(N(\mathcal L)+1)$,  we are done.
\end{proof}

\subsection{Parametrization of cusps, standard collars and extended collar neighborhoods}\label{sec: parametrization of extended collar neighborhoods}

Let $c \in \mathcal S$, let $\ell=\ell(c)$, and let $A$ be its standard collar neighborhood. 
Let $\check{A}$ be the cover of $X$ corresponding to $\pi_1(A)$. $\check{A}$ is a generalized annulus with a complete hyperbolic metric.
Thus, $\check{A}$ can be parametrized conformally by $$\bbS^1_{\ell(c)} \times (-\pi/2,\pi/2)\modsim.$$ 
The lift of $A\subset \check{A}$ is parametrized by \[\bbS^1_{\ell(c)} \times (-\rho_{\ell},\rho_{\ell})\modsim.\] 
This is exactly the \emph{conformal parametrization} discussed in \S\ref{sec: collarsec}.

% Recall that the complete hyperbolic metric on $\check{A}$ is given by $ds = \sec(y) \sqrt{dx^2+dy^2}$ in the conformal parametrization. Thus, the middle curve $c= S^1_{\ell}  \times \{0\}$ is a geodesic, and the distance of a point $(x,s)$ from $c$ is given by $\Sec(s)$ where $\Sec$ is the antiderivative of $\sec$. Therefore, it will also be convenient to use the \emph{semi-hyperbolic parametrization} given by applying $\Id \times \Sec$ to the conformal parametrization. In the semi-hyperbolic parametrization $$A \simeq S^1_{\ell}\times (-M_{\ell},M_{\ell}) \subset S^1_{\ell}\times (-\infty,\infty) \simeq  \check{A}.$$

After grafting a cylinder onto $c$, the extended collar neighborhood $A^{\ext}$ can be seen as a subannulus of the extension of $\check{A}$ defined by \[\check{A}^{\ext} = \iota(\check{A}\dsm c) \cup C_c.\]
The extension $\check{A}^{\ext}$ is a generalized conformal annulus and so by Theorem \ref{thm: uniformization} admits a unique complete hyperbolic metric $\check{d}^{\ext}$.
The restriction of the metric $\check{d}^{\ext}$ to $A^{\ext}$ will be used in the next subsection to analyze geometric limits of grafted surfaces when the basevectors are taken deep inside the grafting cylinders. As preparation for this, here we write down explicit parametrizations of $\check{A}^{\ext}$ and  express $\check d$ in them.

We divide into two cases, depending on whether $L(c)<\infty $ or $L(c)=\infty$. For convenience, we will let $c_+ \in \mathcal S_\pm$ be $c$ equipped with a choice of preferred side, and work with the one-sided collar $A^{ext}_+$ of  $c_+$.

\paragraph*{Case 1. $L(c) < \infty$.} 

% {\bf *** Copied from Section 8, we should decide if we keep it here, move it to Subsection 7.1 or keep it in Section 8.}
The generalized annuli $A^{\ext}\subseteq \check{A}^{\ext}$ have 3 natural parametrizations --  conformal, rescaled conformal, and semi-hyperbolic -- defined as follows. \begin{itemize}
    \item The \emph{conformal parametrization} is the one given by joining the (reversed) Euclidean parametrization $C_{c_+} \longrightarrow \bbS^1_{\ell} \times [-L(c),0) \modsim$ and the conformal parametrization of $A$ or $\check{A}$. I.e,
$$A^{\ext} \longrightarrow  \bbS^1_{\ell}\times [-L(c) , \rho_\ell)\modsim \qquad \textrm{and} \qquad  \check{A}^{\ext} \longrightarrow \bbS^1_\ell \times [-L(c), \pi/2)\modsim.$$
    \item The \emph{rescaled conformal parametrization} is obtained from the conformal parametrization by shifting the second coordinate by $L(c)$ and rescaling both coordinates by a factor of $$\omega = \omega_{c} := \frac{\pi/2}{\pi/2+L(c)}=\frac{\pi}{\pi+2L(c)}.$$ This gives 
$$A^{\ext}\longrightarrow \bbS^1_{\ell'} \times [0,\rho')\modsim \qquad \textrm{and} \qquad  \check{A}^{\ext} \longrightarrow \bbS^1_{\ell'} \times [0,\pi/2)\modsim,$$ where
$\ell' = \ell \omega$ and $\rho' = (L(c)+\rho_\ell) \omega$. 
The complete hyperbolic metric $\check{d}$ on $\check{A}^{\ext}$ is given by 
\[ds = \sec(y) \sqrt{dx^2+dy^2}\] in the rescaled conformal parametrization.
    \item The \emph{semi-hyperbolic parametrization} is obtained from the rescaled conformal parametrization by postcomposing with $\Id \times \Sec$, which gives
\[ A^{\ext} \longrightarrow \bbS^1_{\ell'} \times [0,\Sec(\rho')) \modsim.\]
In the semi-hyperbolic parametrization the second coordinate is the hyperbolic distance of a point to the geodesic $\bbS^1_{\ell'}\times\{0\}$ with respect to the complete hyperbolic metric on $\check{A}^{ext}$.
\end{itemize}

% {\bf ****}

% In this case, $\check{A}_+^{\ext}$ is an annulus of finite modulus which is the union of the two annuli $C_{c_+}$ and 
% $\iota(\check{A}_+\dsm c)$.  These are parametrized respectively  by 
% \[S^1_{\ell}\times [0,L(c) \modsim\text{\: and \:}S^1_{\ell}\times [0,\pi/2)\]
% and so their union, $\check{A}_+^{\ext}$ can be parametrized by 
% \[S^1_{\ell} \times [0,L(c)+\pi/2)\modsim.\]
% After scaling by $\omega = \frac{\pi/2}{L(c)+\pi/2} = \frac{\pi}{2L(c)+\pi}$ we get the parametrization
% \[\Omega:\check{A}_+^{\ext} \to S^1_{\check{\ell}} \times [0,\pi/2)\modsim.\]
% where $\check{\ell} =  \ell\omega$.
% The complete hyperbolic metric $\check{d}$ is given as usual by 
% \[ds = \sec(y) \sqrt{dx^2+dy^2}.\]

% The subannulus $A^{\ext}_+$ is therefore parametrized by
% \[A^{\ext}_+\to S^1_{\check{\ell}} \times [0,\check{\rho})\modsim,\]
% where $\check{\rho} =  (L(c)+\rho)\omega$.

\paragraph*{Case 2. $L(c) = \infty$.}
% {\bf **** copied from the retraction section}
In this case we will only use the conformal parametrization which is obtained by joining together the parametrization of the infinite cylinder $C_{c_+} \to \bbS^1_{\ell}\times(-\infty,0]$ and the conformal parametrization of $A$. The \emph{conformal parametrization} of $A^{\ext}$ is thus given by
$$A^{\ext} \longrightarrow \bbS^1_\ell \times (-\infty,\rho_\ell] \qquad \textrm{and} \qquad \check{A}^{\ext} \longrightarrow \bbS^1_\ell \times (-\infty,\pi/2),$$ and the hyperbolic metric $\check{d}$ on $\check{A}^{\ext}$ is given by $ds=\frac{1}{\pi/2 - y}\sqrt{dx^2+dy^2}$.

We will also need to measure angles in extended standard collars. For that purpose, in all the parametrizations above, we can consider the foliation by equidistant circles of the form $\bbS^1\times\{y\}$. The \emph{inward direction} at a point $p \in A^{\ext}$ is the direction perpendicular to the equidistant foliation that points towards the geodesic curve $\bbS^1\times \{0\}$ or cusp. Note that for points on the geodesic the inward direction is only defined up to an angle of $\pi$.

\subsection{Limits when the basepoint is deep inside a collar}\label{sec: limits deep in collars}

Suppose that $X^i$ is a  sequence of hyperbolic $2$-orbifolds  that for each $i$ we have a finite set $\mathcal S^i$  of simple closed geodesics in $X^i$ and a function $L^i : \mathcal S^i \longrightarrow [0,\infty]$. As in  the previous section, let $E^i$ be either
\begin{enumerate}
	\item the extended standard collar of some $c^i\in \mathcal S^i$, 
\item  the inclusion $E^i = \iota(A^i)$ of the standard collar $A^i$ of a cusp of $X^i$.
\end{enumerate}

\begin{proposition}[Chabauty limits deep in extended standard collars]\label{prop: limits in collars}
Take tangent vectors $v^i \in TE^i$, with basepoints $p^i \in E^i$, suppose that $p^i$ is contained in the $D^i$-trunctation of $E^i$, where $D^i\to \infty$. 

Let $s^i$ be the second coordinate of $p^i$ in the semi-hyperbolic parametrization of $E^i$ if $E^i$ is not conformally equivalent to a cusp, and take  $s^i= \infty$ if $E^i$ is conformally equivalent to a cusp. 
Assume $\theta^i \to \theta \in [0,2\pi],$ where $\theta^i$ is the CCW angle the vector $v^i$ forms with the inward direction
% \footnote{The inward direction is the direction at $p^i$ of the shortest geodesic connecting $p^i$ to the unique admissible geodesic or cusp in $E^i$. When $v^i$ is on the unique geodesic of $E^i$ then the angle $\theta^i$ is only defined modulo $\pi$.} 
of $E^i$. Then setting $$\Gamma^i := \Gamma\big ((X^i_{L^i})^{hyp}, v^i\big ),$$ we have:
\begin{enumerate}
\item If $s^i \to s<\infty$ and all $c^i$ are regular geodesics, then $(\Gamma^i)$  converges in the Chabauty topology to the one-parameter group $A(\alpha)$ of all hyperbolic-type isometries with axis $\alpha \subset\HH^2$, where $d_{\HH^2}(\alpha,\pbase)=s$ and the shortest path from $\pbase$ to $\alpha$ starts out in the direction that is an angle of $\theta$ CCW from $v_{base}.$
\item If $s^i \to s$ and all $c^i$ are degenerate geodesics, then $(\Gamma^i)$  converges to the group $A'(\alpha)=A(\alpha) \rtimes \ZZ/2$ of \emph{all} orientation preserving isometries that leave $\alpha$ invariant, where $\alpha$ is as in part 1.
\item If $s^i \to \infty$
then $(\Gamma^i)$  converges in the Chabauty topology to the one-parameter group $N(\xi)$ of parabolic isometries fixing the  endpoint $\xi \in \partial \HH^2$ of the geodesic ray emanating out from $\pbase$ in the direction that is an angle of $\theta$ CCW from $v_{base}$. 
\end{enumerate}
\end{proposition}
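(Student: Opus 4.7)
The plan is to exploit the covering space argument provided by Lemma~\ref{lem: same limit}, reducing the Chabauty convergence of $\Gamma^i$ to that of the deck group $\Delta^i \subset \Gamma^i$ of the cover $\check A^{\mathrm{ext},i} \to (X^i_{L^i})^{\mathrm{hyp}}$ corresponding to $\pi_1(E^i)$. This cover is isometric to the complete hyperbolic generalized annulus of \S \ref{sec: parametrization of extended collar neighborhoods}, and $\Delta^i$ is cyclic hyperbolic in case 1, infinite dihedral in case 2, and cyclic parabolic when $E^i$ is a cusp. After lifting $v^i$ to the fixed base vector $v_{\mathrm{base}} \in T\HH^2$, the explicit parametrizations of \S \ref{sec: parametrization of extended collar neighborhoods} identify $\Delta^i$ as an elementary subgroup of $G$ whose geometric data is controlled by $s^i$, $\theta^i$, and the rescaled core length $\ell'_i = \ell(c^i)\, \omega_i$.

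The first step is to verify that $d^{\mathrm{hyp}}(p^i, \partial E^i) \to \infty$, which is immediate from Proposition~\ref{prop: modelprop}. With this in hand, I apply Lemma~\ref{lem: same limit} with $f_i = \mathrm{id} : \HH^2 \to \HH^2$ to conclude that $\Gamma^i$ and $\Delta^i$ have the same Chabauty limit. The equivariance conditions hold on any ball $B(\pbase, R) \subset C^i$ once $d^{\mathrm{hyp}}(p^i, \partial E^i) > 2R$, since an element $\gamma^i \in \Gamma^i$ satisfying $\gamma^i(B(\pbase, R)) \cap B(\pbase, R) \neq \emptyset$ corresponds to a loop in $(X^i_{L^i})^{\mathrm{hyp}}$ based at $p^i$ of length at most $2R$, which cannot leave $E^i$ and hence is represented by an element of $\pi_1(E^i) = \Delta^i$.

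Finally I compute $\lim \Delta^i$ directly via the parametrizations of \S \ref{sec: parametrization of extended collar neighborhoods}. In cases 1 and 2, the $D^i$-truncation hypothesis forces the modulus of $E^i$ to diverge, and hence $\ell'_i \to 0$, so the generator of $\Delta^i$ has translation length tending to zero along an axis at hyperbolic distance $s^i \to s$ from $\pbase$ in direction $\theta^i \to \theta$. By Proposition~\ref{prop: subspace of elem subgroups} (cases (7)--(8)), cyclic hyperbolic and infinite dihedral groups with this behavior converge in Chabauty to the one-parameter groups $A(\alpha)$ and $A'(\alpha)$ respectively, where $\alpha$ is the limiting axis identified in the statement. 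In case 3, the base vector recedes to infinity inside the cover: either $E^i$ is a cusp so $\Delta^i$ is already parabolic with translation distance of its generator at $\pbase$ tending to zero, or $s^i \to \infty$ pushes the axis of $\Delta^i$ out to a boundary point $\xi$. In either scenario, $\Delta^i$ converges to the full one-parameter parabolic group $N(\xi)$, with $\xi$ determined by the limiting inward direction $\theta$, just as in Lemma~\ref{lem: thinlimits}.

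The main obstacle is case 3 when $L^i(c^i)$ remains finite and $\Delta^i$ is hyperbolic or dihedral: one must verify that $\ell'_i \to 0$ is fast enough compared to the growth of $s^i$ to force $d(\gamma^i(\pbase), \pbase) \to 0$, so that $\langle \gamma^i \rangle$ fills out the full one-parameter group $N(\xi)$ rather than only a proper subgroup. This quantitative tradeoff is encoded by the $D^i$-truncation condition through the modulus of $E^i$, and is verified by direct computation of translation distance in the semi-hyperbolic coordinates.
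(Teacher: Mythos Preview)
Your overall strategy coincides with the paper's: pass to an elementary group $\Delta^i$, use Proposition~\ref{prop: modelprop} together with Lemma~\ref{lem: same limit} to show $\Gamma^i$ has the same limit, and identify that limit via Proposition~\ref{prop: subspace of elem subgroups}. Your handling of the ``main obstacle'' in case~3 is also correct in spirit: the $D^i$-truncation hypothesis does force the minimal translation of $\pbase$ by $\Delta^i$ to tend to zero.

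However, there is a genuine gap in your identification of $\Delta^i$. You take $\Delta^i \subset \Gamma^i$ to be the subgroup corresponding to $\pi_1(E^i)$ and assert that the associated annular cover of $(X^i_{L^i})^{\mathrm{hyp}}$ is isometric to $(\check A^{\mathrm{ext},i}, \check d^{\mathrm{ext}})$ from \S\ref{sec: parametrization of extended collar neighborhoods}. This is false whenever $\mathcal S^i$ contains curves other than $c^i$: the annular cover of the \emph{grafted} orbifold carries grafting along all lifts of \emph{every} curve in $\mathcal S^i$, whereas $\check A^{\mathrm{ext},i}$ is obtained by grafting only along the single core geodesic of the annular cover of $X^i$. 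The two complete hyperbolic annuli have different moduli in general. Consequently the coordinate $s^i$, which by definition is the $\check d^{\mathrm{ext}}$-distance from $p^i$ to the core of $\check A^{\mathrm{ext},i}$, is \emph{not} the distance from $\pbase$ to the axis of your $\Delta^i$ in $\HH^2$, and your assertion of ``an axis at hyperbolic distance $s^i \to s$ from $\pbase$'' does not follow from the data you have.

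The paper avoids this by instead setting $\Delta^i := \Gamma(E^i, v^i)$ with respect to the complete hyperbolic metric $\hat d^i$ on $E^i$ itself---a metric entirely insensitive to grafting elsewhere---and then relating the resulting distance $\hat s^i$ to $s^i$ by an explicit conformal-rescaling formula. Lemma~\ref{lem: same limit} is applied not with $f_i=\mathrm{id}$ but with $f_i = I^i$, the lift to $\HH^2$ of the conformal inclusion $(E^i,\hat d^i)\hookrightarrow (X^i_{L^i})^{\mathrm{hyp}}$; the crucial analytic step is showing $I^i \to \mathrm{id}$ via compactness of conformal maps, using Proposition~\ref{prop: modelprop} to see that the images exhaust $\HH^2$. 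Your shortcut $f_i=\mathrm{id}$ is legitimate for the equivariance check, but it merely transfers the difficulty: you then cannot read off the axis location of $\Delta^i$ without an argument equivalent to $I^i\to\mathrm{id}$.
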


\begin{proof}[Proof of Proposition \ref{prop: limits in collars}]
In the proof, it will be useful for us to consider the complete hyperbolic metric $\hat d^i$ on $E^i$ in the given conformal class. 
If $E^i$ is not conformally equivalent to a cusp, $E^i$ is the extended standard collar of a grafting curve $c^i$ with $L^i(c^i)<\infty$. For such $E^i$ there is a unique $\hat d^i$-geodesic $m^i$ in $E^i$ which coincides with the geodesic $\bbS^1_{\ell'}\times \{0\}$ in the semi-hyperbolic parametrization. 

Let $\hat{s}^i$ be the distance of $p^i$ to the geodesic $m^i$ in the metric $\hat d^i$. 
By conformally parametrizing $E^i$ by $\bbS^1_{\hat\ell^i} \times (-\pi/2,\pi/2)$ where $\hat{\ell}^i=\frac{\pi\ell(c^i)}{2\rho_{\ell(c^i)}+2L^i(c^i)}$ and expressing its complete hyperbolic metric $\hat{d}^i$ as we did in \S\ref{sec: parametrization of extended collar neighborhoods}, we can relate $\hat{s}^i$ to $s^i$ by 
\begin{equation}\label{eq: relation between s and s hat} \hat{s}^i = \Sec\left(  \frac{\pi + 2L^i(c^i)}{2 \rho_{\ell(c^i)}+2L^i(c^i)}\cdot \Sec ^{-1} (s^i) \right).\end{equation}

The modulus of $E^i$ is given by $$mod(E^i) = \begin{cases}
\frac{2\rho_{\ell(c^i)}+L^i(c^i)}{\ell(c^i)} & \textrm{if it is not conformal to a cusp} \\
\infty & \textrm{otherwise.}
\end{cases}
$$ %if it is not conformal to a cusp, and $\infty$ otherwise. 
The hypothesis that $p^i$ lies in the $D^i$-truncation of $E^i$, where $D^i\to \infty$, implies that the modulus of $E^i$ tends to $\infty$. This happens when  $L_t \to \infty$ or $\ell(c^i) \to 0$, and in particular  $\frac{\pi + L_t}{2 \rho_{\ell(c^i)}+L_t}\to 1$.
Therefore, by \eqref{eq: relation between s and s hat} if $s^i \to s$ then $\hat{s}^i\to s$ as well.

% can be interpreted more concretely in cases. For instance, if each $E^i$ is the extended standard collar around some $c^i\in  \mathcal S^i$, then writing $$E^i=\iota(A^i) \cup C_{c^i},$$ where $A^i$ is the standard collar of $c^i \subset X^i$, it suffices that any of the following three conditions hold for all $i$:
% \begin{itemize}
%     \item $p^i \in \iota(A^i)$, and when regarding $p^i\in A^i$ we have $d(p^i,\partial A^i)\to \infty$,
%     \item $p^i \in C_{c^i}$, and either $\ell(c^i)\to 0$ or $d(p^i,\partial C_{c^i})/ \ell(c^i) \to \infty.$ 
% \end{itemize}
% \todo{rewrite this, and deduce the following:}
% In particular, $\frac{\pi + L_t}{2 \rho_{\ell(c^i)}+L_t}\to 1$. So by \eqref{eq: relation between s and s hat} if $s^i \to s$ then $\hat{s}^i\to s$.

Let $\Delta^i := \Gamma(E^i,v^i) < G$ be the groups corresponding to the vectored orbifolds $(E^i,v^i)$, where we consider $E^i$ as equipped with the complete hyperbolic metric $\hat d$. We first show that the groups $\Delta^i $ have the Chabauty limits in 1--3 above. We then show that $\Delta^i$ and $\Gamma^i$  have the same limit.

 Since each $E^i$ is a  generalized annulus, each group $\Delta^i$  is of one of the following two forms:
 \begin{itemize}
     \item[(a)] $\Delta^i$ is isomorphic to either $\mathbf{a}(\alpha^i,t^i)$ or $\mathbf{a}'(\alpha^i,t^i,q^i)$ where $\alpha^i$ is the geodesic in $\mathbb{H}^2$ at distance $\hat{s}^i$ from $\tilde{p}$ and the CCW angle from $v_{base}$ to the shortest path from $\pbase$ to $\alpha^i$  is $\theta^i$.
     
    %  $\ZZ$ or $\ZZ/2\ZZ$ and stabilizes a geodesic axis $\alpha^i$ in $\HH^2$, where the  distance from $\tilde p$ to $\alpha^i$ is $\hat{s}^i$, and the angle from $v_{base}$ to the shortest path from $\tilde p$ to $\alpha^i$  is $\theta^i$.
     \item[(b)] $\Delta^i$ is a discrete cyclic subgroup of $N(\xi^i)$ of parabolic isometries fixing some point $\xi^i \in \partial \HH^2$, where the geodesic from $\pbase$ to $\xi^i$ is an angle of $\theta^i$ counterclockwise from $v_{base}.$
 \end{itemize} 

The condition that $p^i$ is contained in the $D^i$-truncation of $E^i$, where $D^i\to \infty$, implies that the  minimal amount that $\pbase$ is translated by $\Delta^i$ goes to zero as $i\to \infty$. Hence, the groups $\Delta^i$ converge to the desired limits by Lemma \ref{prop: subspace of elem subgroups}.

To relate the $\Gamma^i$ to the $\Delta^i$, choose universal covers  $$\tilde E^i \cong \HH^2 \longrightarrow E^i, \ \ \ \ \HH^2 \longrightarrow X_{L^i}^{hyp},$$
taking our standard fixed base vector $v_{base} \in T\HH^2$ to $v^i$. Then the inclusion $E^i \hookrightarrow X_{L^i}^{hyp}$ lifts to a conformal map between the universal covers $$I^i : \tilde E^i \cong \HH^2 \hookrightarrow \HH^2 \cong \tilde X_{L^i}^{hyp}.$$
By assumption, $p^i$ is contained in the $D^i$-truncation of $E^i$, where $D^i\to \infty$. So, Proposition \ref{prop: modelprop}  says that the $d^{hyp}$-distance from $p^i$ to $\partial E^i$ goes to infinity with $i$. Hence, the images of the maps $I^i$ exhaust $\HH^2$. Since $DI^i(v_{base})=v_{base}$ for all $i$, by standard compactness theorems for conformal maps (or at worst, Lemma \ref{lem: compactquasi}), after passing to a subsequence we can assume that the maps $I^i$ converge uniformly on compact sets to a conformal automorphism of $\HH^2$. By the Cauchy integral formula the derivatives $DI^i$  also converge, so the limit automorphism fixes the vector $v_{base},$  and therefore is the identity.   The $I^i$ are $(\Gamma^i,\Delta^i)$-equivariant, Lemma~\ref{lem: same limit} then implies that $\Gamma^i$ converges to the same limit as $\Delta^i.$
\end{proof}

\subsection{Global distance estimates}
\label {sec: global distance estimates}
In this section, we establish the following proposition, which relates the hyperbolic metric on a grafted orbifold with the original hyperbolic metric, at least outside of collars of any short grafting curves.

\begin{proposition}\label{prop: globalesstimates}
Given $\LLL>0$, there is a homeomorphism
$$\eta : [0,\infty] \longrightarrow [0,\infty], $$
 with $\eta(\infty)=\infty$ as follows. Suppose that $X$  is a hyperbolic $2$-orbifold, $\SSS$  is a collection of admissible geodesics on $X$ which satisfy $$\ell(c) \le \LLL  \quad \forall c\in \SSS.$$ Then, for all $L: \SSS \longrightarrow [0,\infty]$ and for all $x,y\in X$ that lie outside the standard collars of all curves in $\SSS$, 
$$d_{(X_L)^{hyp}}\big (\iota(x),\iota(y)\big ) \geq \eta\big (d_X(x,y)\big ).$$
\end{proposition}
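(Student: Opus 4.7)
The plan is to construct an explicit hyperbolic orbifold $Y$ together with a homeomorphism $\phi : X_L \to Y$ whose quasiconformal dilatation depends only on $\LLL$. Since $X_L$ and $(X_L)^{hyp}$ share the same conformal structure by construction, $\phi$ can be simultaneously viewed as a $K(\LLL)$-quasiconformal map $(X_L)^{hyp} \to Y$ between hyperbolic orbifolds. Lifting to universal covers and invoking the standard distance-distortion estimate for $K$-quasiconformal self-maps of $\HH^2$ (Theorem 4.4.1 of \cite{hubbard}, as used in the proof of Lemma \ref{lem: compactquasi}) gives a homeomorphism $\eta_K : [0,\infty] \to [0,\infty]$ with $\eta_K(\infty)=\infty$ such that
\[ d_{(X_L)^{hyp}}(\iota(x),\iota(y)) \;\ge\; \eta_K^{-1}\bigl( d_Y(\phi(\iota(x)),\phi(\iota(y))) \bigr). \]
The proposition then reduces to producing a homeomorphism $\eta_1$, depending only on $\LLL$, with $d_Y(\phi(\iota(x)),\phi(\iota(y))) \ge \eta_1(d_X(x,y))$ for $x,y$ outside the standard collars of $\SSS$; the desired $\eta$ is then $\eta_K^{-1} \circ \eta_1$.

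To build $Y$, first extend $\SSS$ to a generalized pants decomposition $\mathcal P$ of $X$ using Theorem \ref{thm: pantsdecomps}. For each $c \in \SSS$ with $L(c) < \infty$, choose $\ell_Y(c)$ so that the standard collar of $c$ in $Y$ has modulus matching that of the extended standard collar of $c$ in $X_L$; combined with Lemma \ref{lem: modestimate} this forces $\ell_Y(c) \approx \pi\,\ell(c)/(\pi+2L(c))$. For $c$ with $L(c) = \infty$, pinch $c$ to a cusp. Leaving all remaining pants lengths and all twist parameters unchanged, Theorem \ref{thm: fenchelnielsen} produces $Y$ uniquely, carrying a natural identification of $\mathcal P$ as one of its pants decompositions. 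Now define $\phi : X_L \to Y$ piecewise along $\mathcal P$: on each extended standard collar of $c \in \SSS$ in $X_L$, use the conformal parametrization of \S\ref{sec: parametrization of extended collar neighborhoods} to map conformally onto the standard collar of $c$ in $Y$ (or onto a standard cusp neighborhood when $L(c) = \infty$); and on each truncated pair of pants $P \subset X$ (a pants of $\mathcal P$ with the standard collars of its boundaries removed), with its counterpart $P' \subset Y$, define $\phi|_P : P \to P'$ as an explicit homeomorphism agreeing with the collar maps on $\partial P$. The \textbf{principal obstacle} is to arrange that the dilatation of $\phi|_P$ depend only on $\LLL$, uniformly in the lengths of boundary components of $P$ not in $\SSS$, which may be arbitrarily large. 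This is the delicate point, addressed via the right-angled hexagon analysis of the appendix: decomposing each pair of pants into two right-angled hexagons and building the hexagon-to-hexagon maps carefully, one shows the dilatation is controlled by the short sides (pieces of $\SSS$-curves, bounded by $\LLL$), while any large distortion is confined to the standard collars of the long boundaries and is therefore absorbed by the separate conformal collar step.

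Finally, by construction $\phi$ is globally $K(\LLL)$-quasiconformal, and its restriction to the union of truncated pants is $C(\LLL)$-bilipschitz onto the corresponding union in $Y$. For $x,y \in X$ outside the standard collars of $\SSS$, the images $\phi(\iota(x))$ and $\phi(\iota(y))$ lie outside the standard collars of $\SSS$ in $Y$. Since $\ell_Y(c) \le \ell(c) \le \LLL$, the $Y$-collars of $\SSS$ are at least as wide as the $X$-collars, so any geodesic in $Y$ joining these two points can enter such a collar only at the cost of a definite positive length per deep excursion. A standard short-circuit argument --- replacing each deep excursion by an arc of bounded length along the collar boundary --- yields a path in $Y$ avoiding the open $\SSS$-collars of length at most $C''(\LLL)\cdot d_Y(\phi(\iota(x)),\phi(\iota(y)))$. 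Pulling this path back through the bilipschitz $\phi^{-1}$ on the truncated pants produces a path in $X$ from $x$ to $y$ of length at most $C'(\LLL)\cdot d_Y(\phi(\iota(x)),\phi(\iota(y)))$, which yields the desired $\eta_1$ and, via the Mori bound of the first paragraph, the required $\eta$.
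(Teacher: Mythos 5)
Your proposal follows essentially the same route as the paper: build a hyperbolic model $Y$ by shrinking the $\SSS$-lengths in Fenchel--Nielsen coordinates so that collar moduli match the extended grafting collars, use the appendix's right-angled hexagon analysis to make the pants-by-pants map uniformly bilipschitz outside collars, invoke the quasiconformal distance-distortion bound, and compare $d_Y$ to $d_X$ by short-circuiting collar excursions along collar boundaries. The one step you omit is the initial reduction from orbifolds to surfaces (via a finite cover in which every admissible geodesic lifts to a regular one), which the paper uses so that the pants/hexagon decomposition applies without degenerate-geodesic cases; this is a minor technical point rather than a gap in the method.
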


%Here, a quick computation involving a side identity for Saccheri quadrilaterals, see Figure \ref{fig:saccheri} in the appendix, shows that the $\epsilon$-thin collar of a short geodesic $\gamma$ has radius
%$$R_{\delta,\gamma} := \cosh^{-1}\left ( \frac{\sinh(\delta)}{\sinh(\ell(\gamma)/2)} \right ).$$
%For small $\delta $, this radius is less than the radius $M_\gamma = \sinh^{-1}(1/\sinh(\ell(\gamma)/2))$ of the standard collar, implying that the $\delta$-thin collar is contained in the standard collar. So, one could use standard collars in the statement of Proposition \ref{prop: globalesstimates} if desired.

The idea of the proof is as follows. First, we reduce to the case that $X $ is a surface by passing to a suitable cover. If $X$ is a surface, we construct in Lemma \ref{lem: pinching} a explicit hyperbolic surface $Y$ marked by a homeomorphism $h:X \longrightarrow Y$, by shrinking the lengths of curves in $\mathcal S$, using Fenchel-Nielsen coordinates. In Claim \ref{claim: lipschitz bounds}, we describe how to bound distances in $Y$ using distances in $X$, at least outside collars of elements of $\mathcal S$. We then show that if the pinching above is done appropriately, $Y$ will be quasiconformal to $(X_L)^{hyp}$. Proposition \ref{prop: globalesstimates} follows from the fact that quasiconformal maps distort distances in a controlled way, see for instance \cite[ Theorem 4.4.1]{hubbard}.

In the proof, it will be useful to replace standard collars with smaller `reduced' collars.

\begin{definition}[Reduced Collars]
If $c$ is an admissible geodesic in a hyperbolic $2$-orbifold $X$, the \emph{$\delta$-reduced collar} $\RC_\delta(\gamma)$ is defined to be the metric neighborhood of $\gamma$ with radius $$R_{\delta,\gamma} := \sinh^{-1} \left ( \frac \delta{ \sinh(\ell(\gamma)/2)}\right ).$$
\end{definition}

Note that the standard collar of $\gamma$ is the $1$-reduced collar, and for $\delta < 1$, the $\delta$-reduced collar is contained in the standard collar. Below, what we actually will prove is that if $\delta>0$ is sufficiently small, there is some $\eta$ depending on $\mathcal L,\delta$ such that the conclusion of Proposition \ref{prop: globalesstimates} holds outside of all $\delta$-reduced collars of elements of $\mathcal S$. If we fix such an $\delta<1$, this impiles Proposition \ref{prop: globalesstimates}.

\subsubsection{Reducing to the surface case}

 It will reduce the number of cases we have to consider if we are allowed to assume $X$ is a surface rather than an orbifold. So, let us first reduce the proposition to the case of surfaces, while noting that the same proof would work for orbifolds with a few more pages of effort. 

\begin{lemma}\label{lem: smooth cover with double covers}
Let $X$ be an orbifold, and let $\SSS$ be a collection of disjoint admissible geodesics. Then there exists a (possibly infinite) surface cover $\hat X$ of $X$ such that each lift of $c\in \SSS$ double covers $c$.
\end{lemma}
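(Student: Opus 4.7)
The plan is to construct $\hat X$ as a quotient $\tilde X / H$ for a suitable torsion-free subgroup $H \le \Gamma := \pi_1^{\mathrm{orb}}(X)$, using a graph-of-groups decomposition of $\Gamma$ induced by the multicurve $\mathcal{S}$. To each $c \in \mathcal{S}$ I would associate a hyperbolic element $h_c \in \Gamma$ of translation length $\ell(c)$, well-defined up to conjugacy: for regular $c$ this is the holonomy of $p_c$, while for degenerate $c$ with order-$2$ endpoint rotations $r_1, r_2$ I set $h_c := r_1 r_2$. Tracking how the $\Gamma$-orbit of the axis of $h_c$ projects to $\hat X$ reduces the required conclusion to a condition on the intersection of $H$ with the stabilizer (in $\Gamma$) of each axis: in the regular case this stabilizer is $\langle h_c \rangle$ and we need $H$ to meet it in $\langle h_c^2 \rangle$, while in the degenerate case the stabilizer is the infinite dihedral group $\langle r_1, r_2 \rangle$ and we need $H$ to meet it in an appropriate index-two torsion-free subgroup; combined with torsion-freeness of $H$, this yields the desired lift structure.

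To construct $H$, I would first dispose of the order-$2$ endpoint cone points of degenerate geodesics by passing to an intermediate surface cover $X_1 \to X$ via Proposition~\ref{prop: finitecovers} applied to the set of such cone points. After this, every degenerate geodesic in $\mathcal{S}$ lifts to regular simple closed geodesics in $X_1$, so henceforth I may assume all curves in $\mathcal{S}$ are regular. I would then use the graph-of-groups decomposition of $\pi_1^{\mathrm{orb}}(X_1)$ obtained by cutting along $\mathcal{S}$, with edge groups $\langle h_c \rangle \cong \mathbb{Z}$ and vertex groups the orbifold fundamental groups of the pieces, and build a covering graph-of-groups where each edge group is replaced by $\langle h_c^2 \rangle$ (index~$2$) and each vertex group by a torsion-free subgroup intersecting each adjacent edge group exactly in $\langle h_c^2 \rangle$. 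Such vertex subgroups exist by residual finiteness of hyperbolic $2$-orbifold groups together with Proposition~\ref{prop: finitecovers}, by choosing finite quotients in which the relevant curve holonomies and cone point rotations map with prescribed orders.

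The main obstacle is the case when $\mathcal{S}$ or the cone-point set of $X$ is infinite, since Proposition~\ref{prop: finitecovers} handles only finite sets. To address this I would exhaust $X$ by nested compact subsets each containing only finitely many cone points and intersecting only finitely many curves of $\mathcal{S}$, construct compatible covers at each stage, and assemble the global $\hat X$ from the Bass--Serre tree of the decomposition by recursively gluing translates of vertex covers along cosets of edge subgroups. A secondary subtlety is arranging compatibility of the vertex covers at each edge, which reduces to coordinating the choices of finite quotients so that the boundary monodromies induced in the two vertex covers adjacent to a given edge $c$ agree on the index-$2$ subgroup $\langle h_c^2 \rangle$.
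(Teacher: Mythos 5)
Your overall strategy---split $\pi_1^{\mathrm{orb}}(X)$ along $\mathcal S$, prescribe index-two edge subgroups and torsion-free vertex subgroups, and realize the cover via Bass--Serre theory---is the algebraic shadow of what the paper does, but as written it has a genuine gap at the vertex groups. A complementary piece of $\mathcal S$ in $X$ can be an infinite-type orbifold carrying infinitely many cone points with unbounded orders (and infinitely many boundary curves from $\mathcal S$). Such a piece admits \emph{no} finite surface cover at all, since a cone point of order $n$ survives in every cover of degree less than $n$; so ``residual finiteness together with Proposition~\ref{prop: finitecovers}, choosing finite quotients in which the relevant holonomies have prescribed orders'' cannot produce the torsion-free vertex subgroups you need. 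Your exhaustion paragraph does not repair this: the obstruction lives \emph{inside} a single vertex piece rather than in the assembly of infinitely many pieces, and cutting a vertex piece along auxiliary exhaustion curves introduces new gluing circles whose covering degrees must themselves be coordinated, which is the problem you started with. The paper's resolution is to refine the decomposition before doing anything else: extend $\mathcal S$ to a full generalized pants decomposition via Theorem~\ref{thm: pantsdecomps}, so that every vertex piece is a generalized pair of pants $P$ with at most three boundary circles, cusps and cone points in total. For such a $P$ the required cover is completely explicit: take the regular cover corresponding to the quotient of $\pi_1(P)=\langle x_1,x_2,x_3 \mid x_1x_2x_3,\ x_i^{n_i}\rangle$ onto the triangle group obtained by replacing each $n_i=\infty$ by $2$. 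Boundary holonomies become order two (so every boundary lift is a double cover) and the finite-order generators retain their orders (so every cone point is resolved), and the global cover is then an unconstrained tree-of-spaces gluing, since any two double covers of a circle are isomorphic.

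A secondary problem is your treatment of degenerate geodesics. Passing first to a cover $X_1$ resolving the order-two endpoint cone points only guarantees that $\pi_1(X_1)$ meets the infinite dihedral stabilizer $\langle r_1,r_2\rangle$ in $\langle h_c^k\rangle$ for \emph{some} $k\geq 1$ determined by the finite quotient you happened to choose, so a degenerate $c$ lifts to $X_1$ as geodesics of length $k\,\ell(c)$; doubling these afterwards produces lifts covering $c$ with degree $2k$ rather than $2$, and the length bound $\ell_{\hat X}(\hat c)\leq 2\mathcal L$ that the lemma is used for is lost unless you separately force $k=1$ for every conjugate. (Moreover the set of such endpoint cone points may be infinite, again outside the scope of Proposition~\ref{prop: finitecovers}.) The triangle-group cover of a pair of pants handles degenerate boundary curves with no special casing, which is another reason to make the reduction to pants the first step rather than an afterthought.
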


\begin{proof}
As the conclusion of the lemma is completely topological and $X$ is homeomorphic to the interior $$Y:=int(CC(X))$$ of its convex core, it suffices to construct a surface cover of $Y$ with the indicated property. Extend $\SSS$ to a generalized pair of pants decomposition of $Y$ using Theorem \ref{thm: pantsdecomps}. So, $Y$ is a union of generalized pairs of pants (possibly with missing boundary components) glued along their boundaries. If we show that each pair of pants $P$ has a surface cover $\hat P$ in which each lift of $c\subseteq \partial P$ double covers $c$, then we get the desired cover $\hat X$ as the ``tree of spaces'' formed by gluing together the covers $\hat P$ of each $P$ along their matching boundaries.
Thus it suffices to show that such a cover $\hat P$ of $P$ exists.

The fundamental group of a generalized pair of pants $P$ is given by 
\[ \pi_1(P) = \left<x_1,x_2,x_3 \mid x_1x_2x_3,\; {x_1}^{n_1},\; {x_2}^{n_1},\; {x_3}^{n_3}\right>\]
where $n_1,n_2,n_3\in \NN \cup \{\infty\}$ and at least one of them is $\infty$.
The boundary curves of $P$ correspond to (conjugacy classes of) the generators $x_i$ with $n_i=\infty$.
Thus, $\pi_1(P)$ has the quotient
\[ H= \left<x_1,x_2,x_3 \mid x_1x_2x_3,\; {x_1}^{n_1'},\; {x_2}^{n_1'},\; {x_3}^{n_3'}\right>\]
where $n_i' = n_i$ if $n_i<\infty$ and $n_i'=2$ otherwise.
Since $H$ is a triangle group, we know that $x_1,x_2,x_3$ are elements of order $n_i'$ in $H$. 
This means that in the regular cover $\hat P$ of $P$ that corresponds to $\pi_1(P)\to H$ each lift of a cone point is smooth, and each lift of a boundary component is a double cover.
\end{proof}

So, given $X, \mathcal S,\delta$ as in the statement of the proposition, let $q:\hat X\to X$ be the surface cover given by Lemma~\ref{lem: smooth cover with double covers} and let $\hat\SSS$ be all the lifts to $\hat X$ of all curves in $\SSS$. Then
\begin{enumerate}
    \item If $\hat c\in \hat \SSS$ then $\ell_{\hat X}(\hat c)< 2\mathcal L$. 
    \item If $\hat{c}$ is a lift of $c$ then $q$ maps the $\delta$-reduced collar $\RC_\delta(\hat c)$ inside of $\RC_\delta( c)$.
    \item If we define $\hat L : \hat \SSS \longrightarrow [0,\infty]$ by $\hat L ( \hat c) = L (q(\hat c))$ for every $\hat c\in \hat \SSS$, then $q$ induces a covering map $q_L : \hat X_{\hat L} \longrightarrow X_L$ with $q_L \circ \iota = \iota \circ q.$
    \item For all $x,y\in X$, we have $d_X(x,y) = \min d_{\hat{X}}(\hat{x},\hat{y})$ where the minimum is taken over all lifts $\hat{x}, \hat{y} \in \hat X$ of $x,y$ respectively. Similarly, $d_{(X_L)^{\hyp}} (x,y) =\min d_{(\hat X_{\hat L})^{\hyp} } (\hat x,\hat y)$.
\end{enumerate} 
So, if $x,y\in X$ lie outside the $\delta$-reduced collars of all short elements of $\mathcal S$, choose lifts $\hat x,\hat y \in \hat X$ such that 
$$d_{(X_L)^{\hyp}} (\iota(x),\iota(y)) = d_{(\hat X_{\hat L})^{\hyp} } (\iota(\hat x),\iota(\hat x)).$$
By property 2 above, $\hat x,\hat y$ lie outside all $\delta$-reduced collars of short curves in $\hat {\mathcal S}$. So if we prove the proposition for $\hat X$, then we will have some $\eta=\eta(2\mathcal L,\delta)$ such that 
$$d_{(X_L)^{\hyp}} (\iota(x),\iota(y))=d_{(\hat X_{\hat L})^{\hyp} } (\iota(\hat x),\iota(\hat x)) \geq \eta(d_{\hat X}(\hat x, \hat y)) \geq \eta( d_X(x,y) ),$$
and the proposition follows for $X$.

\subsubsection{The surface case}

Let $X$ be a surface. Let $\LLL,\SSS,L$ be as in the proposition. By Theorem \ref{thm: pantsdecomps}, $X$ has a geodesic pants decomposition that includes $\SSS$. That is, there is a collection of geodesics $\PPP \supset \SSS$ such that $X\dblsetminus \PPP$ consists of pair of pants, flares and halfspaces with geodesic boundaries. To avoid painful notation, let us just assume everywhere below that $L(c) < \infty$ for all $c\in \mathcal S$. The proof in general is almost exactly the same.

As mentioned above, the idea behind our proof is to construct a quasiconformal model for $X_L$ by pinching $\mathcal S$, using Fenchel-Nielsen coordinates. Now in \S \ref{sec: fenchelnielsen} we only discussed Fenchel-Nielsen coordinates for finite type surfaces. This was for good reason, since there are some subtleties in the infinite type case. For instance, while one can always glue together pairs of pants as dictated by length and twist parameters, a gluing of infinitely many pants may not produce a \emph{complete} hyperbolic surface. However, we show in the following lemma that if we start with a collection $\mathcal S$ of curves on $X$ with lengths at most some fixed $\mathcal L$, and vary their lengths within $[0,\mathcal L]$ while keeping all other length and twist parameters constant, then one constructs a complete hyperbolic surface $Y$ that is naturally identified with $X$.

\begin{lemma}[Pinching bounded length curves]\label{lem: pinching}
Fix a sufficiently small $\delta >0$, let $X$ be a hyperbolic surface with geodesic pants decomposition $\mathcal P$ and a subset $\mathcal S \subset \mathcal P$ such that $\ell_X(c)\leq \mathcal L$ for all $c\in \mathcal S$, and let $$l : \mathcal P \longrightarrow (0,\mathcal L]$$
be a function such that $l \equiv \ell_X$ on $\mathcal P \setminus \mathcal S$. Then there is a complete hyperbolic surface $Y$ and a homeomorphism $h : X \longrightarrow Y$
such that the following properties hold.
\begin{enumerate}
    \item $h$ takes elements of $\mathcal P$ to simple closed geodesics in $Y$, and $\ell_Y(h(c)) = l(c)$ for all $c\in \mathcal P$.
    \item After a system of $\mathcal P$-tranversals is chosen, and the pants decomposition and transversals for $X$ are pushed forward to $Y$ using $h$, the twist parameter of each $c \in \mathcal P$ is the same as that of $h(c)$.
\end{enumerate}
Moreover, we can assume that our map $h$ has the following metric properties.
\begin{enumerate}
    \item[3.] For each $c\in \mathcal S$, we have $h(\RC_\delta(c)) = \RC_\delta(h(c))$, and $h$ sends the two orthogonal foliations of $\RC_\delta(c)$ by geodesic segments and equidistant circles to the corresponding foliations of $\RC_\delta(h(c))$.
    \item[4.] $h$ is locally $K$-bilipschitz outside the $\delta$-reduced collars of curves $c\in \mathcal S$, where $K=K(\mathcal L,\delta)$. 
\end{enumerate} 
\end{lemma}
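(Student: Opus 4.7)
The plan is to construct $Y$ pair-of-pants by pair-of-pants. Fix the pants decomposition $\mathcal{P}$ of $X$ and, for each pair of pants $P$ with boundary $c_1, c_2, c_3 \subset \mathcal{P}$, produce a new pair of pants $P'$ whose boundary lengths are $l(c_1), l(c_2), l(c_3)$ together with a homeomorphism $h_P : P \longrightarrow P'$ that (i) sends each $c_i$ to the corresponding boundary of $P'$ by the constant-speed affine parametrization, (ii) sends the $\delta$-reduced collar of $c_i$ in $P$ onto the $\delta$-reduced collar of $h_P(c_i)$ in $P'$, taking the orthogonal foliations to the orthogonal foliations, and (iii) is locally $K(\mathcal{L}, \delta)$-bilipschitz outside of $\RC_\delta(c_i)$. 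We then glue the $P'$ along their boundaries using exactly the twist parameters that $X$ gave to $\mathcal{P}$ (properties 1 and 2 will then be immediate), and we assemble the $h_P$ into a homeomorphism $h : X \longrightarrow Y$; this is consistent along boundary curves thanks to (i) and consistent of the twist gluings. Completeness of $Y$ will follow because, outside the reduced collars, $h$ is a uniformly bilipschitz homeomorphism between the pants skeleta, and inside each reduced collar one has a diffeomorphism between two explicit complete hyperbolic model annuli of bounded geometry.

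The construction of $h_P$ is the main content. I would cut $P$ along the three orthogeodesics connecting its boundary components into two congruent right-angled hexagons $H_1, H_2$, whose six sides alternate between half-boundaries $\tfrac{1}{2} c_i$ and fixed-length orthogeodesic arcs $a_{ij}$; I would cut $P'$ analogously into $H_1', H_2'$. By hyperbolic trigonometry the lengths of $a_{ij}'$ are determined by those of $\tfrac{1}{2} c_i' = \tfrac{1}{2} h_P(c_i)$, so each $H_j'$ is uniquely determined. On the collar side of each $c_i$ I use the semi-hyperbolic parametrization $\bbS^1_{\ell(c_i)} \times [0, M_{\ell(c_i)}) \modsim$ and define the collar map by the affine rescaling
\[
(x, y) \longmapsto \Big( \tfrac{l(c_i)}{\ell(c_i)} \cdot x, \ \psi_i(y) \Big),
\]
where $\psi_i : [0, M_{\ell(c_i)}) \to [0, M_{l(c_i)})$ is an increasing diffeomorphism that is linear on $[0, R_{\delta,\ell(c_i)}]$ and linear on $[R_{\delta,\ell(c_i)}, M_{\ell(c_i)})$, sending $R_{\delta,\ell(c_i)}$ to $R_{\delta, l(c_i)}$. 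Since both $\ell(c_i)$ and $l(c_i)$ lie in $(0, \mathcal{L}]$ whenever this collar actually changes (i.e.\ for $c_i \in \mathcal{S}$), this map is $K(\mathcal{L}, \delta)$-bilipschitz outside of $\RC_\delta(c_i)$; for $c_i \notin \mathcal{S}$, $\ell(c_i) = l(c_i)$ and the collar map is simply the identity. The remaining region of $H_j$ (the ``thick hexagonal core'' after excising the collar strips along the three $\tfrac{1}{2} c_i$) must be mapped uniformly bilipschitzly onto the corresponding region in $H_j'$, with the boundary behavior compatible with the collar maps above and with the identity on the orthogeodesic arcs $a_{ij}$.

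The essential difficulty, and the main obstacle, is obtaining uniform bilipschitz control of this core map that is independent of the lengths of the unchanging boundary curves in $\mathcal{P} \setminus \mathcal{S}$, which may be arbitrarily large. When one or more of the $c_i$ is very long, the hexagon $H_j$ is extremely thin along the corresponding side and has large diameter, so naive interpolations (e.g.\ Fermi-like coordinates) produce distortion that grows with $\ell(c_i)$. I would therefore prove a self-contained technical lemma about right-angled hexagons: if $H$ and $H'$ are right-angled hexagons with three alternating sides of lengths in $[0, \mathcal{L}/2]$ and the other three sides \emph{equal} (not just bounded), then there is a $K(\mathcal{L}, \delta)$-bilipschitz homeomorphism $H \to H'$ that is the identity on the three common sides, affine on the $\delta$-reduced collars along the varying sides, and which extends continuously with the required boundary behavior. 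Since in our setup the three orthogeodesic arc lengths are the same in $H_j$ and $H_j'$ (as they share the unchanging boundary components among $\mathcal{P} \setminus \mathcal{S}$ and their lengths are determined by neighboring pants data in exactly the same way), this hexagon lemma applies directly. The proof of that hexagon lemma requires careful coordinates adapted to the thin direction and is precisely the right-angled-hexagon calculation that the introduction mentions relegating to an appendix. Given that lemma, assembling $h_P$ from the collar maps and the hexagon-core maps, checking compatibility across the orthogeodesics, and then gluing across the pants curves using the prescribed twist parameters yields $h : X \to Y$ with all four properties.
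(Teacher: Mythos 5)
Your overall architecture matches the paper's: build $Y$ pants-by-pants by altering boundary lengths, cut each pair of pants into its two right-angled hexagons, prove a uniform bilipschitz hexagon lemma, and reassemble with the original twist parameters. But the key hexagon lemma you propose is misstated in a way that would make the argument fail. You claim the three orthogeodesic (seam) sides of $H_j$ and $H_j'$ are \emph{equal} because they are ``determined by neighboring pants data in exactly the same way.'' They are not: as you yourself observe two sentences earlier, the seam lengths of a pair of pants are determined by the three boundary lengths of \emph{that} pair of pants via hyperbolic trigonometry, so changing $\ell(c_i)$ to $l(c_i)$ for $c_i\in\mathcal S$ changes all three seams, and ``the identity on the three common sides'' is not even well defined. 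What is actually true — and is the real content of the appendix — is that the determined sides and the short non-side necks change only by a multiplicative factor $M(\mathcal L)$, and that non-side necks shift along the sides they meet by a bounded additive amount (Lemma \ref{lem: controlled distortion of necks}); one then has to build the bilipschitz map by matching a flexible thick-thin decomposition of the two hexagons with collar radii that are allowed to differ between the two coorientations of a neck (Proposition \ref{prop: thick thin hex} and Lemma \ref{lem: bilipschitz maps hex}). Relatedly, your lemma hypothesizes that all three free sides lie in $[0,\mathcal L/2]$, which excludes exactly the arbitrarily long curves of $\mathcal P\setminus\mathcal S$ whose presence you correctly identified as the main difficulty; the paper's Lemma \ref{lem: bilipschitz maps hex} instead alters one bounded free side at a time while the other two are merely \emph{equal} (possibly enormous), and Proposition \ref{prop: alteration} iterates this at most three times.

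A second gap is the global assembly. An infinite gluing of pairs of pants need not be metrically complete (see the remark following Theorem \ref{thm: fenchelnielsen}), and your one-line justification of completeness does not address this; the paper instead invokes Alessandrini et al.\ to identify the glued surface with $CC(Y)\setminus \mathcal B_Y$ for a complete hyperbolic $Y$. Moreover $X$ need not coincide with its convex core: the paper must, and does, extend $h$ over flares with explicit length estimates, extend it isometrically over half-plane components bounded by biinfinite geodesics, and then prove surjectivity of the extension via a finite-length-path argument using Fact \ref{fact: collarfact}. None of this appears in your sketch, and for infinite-type $X$ it is not routine.
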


Before we start the proof, we record some useful elementary facts about reduced collars. Below, we often denote the frontier of $\RC_\delta(c) \subset X$ by $\partial \RC_\delta(c)$. We feel obliged to say this for precision, since $\RC_\delta(c)$ is an open annulus, not a closed annulus.

\begin{fact} \label{fact: collarfact} Suppose $c$ is a curve in $X$ with length $\ell_X(c) \in (0,\mathcal L]$. \begin{enumerate} 
\item[(a)] The length of each boundary component of $\RC_\delta(c) $ is contained in some interval $[\ell_{min},\ell_{max}] \subset (0,\infty).$ Here, both constants depend only on $\mathcal L,\delta$. 
\item[(b)] The radius $R_{\delta,c}$ of the $\delta$-reduced collar of $c$ decreases monotonically as $\ell_X(c)$ increases. Hence, it is bounded below by the radius $R_{ \delta,\mathcal L}$ one gets when $\ell(c) = \mathcal L$.
\item[(c)] If $\alpha \subset \RC_{\delta}(c)$ is an arc that has both endpoints on the same component of $\partial \RC_\delta(c)$, then $\alpha$ is homotopic rel endpoints to a path $\alpha' \subset \partial  {\RC_{\delta}(c)}$ with length $\ell(\alpha')\leq C \cdot \ell(\alpha)$, where $C=C(\mathcal L,\delta)$.
\end{enumerate}
\end{fact}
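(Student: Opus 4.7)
The plan is to treat all three parts using Fermi coordinates about the core geodesic. The universal cover of $\RC_\delta(c)$ identifies isometrically with the strip $\RR\times(-R,R)\subset\HH^2$ equipped with the metric $dy^2+\cosh^2(y)\,dt^2$, where $R=R_{\delta,c}$, and the two boundary components of $\partial\RC_\delta(c)$ are double covered by the equidistant curves $\RR\times\{\pm R\}$. Everything reduces to manipulations with the defining identity $\sinh(R_{\delta,c}) = \delta/\sinh(\ell(c)/2)$.

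For part (a), a standard Jacobi field computation yields
\[
\ell(\partial\RC_\delta(c)) = \ell(c)\cosh(R_{\delta,c}) = \sqrt{\ell(c)^2 + 4\delta^2\left(\frac{\ell(c)/2}{\sinh(\ell(c)/2)}\right)^2}.
\]
The right-hand side is continuous in $\ell(c)\in(0,\mathcal L]$ with limit $2\delta$ as $\ell(c)\to 0^+$, so it is bounded above and below by positive constants $\ell_{\max}(\mathcal L,\delta)$ and $\ell_{\min}(\mathcal L,\delta)$ depending only on $\mathcal L$ and $\delta$. Part (b) is immediate from the defining formula: as $\ell(c)$ grows on $(0,\mathcal L]$, $\sinh(\ell(c)/2)$ grows, so $\delta/\sinh(\ell(c)/2)$ and hence $R_{\delta,c}$ decrease.

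For part (c) I would lift $\alpha$ to an arc $\tilde\alpha\subset\RR\times(-R,R)$ from $(0,R)$ to some $(t_2,R)$. Simplicity of $\alpha$ forces the wrapping number to lie in $\{-1,0,1\}$: if it had absolute value at least $2$, two deck-translates of $\tilde\alpha$ would be forced to intersect in the strip by a planar Jordan-curve argument, contradicting simplicity of $\alpha$ in the annulus. In the non-wrapping case $|t_2|\le\ell(c)/2$, I would use the hyperbolic distance formula for two points on a common equidistant curve,
\[
\cosh d_{\HH^2}\bigl((0,R),(t_2,R)\bigr) = 1 + \cosh^2(R)\bigl(\cosh(t_2)-1\bigr),
\]
which gives $\ell(\alpha)\ge d_{\HH^2}$. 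Splitting into the regimes $t_2\cosh(R)\le 1$ and $t_2\cosh(R)>1$, the first uses $\operatorname{arccosh}(1+x)\ge\sqrt{x}$ to give $d_{\HH^2}\ge\cosh(R)t_2/\sqrt 2$, and the second combines $t_2\cosh(R)\le\ell_{\max}$ with the uniform lower bound $d_{\HH^2}\ge\operatorname{arccosh}(3/2)$ to bound $\ell(\alpha')/\ell(\alpha)\le\ell(\alpha')/d_{\HH^2}$ by a constant depending only on $\mathcal L,\delta$. In the wrapping case, a simple arc encircling $c$ must traverse the tube radially at some moment, forcing $\ell(\alpha)\ge 2R_{\delta,c}\ge 2R_{\delta,\mathcal L}$ by (b), while $\ell(\alpha')\le\ell_{\max}$ by (a).

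The delicate step is the non-wrapping sub-case of (c): naive bounds fail because $\cosh(R)$ is unbounded as $\ell(c)\to 0$. The key observation making the ratio uniform is that the boundary arc length $\cosh(R)t_2$ is itself controlled by $\ell_{\max}(\mathcal L,\delta)$ from part (a), so the blow-up of $\cosh(R)$ is exactly compensated by $t_2$ being small, and the two-regime split above exploits this cleanly.
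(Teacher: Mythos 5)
Parts (a) and (b) are correct and essentially identical to the paper's argument: both compute $\ell(\partial \RC_\delta(c))=\ell(c)\cosh(R_{\delta,c})$, extend it continuously to $\ell(c)=0$ (your limit $2\delta$ is the right one), and read monotonicity of $R_{\delta,c}$ off the defining formula. Your strategy for (c) is genuinely different from the paper's: you work in Fermi coordinates with the explicit two-point distance formula $\cosh d = 1+\cosh^2(R)(\cosh t_2-1)$, and the key point that the blow-up of $\cosh(R)$ is compensated because $t_2\cosh(R)$ is a boundary arclength controlled by $\ell_{\max}$ is exactly right. The paper instead restricts the orthogonal projection $\pi$ onto $\gamma$ to a sub-collar $\nhd_r(\gamma)$ of \emph{uniformly bounded} width $r=\min\{1,R_{\delta,\mathcal L}\}$, where $\pi$ is $10$-Lipschitz, and disposes of arcs leaving $\nhd_r(\gamma)$ by the trivial bounds $\ell(\alpha)\geq r$ and $\ell(\alpha')\leq \ell_{\max}$; that route avoids the distance formula and the regime split entirely.

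There is, however, a genuine error in your wrapping case. An arc with both endpoints on $\gamma$ that is homotopic rel endpoints to the \emph{longer} of the two boundary segments need not cross the core geodesic and need not ``traverse the tube radially'': the longer boundary segment itself, pushed slightly into the collar, is such an arc; it stays arbitrarily close to $\gamma$ and has length roughly $\ell(c)\cosh(R_{\delta,c})\leq \ell_{\max}$, which for short $c$ is far smaller than $2R_{\delta,c}\approx 2\ln(4\delta/\ell(c))$. So the claimed lower bound $\ell(\alpha)\geq 2R_{\delta,c}$ is false. The fix is immediate with the tools you already set up: in the wrapping case $|t_2|\geq \ell(c)/2$, hence $t_2\cosh(R)\geq \tfrac12\ell(c)\cosh(R)\geq \ell_{\min}/2$, so your second regime applies verbatim and gives $\cosh d_{\HH^2}\geq 1+\ell_{\min}^2/8$, i.e.\ a positive lower bound on $\ell(\alpha)$ depending only on $\mathcal L,\delta$; combined with $\ell(\alpha')\leq \ell_{\max}$ this closes the case. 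With that repair the argument is complete.
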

\begin{proof}
For (a), note that the length of each boundary component is $\ell_X(c) \cdot \cosh R_{\delta,c}$, which is always positive, varies continuously with $\ell_X(c) \in (0,\mathcal L]$, and converges to $\delta/2$ as $\ell_X(c)\to 0$. (b) is trivial. For (c), suppose that $\alpha$ has both endpoints on the boundary component $\gamma$ of $\RC_\delta(c)$, let $r = \min\{1,R_{\delta,\mathcal L}\}$ and let $\mathcal N_r(\gamma)$ be the $r$-neighborhood of $\gamma$ inside $\RC_\delta(c)$. Consider the projection $\pi : \mathcal N_r(\gamma) \longrightarrow \gamma$ obtained by projecting along geodesic rays orthogonal to $c$. The lipschitz constant of $\pi$ is bounded above by the ratio of the length of $\gamma$ to the length of the other boundary component $\gamma'$ of $\mathcal N_r(\gamma)$. But this ratio is 
$$\frac {\ell(\gamma)} {\ell(\gamma')} = \frac{\ell_X(c) \cdot \cosh( R_{\delta,c} )}{\ell_X(c) \cdot \cosh( R_{\delta,c}-r )} \leq 4 \cosh(r) < 10, $$ 
so $\pi$ is $10$-lipschitz. If $\alpha$ is contained in $\mathcal N_r(\gamma)$, then we are done with $\alpha'=\pi \circ \alpha$ and $C=10$. Otherwise, $\ell(\alpha)\geq r$. If we let $\alpha'$ be one of the two segments of $\gamma$ that has the same endpoints as $\alpha$, then $\ell(\alpha') \leq \ell_{max}$, the constant from part (a). So, we are done with $C=\ell_{max} / r$.
\end{proof}
 
We are now ready to prove the lemma.

\begin{proof}[Proof of Lemma \ref{lem: pinching}] For each pants component $P$ of $X \dsm \PPP$, construct a pair of pants $P'$ marked by a homeomorphism $h: P\longrightarrow P'$ such that $\ell_{P'}(h(c)) = l(c)$ for each component $c \subset \partial P$. By Proposition \ref{prop: alteration}, we can take $h$ to be affine on boundary components, to map seams to seams, to map $\delta$-reduced collars of boundary components of $P$ to those of $P'$ in a way that preserves the natural foliation by orthogonal geodesic segments, and to be locally $K$-bilipschitz outside of these reduced collars. Let $Z$ be the surface obtained by gluing all the pants $P'$ together in the pattern dictated by the topology of $X$, making sure that when $P_1,P_2$ are pants that share a boundary component $c\in \mathcal P$, the corresponding pants $P_1',P_2'$ are glued isometrically along $h_1(c)$ and $h_2(c)$ in such a way that the distance between the seam endpoints matches the distance between the seam endpoints of $P_1,P_2$ on $c$. Then the maps $h$ will all agree on pants boundaries, giving a homeomorphism $$h:  CC(X) \setminus \mathcal B  \longrightarrow Z,$$ where $\mathcal B $ is the union of all boundary components of $CC(X)$ that are biinfinite geodesics.  Note that this $h$ satisfies properties 1--4 in the lemma\footnote{Property 3 is currently only true if one considers one-sided open $\delta$-reduced collars of curves $c \in \mathcal S$ that are boundary components of $CC(X)$, since $h$ is not defined on the part of such a collar that lies in the adjacent flare.}, simply because Proposition \ref{prop: alteration} says that its restriction to each pair of pants does. By Theorem 4.5 and Proposition 4.6 in \cite{alessandrini2011fenchel}, we can identify $Z$ with $CC(Y) \setminus \mathcal B_Y$, where $Y$ is some complete hyperbolic surface, $CC(Y)$ is its convex core\footnote{When reading \cite{alessandrini2011fenchel}, note that they take the convex core to be the smallest convex subset of $Y$, while we take it to be the smallest closed, convex subset of $Y$, so in their language $Z$ is the union of $CC(Y)$ with any closed geodesics that are boundary components of $CC(Y)$.}, and $\mathcal B_Y$ is the union of all boundary components of $CC(Y)$ that are biinfinite geodesics. We want to extend $h$ continuously to a homeomorphism $X\longrightarrow Y$ that still satisfies properties 1--4. 

First, take a flare $A_X \subset X$ that is bounded by a curve $c\in \mathcal P$. The image $h(c)$ is a component of $\partial CC(Y)$, so bounds a flare $A_Y \subset Y$. Parameterize these flares via
$$A_X\cong c \times [0,\infty), \ \ \ A_Y \cong h(c) \times [0,\infty)$$
where if $x\in c$, then $\{x\}\times [0,\infty)$ is a unit speed parametrization of the geodesic ray orthogonal to $c$ at the point $x$, and the second parametrization is similar. With respect to these parametrizations, we have
$$\RC_\delta(c) \cap A_X \cong c \times [R_{\delta,c},\infty), \ \ \ \RC_\delta(h(c)) \cap A_Y \cong h(c) \times [R_{\delta,h(c)},\infty).$$
Using the coordinates above, we now extend our map $h$ to a map $$H : A_X \longrightarrow A_Y, \ \ H(x,t) = \begin{cases} \left (h(x),\frac {R_{\delta,h(c)}}{R_{\delta,c}} \cdot t\right ) & t \leq R_{\delta,c} \\
\left (h(x),t - {R_{\delta,c}}+ R_{\delta,h(c)}\right ) & t \geq R_{\delta,c} 
\end{cases}.$$ This map sends $\RC_\delta(c) \cap A_X $ to $\RC_\delta(h(c))\cap A_Y$ in a way that satisfies property 3 of the lemma. It also is locally $K$-bilipschitz on $A_X\setminus \RC_\delta(c)$, for some increased $K=K(\mathcal L,\delta)$. To see this, note that outside of the reduced collar, $H$ is isometric in the direction of the second coordinate, and is affine in the direction of the first coordinate. Moreover, we have the following length estimate:
\begin{align*}
    \ell \big (H(c\times \{t\}) \big ) &=\ell \big (h(c) \times \{t - R_{\delta,c} + R_{\delta,h(c)}\} \big ) \\
    &= \ell_Y(h(c)) \cdot \cosh (t - R_{\delta,c} + R_{\delta,h(c)}) \\
    &\approx \ell_Y(h(c))  \cdot \frac{\cosh (R_{\delta,h(c)}))}{\cosh(R_{\delta,c})} \cdot \cosh t \\
    &\approx \ell_X(c) \cosh(t)\\
    &= \ell(c \times \{t\}).
\end{align*} 
Here, $x \approx y$ if $x,y$ differ by a  multiplicative factor depending on $\mathcal L,\delta$. The first $\approx$ uses the exponential estimate $e^x/2\leq \cosh x \leq e^x$ to coarsely distribute $\cosh$ over addition, so a multiplicative factor of $10$ works, say. The second uses the fact that $\ell_Y(h(c)) \cosh (R_{\delta,h(c)})$ is the length of the boundary components of $\RC_{\delta}(h(c))$, which by Fact \ref{fact: collarfact} (a) is within a multiplicative factor of the length $\ell_X(c) \cosh (R_{\delta,c})$ of the boundary components of $\RC_{\delta}(c)$. The result of these estimates is that $H$ distorts distances in the direction of the first coordinate by a factor that is bounded in terms of $\mathcal L,\delta$, implying that $H$ is locally $K$-bilipschitz for some new increased $K=K(\mathcal L,\delta)$. So, we have extended $h$ to a map of flares as desired. From now on, we will just use the notation $h$ to denote the entire extended map.

Next, take a component $\beta \subset \mathcal B$, i.e.\ a boundary component of $CC(X)$ that is a biinfinite geodesic. This $\beta$ has a neighborhood that is disjoint from all reduced collars of elements $c\in \mathcal S$. For $\beta$ is disjoint from the closure of each individual collar; the collars cannot accumulate since each point in the collar of $c$ lies on a curve homotopic to $c$ that has length less than some universal $D=D(\mathcal L,\delta)$, and bounded length curves in distinct homotopy classes cannot accumulate in $X$. Since $h$ is locally $K$-bilipschitz outside reduced collars, $h$ extends continuously to $\beta$. Moreover, $\beta$ is the limit of a sequence of pants curves $\gamma_i \in \mathcal P$. Discarding finitely many $i$, we can assume all $\gamma_i $ are long, and hence are in $\mathcal P \setminus\mathcal S$. As $h$ maps each curve of $\mathcal P \setminus\mathcal S$ isometrically, it follows that it also maps $\beta$ isometrically, onto a boundary component $h(\beta)$ of the convex core of $Y$. Let $H_X,H_Y$ be the hyperbolic half planes in $X,Y$ bounded by $\beta,h(\beta)$. Then we can extend $h$ to a unique isometric $H_X\longrightarrow H_Y$ that agrees with the previous definition on the boundary. So defined, we have a map $$h: X \longrightarrow Y$$ satisfying 1--4 of the lemma.

It is immediate from the construction that $h$ is continuous, open and injective, so we only have to show it is surjective. First, we know that the image of $h$ contains the interior of the convex core of $Y$, and also contains all flares of $Y$, so the only possibility is that $h$ misses some closed half-plane component $H \subset Y \setminus int(CC(Y))$. Choose a finite length path $\gamma : [0,1)\longrightarrow Y$ such that $\gamma(t) \in int(CC(Y))$ for $t<1$ and $\gamma(t)$ converges to a point in $ \partial H$ as $t\to 1$. We may also assume that $\gamma$ intersects only finitely many reduced collars $\RC_\delta(h(c))\subset Y$, where $c\in \mathcal S $. For the radius of every such collar is bounded below by Fact \ref{fact: collarfact} (b), implying that $\gamma$ can intersect only finitely many collars essentially. And if $\gamma$ intersects some $\RC_\delta(h(c))$ in a boundary-parallel arc, Fact \ref{fact: collarfact} (c) says that this arc can be homotoped rel endpoints to an arc on $\partial \RC_\delta(h(c))$ while only increasing its length by a uniform multiplicative factor. So, if we perform all such homotopies, we will still have a finite length arc with the same endpoint.

Pulling $\gamma$ back to $X$ gives a path $h^{-1} \circ \gamma $, which goes to infinity in $X$. Indeed, if $h^{-1} \circ \gamma $ accumulates onto any point $p\in CC(X)$ as $t\to 1$, then continuity of $h$ means that $\gamma(t)$ accumulates onto $h(p)$ as $t\to 1$, but by assumption $\gamma(t)$ converges to a point outside the image of $h$. Since $h$ maps collars to collars, the path $h^{-1}\circ \gamma$ can intersect only finitely many collars $\RC_\delta(c)$, where $c\in \mathcal S$. As $h$ is locally $K$-bilipschitz outside of reduced collars, and is smooth (hence bilipschitz) on each individual reduced collar, $h$ is bilipschitz on the path $h^{-1} \circ \gamma$, which contradicts the fact that $h^{-1} \circ \gamma$ has infinite length while $\gamma$ has finite length.
\end{proof}

Below, we will apply Lemma \ref{lem: pinching} only when the curves of $\mathcal S$ are being pinched, i.e.\ when $l\leq \ell_X$. In this case, the local bilipschitz criterion in part 4 can be promoted into a global metric distortion inequality, at least outside of reduced collars.

\begin{claim}[A global distortion inequality]\label{claim: lipschitz bounds}
With the notation of Lemma \ref{lem: pinching}, there is some $K'=K'(\mathcal L,\delta)$ such that if $x_1,x_2 \in X$ lie outside all  $\delta$-reduced collars of curves in $\mathcal S$, then $$d_{Y}(h(x_1),h(x_2)) \geq \frac 1{K'} \cdot d_X(x_1,x_2).$$
\end{claim}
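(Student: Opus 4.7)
The plan is to take a length-minimizing path $\gamma$ from $h(x_1)$ to $h(x_2)$ in $Y$, decompose it into pieces inside and outside the reduced collars of pinched curves, and use each piece to build a comparable path in $X$. For each maximal subarc of $\gamma$ that avoids all the open reduced collars $\RC_\delta(h(c))$, $c\in \mathcal S$, property 3 of Lemma~\ref{lem: pinching} guarantees that its image under $h^{-1}$ lies outside $\RC_\delta(c)$, and the local $K$-bilipschitz bound (property 4) then gives an arc in $X$ of length at most $K$ times the original.

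The interesting case is a subarc $\alpha$ of $\gamma$ contained in some $\RC_\delta(h(c))$. If the two endpoints of $\alpha$ lie on the same component of $\partial \RC_\delta(h(c))$, I would apply Fact~\ref{fact: collarfact}(c) in $Y$ (whose hypotheses hold since $\ell_Y(h(c))\leq \mathcal L$ by the pinching assumption) to straighten $\alpha$ rel endpoints onto $\partial \RC_\delta(h(c))$, losing a multiplicative factor $C=C(\mathcal L,\delta)$; this straightened arc lies outside the open reduced collar, so a further application of the local $K$-bilipschitz bound lets me pull it back to $X$. If instead $\alpha$ joins opposite boundary components of $\RC_\delta(h(c))$, then $\ell_Y(\alpha) \geq 2R_{\delta,h(c)}$, and since pinching ($l(c)\leq \ell_X(c)$) makes $R_{\delta,\cdot}$ larger in $Y$ than in $X$ (Fact~\ref{fact: collarfact}(b)), one also has $\ell_Y(\alpha)\geq 2R_{\delta,c}$. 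I would replace $\alpha$ by the following concrete path in $X$: an orthogonal geodesic crossing of $\RC_\delta(c)$ of length $2R_{\delta,c}$ followed by a short arc along the far boundary $\partial \RC_\delta(c)$ joining the endpoint of the crossing to $h^{-1}(\alpha(t_i))$. The latter arc has length bounded by half the total boundary length $\ell(\partial \RC_\delta(c))$, which by Fact~\ref{fact: collarfact}(a) is at most a constant $\ell_{max}=\ell_{max}(\mathcal L,\delta)$.

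Concatenating all these pieces yields a path from $x_1$ to $x_2$ in $X$ whose length I would bound term-by-term. The outside and same-boundary cases immediately give the desired multiplicative bound with constant $\max(K,KC)$. For the crossing case the bound is a priori only additive --- length at most $\ell_Y(\alpha) + \ell_{max}/2$ --- but since $\ell_Y(\alpha) \geq 2R_{\delta,h(c)} \geq 2R_{\delta,\mathcal L}$ is uniformly bounded below (applying Fact~\ref{fact: collarfact}(b) at the maximal length $\mathcal L$), the additive constant is absorbed into a uniform multiplicative factor $1 + \ell_{max}/(4R_{\delta,\mathcal L})$. Setting $K'$ to be the maximum of these three constants gives
\[
d_X(x_1,x_2) \leq K'\, \ell_Y(\gamma) = K'\, d_Y(h(x_1),h(x_2))
\]
as required. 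The main obstacle in the argument is the crossing case, where one must use pinching-monotonicity of collar radius together with the uniform boundary-length bound to upgrade an additive error into a multiplicative one; the only other technical point is arranging the initial decomposition, i.e.\ ensuring $\gamma$ meets the boundary circles of the relevant reduced collars transversely, which can be achieved by a small perturbation that changes the length by an arbitrarily small amount.
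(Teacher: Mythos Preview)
Your proposal is correct and follows essentially the same route as the paper: take a geodesic in $Y$, break it along the boundaries of the reduced collars $\RC_\delta(h(c))$, and replace each piece by a controlled-length arc in $X$, handling the outside, same-boundary, and crossing cases exactly as you describe. The only cosmetic differences are that the paper uses boundary arcs on \emph{both} components in the crossing case (giving $2\ell_{max}$ of additive error rather than your $\ell_{max}/2$) and does not bother with the transversality remark, since a geodesic meets each equidistant curve in finitely many points anyway.
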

 \begin{proof}
 Let $\gamma$ be the geodesic in $Y$ connecting $h(x_1)$ and $h(x_2)$. Then $h^{-1}\circ \gamma$ is a path from $x_1$ to $x_2$ in $X$. The geodesic $\gamma$ can be broken along the boundaries of reduced collars in $Y$, so that it becomes a concatenation $$\gamma = \alpha_1 \cdots \alpha_n,$$ where each $\alpha_i$ either has interior contained in $\RC_\delta(h(c))$ for some $c\in \mathcal S$, or lies outside of all such reduced collars. It suffices to replace each arc $h^{-1} \circ \alpha_i \subset X$ with an arc $\beta_i $ that has the same endpoints, such that \begin{equation}
     \label{eq: lengtheq}\ell_Y(\alpha_i) \ge \frac{1}{K'}\ell_X(\beta_i),
 \end{equation} for some $K'=K'(\mathcal L,\delta)$. For if we can do this, the concatenation $\beta = \beta_1 \cdots \beta_n$ is a path from $x_1$ to $x_2$ and $$d_X(x_1,x_2) \leq \ell_X(\beta) \leq K' \ell_Y(\alpha) = K'd_Y(h(x_1),h(x_2)).$$

If $\alpha_i$ lies outside all reduced collars, let $\beta_i=h^{-1} \circ \alpha_i$. By part 4 of Lemma \ref{lem: pinching}, $h$ is locally $K$-bilipschitz on $\alpha_i$, so \eqref{eq: lengtheq} follows with $K'=K$. Therefore, we may assume that the interior of $\alpha_i$ is contained in some $\delta$-reduced collar $\RC_\delta(h(c))$, where $c\in \mathcal S$. Since $x_1,x_2$ lie outside of all reduced collars, the index $i$ cannot be $1$ or $n$, so the endpoints of $\alpha_i$ lie on the boundary of $\RC_\delta(h(c))$. There are then two cases to consider.

\medskip

\noindent \textit{Case 1.} Suppose that the endpoints of $\alpha_i$ lie on different boundary components of $ {\RC_\delta(h(c))}$. Then
$$\ell_Y(\alpha_i) \geq 2R_{\delta,h(c)} \geq 2R_{\delta,c},$$ where the last inequality follows from Fact~\ref{fact: collarfact}~(b) and the fact that $\ell_Y(h(c)) \leq \ell_X(c)$. By Fact~\ref{fact: collarfact}~(a), the boundary components of $ {\RC_\delta(c) }$ have length at most some $\ell_{max}=\ell_{max}(\mathcal L,\delta)$. So, if $p,q$ are the endpoints of $\alpha_i$, we can join the preimages $h^{-1}(p),h^{-1}(q)$ with a path $\beta_i$ in $ {\RC_\delta(c) }$ of length
$$\ell_X(\beta_i) = 2\ell_{max}+R_{\delta,c} \leq 2\ell_{max}+\ell_Y(\alpha_i),$$
 by concatenating a shortest path between the boundary components of $ {\RC_\delta(c) }$ with segments of each boundary component. By Fact \ref{fact: collarfact} (b), we have $\ell_Y(\alpha_i) \geq R$ for some $R=R_{\delta,\mathcal L,}$  so
$$\ell_X(\beta_i) \leq (1+2\ell_{max}/R)\ell_Y(\alpha_i).$$

\medskip

\noindent \textit{Case 2.} Suppose the endpoints of $\alpha_i$ lie on the same  component $\gamma \subset \partial  { \RC_\delta(h(c))}$. Fact \ref{fact: collarfact} says that there is a path $\alpha_i' \subset \partial \RC_\delta(h(c))$ connecting the endpoints of $ \alpha_i$ such that $$\ell_Y(\alpha_i')\leq C \ell_Y(\alpha_i)$$
for some $C=C(\mathcal L,\delta)$. If we let $\beta_i = h^{-1} \circ \alpha_i'$, then since $h$ is locally $K$-bilipschitz outside the reduced collars of curves in $\mathcal S$, we have $\ell_X(\beta_i) \leq K\ell_Y(\alpha_i') \leq KC \ell_Y(\alpha_i),$ so we are done with $K'=KC$.
\end{proof}

If $c\in \mathcal S$, define the \emph{extended $\delta$-reduced collar} of $c$ in $X_L$ to be the union $$\RC_\delta^{ext}(c):=\iota(\RC_\delta(c) \dsm c) \cup C_c \subset X_L.$$ 
This is the natural reduced analogue of the extended standard collars discussed in \S \ref{sec: distance in extended collars}. 

\begin{claim}[The quasiconformal model] \label{claim: quasiconformal map from XL to Y} 
For each $l : \mathcal P \longrightarrow (0,\mathcal L]$, let $Y$ and $h: X \longrightarrow Y$ be as in Lemma~\ref{lem: pinching}. Then for some choice of  $l(\cdot) \leq \ell_X(\cdot)$, there is a homeomorphism $f : Y \longrightarrow X_L$ such that
\begin{enumerate}
    \item for each $c\in \mathcal S$, the map $f$ restricts to a conformal homeomorphism 
    $\RC_\delta(h(c)) \longrightarrow \RC_\delta^{ext}(c) ,$ and 
    \item on the complement of all the extended $\delta$-reduced collars $\RC_\delta^{ext}(c)$, $c\in \mathcal S$, we have $f \circ h = \iota$.
\end{enumerate}
In particular, by part 4 of Lemma \ref{lem: pinching}, $f$ is $K$-quasiconformal for some $K=K(\mathcal L,\delta)$.
\end{claim}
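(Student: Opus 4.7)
The plan is to choose the pinched lengths $l(c)\le \ell_X(c)$ so that the conformal modulus of $\RC_\delta(h(c))$ in $Y$ exactly matches the conformal modulus of the extended reduced collar $\RC_\delta^{ext}(c)$ in $X_L$. Once this is arranged, each $\RC_\delta(h(c))$ and $\RC_\delta^{ext}(c)$ are conformally equivalent generalized annuli, and we can build $f$ by gluing an explicit conformal equivalence on each collar to the composition $\iota\circ h^{-1}$ on the complement.

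Concretely, let $\rho_{\delta,\ell} := \Sec^{-1}(R_{\delta,c})$ be the conformal half-width of the $\delta$-reduced collar of a regular geodesic of length $\ell$, so that $\RC_\delta(c)$ admits the conformal parametrization $\bbS^1_{\ell}\times(-\rho_{\delta,\ell},\rho_{\delta,\ell})$ of modulus $2\rho_{\delta,\ell}/\ell$. Since grafting inserts a Euclidean annulus of conformal width $L(c)$ on each side of $c$, the extended reduced collar $\RC_\delta^{ext}(c)$ admits the conformal parametrization $\bbS^1_{\ell(c)}\times(-\rho_{\delta,\ell(c)}-L(c),\rho_{\delta,\ell(c)}+L(c))$, of modulus $2(\rho_{\delta,\ell(c)}+L(c))/\ell(c)$. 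For each $c\in\mathcal S$, I would define $l(c)\in(0,\ell_X(c)]$ by the equation
\[
\frac{\rho_{\delta,l(c)}}{l(c)} \;=\; \frac{\rho_{\delta,\ell(c)}+L(c)}{\ell(c)}.
\]
The right side is $\ge \rho_{\delta,\ell(c)}/\ell(c)$ and the left side is a continuous function of $l(c)$ that equals $\rho_{\delta,\ell(c)}/\ell(c)$ at $l(c)=\ell_X(c)$ and tends to $+\infty$ as $l(c)\to 0$, so a unique solution $l(c)\in(0,\ell_X(c)]$ exists. On $\mathcal P\setminus\mathcal S$ I set $l\equiv\ell_X$ and apply Lemma~\ref{lem: pinching} to produce $(Y,h)$.

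Next, I would define $f$ in two pieces. On the complement of $\bigcup_{c\in\mathcal S}\RC_\delta(h(c))$ in $Y$, set $f := \iota\circ h^{-1}$; this image lies in the complement of all $\RC_\delta^{ext}(c)$ in $X_L$, so property (2) holds. On each $\RC_\delta(h(c))$, express both collars in the conformal parametrizations above and take $f$ to be the similarity $(x,y)\mapsto(\lambda x+\tau_c,\lambda y)$ where $\lambda := \ell(c)/l(c)$ and $\tau_c\in\bbS^1$ is a rotation chosen to make $f$ continuous across $\partial\RC_\delta(h(c))$. The existence of such a $\tau_c$ is where I would need to check compatibility: by part (3) of Lemma~\ref{lem: pinching} the boundary map of $h$ restricted to $\partial\RC_\delta(c)$ is the affine rescaling by $l(c)/\ell_X(c)$ in the conformal circle direction, while the similarity above restricts on the boundary to affine rescaling by $\lambda$, so both $\iota\circ h^{-1}$ and our conformal map send $\partial\RC_\delta(h(c))$ affinely onto $\partial\RC_\delta^{ext}(c)$; these affine maps agree up to a single rotation of the target circle, which determines $\tau_c$ uniquely. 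So defined, $f$ is a homeomorphism $Y\to X_L$ satisfying (1) and (2).

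Finally, quasiconformality: on each $\RC_\delta(h(c))$ the map $f$ is conformal by construction, while on the complement $f=\iota\circ h^{-1}$ is locally $K(\mathcal L,\delta)$-bilipschitz by part (4) of Lemma~\ref{lem: pinching} and the fact that $\iota$ is a local isometry off the gluing locus; hence $f$ is $K$-quasiconformal. The main (mild) obstacle is the degenerate case, where the objects in question are $\mathbb Z/2\mathbb Z$ quotients of annuli by an involution that fixes two order-two cone points on the core; because the involution acts by reflection in the conformal parametrizations used above, the similarity I chose descends to the quotient, and the boundary-matching argument goes through verbatim after keeping track of the reflection, so no essentially new ideas are needed. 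The case $L(c)=\infty$ is handled by allowing $l(c)=0$ (a cusp in $Y$), which is treated identically after upgrading Lemma~\ref{lem: pinching} to permit zero pinching, as indicated by the authors' remark to ignore this case notationally.
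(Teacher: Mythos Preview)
Your proof is correct and follows essentially the same approach as the paper's: choose $l(c)$ by the intermediate value theorem so that the moduli of $\RC_\delta(h(c))$ and $\RC_\delta^{ext}(c)$ match, define $f$ as $\iota\circ h^{-1}$ outside the collars and as the (rotated) conformal similarity on each collar, and invoke part~3 of Lemma~\ref{lem: pinching} to see that a single rotation $\tau_c$ makes the map match on \emph{both} boundary circles simultaneously. Your version is a bit more explicit about the modulus formula, but the argument is otherwise the same.
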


\begin{proof}
Let us leave the choice of $l$ unspecified for a moment. Suppose that $c\in \mathcal S$ and let $c'= h(c) \subset Y$. The extended $\delta$-reduced collar $\RC_\delta^{ext}(c)$ has modulus at least the modulus $m(c)$ of $\RC_\delta(c)$, and as $l(c)$ varies in the interval $(0,\ell_X(c))$, the modulus of $\RC_\delta(c')$ varies between $m$ and $\infty$. So, if we choose $l(c)$ appropriately, the two moduli will be equal, in which case there is a conformal map
$$f: \RC_\delta(c') \longrightarrow \RC_\delta^{ext}(c) .$$
Moreover, we can take $f$ to be affine in the natural conformal parametrizations of these two annuli (compare with \S \ref{sec: parametrization of extended collar neighborhoods}), and then rotating the map appropriately, we can make it agree with $\iota \circ h^{-1}$ on the boundary of $\RC_\delta^{ext}(c)$. Here, we use part 3 of Lemma \ref{lem: pinching} to say that once we rotate our map to agree with $\iota \circ h ^{-1}$ on one boundary component, it also agrees with $\iota \circ h^{-1}$ on the other.

So, choosing every $l(c)$ as in the previous paragraph, we construct a function $l$ such that the map $ \iota \circ h^{-1}$ can be extended conformally across all the $\RC_\delta(c')$ to give a map $f : Y \longrightarrow X_L$ as desired. Since $\iota \circ h^{-1}$ is locally $K$-bilipschitz outside of reduced collars and its extension is conformal on each collar, we have that $f$ is $K$-quasiformal as required.
\end{proof}

We now finish the proof of Proposition \ref{prop: globalesstimates}. Fix all the notation given by Claim \ref{claim: quasiconformal map from XL to Y}, and let $x_1,x_2 \in X$ be outside the $\delta$-reduced collar neighborhoods of all curves in $\SSS$. By Claim \ref{claim: lipschitz bounds}, we have 
$$d_Y(h(x_1),h(x_2)) \geq \frac 1K d_X(x_1,x_2).$$
But $f$ is $K$-quasiconformal, so  \cite[Theorem 4.4.1]{hubbard} gives a homeomorphism $\mu_K:\RR_+ \to \RR_+$ such that 
\[ d_{(X_L)^{\hyp}}(f\circ h(x_1),f\circ h(x_2))\ge \mu_K \big ( d_Y(h(x_1),h(x_2))\big ) \geq \mu_K\left (\frac 1K d_X(x_1,x_2)\right ).\]
Since $f\circ h(x_i)=\iota(x_i)$, the proof is complete if we take $\eta (s)= \mu_K( \frac 1K s)$.

\subsection{Grafting and smooth convergence} \label{sec: grafting and smooth convergence}

 The grafting construction takes in a  hyperbolic $2$-orbifold $X$, a  finite collection of disjoint simple closed geodesics $\mathcal S$ on $X$, and a function $L : \mathcal S \longrightarrow [0,\infty] $, and outputs a new  hyperbolic orbifold $X_L^{hyp}$.  In this section, we show that when  we consider vectored orbifolds varying in the smooth topology, then grafting is continuous in both $X$ and $L$. The statement is necessarily a bit complicated; we set it up as follows.

Suppose that $(X^i,v^i) \to (X^\infty,v^\infty)$ is a  smoothly convergent sequence of oriented hyperbolic $2$-orbifolds. For each $i=1,\ldots, \infty$, let $p^i$  be the base point of $v^i$ and let $$\psi^i : B_{X^\infty}(p^\infty,R^i) \longrightarrow X^i$$ be a sequence of almost isometric maps witnessing this convergence on balls, where $R^i \to \infty$. So, each $\psi^i$ is orientation preserving, $d\psi^i(v^\infty) = v^i\in TX^i$ for all $i$, and if $g^i$ is the Riemannian metric on $X^i$, we have $(\psi^i)^* g^i \to g^\infty$ in the $C^\infty$ topology on  $X^\infty$.

Let $\mathcal S^\infty$ be a finite\footnote{Finiteness is not really necessary here.  All that one really needs is that $\sup_{c\in \mathcal S^\infty} \sup_i  \ell(\psi^i(c)) < \infty$.  However, the arguments are little bit simpler if we assume  at least that $\mathcal S^\infty$ is finite.} collection of disjoint, simple closed geodesics on $X^\infty$.  Discarding finitely many $i$, we can assume that  every geodesic $c\in \mathcal S^\infty$ is contained in the domain of $\psi^i$,  and then using Lemma \ref{lem: geodesicspreserved}, we can  further assume that $\psi^i$  takes constant speed parametrizations of each $c$ to constant speed parametrizations of geodesics in $X^i$. Next, pick  collections $\mathcal S^i$ of disjoint, simple closed geodesics on each $X^i$, $i<\infty$, and functions $L^i: \mathcal S^i\longrightarrow [0,\infty]$, where $i=1,2,\ldots,\infty$,  satisfying the following properties.
\begin{enumerate}
\item $\sup_i \sup_{c^i \in \mathcal S^i} \ell(c^i) < \infty$,
	\item for each $i$, we have $\psi^i(\mathcal S^{\infty}) := \{ \psi^i(c) \ | \ c \in \mathcal S^{\infty} \} \subset \mathcal S^{i}$,
\item for each $c\in \mathcal S^\infty$, we have $L^i(\psi^i(c)) \to L^\infty(c),$
\item $\min_{c^i \in  \mathcal S^i \setminus \psi^i(\mathcal S^\infty)} d\big (p^i, c^i) \to \infty$ as $i\to \infty$.
\end{enumerate} 
 For convenience,  define $\mathcal S^i_\pm$ as in  the beginning of \S \ref{sec: graftingsec}, so that an element of $\mathcal S^i_\pm$  is an element of $\mathcal S^i$ equipped with a preferred side.  Note that $\psi^i$ induces a natural map $\mathcal S^\infty_\pm \longrightarrow \mathcal S^i_\pm$,  which we also will  abusively call $\psi^i$.

We now graft along the curve collections $\mathcal S^i$ to produce new piecewise hyperbolic/Euclidean orbifolds $X^i_{L_i}$, where $i=1,\ldots,\infty $, equipped with smooth embeddings $$\iota : X^i \dsm \mathcal S^i \longrightarrow X^i_{L^i},$$ where $X^i \dsm \mathcal S^i$ is the orbifold with boundary obtained by cutting $X^i$ along $\mathcal S^i$. Note that the maps $\psi^i$ extend to $$\psi^i : B_{X^\infty}(p^\infty,R^i) \dsm \mathcal S^\infty \longrightarrow X^i \dsm \mathcal S^i,$$ smooth maps that we denote abusively by $\psi^i$ as well. Informally, 2 -- 4 above mean that we graft in $X^i$ along the same curves as in $X^\infty,$ using annuli of approximately the same lengths, but we also allow additional grafting in $X^i$ along curves far from the basepoints. This latter allowance is necessary in \S \ref{sec: proof of retraction}, when proving continuity of a certain deformation retract.

We now prove that grafting is continuous  with respect to smooth convergence. The statement is separated into two cases, depending on whether the base vectors we are interested in lie outside the grafting cylinders, or inside the grafting cylinders. 

% \begin{figure}

% 	\caption{This will be an awesome picture illustrating Theorem \ref{thm: contgraft}, part 2.}
% \label{fig: contgraftfig}
% \end{figure}
\begin {theorem}[Continuity of grafting] \label{thm: contgraft} \

\begin{enumerate}
	\item For each $i$, suppose that $w^i$ is a tangent vector to $X^i \dsm \mathcal S^i$, and that we have $d (\psi^i)^{-1}(w^i) \to w^\infty \in T(X^\infty \dsm S^\infty)$ as $i\to \infty$. Then in the smooth topology on vectored hyperbolic $2$-orbifolds, we have $$((X^i_{L^i})^{hyp}, d\iota(w^i)) \to ((X^\infty_{L^\infty})^{hyp}, d\iota(w^\infty)). $$
\item Fix $c^\infty\in \mathcal S^\infty$ and a preferred side $c^\infty_+\in \mathcal S^\infty_\pm$ of $c^\infty$, and let $c^i:=\psi^i(c^\infty)\in \mathcal S^i$ and $c^i_+ = \psi^i(c^\infty_+)$. For $i=1,\ldots,\infty$, let $C_{c^i} \subset X^i_{L^i}$ be the grafting cylinder, let $w^i \in TC_{c^i_+}$ and let $q^i$ be the basepoint of $w^i$. Identify $c^i_+$ with the boundary component of $C_{c^i_+}$ to which it is glued, suppose that $\pi(q^i) \in c^i_+$ is a closest point to $q^i$ within $C_{c^i_+}$, and let $d^i$ be the Euclidean distance in $C_{c^i_+}$ from $q^i$ to $\pi(q^i)$. Let $\theta^i$ be the angle that the vector $w^i$ makes with the inward direction. If $$(\psi^i)^{-1}(\pi(q^i)) \to \pi(q^\infty), \ d^i\to d^\infty, \ \theta^i\to \theta^\infty,$$
then in the smooth topology on vectored hyperbolic $2$-orbifolds, we have
$$((X^i_{L^i})^{hyp}, w^i) \to ((X^\infty_{L^\infty})^{hyp}, w^\infty). $$
\end{enumerate} 
\end {theorem}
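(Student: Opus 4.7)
The plan is to apply Proposition \ref{prop: conv via quasiconformal}: for each large $i$ I will construct a $K_i$-quasiconformal embedding $\tilde\psi^i$ from a large ball around the basepoint of $w^\infty$ in $(X^\infty_{L^\infty})^{hyp}$ into $(X^i_{L^i})^{hyp}$ with $K_i \to 1$, conformal on some fixed $\delta$-neighborhood of the basepoint, whose image contains a hyperbolic ball of radius $R_i' \to \infty$ around the basepoint of $w^i$, and that pulls $w^i$ back to a vector converging to $w^\infty$.

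To build $\tilde\psi^i$, first invoke Lemma \ref{lem: geodesicspreserved} so that each $\psi^i$ carries the constant-speed parametrization of every $c^\infty \in \mathcal S^\infty$ lying in its domain to a constant-speed parametrization of $\psi^i(c^\infty) \in \mathcal S^i$. On the image of $B_{X^\infty}(p^\infty,R^i) \dsm \mathcal S^\infty$ under $\iota$, set $\tilde\psi^i := \iota \circ \psi^i \circ \iota^{-1}$; on each cylinder $C_{c^\infty}$ with $c^\infty$ in the domain of $\psi^i$, extend $\tilde\psi^i$ by the natural affine identification with $C_{\psi^i(c^\infty)}$. Since $\ell_{X^i}(\psi^i(c^\infty)) \to \ell_{X^\infty}(c^\infty)$ by smooth convergence and $L^i(\psi^i(c^\infty)) \to L^\infty(c^\infty)$ by hypothesis~3, this affine identification is $K_i^{cyl}$-quasiconformal with $K_i^{cyl}\to 1$; when $L^\infty(c^\infty) = \infty$, include an exhausting family of finite subcylinders of $C_{c^\infty}$ into $C_{\psi^i(c^\infty)}$. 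The normalization of parametrizations makes these pieces agree on the gluing locus, so they assemble into a smooth $K_i$-quasiconformal embedding $\tilde\psi^i : \Omega_i \to X^i_{L^i}$ of an open subset $\Omega_i \subset X^\infty_{L^\infty}$ that exhausts $X^\infty_{L^\infty}$. Since uniformization is conformal, $\tilde\psi^i$ is also $K_i$-quasiconformal as a map $(X^\infty_{L^\infty})^{hyp} \to (X^i_{L^i})^{hyp}$. Finally, apply Lemma \ref{lem: straightening} in its hyperbolic form for part 1 (basepoint outside the cylinders) and its Euclidean form for part 2 (basepoint inside a cylinder) to modify $\tilde\psi^i$ on a fixed $\delta$-neighborhood of the basepoint of $w^\infty$ so that it becomes an isometry there while preserving the quasi-conformal bounds elsewhere; this secures condition~(a). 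Condition~(c) is then immediate: in part~1 from $d(\psi^i)^{-1}(w^i) \to w^\infty$, and in part~2 from the explicit affine form of $\tilde\psi^i$ on $C_{c^\infty}$ together with the convergences $(\psi^i)^{-1}(\pi(q^i)) \to \pi(q^\infty)$, $d^i \to d^\infty$, and $\theta^i \to \theta^\infty$.

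The main obstacle is condition~(b): for every fixed $R$, the image $\tilde\psi^i(\Omega_i)$ must eventually contain the hyperbolic ball of radius $R$ around $w^i$'s basepoint in $(X^i_{L^i})^{hyp}$. Set $\mathcal L := \sup_i \sup_{c \in \mathcal S^i}\ell_{X^i}(c) < \infty$ by hypothesis~1. Consider part~1, in which $p^i$ lies outside all standard collars; every point $p$ of the hyperbolic $R$-ball around $p^i$ either (i) lies outside all standard collars of $\mathcal S^i$ or (ii) lies inside the extended standard collar $A^{ext}_{c^i}$ of some $c^i \in \mathcal S^i$. In case~(i), Proposition \ref{prop: globalesstimates} gives $d_{X^i}(p, p^i) \leq \eta^{-1}(R)$, placing $p$ inside $\psi^i(B_{X^\infty}(p^\infty,R^i))$ once $R^i$ is sufficiently large. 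In case~(ii), Proposition \ref{prop: modelprop} forces $p$ to lie outside the $D$-truncation of $A^{ext}_{c^i}$ for $D \leq e^{R+M(\mathcal L)}+2$ and hence within bounded hyperbolic distance of the frontier $\partial A^{ext}_{c^i}$; the frontier is disjoint from all standard collars, so Proposition \ref{prop: globalesstimates} applied once more bounds its $X^i$-distance to $p^i$ in terms of $R$ and $\mathcal L$. Combining this with the thick-thin analysis implicit in the smooth convergence $(X^i,v^i)\to(X^\infty,v^\infty)$, hypothesis~4 forces any such $c^i$ to satisfy $c^i=\psi^i(c^\infty)$ for some $c^\infty\in\mathcal S^\infty$ once $i$ is large, so the entire extended collar $A^{ext}_{c^i}$ lies in $\tilde\psi^i(\Omega_i)$. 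Part~2 reduces to part~1 by replacing $p^i$ with a reference point $p^i_0 \in \partial A^{ext}_{c^i_+}$ nearest $q^i$: since here $c^i = \psi^i(c^\infty)$ for a fixed $c^\infty \in \mathcal S^\infty$ and $d^i \to d^\infty < \infty$, the hyperbolic distance from $q^i$ to $p^i_0$ is uniformly bounded in terms of $d^\infty$, $\ell_{X^\infty}(c^\infty)$, and $L^\infty(c^\infty)$, so any radius-$R$ ball around $q^i$ is contained in a radius-$R'$ ball around $p^i_0$ for $R'$ independent of $i$. A symmetric argument shows that $\Omega_i$ contains a large hyperbolic ball around the basepoint of $w^\infty$. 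With conditions (a)--(c) verified, Proposition \ref{prop: conv via quasiconformal} delivers the desired smooth convergence. The crux throughout is the distance bookkeeping: the piecewise hyperbolic/Euclidean metric is explicit, while the hyperbolic metric on $(X^i_{L^i})^{hyp}$ is produced by the opaque uniformization step, and Propositions \ref{prop: globalesstimates} and \ref{prop: modelprop} are precisely what translate between the two.
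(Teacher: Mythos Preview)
Your proposal is correct and follows essentially the same route as the paper: extend $\psi^i$ affinely across the grafting cylinders, straighten near the basepoint via Lemma~\ref{lem: straightening}, and invoke Proposition~\ref{prop: conv via quasiconformal} after using Propositions~\ref{prop: modelprop} and~\ref{prop: globalesstimates} to show the image covers large hyperbolic balls (the paper does this last step by contradiction rather than by your direct case split, but the ingredients are identical). The only places needing a touch more care are the assertion that $p^i$ lies outside all standard collars in part~1 (it need not, but it sits at bounded depth, so one replaces it by a nearby point that does), and the inference ``outside the $D$-truncation $\Rightarrow$ bounded hyperbolic distance to the frontier,'' which the paper justifies via the comparison $d_{he}\ge d_{hyp}$ between the piecewise hyperbolic/Euclidean (Thurston) metric and the uniformized Poincar\'e metric.
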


Note that in the second part of the theorem, the base vectors $w^i$ lie at a bounded distance away from the boundary of the grafting cylinder.  In Proposition \ref{prop: limits in collars}, we  identified the continuous groups that we obtain as Chabauty limits when when we take base vectors deeper and deeper into the cylinders.  
 
 For closed orbifolds, one can easily show that grafting is continuous in both the surface and the cylinder lengths by constructing  explicit quasi-conformal maps between the \emph{entire} relevant grafted surfaces. In the situation of more general smooth convergence, this is impossible since the orbifolds in question may not even be homeomorphic. What we do is construct a quasi-conformal map from a large subset of $(X^\infty_{L^\infty})^{hyp}$ to a large subset of $(X^i_{L^i})^{hyp}$, using Proposition \ref{prop: globalesstimates} to say that the image is also large with respect to the uniformized metric. Some care is also required in order to get the relevant vectors to converge, essentially because a sequence of uniformly convergent quasiconformal maps need not have convergent derivatives.

\begin {proof}
Set $Y^i=(X^i_{L^i})^{hyp},$ where $i=1,2,\ldots\infty.$  By a continuity argument, it suffices to assume that the vectors $w^i,w^\infty$  do not lie  on the boundaries of the  orbifolds $X^i\dsm \mathcal S^i$, or the boundaries of the grafting annuli $C_{c^i}$.  This assumption is not really necessary for the argument below, but it will make some of the technical parts of it simpler.
In both cases 1 and 2, our goal is to construct quasi-conformal maps between large subsets of $Y^\infty$ and $Y^i$, taking $w^\infty$ to $w^i$. We currently have maps $\psi^i$ between large subsets of $X^\infty$ and $X^i$ that are almost isometric, and we first show how to extend these maps across the grafting cylinders. 
 
  Recall that the domains of our maps $\psi^i$  contain all the geodesics in $ \mathcal S^\infty$,  and that $\psi^i$ maps each such  geodesic  with constant speed to some  geodesic in $ \mathcal S^i$. For each $c_+ \in \mathcal S^\infty_\pm$, we can then extend $\psi^i$ across (a large piece of) the annulus $C_{c^+} \subset Y^\infty$.  Namely, we define maps
\begin{equation}
	\bar \psi^i : \iota(B_{X^\infty}(p^\infty,R^i)) \ \cup \bigcup_{\substack{c_+ \in \mathcal S^{\infty}_\pm \\ L(c_+)<\infty} } C_{c_+} \cup \bigcup_{\substack{c_+ \in \mathcal S^{\infty}_\pm \\ L(c_+)=\infty} } C^i_{c_+} \longrightarrow Y^i\label{eq: psii}
\end{equation}
piecewise as follows, where here $C^i_{c_+} \subset C_{c_+}$ is a compact subannulus that is defined below.
\begin{enumerate}
	\item On $\iota(B_{X^\infty}(p^\infty,R^i))$, we just set $\bar \psi^i = \iota \circ \psi^i \circ \iota^{-1}$.
\item If ${c_+}\in \mathcal S^{\infty}_\pm$, we define $\bar \psi^i |_{C_{c_+}}$ to  be the unique map that agrees with $\iota \circ \psi^i \circ \iota^{-1}$ on  the boundary component $\bbS^1_{\ell(c)} \times \{L^\infty(c)\} \subset \bbS^1_{\ell(c)} \times [0,L^\infty(c)]\modsim =: C_{c_+}$, which is glued to $c_+ \subset X^\infty \setminus \mathcal S^\infty$, and where
\begin{enumerate}
\item if $L^{\infty}({c_+}) < \infty$, then $\bar \psi^i$ stretches each vertical line segment $$\{x \} \times I \subset \bbS^1_{\ell(c)} \times [0,L^\infty(c)]\modsim \cong C_c$$ linearly over the  corresponding vertical line segment in $C_{\psi^i(c_+)} \subset X^i_{L^i}$, while
\item if $L^{\infty}(c)=\infty$, then we set 
$$C^i_{c_+} : = \bbS^1_{\ell(c)} \times [0,L^i(\psi^i(c_+))] \subset \bbS^1_{\ell(c)} \times [0,\infty) \cong C_{c_+}$$ and define $\bar \psi^i$ on $C^i_{c_+}$ so that it sends  each vertical subsegment  $$\{x \} \times [0,L^i(\psi^i(c_+))] \subset \bbS^1_{\ell(c)} \times [0,\infty) \cong C_{c_+}$$  isometrically to the corresponding vertical line segment in $C_{\psi^i(c_+)} \subset X^i_{L^i}$.
\end{enumerate}
\end{enumerate}
 Note that for 2 (a) to really make sense, we need that $L^{i}(\psi^i(c)) < \infty$, but this is true after excluding finitely many $i$. While the notation above may be intimidating, all we are really doing here is extending the map $\psi^i$ by stretching the grafting annuli in $Y^\infty$ over the annuli in $Y^i$ in the obvious way, except that if $L^\infty(c) = \infty$  we cannot map the entire infinite annulus over a finite approximating annulus, so we just map in a compact piece of it in a way that is isometric in the second coordinate.  Also, note that the map $\bar \psi^i$ above may not actually be well-defined if the image of $\psi^i$  contains curves of $\mathcal S^i \setminus \psi^i(\mathcal S^\infty)$. But since $$\min_{c^i \in  \mathcal S^i \setminus \psi^i(\mathcal S^\infty)} d\big (p^i, c^i) \to \infty, \ \ \text{ as } \ \ i\to \infty$$
 we can assume  after shrinking the domains of  the $\psi^i$ that  this is not the case, in which case the $\bar \psi^i$ are well-defined topological embeddings that are smooth except on  the boundaries of the grafting annuli. 
 
 By construction, have $d(\bar \psi^i)^{-1}(w^i)\to w^\infty$ in both cases 1 and 2  of the theorem. Then using  Lemma \ref{lem: straightening}, we can modify $\bar \psi^i$  in a small neighborhood of $w^\infty$ so that we still have all of the above properties, but also that $\bar \psi^i$  is an isometry in a small neighborhood of $p^\infty$, with respect to the piecewise hyperbolic/Euclidean metrics on $Y^\infty$ and $Y^i$. (This uses either the  hyperbolic or Euclidean case of Lemma \ref{lem: straightening}, depending on whether $p^\infty$  is outside or inside the grafting annuli.)  In terms of the uniformized hyperbolic metrics on $Y^\infty$ and $Y^i$,  we then get that $\bar \psi^i$ is conformal in a small neighborhood of $p^\infty$. 
 
 Next, except on the boundaries of the grafting annuli, the maps $\bar \psi^i$  are almost isometric  with respect to the piecewise hyperbolic/Euclidean metrics, in the sense that they pull back the piecewise Riemannian metrics on the $Y^i$ to a sequence of piecewise Riemannian metrics on $Y^\infty$  that converge smoothly to the given piecewise metric. It follows that with respect to the uniformized hyperbolic metrics, the $\bar \psi^i$ are $K^i$-quasiconformal embeddings, with $K^i \to 1.$
 
 \begin{claim}
The image of $\bar \psi^i$ contains a radius $R^i$ ball around $p^i$, where $R^i\to \infty$, and where radius  is computed in the uniformized hyperbolic metric on $Y^i$.
 \end{claim}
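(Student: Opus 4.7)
The plan is to show that the $d^{hyp}$-distance in $Y^i$ from $\iota(p^i)$ to the complement of $U^i := \bar{\psi}^i(\mathrm{domain})$ tends to infinity, so that we may set $R^i := d^{hyp}(\iota(p^i), Y^i\setminus U^i)$. The complement $Y^i\setminus U^i$ is a finite union of regions of three types:
\begin{itemize}
\item[(I)] points in $\iota(X^i\dsm \mathcal S^i)$ lying outside $\iota(\psi^i(B_{X^\infty}(p^\infty, R^i)))$;
\item[(II)] grafting cylinders $C_{c^i_+}$ corresponding to curves $c^i \in \mathcal S^i\setminus \psi^i(\mathcal S^\infty)$;
\item[(III)] points in $C_{\psi^i(c^\infty_+)}$ at Euclidean depth greater than $L^i(\psi^i(c^\infty_+))$ from the base, when $L^\infty(c^\infty_+) = \infty$.
\end{itemize}
Setting $\mathcal L := \sup_i \sup_{c\in \mathcal S^i} \ell(c) < \infty$ by hypothesis~(1), Proposition~\ref{prop: globalesstimates} supplies a homeomorphism $\eta : [0,\infty] \to [0,\infty]$ with $\eta(\infty)=\infty$ controlling $d^{hyp}$-distances by $d_{X^i}$-distances for points outside all standard collars of $\mathcal S^i$; Proposition~\ref{prop: modelprop} supplies a constant $M(\mathcal L)$ controlling $d^{hyp}$-distances across extended standard collars in terms of truncation depth. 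Without loss of generality $p^\infty$ lies outside every standard collar of $\mathcal S^\infty$, so the same holds for $p^i$ in $\mathcal S^i$ for large $i$; otherwise perturb $p^\infty$ slightly and absorb the bounded $d^{hyp}$-cost using Proposition~\ref{prop: modelprop}.

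For region (I), any such $q$ has $d_{X^i}(p^i, q) \geq R^i - 1$ since $\psi^i$ is almost isometric on $B_{X^\infty}(p^\infty, R^i)$. Choosing auxiliary points outside all standard collars at bounded $d^{hyp}$-distance from $\iota(p^i)$ and $\iota(q)$ (bounded via Proposition~\ref{prop: modelprop}) and applying Proposition~\ref{prop: globalesstimates} yields $d^{hyp}(\iota(p^i), \iota(q)) \geq \eta(R^i - O(1)) \to \infty$. For region (II), any $d^{hyp}$-path from $\iota(p^i)$ to a point of $C_{c^i_+}$ must cross the boundary of the extended standard collar $A^{ext}_{c^i}$ of $c^i$ in $Y^i$, which coincides with the boundary of the standard collar of $c^i$ in $X^i$. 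Hypothesis~(4) gives $d_{X^i}(p^i, c^i) \to \infty$; if $\ell(c^i)$ is bounded below, then the boundary of the standard collar is at $d_{X^i}$-distance tending to infinity from $p^i$ and Proposition~\ref{prop: globalesstimates} closes the argument, while if $\ell(c^i)\to 0$ the modulus of $A^{ext}_{c^i}$ grows and Proposition~\ref{prop: modelprop} forces the $d^{hyp}$-traversal of $A^{ext}_{c^i}$ to grow logarithmically; combining the two estimates gives $d^{hyp}(\iota(p^i), c^i) \to \infty$.

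Region (III) is the central case. A point $q$ there lies inside the extended standard collar $A^{ext}$ of $\psi^i(c^\infty_+)$ in $Y^i$. The conformal subannulus of $A^{ext}$ separating $q$ from $\partial A^{ext}$ on the $X^i$ side is built by attaching a bounded-modulus piece of the hyperbolic standard collar of $\psi^i(c^\infty_+)$ in $X^i$ to the Euclidean sub-cylinder $\bbS^1_{\ell} \times [0, L^i(\psi^i(c^\infty_+))]$ of $C_{\psi^i(c^\infty_+)}$, so its modulus is at least $L^i(\psi^i(c^\infty_+))/\mathcal L$; the far end of $A^{ext}$ is conformally a cusp and has infinite modulus. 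Hence $q$ belongs to the $D$-truncation of $A^{ext}$ for $D := L^i(\psi^i(c^\infty_+))/(2\mathcal L)$, which tends to infinity by hypothesis~(3). Proposition~\ref{prop: modelprop} then gives $d^{hyp}(q, \partial A^{ext}) \geq \ln(D-2) - M(\mathcal L) \to \infty$, and since $\iota(p^i) \notin A^{ext}$, any $d^{hyp}$-path from $\iota(p^i)$ to $q$ must cross $\partial A^{ext}$ and therefore has length at least this quantity.

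Taking $R^i$ to be the minimum of the three lower bounds produced above gives a sequence tending to infinity with $B_{Y^i}(\iota(p^i), R^i) \subset U^i$, which is the claim. The main obstacle is the short-curve subcase in region (II)---the naive application of Proposition~\ref{prop: globalesstimates} breaks down when $\ell(c^i) \to 0$ because the standard collar in $X^i$ becomes wide, so the $X^i$-exterior bound on distance from $p^i$ to the boundary of the collar can collapse. The resolution is to pair Proposition~\ref{prop: globalesstimates} with Proposition~\ref{prop: modelprop}, trading a shrinking exterior bound for a growing logarithmic collar-traversal bound; carrying out this bookkeeping uniformly in $\mathcal L$ is the one genuinely technical step, and it is enabled by the explicit parametrizations of extended standard collars from \S\ref{sec: parametrization of extended collar neighborhoods}.
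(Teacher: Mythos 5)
Your overall strategy is the same as the paper's: the only real inputs are Proposition \ref{prop: globalesstimates} (to push points outside collars far apart in $d^{hyp}$) and Proposition \ref{prop: modelprop} (to handle points inside extended collars via truncation depth). The paper runs the argument as a contrapositive (a boundary point of the image at bounded $d^{hyp}$-distance from $p^i$ is forced to be shallow in every collar, hence admits a nearby point outside all collars, hence contradicts Proposition \ref{prop: globalesstimates}), while you run it directly by decomposing the complement of the image; these are logically the same argument.

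Two points deserve correction. First, your region (III) is empty: when $L^\infty(c^\infty_+)=\infty$, the map $\bar\psi^i$ is defined on the compact truncation $C^i_{c_+}=\bbS^1_{\ell(c)}\times[0,L^i(\psi^i(c_+))]$ of the infinite cylinder in $Y^\infty$ and sends its vertical segments isometrically onto the vertical segments of the \emph{finite} cylinder $C_{\psi^i(c_+)}\subset Y^i$, which therefore lies entirely in the image; the complement of the image consists only of your regions (I) and (II). This does not break the proof (you are bounding distances to an empty set), but it reflects a misreading of the construction of $\bar\psi^i$. Second, in region (I) your auxiliary-point step is incomplete: if $q$ lies deep (in the truncation sense) inside the standard collar of a very short curve of $\mathcal S^i\setminus\psi^i(\mathcal S^\infty)$, then by Proposition \ref{prop: modelprop} there is \emph{no} point outside that collar at bounded $d^{hyp}$-distance from $\iota(q)$, so the claimed auxiliary point need not exist. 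You must split into the same dichotomy you use in region (II): either $q$ is outside a uniformly bounded truncation (then the auxiliary point exists and Proposition \ref{prop: globalesstimates} applies), or $q$ is in a deep truncation (then Proposition \ref{prop: modelprop} alone shows $q$ is far from $\iota(p^i)$, using that $p^i$ is never deep in any collar — a fact which itself needs the lower bound on $\inj(X^i,p^i)$ coming from the smooth convergence $(X^i,v^i)\to(X^\infty,v^\infty)$, and which both you and the paper invoke without proof). The paper's contrapositive phrasing avoids this case split, which is the one genuine simplification its route buys.
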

 \begin{proof}
 Let $d_{hyp}$ be the hyperbolic metric on $Y^i$, let $d_{he}$ be the piecewise hyperbolic/Euclidean metric on $Y^i$, and let $d_X$ be the push forward to $Y^i$ of the hyperbolic distance function on $X^i$ under the inclusion $\iota$. Then we have $d_{he} \geq d_X$ on $\iota(X^i\setminus \mathcal S^i)$, and  we also have $d_{he} \geq d_{hyp}$, as $d_{he}$ is the `projective', or  `Thurston' metric on $X^i$, which is always at least as big as the Poincar\'e metric, see e.g.\ \S 3 of \cite{mcmullen}.
 
 If the claim is false, then after possibly passing to a subsequence, we can find points $x^i$ in the boundaries of the images of the maps $\bar \psi^i$  such that $$S:=\sup_i d_{hyp}(p^i,x^i) < \infty.$$ 
 Proposition \ref{prop: modelprop} then implies that  there is some universal $D>0$ such that the $x^i$ lie outside the $D$-truncation of all extended standard collars of the curves of $\mathcal S^i$. So, there are points $y^i\in  Y^i$ that lie at bounded $d_{he}$-distance from the $x^i$, and which lie outside all extended standard collars of grafting curves. Since $d_{he}\geq d_{hyp}$, the hyperbolic distances $d_{hyp}(x^i,y^i)$  are bounded as well. Similarly, $(\bar \phi^i)^{-1}(p^i)\to p^\infty,$ the $p^i$ do not travel arbitrarily deep into the grafting annuli, or   deeper and deeper into standard collars, so there are points $q^i\in  Y^i$ that lie outside  all the extended standard collars of the grafting curves, and where $d_{hyp}(p^i,q^i)$ is bounded above independently of $i$. 
 
Since the original maps $\psi^i$ in the statement of the theorem are almost isometric, and their domains exhaust $X^i$, it follows from construction of $\bar \psi^i$ that $d_{X}(q^i,y^i)\to \infty$. But by Proposition \ref{prop: globalesstimates}, 
$$d_{hyp}(q^i,y^i) \geq \eta(d_{X}(q^i,y^i))$$
 for some function $\eta$ with $\eta(t)\to \infty$ as $t\to \infty.$ So, $d_{hyp}(q^i,y^i)\to \infty$, implying that $d(p^i,x^i)\to \infty$  as well, contradicting  our assumption.
 \end{proof}
 
 In summary, we have constructed maps 
 $\bar \psi^i : B^i \longrightarrow Y^i,$
 where $B^i$ is a nested sequence of precompact subsets of $Y^\infty$ with $\cup_i B^i = Y^\infty$, such that
 \begin{itemize}
     \item $d(\bar \psi^i)^{-1}(w^i)\to w^\infty$,
     \item $\bar \psi^i$ is  conformal  in a fixed small neighborhood of $p^\infty$  that is independent of $i$,
     \item $\bar \psi^i$ is a $K^i$-quasiconformal embedding, with $K^i \to 1,$
     \item the image $\bar \psi^i(B^i)$ contains a  radius-$R^i$ (hyperbolic) ball around $p^i \in Y^i$, where $R^i\to \infty.$
 \end{itemize}
 It follows immediately from Proposition \ref{prop: conv via quasiconformal} that $(Y^i,w^i)\to (Y^\infty,w^\infty)$  smoothly.
\end {proof}

\section{Retracting to the boundary}
\label{sec: retractsec}
Recall the following definitions from \S \ref{sec: finitevolumesec}. If $S$ is a finite type orientable $2$-orbifold, then $\Sub_S(G)$ is the space of all discrete subgroups $\Gamma< G$  such that the orbifold $\Gamma \backslash \HH^2$ has finite volume and is homeomorphic to $S$. If $\epsilon>0$ is smaller than the Margulis constant, the \emph{$\epsilon$-end  neighborhood} of $\Sub_S(G)$ is the subset $\Sub_S^\epsilon(G)$ consisting of all groups $\Gamma=\Gamma(X,v) \in \Sub_S(G)$ such that either
\begin{enumerate}
	\item the \emph{systole} of $X$ is less than $\epsilon$, or 
\item the vector $v$ lies in a component of the $\epsilon$-thin part of $X$ that is a horoball neighborhood of a cusp.
\end{enumerate}
Finally, $\partial \Sub_S(G) := \overline{\Sub_S(G)} \setminus \Sub_S(G)$ was described in Proposition \ref{prop: boundarybewhat}.
Our goal in this section is to prove Theorem \ref{thm: deformation retract thm}.

% \begin{theorem}
% 	Suppose $S$ has finite topological type. Then  for small $\epsilon$, there  is a deformation retraction 
%     $$\Sub_S^\epsilon(G) \cup \partial \Sub_S(G) \longrightarrow \partial \Sub_S(G).$$ 
% \end{theorem}

\begin{named}{Theorem \ref{thm: deformation retract thm}}
	Suppose $S$ has finite topological type. Then  for small $\epsilon$, there  is a deformation retraction in $\overline{\Sub_S(G)}$
    $$\Sub_S^\epsilon(G) \cup \partial \Sub_S(G) \longrightarrow \partial \Sub_S(G).$$ 
\end{named}

It would suffice here to just take $\epsilon$ smaller than the Margulis constant. However, the constants that come up in the proof are simpler if we allow ourselves to say `as long as $\epsilon$ is small' a few times.

\medskip

% Recal that the boundary $\partial \Sub_S(G)$  can be described explicitly.  In particular, $\partial \Sub_S(G)$ contains many discrete groups $\Gamma=\Gamma(Y,w)$ where $Y$ has smaller topological complexity than $S$, as well as continuous groups like $A,N$ and their conjugates. Indeed, if we form a path $X_t$ of hyperbolic structures on $S$ by pinching some curve $\gamma$ to a cusp, then letting $\gamma_t$  be the corresponding geodesic on $X_t$, we have:
% \begin{itemize}
% 	\item if  the injectivity radius at $v_t \in TX_t$ stays  above some  fixed positive constant, then $\Gamma(X_t,v_t)$ accumulates onto groups $\Gamma(Y,w)$, where $Y$ is homeomorphic to a component of $S\setminus \gamma$; 
% 	\item if the injectivity radius at $v_t \in TX_t$ goes to zero, but the distance from $v_t$ to $\gamma_t$ goes to infinity, then $\Gamma(X_t,v_t)$ accumulates onto conjugates of $N$ in $\partial \Sub_S(G)$; while 
%     \item  if the distance from $v_t$ to $\gamma_t$ is bounded,  then $\Gamma(X_t,v_t)$ accumulates onto conjugates of $A$ in $\partial \Sub_S(G)$.
% \end{itemize}
%  The reader is encouraged to think through proofs of these facts, if desired. However, we will explicitly prove variants of all these statements below, as we work towards Theorem \ref{thm: deformation retract thm}.

\medskip

%  The rest of \S \ref{sec: retractsec} is devoted to a proof of Theorem \ref{thm: deformation retract thm}.  
 To prove Theoren \ref{thm: deformation retract thm}, given $\Gamma={\Gamma}(X,v)\in \mathcal \Sub_S^\epsilon(G)$, we define a path ${\Gamma_t}= \Gamma(X_t^{hyp},v_t^{hyp}) \in \Sub_S^\epsilon(G)$, where $t \in [0,1)$, with \begin{itemize}
 	\item $(X_0^{hyp},f_0^{hyp})=(X,v)$, and
\item $\lim_{t \to 1 } {\Gamma} (X_t^{hyp},v_t^{hyp}) \in \partial \Sub_S(G)$.
 \end{itemize}  
 To construct $X_t^{hyp}$ and the vector $v_t^{hyp}$, the idea is as follows. 
 
 If the systole of $X$ is less than $\epsilon$, then as $t\to 1$, we pinch all minimal-length geodesics on $X$ to cusps. 
 For the deformation retract to be continuous, we must do this pinching in a canonical way that does not depend on any marking of $X$, e.g. we cannot just shrink the length parameters in a choice of Fenchel--Nielsen coordinates. 
 This is accomplished by grafting.
%  So, we glue in `generalized' Euclidean annuli with length going to infinity along all minimal length geodesics in $X$. The resulting piecewise hyperbolic-Euclidean orbifolds $X_t$ can then be uniformized to give hyperbolic orbifolds $X_t^{hyp}$ in which the lengths of the relevant curves go to zero as $t\to 1$.  This operation, called \emph{grafting}, has been well studied for surfaces, see e.g.\ \cite{mcmullen}, \cite{hensel} and \cite{bourque}, and generalizes somewhat naturally to orbifolds, as we explain in more detail below.
While if the vector $v$ is in the $\epsilon$-thin part of $X$ that is a horoball of a cusp, we wish to flow $v_t^{hyp}$ out the cusp, so that $\Gamma(X_t^{hyp},v_t^{hyp})$ will converge to a conjugate of $N$ as $t\to 1$.
%
 %After reading the above paragraph, the forward-looking reader may wonder why \S \ref{sec: retractsec} is so long.  Mostly, the reason is that it requires quite a bit of work to 
 The main work in this section is to define the vectors $v_t^{hyp} \in TX_t^{hyp}$ in such a way that the resulting deformation retract is continuous.  

\subsection{The grafting parameters and the orbifold $X_t$}
\label{sec: xtsec}
In this section we define the grafting parameters that are used to produce the hyperbolic orbifolds $X_t^{hyp}$ in the path $\Gamma_t$.

Let $\epsilon_0$ be the Margulis constant from Theorem \ref{thm: thickthin}, let $\epsilon<\epsilon_0/2$, pick a continuous function $\phi : [0,\infty) \longrightarrow [0,1]$ such that $\phi(x)=0$ for $x \geq 2$ and $\phi(x)=1$ for $x\leq 1$, and fix an orientation preserving homeomorphism $\mathfrak h : [0,1]\to [0,\infty]$ that is smooth on $[0,1)$. 
Set $\minsystole(X) := \min\{\systole(X),\epsilon\}$.

\begin {definition}
	Given $(X,v)\in \mathcal \Sub_S^\epsilon(G)$, let $\mathcal S$  be the set of all admissible geodesics $c \subset X$ with length $\ell(c)<2\epsilon$, equipped with a preferred {side}. 
\end {definition}

% Here, a \emph{side} of a geodesic $c$ is a component of $\mathcal N(c) \setminus c$, where $\mathcal N(c)$ is a regular neighborhood of $c$.  We will  consider each $c$  as coming with a fixed isometric parametrization $$p_c : S^1_{\ell(c)} \longrightarrow X.$$ Then each $c$ is  either \emph{regular}, i.e.\ a simple closed geodesic contained in the regular part of $X$, or is \emph{degenerate}, i.e.\  there is a reflection $i : S^1_{\ell(c)} \longrightarrow S^1_{\ell(c)}$ such that if $p_c(i(x))=p_c(x)$ for all $x\in S^1_{\ell(c)}$,  such that $p_c$ takes the two fixed points of $i$ to distinct order two singular points of $X$. See \S \ref{2orbifolds}. A  degenerate geodesic has only one side, while a regular geodesic has two.  Note that  elements of $\mathcal S$  are really pairs $(\text {\it geodesic},\text{\it side})$, but we will suppress this in notation and just refer to the `preferred side' of  an admissible geodesic $c\in \mathcal S$.  Finally, for convenience we will assume that the  parametrization of  a given  regular geodesic $c \in \mathcal S$  only depends on the  underlying geodesic, and not on the preferred side. 

For each $c\in \mathcal S,$ set $$L_t(X,c) := \mathfrak h( t \cdot \phi( \ell(c) /\minsystole(X) ) ).$$ Note that $L_t(X,c)$ is a continuous function of $\ell(c),t,\systole(X)$, and has the following properties:

\begin{enumerate}
\item If $\ell(c) \geq 2\epsilon$ then $L_t(X,c)=0$, so in particular, $\systole(X) \geq 2\epsilon \implies L_t(X,c)=0$.
%\item If $sys \geq \epsilon$, then $L$ monotonically increases from $0$ to $h(t)$ as $\ell$ decreases from $2\epsilon$ to $\epsilon$. 
\item If $\systole(X) < \epsilon$, then $L_t(X,c)$ increases from $0$ to $\mathfrak h(t)$ as $\ell(c)$ decreases from $2\systole(X)$ \\ to $\systole(X)$. 
\item $L_t(X,c)=\infty$ if and only if $t=1$ and $\ell(c)=\systole(X)\leq \epsilon$.
\end{enumerate}

Let $X_t:=X_{L_t(X,\cdot)}$. That is, $X_t$ is the hyperbolic/Euclidean orbifold obtained from $X$ by grafting along the collection $\mathcal{S}$ using the function $L_t(X,\cdot):\mathcal{S} \to [0,\infty]$ as explained in \S\ref{sec: graftingsec}. Let $(X_t)^{hyp}$ be $X_t$ with its associated complete hyperbolic metric.

Motivated by property 3,  we make the following definition.

\begin {definition} 
When $t=1$ and $\systole(X)\leq \epsilon$, define $\mathcal S_\infty \subset\mathcal S$  to be the subset consisting of all $c$  such that $\ell(c)=\systole(X)$. When either $t<1$ or $\systole (X)>\epsilon $, we set $\mathcal S_\infty =\emptyset$. 
\end{definition} 

So $\mathcal S_\infty$ \emph{depends on $t$}, although we suppress it in the notation, and $L_t(X,c) =\infty \iff c \in \mathcal S_{\infty}.$   

\subsection{The marking $h_t$ on $X_t$}

Our first goal is to define a marking $h_t$ on $X_t$. The marking $h_t$ will be defined to be a homeomorphism from (a subset of) $X$ to $X_t$.
On most of $X$, particularly on the thick part, the marking will be the inclusion map $\iota$. Thin parts come in two forms, and the marking is defined accordingly:
\begin{itemize}
    \item In collar neighborhoods of curves the marking will stretch certain subannuli of the collar neighborhood of a curve in $\mathcal{S}$ over the corresponding grafted-in cylinders.
    \item In cusp neighborhoods the marking will push vectors into the cusps, by stretching certain subannuli of cusps. 
\end{itemize}
In both cases we will need to define and parametrize the aforementioned subannuli, which we do in the next section.

\subsubsection{The subannuli of standard and extended collar neighborhoods.}\label{sec: ATsec}
Given $c_+\in \mathcal{S}_{\pm}$, let $\ell:=\ell(c)$, and let $A\subset X \dsm S$ be its (one-sided) standard collar neighborhood (see \S\ref{sec: collarsec}).
Let $d$ be the restriction of the hyperbolic metric of $X$ on $A$. 

In \S\ref{sec: collarsec} we saw that the annulus $A$ has two natural parametrizations -- the conformal and the semi-hyperbolic -- with domains and transition map 
\[\bbS^1_{\ell} \times [0,\rho_\ell)  \modsim \xrightarrow{\Id \times \Sec} \bbS^1_{\ell} \times [0,M_\ell). \]
% The \emph{conformal parametrization} of $A$ is given by \[A \longrightarrow S^1_{\ell} \times [0,\rho_\ell)  \modsim, \; 0\le \rho_\ell < \pi/2,\] in these coordinates the metric $d$ can be defined by $ds=sec(y)\sqrt{dx^2+dy^2}$.
% The \emph{semi-hyperbolic} parametrization is obtained from the conformal parametrization by postcomposing with $\Id \times \Sec$, and maps 
% \[A \longrightarrow S^1_{\ell} \times [0,M_\ell)\modsim.\]
% We call it the \emph{semi-hyperbolic} parametrization, to reflect the fact that the first coordinate is the closest point projection on $c$ and the second coordinate is the $d$-distance from $c$.

{The reader may wish to refer to Figure \ref{fig: annuli before and after grafting} to keep track of the coordinates we will define for the different parametrizations.} To begin with, given $l \in [0,2\epsilon]$, $t\in [0,1]$, define 
\begin{equation}
\label{eq: rt}	R_{\I}=R_{\I}(l)\in [0,\infty],\ \  R_{\II} = R_{\II}(l,t) \in [0,1], 
\end{equation}
 such that the following properties hold:
\begin{enumerate}
	\item $R_{\I}(l)$ is continuous in $l$, and $R_{\I} \le R_{\II} = R_{\II}(l,t)$  is  continuous in both $l$ and $t$, 
\item $R_{\I}(l) = R_{\II}(l,t)$ if and only if $L_t=0$ (i.e, either $l=2\sigma$ or $t= 0$), 
\item $R_{\I}(l)=0$ if and only if $l =2\epsilon$, $R_{\I}(l)=\infty$ if and only if $l=0$,
\item \label{item: property R+r} $R_{\II}(l,t) \le  \min\{l \cdot M_l, M_l\}$,  where $M_l$ is defined as in \eqref{eq: def of Mc}.
% , and  $\delta_l = \min\{l,1\}$, say. (All that is important about the function $l \mapsto \delta_l$ is that it maps into $[0,1]$, is continuous, and vanishes exactly when $l=0$.)
\end{enumerate}
Now if $\ell=\ell(c)$ is the length of our fixed $c \in \mathcal S$, we set $$R_{\I} :=R_{\I}(\ell), \ \ \ \ R_{\II}:= R_{\II}(\ell,t). $$
We also set $$\rho_{\I} :=\Sec^{-1} (R_{\I}), \ \ \ \ \rho_{\II}:= \Sec^{-1} (R_{\II}). $$

Let $A_{\I},A_{\II} \subset A$ be the (generalized) sub-annuli of $A$ of all points whose $d$-distance from $c$ is in the interval $[0,R_{\I}],(R_{\I},R_{\II}]$ respectively. 
In the conformal parametrization they correspond to 
\[  \bbS^1_\ell\times [0,\rho_{\I}]\modsim \longrightarrow A_{\I}\;\text{ and }\;   \bbS^1_\ell\times (\rho_{\I},\rho_{\II}]\modsim \longrightarrow A_{\II}.\]
In the semi-hyperbolic parametrization they correspond to \[  \bbS^1_\ell\times [0,R_{\I}]\modsim \longrightarrow A_{\I}\;\text{ and }\;  \bbS^1_\ell\times (R_{\I},R_{\II}]\modsim \longrightarrow A_{\II}.\]
Note that in the notation we suppress the dependence of $A_{\I},A_{\II}$ on $c$ and $t$. We will denote them by $A_{\I,c,t},A_{\II,c,t}$ when we want to specify $c$ and $t$. Also note that by property \ref{item: property R+r} above, $A_{\I},A_{\II}$ are indeed subannuli of $A$.

\medskip

We now turn to the grafted orbifold $X_t$.  Let $C_{c_+}$ be the grafted-in generalized annulus, and let $A^{\ext}=\iota(A)\cup C_{c_+}$ be the extended standard collar, as defined in \S\ref{sec: distance in extended collars}.
There are two cases to consider:

\medskip

\noindent \bf The case $c\notin \mathcal S_{\infty}$. \rm
% In \S\ref{hatdsec}, we conformally parametrized $A^{\ext}$ by $S^1_{\ell'} \times [0,\pi/2) / \sim$, and endowed it with the complete hyperbolic metric $\hat{d}$ defined by $ds=\sec(y)\sqrt{dx^2+dy^2}$. 
The generalized annulus $A^{\ext}$ has 3 natural parametrizations: conformal, rescaled conformal, and semi-hyperbolic. 
The domains of these parametrizations and the transition maps between them are given by
\begin{center} 
\begin{tikzcd} [column sep=large, row sep=tiny]
% \mathbb{S}^1_\ell \times [ - L(c) , \rho_\ell) \modsim  \arrow{r}{ \cdot \frac{\pi}{\pi +2 L(c)} } & \mathbb{S}^1_{\ell'} \times [0, \rho') \modsim \arrow{r}{ \textrm{Id} \times \Sec} & \mathbb{S}^1_{\ell'} \times [0, \Sec(\rho)) \modsim \\
 \mathbb{S}^1_\ell \times [ - L(c) , \rho_\ell) \modsim  \arrow{r}{ \cdot \frac{\pi}{\pi +2 L(c)} } &  \mathbb{S}^1_{\ell'} \times [0, \rho') \modsim \arrow{r}{ \textrm{Id} \times \Sec}   & \mathbb{S}^1_{\ell'} \times [0, \Sec(\rho')) \modsim.\\
 \textrm{conformal} & \textrm{rescaled conformal} & \textrm{semi-hyperbolic} 
%  \\
%  \mathbb{S}^1_\ell \times [ - L(c) , \frac{\pi}{2}) \modsim  \arrow{r}{ \cdot \frac{\pi}{\pi +2 L(c)} } & \mathbb{S}^1_{\ell'} \times [0, \frac{\pi}{2}) \modsim \arrow{r}{ \textrm{Id} \times \Sec} & \mathbb{S}^1_{\ell'} \times [0, \infty) \modsim
 \end{tikzcd} 
\end{center} 

%\begin{itemize}
    %\item The \emph{conformal parametrization} is the one given by joining the Euclidean parametrization of the cylinder $C_{c_+} \longrightarrow S^1_{\ell} \times [-L_t/2,0) \modsim$ and the conformal parametrization of $A$. I.e,
%$$A^{\ext} \longrightarrow S^1_{\ell} \times [-L_t/2 , \rho_\ell)\modsim \subseteq [-L_t/2, \pi/2)\modsim.$$
   % \item The \emph{rescaled conformal parametrization} is obtained from the conformal parametrization by shifting the $y$ coordinate by $+L_t/2$ and rescaling both coordinates by a factor of $\omega = \omega_{c,t} = \frac{\pi/2}{\pi/2+L_t/2}=\frac{\pi}{\pi+L_t}$. This gives
% \todo{N: note that it does not coincide with the parametrization in \S\ref{hatdsec}}
%$$A^{\ext}\longrightarrow S^1_{\ell'} \times [0,\rho')\modsim \subset S^1_{\ell'} \times [0,\pi/2)\modsim,$$ where
%$\ell' = \ell \omega$ and $\rho' = (L_t+\rho_\ell) \omega$. 
  %  \item The \emph{semi-hyperbolic parametrization} is obtained from the rescaled conformal parametrization by postcomposing with $\Id \times \Sec$, which gives
%\[ A^{\ext} \longrightarrow S^1_{\ell'} \times [0,\Sec(\rho')) \modsim.\]
%\end{itemize}

% \medskip

Using the semi-hyperbolic parametrization, let us define the sub-annuli $A_{\I}^{\ext}$ and $A_{\II}^{\ext}$ as follows. 
The curve $\partial \big ( \iota_t(A_{\I} \cup A_{\II} ) \cup C_{c_+})$ is given by  $\bbS^1_{\ell'} \times \{\Delta_{\II}\}$ in the semi-hyperbolic parametrization, where $\Delta_{\II}$ can be given explicitly by
$$\Delta_{\II} := \Sec((L_t/2 + \rho_{\II} ) \omega).$$
% \noindent \bf The case when $c\not \in \mathcal S_{\infty}$. \rm  
% Let $\Delta$ be the $\hat d$-distance in $A^{\ext}$ from $c \times 0$ to $\partial \big ( \iota_t(A_{\I} \cup A_{\II} ) \cup C_{c_+})$. Explicitly,
% $$\Delta : = \Sec\left ( \frac {L_t + \Sec^{-1}(R+r) } { \frac 2\pi \big (L_t +\rho_\ell\big )}      \right ), $$
% $$\Delta  = \Sec ( (L_t + \Sec^{-1}(R+r) ) \omega), $$
%  which can be easily verified by tracing through the definition of $\hat d$. However, the only reason we present the formula here is to convince the reader that it varies continuously in $\ell$ and $t$. Let 
Let $$\Delta_{\I} := (1-\mathfrak{h}^{-1}(L_t)/4) \Delta_{\II},$$ and let 
$A_{\I}^{\ext}$ and $A_{\II}^{\ext} $ be the subsets of $A^{\ext}$ which are given by
\begin{equation}\label{eq: subW semi-conformal}
 \bbS^1_{\ell'}\times [0,\Delta_{\I}]\modsim \longrightarrow A_{\I}^{\ext} \;\text{ and }\;   \bbS^1_{\ell'}\times (\Delta_{\I},\Delta_{\II}]\modsim \longrightarrow A_{\II}^{\ext}.
\end{equation}
in the semi-hyperbolic parametrization. In the conformal coordinates these are given by 
\begin{equation}\label{eq: subW conformal}
 \bbS^1_{\ell}\times [-L_t/2,\delta_{\I}]\modsim \longrightarrow A_{\I}^{\ext} \;\text{ and }\;   \bbS^1_{\ell}\times (\delta_{\I},\rho_{\II}]\modsim \longrightarrow A_{\II}^{\ext}.
\end{equation}
where $\delta_{\I} := \Sec^{-1} ( \Delta_{\I}) / \omega - L_t/2$.

\begin{figure}
    \centering
    \includegraphics[scale=.9]{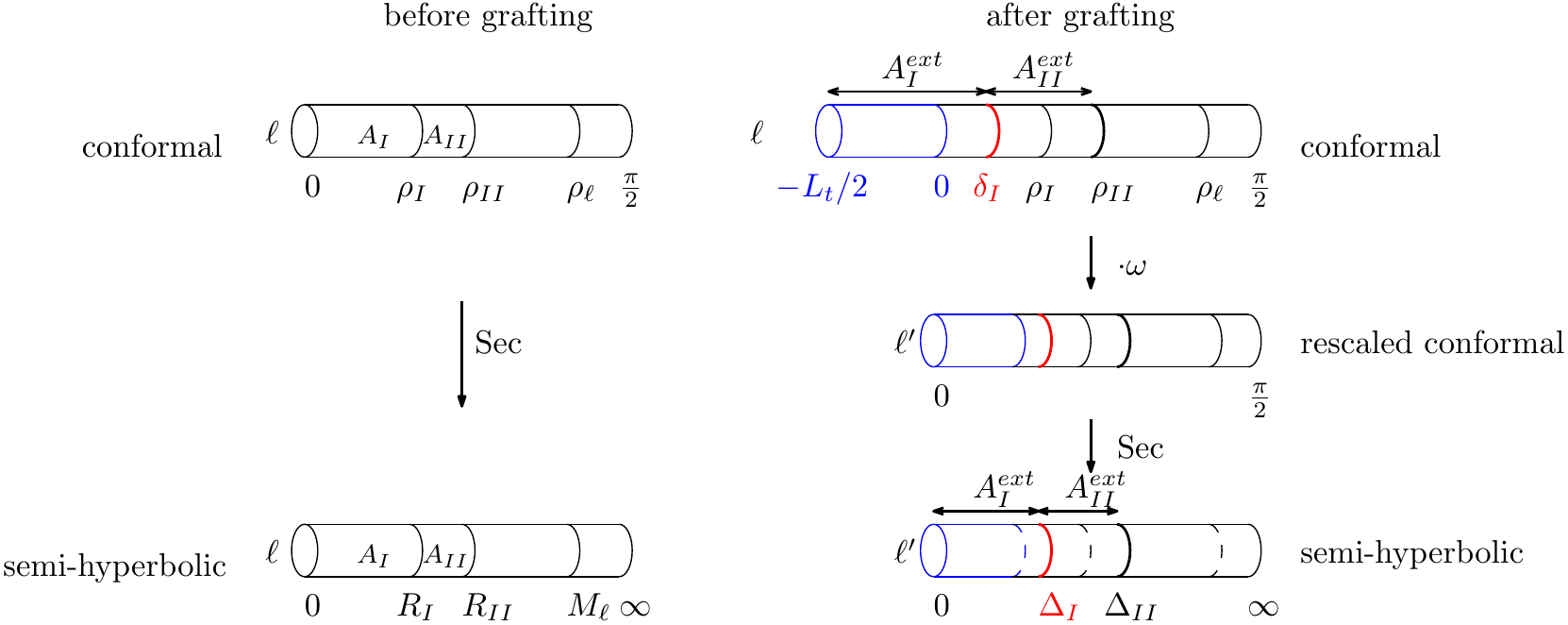}
    \caption{Annuli before and after grafting in different parametrizations.}
    \label{fig: annuli before and after grafting}
\end{figure}

The reason we write $A^{\ext}_{\I},A^{\ext}_{\II}$ in both the semi-hyperbolic and the conformal parametrization is that the map $h_t$ defined in \S\ref{sec: htsec} is defined using the semi-hyperbolic parametrization on $A_{\I} \to A^{\ext}_{\I}$ and using the conformal parametrization for $A_{\II} \to A^{\ext}_{\II}$.

\noindent \bf The case  $c\in \mathcal S_{\infty}$. \rm
In this case $t=1$, and we graft an infinite cylinder on both sides of $c$. 
Let $A^{\ext}_{\I} = \emptyset$ and $A^{\ext}_{\II} := \iota_1(A_{\I} \cup A_{\II} ) \cup C_{c_+}\subset A^{\ext}$.
We will only use the \emph{conformal parametrization} of $A^{\ext}$ in the definition of $h_t$. Recall that the conformal parametrization is given by
$$ \bbS^1_\ell \times (-\infty,\rho_\ell]\longrightarrow A^{\ext},$$ and the hyperbolic metric $\hat{d}$ is given by $ds=\frac{1}{\pi/2 - y}\sqrt{dx^2+dy^2}$.
The subannulus $A^{\ext}_{\II}$ is given by 
% Using this parametrization $A^{\ext}_{\II}$ is parametrized by
$$ \bbS^1_\ell \times (-\infty, \rho_{\II}] \longrightarrow A^{\ext}_{\II}.$$
For consistency we define $\delta_{\I} = -\infty$.

% The $(c,\hat{d})$-parametrization of $A_{\II}^{\ext}$ is obtained by postcomposing with $\Id \times (-\ln(\cdot/\Delta''))$. In this parametrization  
% $$A^{\ext}_{\II} \longrightarrow S^1_\ell \times [0,\infty),$$ where the second coordinate is  the $\hat{d}$-distance from $\partial A^{\ext}_{\II}$. In other words, it is the Busemann function of the cusp $\zeta$, normalized to be $0$ on $\partial A_{\II}^{\ext}$.
% In this case, there is no annulus $A^{\ext}_{\I}$.

\subsubsection{Subannuli of cusp neighborhoods}\label{sec: cusp neighborhoods}

%We will also need to parameterize cusp neighborhoods of $X$. Given a cusp $\zeta $ of $X$, let $K_\epsilon := K_{\zeta,\epsilon}$ be the component of the $\epsilon$-thin part of $X$ that is a neighborhood of $\zeta$. If $\epsilon_0$ is the Margulis constant, we also  define $K_{\epsilon_0} :=K_{\zeta,\epsilon_0} $  similarly.  Note that $K_\epsilon \subset K_{\epsilon_0}$, and that  $$d(K_{\epsilon},\partial K_{\epsilon_0}) = \delta,$$
% for some explicit continuous function $\delta=\delta(\epsilon,\epsilon_0).$ The reader is encouraged to calculate $\delta$ if desired.  Then we can parameterize $K_\epsilon$ and $ K_{\epsilon_0}$ as $$K_\epsilon \cong S^1 \times [\delta,\infty)  \subset S^1 \times [0,\infty) \cong K_{\epsilon_0},$$
% in such a way that each level circle $S^1 \times \{y\} \hookrightarrow X$  is a horocyclic curve parameterized with constant hyperbolic speed, while every ray $\{x\} \times [0,\infty)$ is a unit speed geodesic exiting the cusp $\zeta$.
Let $\mathcal K$ be the set of cusps of $X$.
We begin by parameterizing cusp neighborhoods of $X$. Let $\zeta\in \mathcal K$ be a cusp of $X$.
% For $\sigma \le \epsilon_0$ where $\epsilon_0$ is the Margulis constant, let $K_{\sigma}:=K_{\xi,\sigma}$ be the component of the $\sigma$-thin part of $X$ that is a neighborhood of $\xi$. 
Let $K=K_\zeta$ be the standard collar neighborhood of the cusp $\zeta$. In \S\ref{sec: collarsec}, we saw that the standard collar has two  parametrizations -- the conformal and the semi-hyperbolic -- whose domains and transition map are 
\[\bbS^1 \times [1/2,\infty) \xrightarrow{\ln(2 \cdot)} \bbS^1 \times [0,\infty)\]
respectively.

% of $K$ is given by
% \[K \longrightarrow S^1 \times [1/2,\infty),\]
% with the hyperbolic metric given by $ds=1/y \sqrt{dx^2+dy^2}$ on $S^1 \times [1/2,\infty)$.
% However, as before, it would be more convenient to work with the \emph{semi-hyperbolic} parametrization obtained by postcomposing with $\Id \times \ln(2 \cdot)$. Thus the semi-hyperbolic parametrization is a map
% \[K \longrightarrow S^1 \times [0,\infty),\]
% in which the level circles $S^1\times\{y\}$ are horocyclic curves parametrized with constant hyperbolic speed, while the second coordinate is the distance from $\partial K$ in $X$.

Let $\sigma = \sigma(X)$. In the semi-hyperbolic parametrization the $\sigma$-thin part $K_{\I}$ of $K$ corresponds to the subannulus \[\bbS^1\times [\mu_{\I},\infty)   \longrightarrow K_{\I},\]
where $\mu_I=\mu_I(\sigma)=d(\partial K, K_{\I})$. The $\mu_{\I}(\sigma)$ is a continuous decreasing function satisfying $\mu_I(\sigma)\to \infty$ as $\sigma \to 0$.

Let $K_{\II}$ be the subannulus of $K$ given in the semi-hyperbolic parametrization by \[ \bbS^1 \times [\mu_{\II},\mu_{\I}) \longrightarrow K_{\II} ,\] where $\mu_{\II} = \mu_{\I} -1$.
% For reasons that will become clear in Definition \ref{htdef}, let us denote by $\sigma_0:=\sigma_0 (X) := \delta ^{-1} (\delta (\sigma )-1 )$. Since $\delta(\epsilon_0/2) \ge 1$ we have $K_{\sigma_0} \subseteq K_{\epsilon_0}$, and $K_\sigma$ is at distance 1 deeper into the cusp from the boundary  $\partial K_{\sigma_0}$.

\subsubsection{The map $h_t : X^{\disc} \longrightarrow X_t$}
\label{sec: htsec}
At time $t=1$, the deformation retract we define will map some subgroups $\Gamma(X,v)\in \Sub_S^\epsilon(G)$ to non-discrete subgroups of $G$ while others will be mapped to discrete surface groups of lower complexity. For a given surface $X$ the set $X^{\nd}$  (resp. $X^{\disc}$) defined below is the set of points of $X$ that will be mapped to non-discrete (resp. discrete) subgroups of $G$.

When $t=1$, set 
\[ X^{\nd}  := 	\big(\cup_{c\in \mathcal S_{\infty}} \, A_{\I,c,t}\big) \cup \big( \cup_{\zeta\in\mathcal K} K_{\I,\zeta} \big) \subset X, \ \ \ \ X^{\disc} := X \setminus X^{\nd},\]
where the generalized annuli $A_{\I,c,t}$ and the cusp neighborhoods $K_{\I,\zeta}$ are as defined in \S\ref{sec: ATsec} and \S \ref{sec: cusp neighborhoods}, respectively. 
%So $X^{\nd} \subset X$ is a union of neighborhoods of  cusps, and neighborhoods of curves that are fully pinched in $X_1$.  
Also, when $t<1$,  just set $$X^{\nd} = \emptyset, \ \ X^{\disc}=X.$$
Note that for any $t\in [0,1]$, the surface  $X^{\disc} $ is homeomorphic to $ X_t$. Indeed, when $t=1$ then $X_t$ is split along each $c\in \mathcal S_\infty$; deleting $A_{\I,c,1}$ from $X$ does the same thing topologically, while removing cusp neighborhoods does not change the topological type.   We call $X^{\disc}$ the \emph{finite part} of $X$, and we now show how to construct an explicit homeomorphism
$$h_t : X^{\disc} \longrightarrow X_t.$$
 that is necessary for the deformation retract referenced in Theorem \ref{thm: deformation retract thm}.

 We first need  to define the following \emph {stretch functions}. Given two intervals $[a,b] \subset \RR$ and $[a',b']\subset \RR\cup \{+\infty\}$, define an orientation preserving  homeomorphism 
% $$\mathfrak s := \mathfrak s_{a,b,a',b'} : [a,b] \longrightarrow [a',b'], \ \ \mathfrak s(y) =  a' + \mathfrak h \left ( \frac {t-a}{b-a} \cdot \mathfrak h^{-1}(b'-a')  \right ),$$
$$\mathfrak s := \mathfrak s_{a,b,a',b'} : [a,b] \longrightarrow [a',b'], \ \ \mathfrak s(y) =  a' + \mathfrak h \left ( \frac {\mathfrak{h}^{-1}(y-a)}{\mathfrak{h}^{-1}(b-a)} \cdot \mathfrak h^{-1}(b'-a')  \right ),$$
where as above, $\mathfrak h : [0,1] \longrightarrow [0,\infty]$ is a fixed orientation preserving homeomorphism. Note that $\mathfrak s$  is continuous, not just in $y$, but in the parameters $a,b,a',b'$ too, \emph{even when $b'=\infty$.}  The fact that $\mathfrak s$ is defined when $b'=\infty $ is the reason we use this function rather than a linear one. Another important property of $\mathfrak s$ is that if $b-a=b'-a'$ then $\mathfrak{s}$ is the translation mapping one interval to the other. 

For convenience, we also define, for intervals $[a,b] \subset \RR$ and $[a',b'] \subset \RR \cup \{-\infty\}$, a similar orientation preserving homeomorphism 
% $$\mathfrak s^* :=\mathfrak s^*_{a,b,a',b'} : [a,b] \longrightarrow [a',b'], \ \ \mathfrak s^* = b' - \mathfrak h \left ( \frac {b-t}{b-a} \cdot \mathfrak h^{-1}(b'-a')  \right ).$$
$$\mathfrak s^*(y) :=\mathfrak s^*_{a,b,a',b'} : [a,b] \longrightarrow [a',b'], \ \ \mathfrak s^* = b' - \mathfrak h \left ( \frac {\mathfrak{h}^{-1}(b-y)}{\mathfrak{h}^{-1}(b-a)} \cdot \mathfrak h^{-1}(b'-a')  \right ).$$
 The difference between $\mathfrak s$ and $\mathfrak s^*$ is as follows. If the interval $[a',b']$  is much larger than $[a,b]$, then any homeomorphism between them must stretch $[a,b]$ a lot. In $\mathfrak s$, the stretching happens near the right endpoints of the intervals, while in $\mathfrak s^*$ the stretching happens near the left endpoints. Note that these stretch maps commute with translations of the relevant intervals, e.g.\ 
$$\mathfrak s_{a,b,0,b'-a'}(x) = \mathfrak s_{a,b,a',b'}(x) - a', \ \ \ \mathfrak s_{0,b-a,a',b'}(x) = \mathfrak s_{a,b,a',b'}(x+a).$$

\medskip

Next, we wish to define a homeomorphism $$h_t : X^{\disc} \longrightarrow X_t$$ that stretches the cylinders $A_{\I}, A_{\II}, K_{\I},K_{\II}$ for every short curve and every cusp. 
\begin {definition} \label{def: htdef}  
% Let $\epsilon_0$  be the Margulis constant and $\epsilon < \frac 12 \epsilon_0$ be our fixed constant. 
On the complement of these cylinders, which we denote by 
\[X^{\iota} = X \setminus \left( \cup_{c\in\mathcal{S}} (A_{\I,c,t}\cup A_{\II,c,t}) \cup \big(\cup_{\zeta\in \mathcal K} K_{\zeta,\I} \cup K_{\zeta,\II}\big) \right),\] 
the map $h_t$ is defined using the map $\iota$. 
That is, if $p\in X^{\iota}$, set $$h_t(x) := \iota_t(x).$$
We now define $h_t$ on each of $A_{\I,c,t}, A_{\II,c,t}$, $K_{\zeta,\I}$ and $K_{\zeta,\II}$ whenever they are contained in $X^{\disc}$. 
	
%If $x \in X^{\fin}$ lies outside all the generalized annuli $A_{c,t},T_{c,t}$ associated to $c\in \mathcal S$, and also outside all cuspidal components of the $\epsilon_0$-thin part of $X$, set 
%$$h_t(x) := \iota_t(x).$$
%We now define $h_t$ on each of these pieces,  using the parametrizations discussed in  \S \ref{ATsec} and \S \ref{cusp neighborhoods}. 
\begin{enumerate}
\item[($A_{\I}$)] A generalized annulus $A_{\I}:=A_{\I,c,t}$ is contained in $X^{\disc}$ if and only if $c \not \in \mathcal S_\infty$.  In that case, using the semi-hyperbolic parametrizations from \S \ref{sec: ATsec}, we define
\[h_t : A_{\I} \simeq \bbS^1_{\ell} \times [0,R_{\I}] \longrightarrow  \bbS^1_{\ell'} \times [0,\Delta_{\I}] \simeq A^{\ext}_{\I}\] to be the map which is linear in the first coordinate, and the homeomorphism $$D_{\I,c,t} : [0,R_{\I}] \longrightarrow [0, \Delta_{\I}]$$
in the second coordinate, defined to be the concatenation of
\begin{align*}
	\id : [0, R_{\I}/2] \longrightarrow [0,R_{\I}/2] \\ 
\mathfrak s : [R_{\I}/2,R_{\I}] \longrightarrow [R_{\I}/2,\Delta_{\I}].
\end{align*} 
Note that indeed $R_{\I}/2 < \Delta_{\I}$.
\item[($A_{\II}$)] Using the conformal parametrizations we define 
\[h_t : A_{\II} \simeq \bbS^1_{\ell} \times (\rho_{\I},\rho_{\II}] \longrightarrow \bbS^1_{\ell} \times (\delta_{\I},\rho_{\II}] \simeq A^{\ext}_{\II}\] 
to be the identity in the first coordinate and to be the stretch homeomorphism $$\mathfrak s^* : (\rho_{\I},\rho_{\II}] \longrightarrow (\delta_{\I},\rho_{\II}] $$ in the second. 

Note that this makes sense both when  $c \in \mathcal S_\infty$ and when $c \not \in \mathcal S_\infty$.
% , , if, then $t=1$ and we define $h_1 : A_{\I} \simeq \bbS^1_{\ell} \times (R,R+r] \longrightarrow S^1_{\ell} \times (-\infty,0] \simeq A^{\ext}_{\II}$ to be the identity in the first coordinate and to be $$\mathfrak s^* : (R,R+r] \longrightarrow (-\infty,0] $$ in the second.
%\item[(K)] Let $\zeta$ be a cusp of $X$ and let $K_\epsilon \subset K_{\epsilon_0}$  be the cusp neighborhoods defined in \S \ref{cusp neighborhoods},  together with their given parametrizations $ K_\epsilon \cong S^1 \times [\delta,\infty)  \subset S^1 \times [0,\infty) \cong K_{\epsilon_0}$. Let 
%\begin{equation}
% 	d_t(y) := \mathfrak h \left  ( t \cdot \frac{\min\{y,\delta\}}{\delta} \right ) \in [0,\infty], \label{ddef}
%\end{equation}
%and note  that  the following properties hold:
%\begin{itemize}
%\item $d$ is continuous in both $t$ and $y$,
%\item $d_t(0)=0$ for all $t$,
%\item $d_t(y) = \mathfrak h(t)$ if $y \geq \delta$, so that in particular $d_1(y)=\infty$ when $y\geq \delta$.
%\end{itemize}
%Given $p\in K_{\epsilon_0}$, write it in coordinates as $(x,y) \in S^1 \times [0,\infty)$. If $t<1$, let $p_t \in K_{\epsilon_0}$ be the point with coordinates $(x,y+d_t(y))$, and define $h_t(p) = \iota_t(p_t)$. If $t=1$, we do the same thing, except only when $p \in  K_{\epsilon_0} \cap X^{\fin} = K_{\epsilon_0} \setminus K_\epsilon$,  in which case $d_t(y)<\infty$.

\item[($K$)] Let $K=K_\zeta$ be the standard collar neighborhood of $\zeta$.
% Define
% % Let $\zeta$ be a cusp of $X$ define the map $h_t$ on 
% % let $\delta_0 = \delta (\sigma_0)$ and let $\delta=\delta(\sigma)$. Recall from the definition of $\sigma_0$ that $\delta_0+1 = \delta$. 
% % % Define $d^+_t : [0,\infty) \to [0,\infty]$ by
% \begin{equation}
% d^+_t(y) := \mathfrak h \left  (t \cdot \nu(y-\mu_{\II})  \right ) \in [0,\infty], \qquad \textrm{where} \qquad    \nu(s) :=\begin{cases} 0 & s<0\\
% s & 0\le s<1\\
% 1 &  1 \le s
% \end{cases}. \label{ddef}
% \end{equation}
% Notice that  the following properties hold for $d^+_t(y)$:
% \begin{itemize}
% 	\item $d^+_t(y)$ is continuous in both $t$ and $y$,
% 	\item $d^+_t(0)=0$ for all $t$ and for $y<\mu_{\II}$,
% 	\item $d^+_t(y) = \mathfrak h(t)$ if $y \geq \mu_I$, so that in particular $d^+_1(y)=\infty$ exactly when $y\geq \mu_{\I}$ and $t=1$.
% \end{itemize}
Define $h_t$ on $K_{\I}$ in the semi-hyperbolic coordinates as the map $$\bbS^1 \times [\mu_I ,\infty) \to \bbS^1 \times [\mu_I  + \mathfrak h (t),\infty)$$ which is the identity on the first coordinate and the translation $y \mapsto y+\mathfrak h(t)$ on the second. 

On $K_{\II}$ we define $h_t$ as the map $$\id \times \mathfrak{s} : \bbS^1 \times [\mu_{\II},\mu_{\I}) \to \bbS^1 \times [\mu_{\II},\mu_{\I}+\mathfrak h (t)).$$
% Let $K_{\epsilon_0} \cong S^1 \times [0,\infty)$ be the $(\zeta,d)$-parametrization defined in \S\ref{cusp neighborhoods}. 
% Given $p\in K_{\epsilon _0}$, write $p=(x,y) \in S^1 \times [0,\infty)$. Let $(x,y)$ be the coordinates of a point $p \in K_{\epsilon_0}$. Note that $p\in X^{\disc}$ if and only if $d^+(y)<\infty$. Let $p_t$ be the point of $K_\epsilon$ given by the coordinates $(x,y+d^+_t(p))$ in the $(\zeta,d)$ parametrization. 
% That is, $d^+_t$ is the amount by which $p$ is pushed into the cusp.
% Set
% \[h_t(p) = \iota_t (p_t).\]
% Note that there is no conflict of definition with $h_t | _{X^\iota}$ since $h_t(p)=\iota(p)$ for all $p\in K_\epsilon \cap X^{\iota}$.
\end{enumerate}
\end{definition}

 In the definition above, note that the stretch maps $\mathfrak s$ and $\mathfrak s^*$ are  chosen  so that when $t$ is very large, almost all the stretching that $h_t$ does when it maps $A$ over the much larger extended annulus $A^{\ext}$ happens near the curve $\partial A_{\I}$.

\begin{figure*}[t]
\centering
	\includegraphics{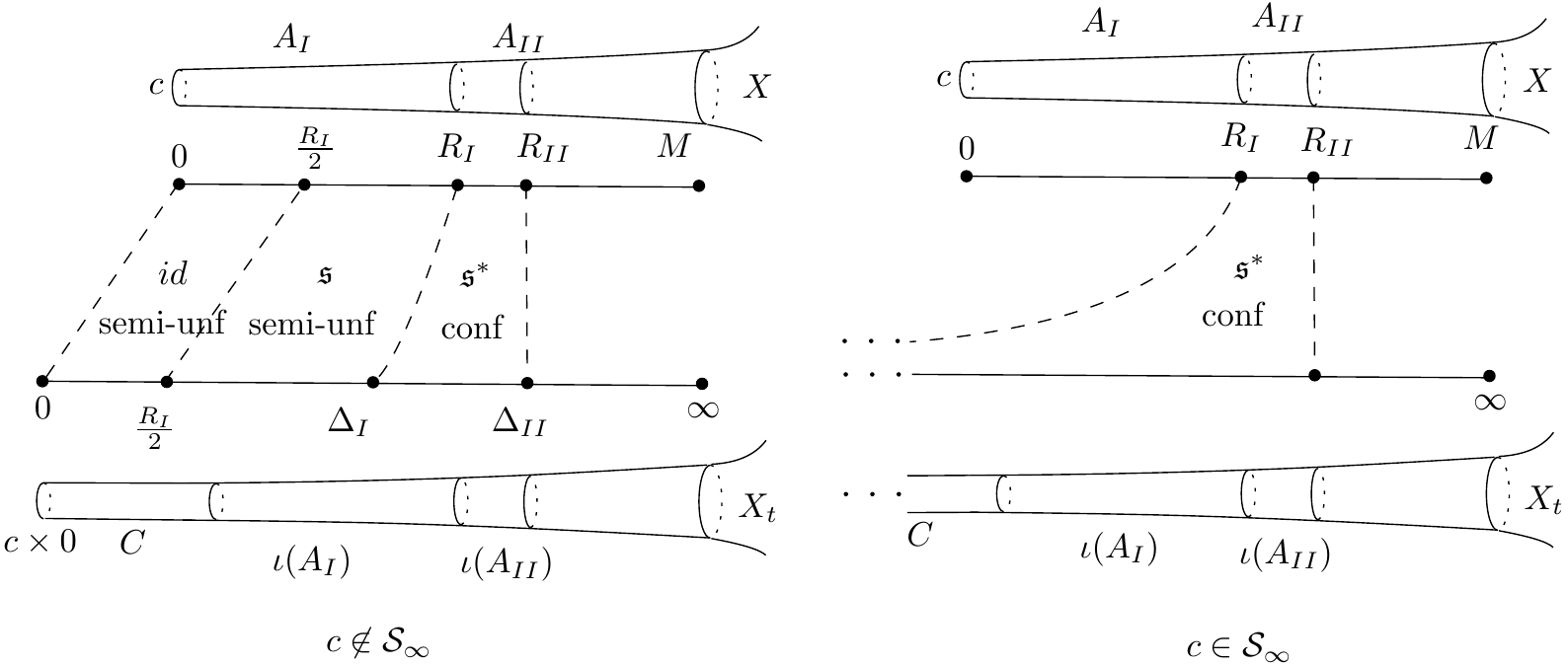}
\caption{ (\emph{The definition of $h_t$  on $A_{\I}$ and $A_{\II}$.}) 
On the left, the cylinders $A$ (top) and $A^{ext}$ (bottom) for $c\notin S_\infty$ are shown with their semi-hyperbolic parametrizations. The map $h_t$ stretches the sub-annuli $A_{\I}$ and $A_{\II}$ by the maps $\id $, $\mathfrak{s}$ and $\mathfrak{s}^*$ in the appropriate parametrizations.
On the right, the same is shown for $c\in S_\infty$.
}
% {\bf***** change figure, change caption ****}  Here, $c\in\mathcal S$ is a regular geodesic, although the picture in the degenerate case differs only in that $c$ and $c\times 0$ are replaced by order $2$ quotients.   In both pictures, the preferred side $V:=V_c$ of the Margulis tube around $c$  is shown at the top, and the bottom picture is of $W$. Collapsing level circles to points, we obtain schematic representations of $V$ and $W$ as line segments. We parametrize the top line segments using hyperbolic distance to $c$ on $V$. The bottom line segments are parameters using the $\hat d$-distance to $c\times 0$ on $W$, and the $\hat d$-distance to $\partial( C \cup \iota(A \cup T))$, respectively, where $\hat d$ is the complete (infinite volume) hyperbolic metric on $W$ in the conformal class of the given piecewise hyperbolic/Euclidean metric. Using the parameter on the line segments as the second coordinate in parametrizations of $V$ and $W$, the map $h_t$ is given by the indicated maps on subintervals.}
\label{fig: annulusmap}
\end{figure*}

\medskip

\begin{remark}\label{rem: continuity of definition of h_t}
We emphasize the following important properties of $h_t$: 
\begin{enumerate}
    \item \label{rem: continuity of definition of h_t - conformal coordinates} When written in the conformal parametrizations, the map $h_t$ on on $X^{\disc}-X^\iota$  is a map $\bbS^1_{\ell} \times [0,\rho_{\II})\modsim \to \bbS^1_{\ell} \times [-L_t/2,\rho_{\II})\modsim $ that is continuous in all of the involved parameters, namely $t$, $\ell=\ell(c)$, $\systole(X)$ and the coordinates of the base point of $v$. 
    The map $\mathfrak{s}^*$ is chosen so that the continuity extends also when the base point of $v$ is in $ \bbS^1_{\ell} \times \{ \rho_{\I}\}$ and $c\in \mathcal{S}_\infty$ if one allows the second coordinate to take the value $-\infty$.
    \item \label{rem: continuity of definition of h_t - semi-unif coordinates} When written in the semi-hyperbolic parametrizations, the second coordinate of $h_t$ on $X^{\disc} - X^{\iota}$ is a map $[0,R_{\II}) \to  [0,\Delta_{\II})$ that is continuous in all of the involved parameters, namely $t$, $\ell=\ell(c)$, $\systole(X)$ and the coordinates of the base point of $v$. 
    The map $\mathfrak{s}$ is chosen so that the continuity extends also when $v \in X^{\nd}$, as long as it is allowed to take the value $\infty$.
    \item \label{rem: continuity of definition of h_t - 0 grafting} If $L_{t}(c)=0$ then $h_t |_{A} = \iota$, as in this case $R_{\II} =R_{\I} = \Delta_{\I}=\Delta_{\II}$, and all stretch maps are thus the identity maps.
\end{enumerate}
\end{remark}

\medskip

	 There is one more property of the homeomorphism $h_t$  that is worth highlighting.  First, note that $h_t$ is not smooth. It is smooth on the interiors of $X^{\iota}$, the $A_{\I}$-annuli, the cusp annuli $K_{\I}$ and $K_{\II}$, and the subsets $\bbS^1_\ell \times [0,R_{\I}/2)$ and $\bbS^1_\ell \times (R_{\I/2},R_{\I})$  of the $A_{\I}$-annuli, but these smooth maps do not patch together smoothly along the boundaries of the annuli.  However, since $h_t$ is obtained by pasting together smooth functions defined on smooth submanifolds that are closed subsets of $X$, $h_t$ is \emph{tangentiable}, meaning that it has well-defined one-sided directional derivatives at every point. So in the following, given $v \in TX_p$,  we will freely speak of the one-sided directional derivative
$dh_t(v) \in T(X_t)_{h_t(p)}$.  Note also that the map $$dh_t : TX^{\disc} \longrightarrow TX_t$$ 
 is a homeomorphism. See e.g.\ Bonahon \cite{bonahon1997variations} for more information on tangentiable maps.

\subsection{The vectored orbifolds $(X_t,v_t)$}
\label{sec: vtsec}

\medskip

Let $(X,v)$  be a   vectored orbifold with $\Gamma(X,v) \in \Sub_S^\epsilon(G)$, let $t\in [0,1]$, and  suppose that  the base point $p$ of $v$ lies in $X^{\disc}$.  In this section, we define the vectors $v_t \in TX_t$. 

 Recall that the piecewise hyperbolic/Euclidean orbifold $X_t$  was created by grafting on $X$ generalized Euclidean annuli along curves $c\in \mathcal S$, see Definition \ref{def: graftingdef}, and that we defined a  tangentiable homeomorphism $$h_t : X^{\disc} \longrightarrow X_t$$   in Definition \ref{def: htdef}. We will define $v_t \in TX_t$  to be a certain vector based at the point $h_t(p)$.  A natural choice would be the directional derivative $dh_t(v)$, but  this would cause problems later\footnote{Defining $v_t:=dh_t(v)$ would be problematic because $h_t$ stretches a lot in the second coordinate near the intersection $ A_{c,t,\I} \cap A_{c,t,\II}$, and hence if $v$ is a vector based at a point very near this intersection, then $dh_t(v)$ has a huge second coordinate.  The resulting deformation will not be continuous when $t\to 1$.}. Instead, first note that for every  $c\in \mathcal S$, there is a \emph{natural foliation} of its collar neighborhood $A_c$ by simple closed curves equidistant from $c$, and its image under $h_t$ is the \emph{natural  foliation} on $A_c^{\ext} \subset X_t$.  Similarly, every cusp neighborhood $K_{\zeta}$ has a \emph{natural foliation} by horocyclic curves, and its image under $h_t$ is the corresponding \emph{natural foliation} of $h_t(K_{\zeta})$, which is a cusp neighborhood in $X_t$. Then we define $v_t$ as follows. 

\begin{definition}[The vector $v_t \in TX_t$]
\label{def: vtdef}
As above, let $p \in X^{\disc}$ be the base point of $v$.	When $p\in A_c$ for some $c\in \mathcal S$, or $p\in K_{\zeta}$ for some cusp $\zeta\in\mathcal K$, define $v_t \in TX_t$  to be the unit vector based at $h_t(p) \in X_t$ that makes  the same counter-clockwise angle with the  natural foliation as $v\in TX$ did with its natural foliation, such that $v_t$ and  $dh_t(v)$ lie on the same side of the leaf through $h_t(p)$. Otherwise, we just set $v_t = d\iota_t(v)$.
\end{definition}

So defined, the map $v \longmapsto v_t$ is a homeomorphism $TX^{\disc} \longrightarrow TX_t$.

\subsection{The deformation retract $\mathcal D$}

\label{sec: deformation retract section}
We now show how to use the orbifolds $X_t$ from Definition \ref{def: graftingdef} and the vectors $v_t$ from Definition \ref{def: vtdef} to construct a deformation retract $$\Sub_S^\epsilon(G) \cup \partial \Sub_S(G) \longrightarrow \partial \Sub_S(G),$$ 
 as required in Theorem \ref{thm: deformation retract thm}. First, note  that by the geometric uniformization theorem, see Theorem \ref{thm: uniformization}, there is a unique hyperbolic metric on $X_t$ in the conformal class of the piecewise hyperbolic-Euclidean metric. We denote $X_t$ equipped with this metric by $X_t^{hyp}$, and let $v_t^{hyp}$ be the vector with unit hyperbolic length in the direction of $v_t$.  In this way, from  a group ${\Gamma}(X,v) \in \Sub_S^\epsilon(G)$ and $t\in [0,1]$,  we get a  hyperbolic orbifold
$${\Gamma}(X_t^{hyp},v_t^{hyp}) \in \Sub_S(G)$$ as long as the base point of $v$ is in $X^{\disc}$.

 We now use this to define the promised deformation  retract.  It will be convenient in the description below to  think of $\HH^2$ in the disc model, and to take the preferred vector $v_{base} \in T\HH^2$ that determines the correspondence between groups and vectored orbifolds to be the vector $1 \in \CC \cong (T\HH^2)_0$ based at the origin.

\begin{definition}\label{def: retr} 
Define a map	$\mathcal D : \overline{\mathcal \Sub_S^\epsilon(G)} \times [0,1] \longrightarrow \Sub(G)$ in cases as follows.
\begin{enumerate}
	\item If $H \in \partial  \Sub_S^\epsilon(G), $  set $\mathcal D(H,t)=H$ for all $t\in [0,1]$.
\end{enumerate}
Now assume that ${\Gamma}(X,v) \in  \Sub_S^\epsilon(G)$, and let $p\in X$ be the base point of $ v$.
\begin{enumerate}
\item[2.] If  $p\in X^{\disc}$,  set $\mathcal D (\Gamma(X,v),t) = {\Gamma} (X_t^{hyp},v_t^{hyp})$.
\end{enumerate}
Now assume $p \in X^{\nd}$, so in particular $t=1$.  
\begin{enumerate}
\item[3.] If  $p \in K_{\zeta,\I}$  for some cusp $\zeta$ of $X$, define $\mathcal D(\Gamma(X,v),1) \in \Sub(G)$ to be the one-parameter group $N(e^{i\theta})$ of parabolic isometries fixing the point $e^{i\theta}\in \partial \HH^2$ in the disk model, where here $\theta$ is the counterclockwise angle from $v$ to the  tangent vector of the unique geodesic from $p$ to $\zeta$.
\end{enumerate} 
 Now assume  further that $p$ lies in the annulus $A_{\I}=A_{c,\I}$ associated to some $c \in \mathcal S_{\infty}$. Equip $A_{\I}$ with the semi-hyperbolic parametrization $A \longrightarrow \bbS^1_{\ell(c)} \times [0,R_{\I}]$. Let $\theta$ be the counterclockwise angle from $v$ to the  tangent vector of the shortest path from $p$ to $c$, and let $(x,s) \in \bbS^1_{\ell(c)} \times [0,R_{\I}]$  be the coordinates of $p \in A$.
\begin{enumerate}
\item[4.] If $s=R_{\I}$,  set $\mathcal D(\Gamma(X,v),1)  \in \Sub(G)$  to be the one-parameter group $N(\xi)$ of parabolic isometries fixing the point $e^{i\xi}\in \partial \HH^2$  in the disk model.
\item[5.] If $s<R_{\I}$ and $c$ is regular, define $\mathcal D (\Gamma (X,v),1)  \in \Sub(G)$  to be the one-parameter group $A(\alpha)$ of all hyperbolic-type isometries  with axis $\alpha$, 
where $\alpha \subset\HH^2$ is the geodesic line such that the shortest geodesic from $\pbase$ to $\alpha$ forms a CCW angle of $\theta$ with $v_{base}$ and has length $D_{A_{\I},1}(s)$ 
where $D_{A_{\I},1} : [0,R_{\I}) \longrightarrow [0,\infty)$  is the concatenation of $id: [0,R_{\I}/2] \longrightarrow [0,R_{\I}/2]$ and $\mathfrak s: [R_{\I}/2,R_{\I}) \longrightarrow [R_{\I}/2,\infty)$ (cf. Definition \ref{def: htdef}).
\item[6.] If $s<R_{\I}$ and $c$ is degenerate, define $\mathcal D(\Gamma(X,v),1)  \in \Sub(G)$  to be the group $A'(\alpha) \cong \RR \rtimes \ZZ/2\ZZ$ of \emph{all}  orientation preserving isometries leaving $\alpha \subset\HH^2$ invariant, where $\alpha$ is as in part 5.
\end{enumerate}
\end{definition}

% \begin{figure*}
% 	\centering
% 	\def\svgwidth{0.7\textwidth}
% %\input{./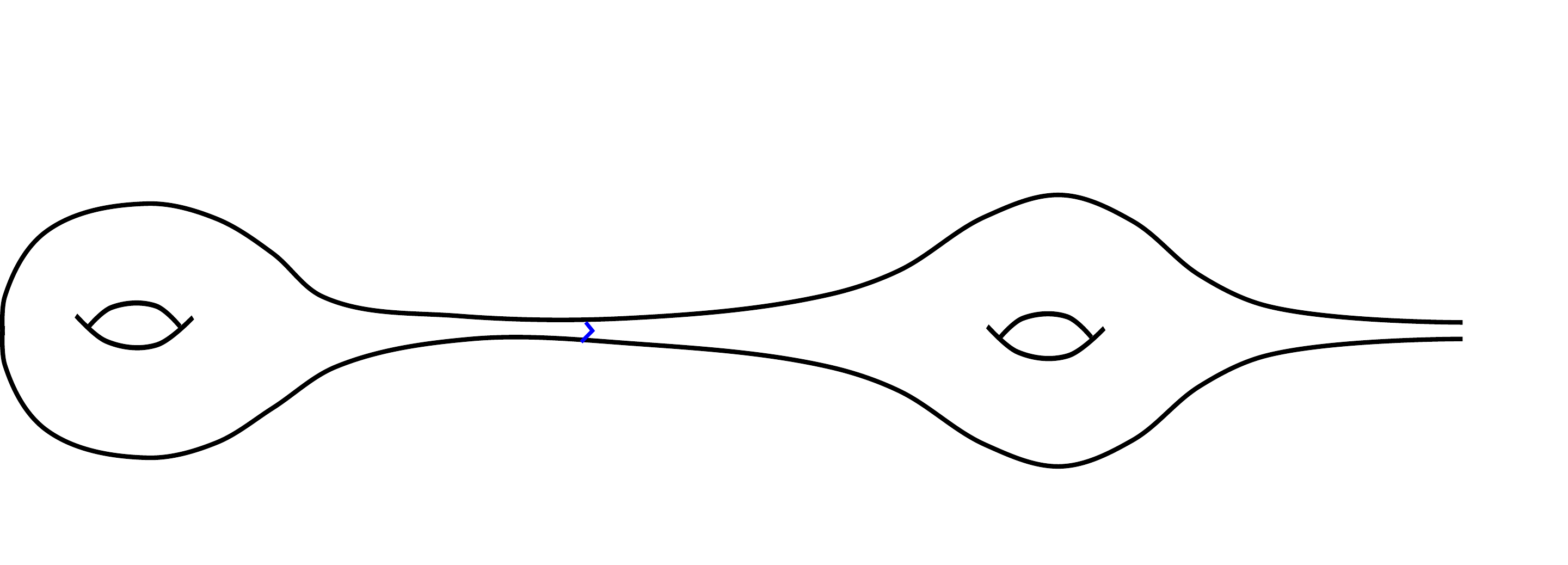_tex}
% \caption{A sketch to help understand Definition \ref{def: retr}. }
% \label{fig: deformation}
% \end{figure*}

\medskip

\subsection{Proof of Theorem \ref{thm: deformation retract thm}}\label{sec: proof of retraction}

We want to prove that $\mathcal D$ is a deformation retraction from $\overline{\mathcal \Sub_S^\epsilon(G)}$ onto its boundary in $\Sub(G)$.   It suffices to show that the following properties are satisfied.
\begin {enumerate}[label=(\alph*)]
\item $\mathcal D(H ,0) = H$  for all $H \in \overline{\Sub_S^\epsilon(G)}.$
\item $\mathcal D(H ,1) \not \in \Sub_S(G)$ for all $H$.
\item $\mathcal D(H ,t) \in \overline{\Sub_S(G)}$ for all $H$ and $t<1$,
\item $\mathcal D$ is continuous,
\end {enumerate}
\noindent Note that it follows from properties (b), (c), and (d) together that $\mathcal D(H,1) \in \partial \Sub_S^\epsilon(G)$ for all $H$.

\medskip

Properties (a) and (b) obviously hold when $H\in \partial \Sub_S^\epsilon(G)$, so assume $(X,v) \in \Sub_S^\epsilon(G)$. 

\paragraph*{Proof of (a):} It obviously holds when $H\in \partial \Sub_S^\epsilon(G)$, so assume $(X,v) \in \Sub_S^\epsilon(G)$. Note that when $t=0$ the lengths of all the Euclidean annuli and orbifolds we are grafting in are zero, so $X_0^{hyp} = X_0 = X$.  Moreover, the homeomorphism $h_0=id$ by Remark \ref{rem: continuity of definition of h_t}(\ref{rem: continuity of definition of h_t - 0 grafting}). So $v_0^{hyp}=v_0=v$. Hence, (a) follows. 

\paragraph*{Proof of (b):} Again, (b) clearly holds for $H\in \partial \Sub_S^\epsilon(G)$, so assume $(X,v) \in \Sub_S^\epsilon(G)$. 

First assume that the base point $p$ of $v$ lies in $X^{\disc}$. Since $v\in \Sub_S^\epsilon(G)$,  this implies that $\systole(X)\leq \epsilon$. 
If $\systole(X)\leq \epsilon$, then $\mathcal S_\infty \neq \emptyset$, so some curve is fully pinched in $(X_1,v_1)$, meaning that $X_1$ is not homeomorphic to $X$. So, $\mathcal D(\Gamma(X,v),1) = {\Gamma}(X_t^{hyp},v_t^{hyp})$ which is not in $\Sub_S(G)$.
Otherwise, if $p\in X^{\nd}$, then $D(\Gamma(X,v),1)$ is a non-discrete subgroup, and hence not in $\Sub_S(G)$.

\paragraph*{Proof of (c):} Property (c) follows from the definition of $\DDD$. 

\paragraph*{Proof of (d):} 
Let $(H^i,t^i) \to (H,t)\in \overline{\Sub_S^\epsilon(G)}$, we wish to show that $\DDD(H^i,t^i) \to \DDD(H,t)$.
We divide the proof into cases according to $(H,t)$.

\paragraph*{Case 1. $H\in \Sub_S^\epsilon(G).$} 
\paragraph*{Case 1.a. $v\in X^{\disc}$.}
Write $H=\Gamma(X,v)$. Without loss of generality $H^i$ are all discrete, write them as $H^i=\Gamma(X^i,v^i)$.
$H^i \to H$, and therefore $(X^i,v^i) \to (X,v)$ in smooth convergence. Let $\psi^i$ be a sequence of almost isometric maps witnessing this convergence, as discussed in \S\ref{sec: grafting and smooth convergence}. The length of curves and the systole vary continuously, and therefore also the grafting parameters $L_{t^i}(\psi^i(c)) \to L_t (\psi^i(c))$ for all $c\in \mathcal{S}$.
If $v_t \in \iota(X \dsm \mathcal{S})$ then without loss of generality $v^i_{t^i} \in \iota(X^i\dsm \mathcal{S}^i)$, and it follows from the definition of $h_t$ that $d(\psi^i)^{-1}(\iota^{-1} (v^i_{t^i})) \to \iota^{-1}(v_t)$.
It then follows from part 1 of Theorem \ref{thm: contgraft} that $\DDD(H^i,t^i)  \to \DDD(H,t)$ by the definition of $\DDD$.

If $v_t \in C_{c,t}$ for some curve $c\in \mathcal{S}$, then without loss of generality, we may assume that $v^i_{t^i} \in C_{\psi^i(c),t^i}$. By Remark \ref{rem: continuity of definition of h_t}(\ref{rem: continuity of definition of h_t - conformal coordinates}) the map $h_t$ depends continuously on all parameters, the assumptions of part 2 of Theorem \ref{thm: contgraft} are satisfied, and it follows that $\DDD(H^i,t^i) \to \DDD(H,t)$.

\paragraph*{Case 1.b. $v\in X^{\nd}$.} In particular $t=1$.
This case follows from Proposition \ref{prop: limits in collars} by Remark \ref{rem: continuity of definition of h_t}.

\paragraph*{Case 2. $H\in \partial \Sub_S^\epsilon(G).$} Since $\DDD$ is the identity on $\partial \Sub_S^\epsilon (G)$ it suffices to show that if $(H^i,t^i) \to (H,t)$ then $\DDD(H^i,t^i) \to \DDD(H,t)=H$ for $H^i \in \Sub_S^\epsilon(G)$. Write $H^i = \Gamma(X^i,v^i)$.

\paragraph*{Case 2.a. $H=A(\alpha)$ for some geodesic line $\alpha\subset \HH^2$.} 
Since $H^i=\Gamma(X^i,v^i) \to H=A(\alpha)$, necessarily $\injrad(v^i,X^i) \to 0$, moreover, $v^i$ is in a standard collar neighborhood around a non-degenerate simple curve $c^i$ and $\ell(c^i) \to 0$. If $d^i$ is the distance of the base point of $v^i$ from $c^i$ and $\theta^i$ is the angle that the vector $v^i$ makes with the natural foliation of the standard collar neighborhood around $c^i$, then $d^i \to d^\infty$ and $\theta^i \to \theta^\infty$ where $d^\infty$ and $\theta^\infty$ are the distance and angle of the geodesic from the origin to the closest point in $\alpha$ in  $\HH^2$.

In the semi-hyperbolic parametrization of $A_{c^i,\I}$, the second coordinate of the base point of $v^i$ is, by definition, $d^i$.
The vector $(v^i)^{t^i}$ is a vector $w^i$ in the extended standard collar $A_{c^i,t^i,\I}^{\ext}$.
Since $\ell(c^i) \to 0$, $R_{\ell(c^i),\I} \to \infty$ and so at some point $d^i < R_{\ell(c^i),\I}/2$. By the definition of $h^{t^i}$, it follows that in semi-hyperbolic parametrization of $A_{c^i,t^i,\I}^{\ext}$ the second coordinate of $w^i$ is also $d^i$.
Moreover, the vector $w^i$ forms an angle of $\theta^i$ with the natural foliation of $A_{c^i,t^i}^{\ext}$. 

It follows from Proposition \ref{prop: limits in collars}(1) and the definition of $\DDD$, that $\DDD(H^i) \to A(\alpha) = \DDD(A(\alpha))$. 

\paragraph*{Case 2.a'. $H=A'(\alpha)$ for some geodesic line $\alpha\subset \HH^2$.} This case is done similarly to the previous using Proposition \ref{prop: limits in collars}(2).
\paragraph*{Case 2.b. $H=N(e^{i\theta})$ for some $\xi =e^{i\theta}\in\partial\HH^2$.} Again, we may assume without loss of generality that $H^i \in \Sub_S^\epsilon(G)$, and write $H^i=\Gamma(X^i,v^i) \to N(\xi)$. Necessarily, $\injrad(v^i,X^i)\to 0$. It follows that $v^i$ is either in a cusp neighborhood $K_{\zeta^i}$ or in a collar neighborhood $A_{c^i}$ of a curve $c^i\in X^i$. In both cases the distance of $v^i$ from the boundary of the neighborhood tends to $\infty$. Or in other words, it is in the $D^i$ truncation of $A_c$ (or the cusp $K_\zeta$) for some $D^i \to \infty$. Moreover, in the second case, the distance $d^i$ of $v^i$ from the curve $c^i$ tends to $\infty$ as well.

By definition of $h_t$ in the conformal parametrization, the base points of the vectors $(v^i)^{t^i}$ can only move dipper into the extended collar neighborhood $A^{\ext}_{c^i,t^i}$ (or the cusps $K_\zeta$) and are thus in its $D^i$ truncation. Again, the desired limit follows from  Proposition \ref{prop: limits in collars}(3).

\paragraph*{Case 2.c. $H\in \partial\Sub_S(G)$ is discrete.} In this case denote $H=\Gamma(X,v)$ for some orbifold $X$. It follows that $\lim_{i\to\infty} \injrad(X^i,v^i)=\injrad(X,v)>0$, and $\systole(X^i) \to 0$ as otherwise the limit will be in $\Sub_S (G)$.
It follows that eventually $v^i \in (X^i)^\iota$. Thus, $(v^i)^{t^i}= \iota (v^i)$ and the desired limit follows from Theorem \ref{thm: contgraft}(1).\qed

\section{Appendix}

The purpose of this appendix is to prove the following proposition. Below, recall from \S \ref{sec: global distance estimates} that the \emph {$\delta $-reduced  collar} of a curve $c$ is the metric neighborhood $\RC_\delta(\gamma)$ of radius $R_{\delta,\gamma} := \sinh^{-1}(\delta/\sinh(\ell(\gamma)/2)).$

\begin{proposition}[Bounded alteration of boundary lengths is bilipschitz outside collars]\label{prop: alteration}
Given $\mathcal L$ and any sufficiently small $\delta>0$, there is some $K$ as follows. Suppose that $P,P'$ are pairs of pants with geodesic boundary components $\gamma_1,\gamma_2,\gamma_3$ and $\gamma_1',\gamma_2',\gamma_3'$, respectively, and that $\mathcal S \subset \{1,2,3\}$. Suppose that \begin{itemize}
    \item if $i \in \mathcal S$, then $\ell(\gamma_i),\ell(\gamma_i') \leq \mathcal L,$
    \item if $i \not \in \mathcal S$, then $\ell(\gamma_i)=\ell(\gamma_i')$.
\end{itemize}
For each $i\in \mathcal S$, let $\RC_\delta(\gamma_i),\RC_\delta(\gamma_i')$ be the $\delta$-reduced collars of $\gamma_i,\gamma_i'$ in $P,P'$. Then there is a homeomorphism $f : P\longrightarrow P'$ with the following properties.
\begin{enumerate}
    \item $f$ is constant speed on each component of $\partial P$, and maps seams of $P$ to seams of $P'$. (Here, a \emph{seam} of a pair of pants is the shortest path joining two boundary components.)
    \item $f(\RC_\delta(\gamma_i))=\RC_\delta(\gamma_i')$ for all $i\in \mathcal S$, and sends the two orthogonal foliations of $\RC_\delta(\gamma_i)$ by geodesic segments and equidistant circles to the corresponding foliations of $\RC_\delta(\gamma_i')$. Moreover, $f$ is constant speed on each leaf of the foliations. (The speed may depend on the leaf, in the equidistant circle case.)
    \item $f$ is locally $K$-bilipschitz outside of the reduced collars $\RC_\delta(\gamma_i)$.
\end{enumerate}
\end{proposition}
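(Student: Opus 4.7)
My plan is to reduce to a problem about right-angled hexagons and then solve that by a Fermi-coordinate decomposition. Cut $P$ and $P'$ along their three seams: each becomes the union of two isometric right-angled hexagons glued along the seams, and the reflection swapping them is a global isometry of the pair of pants. It thus suffices to build one homeomorphism $f_H \colon H \to H'$ between the corresponding hexagons that (i) sends vertices to vertices and side types to side types, (ii) is an affine reparametrization on each boundary half $\beta_i$ (of length $b_i = \ell(\gamma_i)/2$) and an isometry on each seam $\sigma_i$, (iii) sends $\RC_\delta(\gamma_i)\cap H$ onto $\RC_\delta(\gamma_i')\cap H'$ preserving the two orthogonal foliations at constant speed per leaf, and (iv) is $K$-bilipschitz on $H^\circ := H \setminus \bigcup_{i\in\mathcal S} (\RC_\delta(\gamma_i)\cap H)$; doubling $f_H$ by the seam reflection yields the required $f\colon P\to P'$.

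For each $i \in \mathcal S$, I set up Fermi coordinates $(x,y)$ around $\beta_i$, so that $\mathrm{Coll}_i := \RC_\delta(\gamma_i)\cap H$ is the rectangle $[0,b_i]\times[0,R_{\delta,\gamma_i}]$ with metric $ds^2 = \cosh^2(y)\,dx^2 + dy^2$, and similarly for $H'$. Define $f_H|_{\mathrm{Coll}_i}(x,y) := (\tfrac{b_i'}{b_i}x,\, y)$: this is isometric in $y$, linear in $x$, sends $\mathrm{Coll}_i$ onto $\mathrm{Coll}_i'$, is constant speed on each leaf of both foliations, and along each vertical segment $\{x=0\}$ or $\{x=b_i\}$ (which is a sub-arc of the seam adjacent to $\beta_i$) it is an isometry onto the corresponding vertical segment of $\mathrm{Coll}_i'$. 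These definitions on different collars are mutually compatible at the hexagon vertices because both adjacent Fermi parametrizations place the shared vertex at a corner of the rectangle.

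The main work is extending $f_H$ across $H^\circ$ with bilipschitz constant $K = K(\mathcal L,\delta)$ independent of $b_k$ for $k\notin\mathcal S$. I subdivide $H^\circ$ combinatorially as follows: from each of the two "inner corners" of every $\mathrm{Coll}_i$ (i.e., the corners not lying on $\partial H$) I drop the perpendicular onto the opposite seam $\sigma_i$, and for each $k\notin\mathcal S$ I also drop two perpendiculars to $\beta_k$ at a fixed Fermi-distance $\tau = \tau(\mathcal L,\delta)$ inside from the seam endpoints of $\beta_k$. This produces finitely many "corner pieces" near each hexagon vertex and each collar, plus, for each $k\notin\mathcal S$, a central Fermi rectangle along $\beta_k$ of width $\tau$ and length $b_k - 2\tau$. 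On each such central rectangle $f_H$ is declared to be the identity, which is consistent because $b_k = b_k'$. On each corner piece, the side lengths and angles are bounded explicitly in terms of $\mathcal L,\delta$: the hexagon formula $\cosh(a_i)\sinh(b_j)\sinh(b_k) = \cosh(b_i) + \cosh(b_j)\cosh(b_k)$, combined with the estimate $a_i - R_{\delta,\gamma_j} - R_{\delta,\gamma_k}$ (truncating only the collars that are actually removed) and the exponential decay of $\cosh(b)/\sinh(b) \to 1$, shows that the collar-truncated seam lengths and the corner-piece diameters lie in a bounded interval depending only on $\mathcal L,\delta$; the corresponding corner pieces in $H'$ have bounded side lengths in the same interval. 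An explicit bilipschitz comparison between two hyperbolic polygons of bounded diameter and bounded inradius with prescribed, matching boundary parametrizations (piecewise affine or isometric, as constructed above) then yields the required $K$-bilipschitz extension on each piece.

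The main obstacle I expect is guaranteeing that the prescribed boundary data on each corner piece is itself consistent and extendable with bounded distortion, especially along the boundary between a central identity strip along $\beta_k$ ($k\notin\mathcal S$) and the adjacent corner piece. This amounts to a delicate manipulation of the hexagon formulas: I need to verify that when $b_k$ is arbitrarily large while $b_k = b_k'$, the locations of the seam feet on $\beta_k$ and the perpendicular feet of the dropped perpendiculars shift between $H$ and $H'$ by amounts bounded in $\mathcal L,\delta$ alone, and that all angles at the subdivision vertices stay uniformly bounded away from $0$ and $\pi$. This is ultimately a continuity-plus-compactness argument once the parameter space of triples $(b_i,b_j,b_k)$ is quotiented by the unbounded directions on which $f_H$ is locally the identity, but making it fully uniform requires the careful case analysis I outline above.
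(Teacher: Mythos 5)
Your overall strategy --- cut along seams to reduce to right-angled hexagons, map the $\mathcal S$-collars explicitly via the two orthogonal foliations in Fermi coordinates, and handle the complement by a bounded-geometry bilipschitz comparison --- is the same as the paper's. But there is a genuine gap in the step where you claim that the corner pieces and collar-truncated seams have side lengths, diameters, and angles ``bounded in terms of $\mathcal L,\delta$'' alone. This fails because the hexagon $H$ can have thin parts that are not collars of the $\mathcal S$-sides. Concretely, if $\gamma_j,\gamma_k\notin\mathcal S$ are both very long, the right-angled hexagon formula $\cosh(a_i)\sinh(b_j)\sinh(b_k)=\cosh(b_i)+\cosh(b_j)\cosh(b_k)$ forces the seam $a_i$ between them to be arbitrarily short (its $\cosh$ tends to $1$); since neither end of that seam is truncated by an $\mathcal S$-collar, your corner pieces adjacent to it degenerate and admit no uniform bilipschitz comparison of the kind you invoke. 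Similarly, when a free side $\beta_k$ is very long, the common perpendicular from $\beta_k$ to its opposite side has length tending to $0$ (via $\sinh(\ell(\gamma))^2=\cosh(a)\cosh(c)/(\sinh(b_+)\sinh(b_-))$), so your fixed-width Fermi rectangle of width $\tau(\mathcal L,\delta)$ along $\beta_k$ is not even embedded in $H$. These extra thin parts are exactly why the paper builds a full thick-thin decomposition of the hexagon (Proposition \ref{prop: thick thin hex}) that includes reduced collars around \emph{all} short necks --- short determined sides and short non-side necks --- not just the $\mathcal S$-sides.

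Once those extra collars are in the picture, one must show they match up between $H$ and $H'$: that each short neck of $H$ corresponds to a neck of $H'$ of comparable length (bounded ratio), and that the foot of a non-side neck on a long free side shifts by a bounded amount. This is the paper's Lemma \ref{lem: controlled distortion of necks}, proved from the pentagon and hexagon trigonometric identities; it is precisely the verification you defer to your last paragraph, but it is the heart of the argument rather than a residual consistency check, and the bounded shift of the neck foot is what forces the paper to allow the collar parameter $\delta'$ in $H'$ to differ on the two coorientations of a non-side neck (so that the map can remain constant speed on the long free side). With the thin parts collared and matched, the genuinely bounded-geometry complementary pieces are compared via a boundary-parametrized bilipschitz map extended by Tukia's theorem, and the proposition follows by changing the lengths $\ell(\gamma_i)$, $i\in\mathcal S$, one at a time (Lemma \ref{lem: bilipschitz maps hex}) and doubling. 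I would encourage you to either prove a version of Lemma \ref{lem: controlled distortion of necks} and incorporate collars around all short necks into your decomposition, or restrict your claim about bounded corner pieces to the regime where it actually holds.
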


The confident reader might assume that such a result  follows from some sort of compactness argument: if we are ignoring collars around short curves, then shouldn't everything else be bilipschitz? However, an important quality of the proposition is that the boundary components that are not indexed by $\mathcal S$ can be unboundedly long, while the bilipschitz constant $K$ just depends on $\mathcal L$ and $\delta$.

The proof of Proposition \ref{prop: alteration} is the subject of Section \ref{sec: prop proof}. Here is the basic idea. Any pair of pants is the double of a right angled hexagon along three nonadjacent sides, where boundary components are the doubles of the remaining `free' sides. Right angled hexagons have a thick-thin decomposition (see Proposition~\ref{prop: thick thin hex}) into thin rectangles called \emph{collars} and bounded geometry complementary pieces. We show in Lemma \ref{lem: controlled distortion of necks} that altering a bounded free side length to another bounded value does not significantly affect the sizes of any other collars. Using this, we can patch together a map between the two hexagons by explicitly mapping collars to collars and using that the complementary pieces all have bounded geometry, so are all bilipschitz to each other. Doubling this map proves the proposition above.

We should mention that if $\mathcal L < 1/2$, it is possible to prove the proposition above using techniques of Buser--Makover--Muetzel--Silhol \cite{buser2014quasiconformal}. While probably the two approaches have similar length, we think ours is a bit more conceptual rather than computational\footnote{Perhaps this is just because we haven't intuited their approach well enough, though.}. There are lots of computations below, but they are not very subtle (our only trick is that $\cosh(x)$ and $\sinh(x)$ are basically just $x\mapsto e^x/2$ for large $x$) and mostly they verify qualitative facts about hexagons that will be familiar to many hyperbolic geometers. 

\subsection{Right angled hexagons, necks, collars, and the thick-thin decomposition}

A \emph{right-angled hexagon} is a convex hexagon $P \subset \HH^2$ with geodesic sides and $\pi/2$ interior angles. Here, we develop a `thick-thin decomposition for right angled polygons $P$, where $P$ is divided into rectangles called `collars' and complementary polygons with bounded geometry. 

For technical reasons that will become apparent later, the thick thin decomposition that we will need here needs to have a lot more flexibility than the usual decomposition for hyperbolic surfaces. Namely, ours will not be determined by a single choice of some small $\epsilon$. Rather, there is some freedom about which thin parts to include, and one is even allowed to add in some regions disjoint from thin parts that are not themselves thin. Hopefully, though, the reader is courageous and can power through the following definitions.

\begin{definition}[Necks, coorientations and reduced collars]\label{neckdef}A \emph {neck} of a right angled hexagon $P$ is a common perpendicular to two sides of $P$. Since $P$ is right-angled, any side of $P$ is a neck of $P$. However, $P$ also has \emph{non-side necks} that are common perpendiculars to opposite sides of $P$. 

A \emph{coorientation}\footnote{Colloquially, one might call such a coorientation a `side' of the neck, but that is too confusing given that we talk a lot about sides of polygons.} of a neck $\gamma$ is a choice of orientation for the normal bundle of $\gamma$ consisting of vectors that point into $P$. So, necks that are sides of $P$ have one possible coorientation, and non-side necks have two. When $\gamma$ is equipped with a coorientation, we call it a \emph{cooriented neck}, and usually we will denote it by $\gamma_+$, using $\gamma_-$ to refer to the opposite choice of side if there is one. We will also sometimes refer to the \emph{part of P} determined by a cooriented neck; if the neck is a side of $P$, this is just $P$, while otherwise it is the half of $P$ that lies in the direction of the given coorientation.

For each cooriented neck $\gamma_+$ of $P$, define the \emph{$\delta$-reduced collar} of $\gamma_+$, written  $\RC_\delta(\gamma_+)$, to be the set of all points that lie at a distance less than $R_{\delta,\gamma}$ from $\gamma$ in the part of $P$ determined by the coorientation, where
$$R_{\delta,\gamma} := \sinh^{-1}\left (\frac {\delta}{\sinh(\ell(\gamma))}\right ).$$ For an (uncooriented) neck $\gamma$, we set either
$$\RC_\delta(\gamma) = \RC_\delta(\gamma_+), \ \ \text{ or } \ \ \RC_\delta(\gamma) = \RC_\delta(\gamma_+) \cup \RC_\delta(\gamma_-)$$
depending on whether $\gamma$ has one or two coorientations. 
\end{definition}

\begin{figure}
    \centering
    \includegraphics{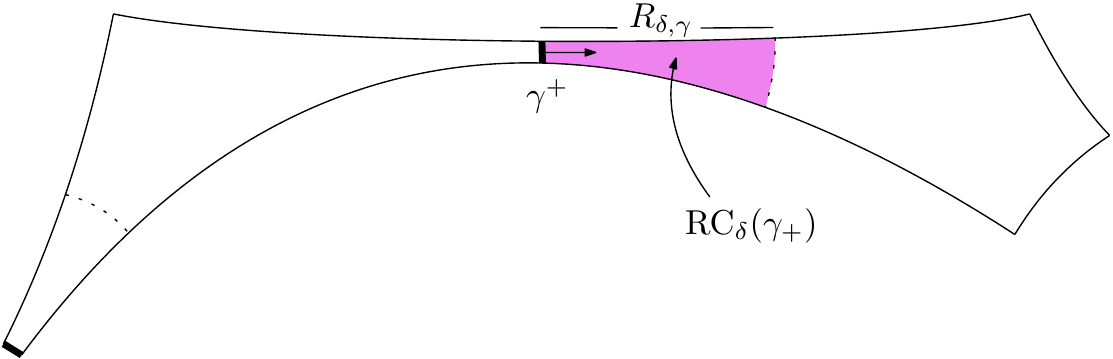}
    \caption{The $\delta$-reduced collar of a cooriented neck $\gamma_+$ is illustrated above. The $\delta$-reduced collar of $\gamma_-$ is a similar region on the left of $\gamma$, and the reduced collar of $\gamma$ includes both regions. In the bottom left corner, there is also (unlabeled) a reduced collar of a neck that is a side of $P$.}
    \label{fig: reduced collar}
\end{figure}
\begin{lemma}[Collars are rectangles]\label{lem: collars are rects}
For functions $\delta\leq  1$, the following holds. Let $P$ be a right angled hexagon. When $\gamma_+ \in \mathfrak N^\pm$ is a cooriented neck, its $\delta$-reduced collar $\RC_\delta(\gamma)$ is a rectangle bounded by four arcs. These arcs are $\gamma,$ an equidistant arc at distance $R_{\delta,\gamma}$ from $\gamma$, and two subsegments of the incident sides of $P$ that have length $R_{\delta,\gamma}$ and go in the direction of the given coorientation.
\end{lemma}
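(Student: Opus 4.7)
The plan is to parametrize a neighborhood of $\gamma$ by Fermi coordinates $(s,t)$, where $s$ is arc-length along the geodesic line $\overline{\gamma}$ containing $\gamma$ (so $\gamma$ corresponds to $[0,\ell]\times\{0\}$ with $\ell:=\ell(\gamma)$) and $t$ is signed distance from $\overline{\gamma}$, taken positive in the direction of the given coorientation. In these coordinates the hyperbolic metric becomes $\cosh^2(t)\,ds^2+dt^2$, so a point with Fermi coordinates $(s_0,t_0)$ and $s_0\in[0,\ell]$ is at distance $|t_0|$ from $\gamma$, realized by the vertical geodesic $\{s=s_0\}$. By the definition of a neck, the two sides of $P$ incident to the endpoints of $\gamma$ are perpendicular to $\overline{\gamma}$ at $s=0$ and $s=\ell$, so they are contained in the geodesic lines $\{s=0\}$ and $\{s=\ell\}$. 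The asserted rectangle is precisely $[0,\ell]\times[0,R_{\delta,\gamma}]$ in Fermi coordinates.

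Next I would argue that the reduced collar coincides with this rectangle intersected with $P$. If $p\in P$ lies on the coorientation side of $\gamma$ with Fermi coordinates $(s_0,t_0)$, $t_0>0$, then convexity of $P$ together with the fact that $P$ is supported by the geodesic lines $\{s=0\}$ and $\{s=\ell\}$ on the coorientation side forces $s_0\in[0,\ell]$, so $d(p,\gamma)=t_0$ and $p\in\RC_\delta(\gamma_+)$ iff $t_0<R_{\delta,\gamma}$. Thus $\RC_\delta(\gamma_+)=P\cap\bigl([0,\ell]\times(0,R_{\delta,\gamma})\bigr)$, and it remains to show the whole rectangle lies in $P$.

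This last step is the main obstacle, and requires two bounds: the incident sides must have length at least $R_{\delta,\gamma}$ on the coorientation side, and no other side of $P$ may meet the rectangle. When $\gamma$ is a side neck I would deduce both bounds from the classical Collar Lemma by doubling $P$ along its three free sides to form a pair of pants $P_c$ in which $\gamma$ doubles to a simple closed geodesic $c$ of length $2\ell$; its standard collar has width $\sinh^{-1}(1/\sinh(\ell(c)/2))=R_{1,\gamma}\geq R_{\delta,\gamma}$, is embedded in $P_c$, and cuts along $\gamma$ into two rectangles as claimed. When $\gamma$ is a non-side neck I would cut $P$ along $\gamma$ into two right-angled pentagons and verify both bounds in each. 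Labeling the pentagon's sides in cyclic order $\gamma,\ell_1,s_2,s_3,\ell_4$, where $\ell_1,\ell_4$ are the parts of the incident sides on the coorientation side, the pentagon identity $\cosh s_3=\sinh\ell_1\sinh\ell$ yields $\sinh\ell_1\geq 1/\sinh\ell$, hence $\ell_1\geq R_{1,\gamma}\geq R_{\delta,\gamma}$, and symmetrically $\cosh s_2=\sinh\ell\sinh\ell_4$ gives $\ell_4\geq R_{1,\gamma}$, disposing of the first bound. For the second bound, since $s_2$ is perpendicular to $\ell_1$ at $(0,\ell_1)$, a short upper half-plane computation (with $\overline{\gamma}$ the imaginary axis and $\ell_1$ the unit semicircle) shows that the common perpendicular from $\overline{\gamma}$ to the geodesic line containing $s_2$ meets this line exactly at $(0,\ell_1)$; hence $s_2$ remains at Fermi distance at least $\ell_1\geq R_{\delta,\gamma}$ from $\overline{\gamma}$, and $s_3$ behaves symmetrically. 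In particular the equidistant arc $\{t=R_{\delta,\gamma},\,s\in[0,\ell]\}$ meets no side of $P$ outside the two incident ones, so the rectangle sits in $P$ as claimed.
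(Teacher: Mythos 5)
Your proof is correct, but it is organized differently from the paper's, so a comparison is worth recording. The paper handles both kinds of necks uniformly: it cuts $P$ into two right-angled pentagons so that $\gamma$ is a side of one of them, $A$ (cutting along a disjoint non-side neck when $\gamma$ is a side of $P$, and along $\gamma$ itself otherwise), applies the pentagon rule to get $\ell(\alpha)\geq \sinh^{-1}(1/\sinh\ell(\gamma))=R_{1,\gamma}\geq R_{\delta,\gamma}$ for the sides $\alpha$ of $A$ adjacent to $\gamma$, and then invokes in one line the fact that those adjacent sides are the shortest paths from $\gamma$ to the non-adjacent sides. Your treatment of the non-side case is essentially this argument with that last step unpacked: the Fermi-coordinate setup and the common-perpendicular computation are exactly the justification the paper leaves implicit, so your write-up is the more self-contained of the two there. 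Where you genuinely diverge is the side-neck case, which you handle by doubling to a pair of pants and quoting the classical Collar Lemma; this works (and buys you a citation in place of a computation), but note two small points: you should double along the three \emph{determined} sides, chosen so that $\gamma$ is one of the remaining pairwise non-adjacent triple (you wrote ``free sides,'' which is backwards in the paper's terminology), and since $c$ is a \emph{boundary} geodesic of the resulting pair of pants you need the boundary version of the Collar Lemma (obtained, e.g., by doubling once more), after which the reflection symmetry cuts the annular collar into the two claimed rectangles. Alternatively, the side-neck case can be run through exactly the same pentagon argument as your non-side case, which is what the paper does and which avoids the doubling entirely.
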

\begin{proof}
This is essentially proven for $\delta=1$ in the course of the usual proof of the Collar Lemma for curves on hyperbolic surfaces, see \cite[Figure 13.3, pg 382]{primer}. Briefly, divide $P$ into two right angled pentagons $A,B$ in such a way that $\gamma$ is a side of $A$. Here, if $\gamma$ is a side of $P$, we cut $P$ along some non-side neck that doesn't intersect $\gamma$, while if $\gamma$ is a non-side neck, we cut along $\gamma$ itself. By the right angled pentagon formula, see Figure \ref{fig: rapr}, and using the fact that $\cosh \geq 1$, whenever $\alpha$ is a side of $A$ adjacent to $\gamma$, we have $$\sinh(\ell(\gamma)) \sinh(\ell(\alpha)) \geq 1 \implies \ell(\alpha) \geq \sinh^{-1}\left ( \frac 1{\sinh(\ell(\gamma))} \right ).$$
As the sides adjacent to $\gamma$ are the shortest paths from $\gamma$ to the sides nonadjacent to $\gamma$, this implies that for $\delta<1$ the  $R_{\delta,\gamma}$-neighborhood of $\gamma \subset A$ is a rectangle bounded by $\gamma$, portions of the two adjacent sides, and an equidistant arc.
\end{proof}

\begin{figure}
    \centering
     \begin{minipage}[c]{0.3\textwidth}
     \vspace{0pt}
    \includegraphics{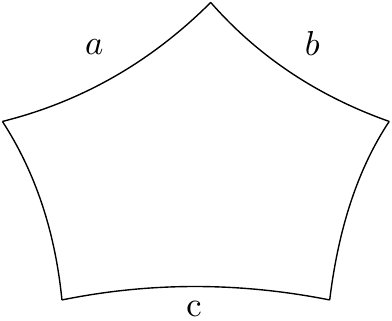}
     \end{minipage}\qquad
  \begin{minipage}[c]{0.3\textwidth}
  \vspace{0pt}
$$\cosh(c)=\sinh(a)\sinh(b)$$
   \end{minipage}
    \caption{The \emph{right angled pentagon rule} for any hyperbolic right angled pentagon, see Theorem 3.5.11 of \cite{ratcliffe1994foundations}.}
    \label{fig: rapr}
\end{figure}

Below, we will only consider necks $\gamma$ that have bounded length. In that case, the $\delta$-reduced collar shrinks by a uniform distance amount when $\delta$ is decreased. For instance:

\begin{claim}\label{claim: collar shrinking}
Fix $\mathcal L>0$ and assume that $\gamma$ is a neck of $P$ with length at most $\mathcal L$. Then for $\delta \leq \frac 14$, we have $$R_{\delta,\gamma} \leq R_{1,\gamma} - c$$ for some positive $c=c(\mathcal L,\delta)$. Moreover, for fixed $\delta\leq \frac 14$ and sufficiently short $\gamma$ we have $$R_{\delta,\gamma} \leq R_{1,\gamma} - \ln(4).$$
\end{claim}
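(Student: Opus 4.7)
The plan is to prove both parts of the claim by directly analyzing the function
$$f(\ell) := R_{1,\gamma} - R_{\delta,\gamma} = \sinh^{-1}\!\left(\frac{1}{\sinh(\ell)}\right) - \sinh^{-1}\!\left(\frac{\delta}{\sinh(\ell)}\right),$$
using the explicit formula for $R_{\delta,\gamma}$ from Definition~\ref{neckdef}, where $\ell = \ell(\gamma)$. Since $\sinh^{-1}$ is strictly increasing and $\delta < 1$, $f$ is strictly positive on $(0,\infty)$, and both inequalities in the claim become quantitative lower bounds on $f$.

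The first step will be to show that $f$ extends continuously across $\ell = 0$ with a positive limit. Using the closed form $\sinh^{-1}(x) = \ln(x + \sqrt{x^2+1})$, one can rewrite
$$f(\ell) = \ln\!\left(\frac{1 + \cosh(\ell)}{\delta + \sqrt{\delta^2 + \sinh^2(\ell)}}\right),$$
so that the logarithmic singularities of the two separate terms cancel and $\lim_{\ell \to 0^+} f(\ell) = \ln(2/(2\delta)) = -\ln(\delta) > 0$. For the first inequality this makes $f$ a positive continuous function on the compact interval $[0,\mathcal L]$, which therefore attains a positive infimum $c = c(\mathcal L,\delta) > 0$, yielding $R_{\delta,\gamma} \leq R_{1,\gamma} - c$.

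For the second inequality, the same limit gives $\lim_{\ell\to 0^+} f(\ell) = -\ln\delta \geq \ln 4$ whenever $\delta \leq 1/4$, so one would like to conclude $f(\ell) \geq \ln 4$ for all sufficiently short $\gamma$. This requires checking the direction of approach, which is the only genuinely delicate point. Expanding the closed form above (equivalently, using $\sinh^{-1}(x) = \ln(2x) + 1/(4x^2) + O(1/x^4)$ for large $x$) gives
$$f(\ell) = -\ln(\delta) + \tfrac{1}{4}\sinh^2(\ell)\,(1 - \delta^{-2}) + O(\sinh^4(\ell)),$$
and the second-order correction is \emph{negative} whenever $\delta < 1$. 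So I would impose the strict inequality $\delta < 1/4$ for this part (which I take to be the intended reading, since at $\delta = 1/4$ the expansion shows $f(\ell) < \ln 4$ for all small $\ell > 0$). Once $\delta$ is strictly less than $1/4$, the strictly positive gap $-\ln\delta - \ln 4$ dominates the $O(\sinh^2 \ell)$ correction for $\ell$ sufficiently small, giving $R_{\delta,\gamma} \leq R_{1,\gamma} - \ln 4$.

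The obstacle, such as it is, lies entirely in the second-order asymptotic pinning down the direction of approach to the limit; the rest of the argument is routine continuity on a compact interval.
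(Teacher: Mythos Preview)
Your argument is correct and follows essentially the same route as the paper: both prove the first inequality by continuity in $\ell$ and compactness on $[0,\mathcal L]$ once the limit at $\ell=0$ is shown to exist and be positive, and both obtain the second inequality from that limit being $-\ln\delta$. The paper works with the rough approximations $\sinh x\approx x$ for small $x$ and $\sinh^{-1} y\approx\ln(2y)$ for large $y$, while you use the closed form $\sinh^{-1}(x)=\ln(x+\sqrt{x^2+1})$ to get the cleaner exact expression $f(\ell)=\ln\bigl((1+\cosh\ell)/(\delta+\sqrt{\delta^2+\sinh^2\ell})\bigr)$.

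Your extra second-order analysis is in fact a genuine refinement: the paper only records that the additive error tends to zero and does not check its sign, whereas your expansion shows the approach to $-\ln\delta$ is strictly from below. So you are right that at the endpoint $\delta=1/4$ the inequality $R_{\delta,\gamma}\le R_{1,\gamma}-\ln 4$ fails for every sufficiently short $\gamma$, and the second statement of the claim should be read with $\delta<1/4$ strict. This does not affect any application in the paper, since the claim is only ever invoked with $\delta$ taken sufficiently small, but it is a correct and careful observation that the paper's own proof overlooks.
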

\begin{proof}
The point is to show that there is such a $c$ that does not depend on the length of $\gamma$. Since there is an upper bound on $\mathcal L$, and $R_{\delta,\gamma}$ is continuous in $\ell(\gamma)$, it suffices to show that there is such a $c$ when $\ell(\gamma)$ is small, for then the claim follows by a compactness argument.  But for small $x$,  $\sinh x \approx x$ up to small multiplicative error, and for large $x$, $\sinh x \approx e^x/2$ up to small additive error. In particular, it follows that $R_{\delta,\gamma} \approx \ln (2\delta /\ell(\gamma))$ up to an additive error that goes to zero when $\delta$ is fixed and $\ell(\gamma)\to 0$.  From this, one can deduce that if $\delta \leq \frac 14$ and $\ell(\gamma)$ is small, then $R_{\delta,\gamma} \leq R_{1,\gamma} - \ln(4).$
\end{proof}

We now describe our flexible thick-thin decomposition of $P$.

\begin{proposition}[A thick-thin decomposition for right angled hexagons]\label{prop: thick thin hex} For every $\mathcal L>0,$ the following holds for all sufficiently small $0<\delta_{min} < \delta_{max}$, and all $0<\epsilon_{min} <\epsilon_{max}$ much smaller than $\delta_{min}$. Suppose that $P$ is a right angled hexagon, and that we have:
\begin{enumerate}
    \item a collection of necks $\mathfrak N_{\epsilon}$ in $P$ that includes all necks of length less than $\epsilon_{min}$, and no necks of length at least $\epsilon_{max}$, 
    \item a set $\mathfrak N_{\mathcal L}$ of pairwise nonadjacent sides of $P$ that all have length in $[\epsilon_{max},\mathcal L]$,
    \item a function $\delta : \mathfrak N^\pm \longrightarrow [\delta_{min},1/4]$, where $\mathfrak N= \mathfrak N_\epsilon \cup \mathfrak N_{\mathcal L}$.
\end{enumerate} 
Writing $\RC_{\delta}(\gamma)$ for the $\delta(\gamma)$-reduced collar of $\gamma$, the following hold.
\begin{enumerate}
    \item The distance between any two reduced collars $\RC_\delta(\gamma), \gamma \in \mathfrak N$, is at least some positive constant $c$ depending on $\mathcal L,\delta_{min},\epsilon_{min},\epsilon_{max}.$
    \item The complement $T \subset P$ of all the collars $\RC_\delta(\gamma), \gamma \in \mathfrak N$ is either a topological hexagon or the union of two topological pentagons. Each component of $T$ has `bounded geometry', in the sense that it is the $K$-bilipschitz image of a regular hyperbolic pentagon or hexagon with unit side lengths, where the bilipschitz map is constant speed on sides, and where $K$ depends only on $\mathcal L,\delta_{min},\epsilon_{min}$.
\end{enumerate}
\end{proposition}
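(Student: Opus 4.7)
The plan is to prove both claims using the right-angled pentagon formula $\cosh(c)=\sinh(a)\sinh(b)$ (Figure~\ref{fig: rapr}) applied to pentagons cut from $P$ by its three non-side necks. Two consequences extracted from this formula will drive the proof: (i) the $\delta$-reduced collar of any neck $\gamma$ for $\delta \le 1/4$ is strictly contained in the standard ($\delta=1$) collar with a margin of at least $\ln 4$ when $\gamma$ is short, by Claim~\ref{claim: collar shrinking}; and (ii) for any two adjacent sides $s_1, s_2$ of $P$, one has $\sinh(s_1)\sinh(s_2) \ge 1$, obtained by applying the pentagon identity $\cosh(s_4^L)=\sinh(s_1^L)\sinh(s_2)\ge 1$ in the pentagon cut off by the non-side neck opposite $s_1$. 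These will turn the combinatorics of short necks into rigid numerical constraints.

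For Claim 1, two necks in $\mathfrak N$ with disjoint endpoints have disjoint $1$-reduced collars by Lemma~\ref{lem: collars are rects}, and the margin from (i) upgrades this to separation $\ge c(\mathcal L, \delta_{min}, \epsilon_{min}, \epsilon_{max})>0$ for the $\delta$-reduced collars. The remaining possibility that two necks in $\mathfrak N$ share a vertex or endpoint will be ruled out by three mechanisms: the pairwise-nonadjacency hypothesis on $\mathfrak N_{\mathcal L}$; inequality (ii) together with the requirement that $\epsilon_{max}<\sinh^{-1}(1) \approx 0.88$, which forbids two adjacent sides from both lying in $\mathfrak N_\epsilon$; and, for an interior neck sharing a foot with a short side, a further pentagon-formula argument showing that this configuration forces a specific cascade of short sides, which then lets one arrange $\mathfrak N_\epsilon$ so no such pair is forced to coexist. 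Verifying this last case is the fiddly part of Claim~1.

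The main obstacle, however, will be Claim 2, the bounded geometry of each component of $T$. The sides of $T$ are either equidistant arcs from collar boundaries, with length $\ell(\gamma)\cosh(R_{\delta,\gamma}) = \ell(\gamma)\sqrt{1+\delta^2/\sinh^2\ell(\gamma)}$ bounded by a function of $\delta_{max}$ and $\mathcal L$, or portions of sides of $P$ sitting between removed collars. For the latter, the serious case is a portion of a long side $s \notin \mathfrak N$ with $\ell(s) > \mathcal L$; here I would use the pentagon formula to show that either the non-side neck opposite $s$ is short, and therefore in $\mathfrak N_\epsilon$, so its collar chops a long interval out of the middle of $s$; or one of the two sides sharing a vertex with $s$ is short, and therefore in $\mathfrak N_\epsilon$, so its collar extends deep into $s$ from that vertex; or the hexagon is close to a three-cusped-sphere degeneration, in which case the collars around the three short alternating seams together cover everything but a region of uniformly bounded diameter. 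A careful case analysis balancing these alternatives against (ii) yields a universal upper bound on the length of any residual piece of $s$ surviving in $T$, hence on $\mathrm{diam}(T)$.

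With $\mathrm{diam}(T)$ bounded above, the lower bound on the inradius of $T$ at interior points coming from Claim~1, and the fact that each component of $T$ is a right-angled polygon with at most six sides (all angles being $\pi/2$, since sides of $P$ meet perpendicularly by hypothesis and equidistant arcs meet sides of $P$ perpendicularly by Lemma~\ref{lem: collars are rects}), the space of components of $T$ up to isometry is precompact in the space of right-angled hyperbolic polygons with at most six sides and sides of bounded length. A standard Arzel\`a--Ascoli argument then produces a single $K$ depending only on $\mathcal L, \delta_{min}, \epsilon_{min}$ such that each component is $K$-bilipschitz to a fixed regular right-angled pentagon or hexagon of unit side length via a map that is constant speed on each side, completing the proof.
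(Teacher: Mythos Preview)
Your outline tracks the paper's structure closely (configuration analysis $\to$ collar separation $\to$ bounded side lengths of $T$ $\to$ bounded geometry), and the separation argument is essentially the same. The two genuine differences are in Steps 2 and 3.

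For the upper bound on the length of a geodesic side $s$ of $T$, the paper does not run your pentagon-formula trichotomy. Instead it uses an area argument: for small $r$, the union $A_r(s)$ of length-$r$ segments orthogonal to $s$ and pointing into $P$ stays inside $P$ (this is checked via a Lambert quadrilateral identity, showing that if such a segment hit a non-adjacent side $\beta$ then its foot would lie inside a reduced collar, contradicting $s\subset T$). Since $\mathrm{Area}(A_r(s))\ge r\,\ell(s)$ while $\mathrm{Area}(P)=\pi$ by Gauss--Bonnet, one gets $\ell(s)\le \pi/r$. This replaces your case analysis wholesale and is considerably shorter; your trichotomy can be made to work (the computations you would do in each branch do close up, as one can check in the symmetric and asymmetric degenerations), but it is the laborious route.

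For the final step, the paper does not use Arzel\`a--Ascoli. It first shows that on $\partial T_0$ the intrinsic length metric is uniformly bilipschitz to the metric induced from $\HH^2$ (this needs a couple of cases: adjacent sides meet at $\pi/2$; non-adjacent sides are uniformly separated by the collar estimates), so the constant-speed-on-sides boundary map $p:\partial X\to\partial T_0$ is $K$-bilipschitz for the ambient metrics. Then it invokes Tukia's planar bilipschitz extension theorem to extend $p$ across the disc. Your compactness approach is plausible in spirit, but two points need more care than you indicate: the components $T_0$ are not geodesic right-angled polygons (the collar boundaries are equidistant arcs of curvature $\tanh R_{\delta,\gamma}$), so the parameter space you compactify must include these curved sides; and a pure compactness argument yields a uniform bilipschitz constant for \emph{some} map, not automatically for a map that is constant speed on sides. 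The paper's route gives the constant-speed-on-sides property for free, since $p$ is constructed that way before extending.
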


\begin{figure}
    \centering
    \includegraphics[width=6in]{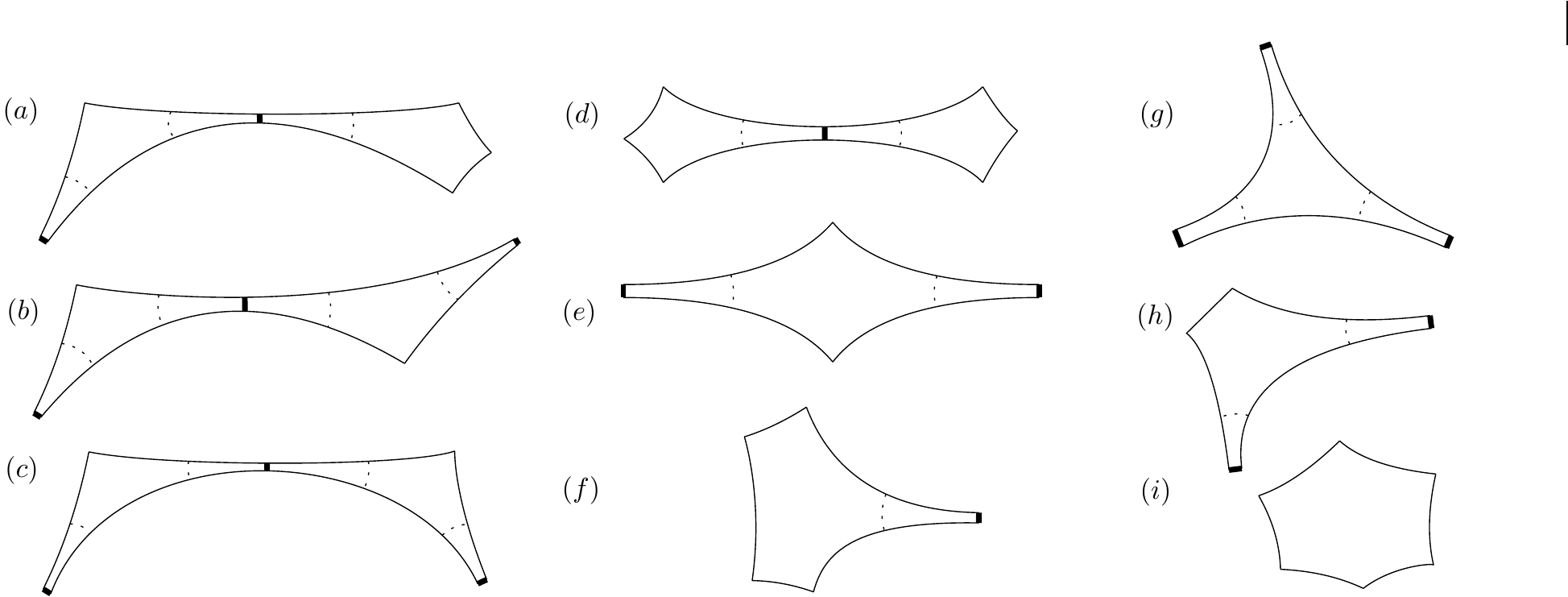}
    \caption{Up to isometry, all the possible configurations of the necks $\mathfrak N$ from Proposition \ref{prop: thick thin hex} are depicted above. The necks are bold, and the boundaries of their reduced collars are dotted. In the picture, the necks are all drawn short for clarity, but note that in Proposition \ref{prop: thick thin hex} there may be some necks with moderate length.}
    \label{fig: hexagons}
\end{figure}

Intuitively, the collars around necks from $\mathfrak N_\epsilon$ are the ones that would correspond to thin parts in the usual thick-thin decomposition. However, there is some flexibility, where we allow ourselves to choose whether to include collars around necks with length in  $[\epsilon_{min},\epsilon_{max}]$, and we also allow some collars around additional non-adjacent bounded length sides of $P$.

Below, we give an elementary proof using only the right angled pentagon formula and exponential estimates for hyperbolic trig functions. A more high-level proof could also be given by relating the thick thin decomposition above to that of the $2$-orbifold obtained by doubling $P$ along its entire boundary. Results similar to Proposition \ref{prop: thick thin hex} also hold for all right angled polygons, but we have stuck to hexagons here since the statement is already awful enough.

\begin{proof}
First, we describe the possible configurations of $\mathfrak N$. Let us call the necks in $\mathfrak N_\epsilon$ \emph{short necks}, and the ones in $\mathfrak N_{\mathcal L}$ \emph{exceptional necks}. First, note that exceptional necks cannot be adjacent sides of $P$, by assumption. If $\gamma $ is a short neck that is a side of $P$, then Lemma \ref{lem: collars are rects} says that the adjacent sides of $P$ both have length at least $\sinh^{-1}(\delta/\sinh(\epsilon))$, which is larger than $\mathcal L$ if $\epsilon$ is small, and therefore those sides cannot lie in $\mathfrak N$. Similarly, if $\gamma $ is a short neck that is not a side of $P$, then neither of the incident sides of $P$ can lie in $\mathfrak N$. Also similarly, there is at most one $\gamma \in \mathfrak N$ that is not a side of $P$, since if there were two, each would cross the $\delta$-reduced collar of the other. More concretely, the necks $\mathfrak N$ are configured in $P$ as in one of the hexagons in Figure \ref{fig: hexagons}, where necks in $\mathfrak N$ are drawn in bold.

\medskip

\textbf{Step 1: Separation of collars. }We want to show that the $\delta$-reduced collars of all necks of $\mathfrak N$ are disjoint, and are separated by at least some positive distance. Since there is at most one non-side neck in $\mathfrak N$, there are two cases to consider.

\medskip

\noindent \emph{Case 1.} If $\alpha,\beta \in \mathfrak N$ are necks that are sides of $P$, let $\gamma$ be the common perpendicular to any pair of opposite sides of $P$ that does not include one of the curves $\alpha,\beta$. Then $\gamma$ divides $P$ into two right angled pentagons, where $\alpha,\beta$ are on opposite sides of $\gamma$. As in the proof of Lemma \ref{lem: collars are rects}, as long as $\delta\leq 1$ the $\delta$-reduced collars of $\alpha,\beta$ are contained in the corresponding pentagons, so they are disjoint. Moreover, if $\delta \leq 1/4$ and $\epsilon$ is small with respect to $\delta$, then Claim \ref{claim: collar shrinking} shows that $R_{1,\alpha} \geq R_{\delta,\alpha} + c$, where $c=c(\mathcal L,\delta)$, and similarly for $\beta$ so the two collars are at least $2c$-separated.

\medskip

\begin{figure}[t]
    \centering
    \includegraphics{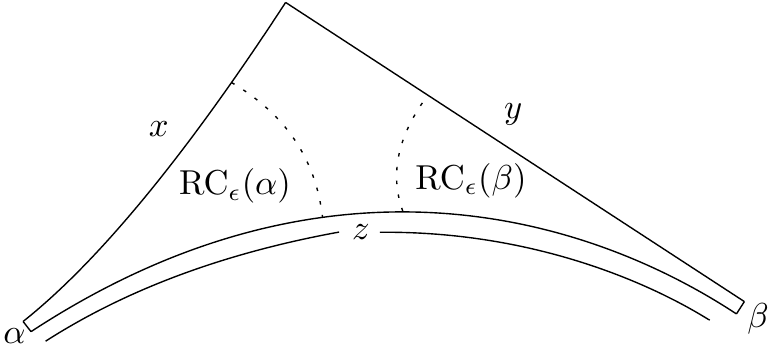}
    \caption{In Case 2, we have $z \geq R_{\delta,\beta} + R_{\delta,\alpha} + c$, where $c=c(\delta)>0$. So, the collars $\RC_\delta(\alpha)$ and $\RC_\delta(\beta)$ are separated by some definite distance.}
    \label{fig: case2}
\end{figure}

\noindent \emph{Case 2.}  Suppose $\alpha \in \mathfrak N$ is a side of $P$ and $\beta \in \mathfrak N$ is a non-side neck of $P$. Cut $P$ along $\beta$ to get a right angled pentagon $A$ containing $\alpha$, and let $x,y,z$ be the lengths of the remaining sides, where $z$ is the length of the side joining $\alpha,\beta$. See Figure \ref{fig: case2}. Then by the right angled pentagon rule, we have $$\sinh(x)\sinh(y) = \cosh(z), \ \ \sinh(\ell(\beta)) \sinh y \geq 1, \ \ \sinh(\ell(\alpha)) \sinh x \geq 1,$$
so it follows that $$\cosh(z) \geq 1/\sinh(\ell(\beta)) \cdot 1/\sinh(\ell(\alpha) = \frac 1 {\delta(\alpha)\delta(\beta)} \cdot \sinh(R_{\delta,\beta}) \cdot \sinh( R_{\delta,\alpha}).$$ We want to say that $z \geq R_{\delta,\beta} +  R_{\delta,\alpha} + c$ for some uniform $c$, since then the collars $\RC_{\delta}(\beta)$ and $\RC_\delta(\alpha)$ will be $c$-separated in $P$. Note that $\beta$ is always a short neck, but $\alpha$ may be either short or exceptional. First, suppose that $\alpha$ is exceptional. By choosing $\delta$ small in terms of $\mathcal L$, we can ensure that $R_{\delta,\alpha} < \ln(2)$, say. Lemma \ref{lem: collars are rects} says that the collar $z \geq R_{1,\beta}$, and then Claim \ref{claim: collar shrinking} says that as long as $\epsilon_{max}$ is small enough (making $\beta$ short), we have $R_{1,\beta} \geq R_{\delta,\beta} + \ln(4).$ So, $$z \geq R_{\delta,\beta} + \ln(4) \geq R_{\delta,\beta} + R_{\delta,\alpha} + \ln(2).$$
Now assume that $\alpha $ is a short neck. A long as $\epsilon_{max}$ is small relative to $\delta_{min}$, the inputs $z,R_{\delta,\beta},R_{\delta,\alpha}$ are large, in which case their $\cosh$'s and $\sinh$'s are approximated up to a multiplicative factor of $2$ by the function $t\mapsto e^t/2$. Hence, we have $$e^z \geq \frac 1{\delta(\alpha) \cdot \delta(\beta)} \cdot \frac { e^{R_{\delta,\beta}}  }{2 } \cdot \frac {  e^{R_{\delta,\alpha}}}{2 }.$$
So, as long as $\delta_{max} < \frac 14$, say, we have $z \geq R_{\delta,\beta} + R_{\delta,\alpha} + \ln 4$. As $z$ is the distance from $\alpha$ to $\beta$, this shows that the neighborhoods $\RC_\delta(\beta)$ and $\RC_\delta(\alpha)$ are at least $\ln 4$-separated in $P$.

\medskip

Finally, let $T$ be the complement in $P $ of all the reduced collars of elements of $\mathfrak N$. From Figure \ref{fig: hexagons}, it is clear that $T$ is either a topological hexagon or the disjoint union of two topological pentagons. Also, all sides $s \subset T$ are either segments of sides of $P$, or \emph{collar boundaries}, i.e. equidistant curves to necks $\gamma \in \mathfrak N$ that are boundary arcs of the reduced collar $\RC_\delta(\gamma)$. Eventually, we want to show that each component of $T$ is the $K$-bilipschitz image of a regular hexagon. As a warm-up, let us prove that there are upper bounds and lower bounds depending on $\epsilon_{min},\epsilon_{max},\delta_{min},\delta_{max}$ for the length of any side of $T$. 

\medskip

\textbf{Step 2: bounding side lengths of $T$.} If $s$ is a collar boundary associated to $\gamma \in \mathfrak N$, then we have $$\ell(s) = \ell(\gamma) \cdot \cosh \left( \sinh^{-1} \left ( \frac {\delta(\gamma)}{\sinh(\ell(\gamma)/2)}\right ) \right ).$$ If $\gamma$ is a short neck, then as long as our upper bound $\epsilon_{max}$ for short neck lengths is small, the outer $\cosh$ and $\sinh$ are both approximated by $x\mapsto e^x/2$ up to uniform multiplicative error (so $\cosh \sinh^{-1} $ cancels out) and $\sinh(\ell(\gamma))/2\approx \ell(\gamma)/2$, so up to a uniform multiplicative error we have $\ell(s) \approx \delta(\gamma)$. See also Fact \ref{fact: collarfact} (a). If $\gamma$ is an exceptional neck, the upper and lower bounds on $\ell(s)$ follow from a compactness argument.

Suppose next that $s$ is a segment of a geodesic side $\alpha \subset P$. We first describe how to get a lower bound for $\ell(s)$. If the endpoints of $s$ lie on collar boundaries, then $\ell(s)\geq c=c(\mathcal L,\delta)$, since we proved above that all the reduced collars around elements of $\mathfrak N$ are $c$-separated. If $s$ is a side of $P$, it has length at least $\epsilon_{min}$, since it is not an element of $\mathfrak N$. The case where one endpoint of $s$ lies on a reduced collar and the other is a vertex of $P$ follows from Lemma \ref{lem: collars are rects} and Claim \ref{claim: collar shrinking}.

We now describe how to find an upper bound for $\ell(s)$. Given $r>0$, let $A_r(s)$ be the union of all geodesic segments of length $r$ emanating out orthogonally from $s$ in the direction of $P$. The hyperbolic area of $A_r(s)$ is at least $r\cdot \ell(s)$. We claim that for  small $r=r(\delta_{max})$, we have $A_r(s) \subset P$. Assuming this, we have  $r\cdot \ell(s) \leq Area(P)=\pi$, so $\ell(s) \leq \pi/r$, and is bounded as desired.

So, let $\beta$ be a side of $P$ that is not adjacent to $\alpha$. We want to show that if $r$ is small, there cannot be a segment of length at most $r$ that comes out orthogonally from $s$ and intersects $\beta$. So, assume there is such a segment that starts at $p\in s$, as in Figure \ref{fig: quad}. Then using the identity in Figure \ref{fig: quad}, when $\ell(\gamma)$ is small
$$e^{d(p,\gamma)}/2 \leq \cosh d(p,\gamma) \leq \frac {\tanh r}{\tanh(\ell(\gamma))} \leq \frac{\tanh r}{\ell(\gamma)/2}, \ \ \implies d(p,\gamma) \leq \ln \left (\frac{4 \tanh(r)}{\ell(\gamma)}\right ).$$
Applying similar estimates, we get  $R_{\delta,\gamma} \geq \ln ( 4/\ell(\gamma))$, say. So as long as $r<1$, we get that $p\in \RC_\delta(\gamma)$, a contradiction. If $\ell(\gamma) $ is not small, then neither is $r$, in which case we are also done.

\begin{figure}
    \centering
    \begin{minipage}[c]{0.4\textwidth}
     \vspace{0pt}
    \includegraphics{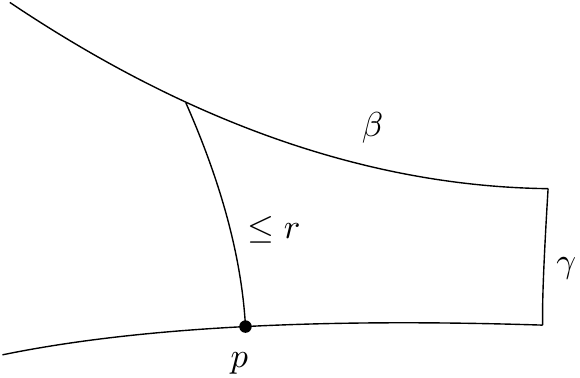}
    \end{minipage}\qquad
    \begin{minipage}[c]{0.3\textwidth}
  \vspace{0pt}
 $$\cosh d(p,\gamma) \cdot \tanh(\ell(\gamma)) \leq \tanh r$$
   \end{minipage}
    \caption{An identity for Lambert quadrilaterals, see  \cite{martin2012foundations}.}
    \label{fig: quad}
\end{figure}

\medskip

\textbf{Step 3: Bounding the geometry of $T$ up to bi-Lipschitz error.} We now show that each component $T_0 \subset T$ has bounded geometry. Currently, we know that the side lengths of $T_0$ are bounded above and below. So, parameterizing each side of $T_0$ via a constant speed map from the unit interval, we can construct a map 
\begin{equation}\label{eq: peqn}
    p : \partial X \longrightarrow \partial T_0 ,
\end{equation} where $X$ is a regular hyperbolic hexagon or pentagon with unit side lengths, and where we consider $\partial T_0,\partial X$ with their intrinsic length metrics. Now, the length metric on $\partial X$ is uniformly bilipschitz to the metric induced from $X$. \emph{We claim that the length metric on $\partial T_0$ is bilipschitz to the  metric induced from the hyperbolic metric on $T_0\subset \HH^2$, where the bilipschitz constant depends only on $\mathcal L,\delta_{min}, \delta_{max}, \epsilon_{min},\epsilon_{max}$.} Assuming this, $p$ will be $K$-bilipschitz for some (new) $K$ depending only on these constants.

Restricting to a particular side of $T_0$, the fact that the arclength metric is bilipschitz to the hyperbolic metric follows since sides of $T_0$ are either geodesics or are bounded length curves with geodesic curvature at most $1$. So, it suffices to take a pair of points $x,y\in \partial T_0$ on distinct sides $s,t$ of $T_0$, and show that 
\begin{equation}
    \label{eq: distortttttion}
    d_{\HH^2}(x,y) \geq c \cdot d_{\partial T_0} (x,y)
\end{equation}
for some $c>0$ depending only on the constants in the proposition.
If $s,t$ are adjacent on $T_0$, then \eqref{eq: distortttttion} follows from the fact that $s,t$ intersect at right angles. If $s,t$ are not adjacent, it suffices to show that $d_{\HH^2}(x,y)$ is bounded below, since the distance between $x,y$ on $\partial T_0$ is at least the length of some in-between side. If $s,t$ are collar boundaries, then $d_{\HH^2}(x,y)\geq 1$, so we are done. There are then two more cases to consider.

\smallskip

\noindent \emph{Case 1.} Assume that $s,t$ are nonadjacent geodesic sides of $T_0$. If $d_{\HH^2}(x,y) < \epsilon_{min}, $ then $s,t$ are segments of sides $\alpha,\beta$ of $P$ that are both incident to a neck $\gamma \in \mathfrak N$. Imagine $\gamma,\beta$ as pictured in Figure~\ref{fig: quad}, where the bottom geodesic is $\alpha$, the top left corner of the quadrilateral is $y$, the left side of the quadrilateral is the shortest path from $y$ to $\alpha$, and where $p \in \alpha$ is the endpoint of this path.  By another identity from \cite{martin2012foundations}, 
$$\sinh d_{\HH^2}(x,y) \geq \sinh  d(y,p) = \sinh \ell(\gamma) \cdot \cosh d(y,\gamma) \geq \sinh \ell(\gamma) \cdot \sinh d(y,\gamma) \geq R_{\delta,\gamma} \cdot \sinh \ell(\gamma),$$
where the last step uses that $y$ is outside $\RC_\delta(\gamma)$. The usual exponential estimates and compactness arguments then show that this is bounded below by some function of $\delta_{min},\mathcal L$, so the same is true of $d_{\HH^2}(x,y)$. 

\smallskip
\noindent \emph{Case 2.} Suppose that $s,t$ are nonadjacent sides of $T_0$, and that $s$ is a collar boundary associated to some neck $\gamma$ while $t$ is a geodesic. By Lemma \ref{lem: collars are rects} and Claim \ref{claim: collar shrinking}, the $\delta$-reduced collar of $\gamma$ is at least some $c$ away from any side of $P$ that is not adjacent to $\gamma $. So, $d_{\HH^2}(x,y)\geq c$.

\medskip

We now know that the map $p : \partial X \longrightarrow \partial T_0$ from \eqref{eq: peqn} is $K$-bilipschitz for some $K$ depending only on $\delta,\epsilon$, where we consider the domain and range with their induced metrics from $\HH^2$. A result of Tukia~\cite{tukia1980planar} says that any $K$-bilipschitz map from the unit circle into $\RR^2$ can be extended to a $K'$-bilipschitz map $\RR^2\to \RR^2$, where $K'=K'(K)$. Since the unit disc is uniformly bilipschitz to $X$ and any compact region in $\HH^2$ is bilipschitz\footnote{Here, the bilipschitz constant depends only on the diameter of the region. In our case, the region of interest is $T_0$, which has diameter bounded by $\ell(\partial T_0)$, which is founded in terms of $\delta,\epsilon$.} to a region in $\RR^2$, Tukia's theorem implies that $p$ extends to a bilipschitz map $X \longrightarrow T_0$. Moreover, by construction our map $p$ is constant speed on sides, as desired.
\end{proof}

\subsection{Bilipschitz maps between right angled hexagons}

It is well known that a right angled hexagon $P$ is determined up to isometry by the lengths of three nonadjacent sides, and that the lengths of the sides can be prescribed arbitrarily, see e.g.\ Ratcliffe \cite[Theorem 3.5.14]{ratcliffe1994foundations}. We will often fix such a triple of sides, calling them `free sides' of $P$, and calling the other three `determined sides'. Doubling $P$ along its determined sides gives a pair of pants whose boundary lengths are twice the length of the free sides; this is a crucial observation in the development of Fenchel Nielsen coordinates. The main point of this section is to show that when a free side length $a$ of $P$ is changed to $a'$ while the other free side lengths are held constant, then as long as $a,a'$ are bounded, the resulting hexagon $P'$ is bilipschitz to $P$ outside the reduced collar of that free side. Intuitively, the reduced collar around a bounded length free side just lengthens if that side is pinched, and nothing else dramatic happens to the hexagon.

We begin by showing that under such an alteration, the other neck lengths of $P$ only change by a uniformly bounded factor. Moreover, the location of any non-side neck of $P$ only shifts a bounded distance.

\begin{lemma}[Bounded changes in free side lengths don't affect other necks much]\label{lem: controlled distortion of necks}
For every $\LLL$ there exists $\epsilon,M$ such that the following holds.
Let $P$ (resp. $P'$) be a (marked) right angled hyperbolic hexagon with free side lengths $a,b,c$ (resp. $a',b',c'$) and assume that $$a,a'\le \LLL, \ \ \  b=b', \ \ \ c=c',$$
If $\gamma \neq a$ is a neck of $P$ with length less than $\epsilon$, and $\gamma'$ is the corresponding neck in $P'$, then
\begin{equation}\label{eq: neck distortion}
\frac{1}{M} \ell (\gamma) <\ell (\gamma') < M \cdot \ell (\gamma).
\end{equation}
Moreover, suppose that the neck $\gamma$ above joins the $b$-side to its opposite side in $P$, dividing  the $b$-side into two arcs of length $b_+,b_-$. If $\gamma',b',b_+',b_-'$ are the corresponding data in $P'$, then   
\begin{equation}\label{eq: neck displacement}
|b_+-b_+'|\le M, \ \ |b_--b_-'|\le M.
\end{equation}
\end{lemma}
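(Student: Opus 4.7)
The plan is to use explicit hyperbolic trigonometry. A right-angled hexagon $P$ is determined up to isometry by the lengths $(a,b,c)$ of any three pairwise non-adjacent free sides, so every other length in $P$ is a real-analytic function of $(a,b,c)$. Besides the free sides themselves, the necks of $P$ consist of the three determined sides $A,B,C$ (with $X$ opposite to the free side $x$) and the three non-side necks $n_a,n_b,n_c$ (common perpendiculars between opposite side-pairs). The approach is to write down explicit formulas for each of these six lengths as a function of $(a,b,c)$ and then bound their variation under $a\mapsto a'$ with $b=b',c=c'$.

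The two key identities, both consequences of the right-angled pentagon rule of Figure~\ref{fig: rapr} applied to the two pentagons obtained by cutting $P$ along a non-side neck, are
\[
\cosh(B) \;=\; \frac{\cosh(a) + \cosh(b)\cosh(c)}{\sinh(b)\sinh(c)}
\qquad\text{and}\qquad
\sinh^{2}(n_b) \;=\; \frac{\cosh^{2}(a) + 2\cosh(a)\cosh(b)\cosh(c) + \cosh^{2}(c)}{\sinh^{2}(b)},
\]
together with cyclic analogues for $A,C$ and $n_a,n_c$. A short rearrangement using $\sinh(b)\sinh(c) = \cosh(b)\cosh(c) - \cosh(b-c)$ gives the useful form $\cosh(B) - 1 = (\cosh(a)+\cosh(b-c))/(\sinh(b)\sinh(c))$.

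With these formulas in hand, \eqref{eq: neck distortion} is proved by case analysis. For $\gamma\in\{b,c\}$ the bound is trivial. For the three necks $B$, $n_b$, $n_c$ whose formulas contain $\sinh(a)$ only in the numerator and whose denominators $\sinh(b)\sinh(c)$, $\sinh^2(b)$, $\sinh^2(c)$ are independent of $a$: under $a\mapsto a'$ each summand in the numerator scales by a factor in $[\cosh(\mathcal{L})^{-2},\cosh(\mathcal{L})^{2}]$, so the ratios $(\cosh(B)-1)/(\cosh(B')-1)$, $\sinh^2(n_b)/\sinh^2(n_b')$, $\sinh^2(n_c)/\sinh^2(n_c')$ are all controlled by a constant depending only on $\mathcal{L}$. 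Translating via $\cosh(x)-1\sim x^{2}/2$ and $\sinh(x)\sim x$ for small $x$ gives the multiplicative bound on $\gamma/\gamma'$. For the three remaining necks $A,C,n_a$, whose formulas do involve $\sinh(a)$ in the denominator, the same explicit formulas show that for $\epsilon$ small enough depending on $\mathcal{L}$ the hypothesis $\ell(\gamma)<\epsilon$ is vacuous: e.g., $\sinh^{2}(n_a) \ge (\cosh^{2}(b)+\cosh^{2}(c))/\sinh^{2}(a) \ge 2/\sinh^{2}(\mathcal{L})$, so $n_a$ is bounded below by an explicit positive constant depending only on $\mathcal{L}$.

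For the moreover assertion \eqref{eq: neck displacement}, the neck joining the $b$-side to its opposite is $n_b$; cutting $P$ along $n_b$ into two right-angled pentagons and reading off the pentagon rule $\cosh(s_i)=\sinh(s_{i-2})\sinh(s_{i+2})$ yields the clean expressions
\[
\sinh(b_+) \;=\; \frac{\cosh(c)}{\sinh(n_b)}, \qquad \sinh(b_-) \;=\; \frac{\cosh(a)}{\sinh(n_b)}.
\]
Since $c=c'$, one has $b_+ - b_+' = \sinh^{-1}(\cosh(c)/\sinh(n_b)) - \sinh^{-1}(\cosh(c)/\sinh(n_b'))$; using $\sinh^{-1}(x)=\ln(2x)+O(1/x^{2})$ for large $x$ this reduces to $\ln(\sinh(n_b')/\sinh(n_b))+O(1)$, bounded by the multiplicative bound from \eqref{eq: neck distortion} applied to $n_b$. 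The computation for $b_-$ picks up an additional $\ln(\cosh(a)/\cosh(a'))$ term, bounded by $\ln\cosh(\mathcal{L})$. The hard part of the whole argument is the case-by-case bookkeeping that identifies which neck formulas have denominators independent of $a$ (those give the multiplicative bound directly) versus which do not (those must be ruled out by a lower-bound estimate for the neck length); once the distinction is made, the trigonometric manipulations are routine.
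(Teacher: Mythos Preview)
Your argument is correct and follows essentially the same route as the paper's proof: case analysis over the necks via explicit hexagon/pentagon trigonometry, with the necks adjacent or incident to the $a$-side ruled out by a lower bound depending only on $\mathcal L$. One notational slip: under your own convention that $X$ is the determined side opposite the free side $x$, the displayed formula $(\cosh a+\cosh b\cosh c)/(\sinh b\sinh c)$ is $\cosh(A)$, not $\cosh(B)$, so the two groups of necks should read $\{A,n_b,n_c\}$ (denominator independent of $a$) and $\{B,C,n_a\}$ (denominator containing $\sinh a$) rather than as you have them.
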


\begin{proof} We first prove \eqref{eq: neck distortion} in three cases. 

\medskip

\noindent \textit{Case 1: $\gamma,\gamma'$ are free sides of $P$.} This is trivial since $\gamma$ is one of the sides of length $b$ or $c$, and so is $\gamma'$. 

\medskip
\noindent \textit{Case 2: $\gamma,\gamma'$ are determined sides of $P$.} Since $a\le \LLL$ and $\ell(\gamma) < \epsilon$, by choosing $\epsilon$ small enough, we may assume that $\gamma$ is not adjacent to $a$.
That is, $\gamma$ is the side opposite to $a$. 
By \cite[Theorem 3.5.14]{ratcliffe1994foundations}, the length of $\gamma$ is given by
\[\cosh(\ell(\gamma)) = \frac{\cosh(b)\cosh(c) + \cosh(a)}{\sinh(b)\sinh(c)}.\]
Using the hyperbolic trigonometric identity $\cosh(x-y)=\cosh(x)\cosh(y)-\sinh(x)\sinh(y)$ we get
\begin{equation}
    \cosh(\ell(\gamma))-1 
    = \frac{\cosh(b-c) + \cosh(a)}{\sinh(b)\sinh(c)}.
\end{equation}
Thus, 
\begin{equation}
    \frac{\cosh(\ell(\gamma))-1}{\cosh(\ell(\gamma'))-1} = \frac{\cosh(b-c) + \cosh(a)}{\cosh(b-c) + \cosh(a')} \le 1+\cosh{\LLL} =: M_1'(\LLL)
\end{equation}
where the last inequality follows since $\cosh(b-c)\ge 1$ and $1\le \cosh{a} \le \cosh(\LLL)$.

By symmetry,
\[1/M_1'(\LLL)  \le \frac{\cosh(\ell(\gamma))-1}{\cosh(\ell(\gamma'))-1} \le M_1'(\LLL) \]
Since $\ell(\gamma)$ is small, we get that $\ell(\gamma')$ is small as well, and we can use the approximation $\cosh(x)  \approx 1 + \frac{1}{2}x^2$ to get 
\[1/M_1 \le \frac{\ell(\gamma)}{\ell(\gamma')} \le M_1\] for some $M_1=M_1(\LLL)$.

\medskip
\noindent \textit{Case 3: $\gamma$ is an arc connecting two opposite sides.} Again, by choosing $\epsilon$ small enough we may assume that $\gamma$ does not meet the side with length $a$. Say $\gamma$ meets the side of length $b$ and its opposite side. Then $\gamma$ divides the hexagon $P$ into two right angled hexagons, and divides the side of length $b$ to two subarcs of length $b_+,b_-$ in the pentagons containing the side of lengths $a,c$ respectively. 

By \cite[Equation 2.6.17]{Thurston}, we have
\begin{equation}\label{eq: length of neck in two pentagon1}
    \sinh(\ell(\gamma)) \sinh(b_+) = \cosh(a) \quad\text{ and }\quad\sinh(\ell(\gamma)) \sinh(b_-) = \cosh(c)
\end{equation}
Thus, 
\begin{equation}\label{eq: length of neck in two pentagon}
    \sinh(\ell(\gamma))^2= \frac{ \cosh(a) \cosh(c)}{\cosh(b_+)\cosh(b_-)}
\end{equation}
Using the inequality
\[ \cosh(b_+)\cosh(b_-) \le \cosh(b_++b_-) = \cosh(b) \le 2 \cosh(b_+)\cosh(b_-)\]
we can, up to a multiplicative factor of $2$, replace the denominator of \eqref{eq: length of neck in two pentagon} by $\cosh(b)$.
Similar estimates for $\gamma'$ give, 
\begin{equation}
    \frac{\sinh(\ell(\gamma))^2}{\sinh(\ell(\gamma'))^2} \le 2\frac{ \cosh(a)}{\cosh(a')} \le 2\cosh(\LLL) =: M_2'(\LLL)
\end{equation}
By symmetry,
\begin{equation}\label{eq: sinh neck distortion}
     1/M_2'(\LLL) \le \frac{\sinh(\ell(\gamma))^2}{\sinh(\ell(\gamma'))^2} \le M_2'(\LLL)
\end{equation}
Finally, since $\ell(\gamma)$ is small, so must be $\ell(\gamma')$. We can use the approximation $\sinh(x)\approx x$ to get the desired inequality
\[1/M_2 \le \frac{\ell(\gamma)}{\ell(\gamma')} \le M_2.\]
for some $M_2=M_2(\LLL)$. We have now shown that the desired inequality \eqref{eq: neck distortion} holds for any $M\ge \max\{M_1,M_2\}$. 

\medskip

To prove \eqref{eq: neck displacement}, since $b_++b_-=b=b'=b_+'+b_-'$, it suffices to show either $|b_+-b_+'|\leq M$ or $|b_--b_-'| \leq M$. So, we can work with $b_+,b_+'$ and assume that they lie in the same component of $P \dsm \gamma$ as the $a$-side. By \ref{eq: length of neck in two pentagon1}, $$ \sinh(b_+) = \frac{\cosh(a)}{\sinh \ell(\gamma)}, \ \ \ \  \sinh(b_+') = \frac{\cosh(a')}{\sinh \ell(\gamma')}.$$
Hence, $$\frac{\sinh(b_+)}{\sinh(b_+')}  =  \frac{\cosh(a)\sinh \ell(\gamma')}{\cosh(a')\sinh \ell(\gamma)}.$$
By \eqref{eq: sinh neck distortion}, 
\begin{equation}\label{eq: sinh neck displacement}
    1/M_3'\le \frac{\sinh(b_+)}{\sinh(b_+')} \le M_3'
\end{equation} 
where $M_3'=\sqrt{M_2'(\LLL)}\cosh(\LLL)$.
Applying $\ln(\cdot)$ to \eqref{eq: sinh neck displacement} and recalling that $\ln \sinh (x) \approx x - \ln(2)$ we get that $$ |b_+ - b_+'|\le M_3,$$ for some $M_3$ depending only on $\mathcal L$, as desired. 

\medskip

Taking $M=\max\{M_1,M_2,M_3\}$, completes the proof.
\end{proof}

We now use the thick thin decomposition for right angled hexagons and the previous lemma to show that altering a bounded side length of $P$ gives a polygon $P'$ that is uniformly bilipschitz to $P$ outside of the reduced collar of that side.

\begin{lemma}[Bilipschitz maps between hexagons]\label{lem: bilipschitz maps hex}
For every $\LLL>0$, the following hold for sufficiently small constants $\delta$, and sufficiently large $K=K(\mathcal L,\delta)$. As in Lemma \ref{lem: controlled distortion of necks}, let $P$ (resp. $P'$) be a (marked) right angled hyperbolic hexagon with free side lengths $a,b,c$ (resp. $a',b',c'$) and assume that $$a,a'\le \LLL, \ \ \  b=b', \ \ \ c=c'.$$
Then there exists a homeomorphism $f:P \longrightarrow P'$ with the following properties.
\begin{enumerate}
    \item $f$ is constant speed on each free side of $P$.
    \item $f$ takes the $\delta$-reduced collar of any free side of $P$ that has length at most $\mathcal L$ to the $\delta$-reduced collar of the corresponding free side of $P'$. (This applies to the $a$-side, but perhaps also to the $b$ and $c$ sides.) On each such collar, $f$ is the unique map that sends the two orthogonal foliations by geodesic segments and equidistant arcs to the corresponding foliations of the image collar and is constant speed on every leaf of these foliations. (The speed depends on the leaf in the equidistant arc case.)
    \item $f$ is locally $K$-bilipschitz outside of the $\delta$-reduced collar of the $a$-side of $P$. 
\end{enumerate}
\end{lemma}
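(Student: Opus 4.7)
The plan is to build the homeomorphism $f$ piecewise using the thick--thin decomposition for right angled hexagons from Proposition \ref{prop: thick thin hex}, and then patch together affine maps on collars with bilipschitz maps on the complementary thick pieces. First, I would fix $\delta$ and auxiliary thresholds $\epsilon_{\min}, \epsilon_{\max}$ small enough in terms of $\mathcal{L}$ so that both Proposition \ref{prop: thick thin hex} and Lemma \ref{lem: controlled distortion of necks} apply. Then I would define a neck collection $\mathfrak{N} \subset P$ consisting of all necks of length less than $\epsilon_{\min}$, together with the $a$-side and any of the $b$- or $c$-sides of length at most $\mathcal{L}$. By Lemma \ref{lem: controlled distortion of necks}, the corresponding necks in $P'$ have lengths at most $M\epsilon_{\min}$, so (after enlarging $\epsilon_{\max}$) they form an admissible collection $\mathfrak{N}' \subset P'$ of the same combinatorial type. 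Applying Proposition \ref{prop: thick thin hex} to both $(P, \mathfrak{N})$ and $(P', \mathfrak{N}')$ produces matching collars $\RC_\delta(\gamma)$, $\RC_\delta(\gamma')$ and complementary thick pieces $T$ and $T'$ with matching combinatorial structure.

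Next, I would define $f$ on each collar rectangle explicitly. In the semi-hyperbolic coordinates of Lemma \ref{lem: collars are rects}, $\RC_\delta(\gamma)$ is a rectangle $[0,\ell(\gamma)] \times [0,R_{\delta,\gamma}]$, and we take $f$ to be the product of affine stretches in the two coordinates, sending this to the corresponding rectangle for $\gamma'$. This automatically preserves the foliations by geodesic segments and equidistant arcs, is constant-speed on each leaf, and is the identity when $\ell(\gamma)=\ell(\gamma')$ and $R_{\delta,\gamma}=R_{\delta,\gamma'}$. For $\gamma$ a short neck (not the $a$-side), the neck distortion bound \eqref{eq: neck distortion} of Lemma \ref{lem: controlled distortion of necks} gives $\ell(\gamma')/\ell(\gamma) \in [1/M, M]$, from which one computes that the resulting affine map is bilipschitz with constant depending only on $M$ and $\delta$. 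For $\gamma$ a moderate-length free side (the $b$- or $c$-side with length at most $\mathcal{L}$), a direct compactness argument gives the same kind of bound. On the collar of the $a$-side no bilipschitz bound is needed.

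Third, I would define $f$ on the thick pieces $T \to T'$. By Proposition \ref{prop: thick thin hex}, each component of $T$ (resp. $T'$) admits a $K$-bilipschitz homeomorphism from a fixed regular model pentagon or hexagon $X$, constant-speed on sides. Composing these (after a uniform bilipschitz reparametrization on $\partial X$ that matches the collar boundary data coming from Step 2) yields a bilipschitz map $T \to T'$ which agrees with the collar maps on shared boundaries. The crucial input here is the neck displacement bound \eqref{eq: neck displacement} of Lemma \ref{lem: controlled distortion of necks}, which guarantees that the combinatorial endpoints on determined sides of $P$ of the arcs bounding the complementary pieces differ from those in $P'$ by only a bounded additive amount, so the two parametrizations of $\partial T$ and $\partial T'$ differ by a bilipschitz map with constant controlled by $\mathcal{L}$ and $\delta$.

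Finally, I would assemble the maps into a global homeomorphism $f:P \to P'$ and check the three conclusions: constant speed on free sides follows from the affine form of the collar maps plus the constant-speed property on $T \to T'$; property (2) is built into the collar maps; and the $K$-bilipschitz bound of (3) outside the $a$-collar follows by combining the constants from Steps 2 and 3. The main obstacle I anticipate is Step 3, specifically the matching of collar-boundary data between $T$ and $T'$: ensuring that the thick-piece maps can be chosen to coincide with the prescribed collar maps on shared boundaries without blowing up the bilipschitz constant. Both parts of Lemma \ref{lem: controlled distortion of necks} are essential here, \eqref{eq: neck distortion} for comparable collar boundary lengths and \eqref{eq: neck displacement} for the positions of those boundaries along sides of $P$.
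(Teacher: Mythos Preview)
Your overall architecture matches the paper's: apply Proposition~\ref{prop: thick thin hex} to both hexagons, define $f$ explicitly on collars, and use the bounded-geometry thick pieces to fill in. The gap is in property~(1), constant speed on free sides, and your one-line justification ``follows from the affine form of the collar maps plus the constant-speed property on $T\to T'$'' does not survive inspection.

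Concretely: take a determined-side neck $\gamma\in\mathfrak N$ adjacent to the $b$-side. With your choice of the \emph{same} $\delta$ on both $P$ and $P'$, the portion of the $b$-side inside $\RC_\delta(\gamma)$ has length $R_{\delta,\gamma}$, while its image inside $\RC_\delta(\gamma')$ has length $R_{\delta,\gamma'}$; since $\ell(\gamma)\neq\ell(\gamma')$ in general, these differ. Similarly, for a non-side neck $\gamma$ incident to the $b$-side, your collar map sends $\gamma$ to $\gamma'$, but by \eqref{eq: neck displacement} the position of $\gamma'$ along $b'$ is shifted relative to that of $\gamma$ along $b$. In both cases the thick segments of the $b$-side in $T$ and $T'$ have different lengths, so the constant-speed thick map runs at one speed while the collar maps run at other speeds. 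The glued map on the $b$-side is piecewise affine with \emph{different} speeds on different pieces, hence not constant speed. Your proposed ``bilipschitz reparametrization on $\partial X$'' buys continuity of the glued map but cannot repair this.

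The paper's fix is exactly the flexibility in Proposition~\ref{prop: thick thin hex} that allows $\delta$ to vary with the cooriented neck. On $P'$ one takes a function $\delta'$ defined so that: for free sides $\delta'=\delta$; for determined-side necks $\delta'(\gamma')$ is chosen so that $R_{\delta',\gamma'}=R_{\delta,\gamma}$; and for a non-side neck one chooses $\delta'(\gamma_+')$ asymmetrically so that $b_+'-R_{\delta',\gamma_+'}=b_+-R_{\delta,\gamma}$. Lemma~\ref{lem: controlled distortion of necks} (both \eqref{eq: neck distortion} and \eqref{eq: neck displacement}) is used to show $\delta'$ stays in a bounded interval $[\delta/N,N\delta]$ depending only on $\mathcal L$, so Proposition~\ref{prop: thick thin hex} still applies. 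With these choices every segment of the $b$-side (and $c$-side) in $P$ has exactly the same length as the corresponding segment in $P'$, so $f$ is an \emph{isometry} on those sides and property~(1) is immediate. The paper remarks that this is ``the entire reason'' for building the varying-$\delta$ flexibility into the thick--thin decomposition; your proposal discards precisely that feature.
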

\begin{proof}
We want to apply Proposition \ref{prop: thick thin hex} to both $P$ and $P'$. Fix some $\delta$ that is small in terms of $\mathcal L$. Define $\mathfrak N$ to be the collection of all necks of $P$ with length less than some $\epsilon<<\delta$, together with all free sides of $P$ with length at most $\mathcal L$. Let $\mathfrak N'$ be the corresponding set of necks in $P'$. Then as long as $\epsilon$ is much smaller than $\delta$, Proposition \ref{prop: thick thin hex} applies to the set of collars $\RC_{\delta_0}(\gamma)$, where $\gamma \in \mathfrak N$. 

We now want to apply Proposition~\ref{prop: thick thin hex} to the collection $\mathfrak N'$, but here it is important to let the corresponding $\delta'$ vary in a certain way with the (cooriented) neck. So, given a neck $\gamma \in \mathfrak N$, let $\gamma'$ be the corresponding neck in $\mathfrak N'$. 
\begin{enumerate}
    \item[(a)] If $\gamma$ is a free side of $P$, set $\delta'(\gamma') = \delta$.
    \item[(b)] If $\gamma$ is a determined side of $P$, set $\delta'(\gamma')$ to be the unique value such that $R_{\delta',\gamma'}=R_{\delta,\gamma}$.
    \item[(c)] If $\gamma$ is a non-side neck, then as long as $\epsilon$ is small $\gamma$ cannot be incident to the $a$-side, so we can assume without loss of generality that it is incident to the $b$-side, and so divides the $b$-side into arcs of length $b_+,b_-$. Label the cooresponding cooriented necks as  $\gamma_+,\gamma_-$ so that the preferred coorientation of $\gamma_+$ points along the $b_+$-arc, and let $\gamma_+',\gamma_-',b_+',b_-'$ be the corresponding data in $P'$. Then set $\delta'(\gamma_+')$ to be the unique value such that $b_+' - R_{\delta',\gamma_+'}  =  - b_+ - R_{\delta,\gamma_+} $.
\end{enumerate}
The reason for the complicated definition in (c) is that the location along the $b'$-side of the non-side neck $\gamma'$ may be slightly different than that of $\gamma$ along the $b$-side. So, if we want to construct a map $f$ that is constant speed along the $b$ side, we cannot just map a symmetric collar around $\gamma$ to a symmetric collar around $\gamma'$. Instead, we use a collar around $\gamma'$ where some of the radius is shifted from one coorientation to the opposite coorientation. Really, this is the entire reason why we developed our thick-thin decomposition with enough flexibility to allow for varying $\delta$.

Let us check that the conditions of Proposition \ref{prop: thick thin hex} are satisfied for $P',\mathfrak N', \delta'$. By Lemma \ref{lem: controlled distortion of necks}, there exists $M'=M'(\LLL)$ so that the collection $\mathfrak N'$ includes all $\epsilon/M'$-necks and does not include any neck of length $\ge M'\epsilon$. So, as long as $\epsilon$ is small enough, $\mathfrak N'$ is an allowable set of necks in Proposition \ref{prop: thick thin hex} associated to the choice $[\epsilon_{min},\epsilon_{max}]=[\epsilon/M',M'\epsilon]$. It remains to define $\delta_{min}, \delta_{max}$.

We claim that the function $\delta'$ has values in an interval $[\delta_0/N,N\delta_0]$, where $N=N(\mathcal L)$. Assuming this, as long as $\epsilon$ was chosen much smaller than $\delta_0$ to begin with, Proposition \ref{prop: thick thin hex} will apply with $\mathcal L$, the intervals $[\delta_{min},\delta_{max}] = [\delta_0/N,N\delta]$ and $[\epsilon_{min},\epsilon_{max}]=[\epsilon/M',M'\epsilon]$, the collection $\mathfrak N'$, and the function $\delta'$ defined above. It of course suffices to bound $\delta'$ when $\gamma_+$ is as in (b) or (c) above, as otherwise $\delta(\gamma)=\delta$. When $\gamma$ is as in (b), as long as $\epsilon$ is tiny compared to $\delta$, exponential estimates apply to the radii in question, and the fact that $\ell(\gamma)/\ell(\gamma')$ is bounded implies that $\delta/\delta'(\gamma')$ is bounded too. Similarly, indicated in the notation of (c), as long as $\epsilon$ is tiny compared to $\delta$, the radius $R_{\delta,\gamma}$ is huge, must larger than the difference $b_+'-b_+$, which is bounded by Lemma \ref{lem: controlled distortion of necks}. So, exponential estimates apply to both $R_{\delta,\gamma}$ and $R_{\delta',\gamma_+'}$, giving that 
$$\ln(4\delta'/\ell(\gamma)) \approx \ln(4\delta/\ell(\gamma')) + b_+'-b_+$$
up to uniform additive error. But $\ell(\gamma') \approx \ell(\gamma)$ up to a multiplicative error of $M=M(\mathcal L)$, and $b_+'-b_+$ is also bounded by $M$, so this implies that $\delta' \in [\delta/N,N\delta]$ for some $N=N(\mathcal L)$. 

By Proposition \ref{prop: thick thin hex}, we now know that all the $\delta$-reduced collars in $P$ are disjoint, and similarly for $\delta',P'$. Moreover, if $T,T'$ are the complements in $P,P'$ of all these collars, there is a locally $K$-bilipschitz homeomorphism $f: T \longrightarrow T'$
that is constant speed on the sides of $T$ and that respects the markings of $P,P'$, in the sense that on $\partial T$, segments of sides of $P$ are sent to segments of sides of the corresponding sides of $P'$, and collar boundaries in $P$ are sent to the corresponding collar boundaries in $P'$.  Extend $f$ to a map $$f : P \longrightarrow P'$$ by defining $f$ on each reduced collar to be the unique map $$f: \RC_{\delta}(\gamma) \longrightarrow \RC_{\delta'}(\gamma')$$ satisfying property 2 of the lemma. Note that property 2 requires that within the collar, $f$ is constant speed on all equidistant curves to $\gamma$, so defining the map like this does indeed give a continuous extension of the earlier map $T \longrightarrow T'$ given by Proposition \ref{prop: thick thin hex}. It only remains to check properties 1 and 3 of the lemma.

For property 1, suppose that $\beta$ is a free side of $P$, and $\beta'$ is the corresponding side of $P'$. If $\beta \in \mathfrak N$, then by definition of $f$ on $\RC_\delta(\beta)$, it is constant speed on $\beta$. Otherwise, consider all the necks $\gamma$ that are incident to $\beta$. If $\gamma$ is a determined side of $P $ that is adjacent to $\beta $, then the collar radii $R_{\delta',\gamma'},R_{\delta,\gamma}$ were defined to be equal, so $f$ is an isometry on the part of $\beta$ that is contained in $\RC_{\delta}(\gamma)$. If $\gamma$ is a non-side neck incident to $\beta$, then we are in case (c) above, so assuming without loss of generality that $\beta$ is the $b$-side, we have \begin{equation}\label{eq: same radius}
    R_{\delta',\gamma_+'}+ R_{\delta',\gamma_-'} = R_{\delta,\gamma_+} + b_+'-b_+ +  R_{\delta,\gamma_-} + b_-'-b_- = 2R_{\delta,\gamma} +b'-b = 2R_{\delta,\gamma}.
\end{equation}
In other words, the length of each leaf of the foliation of $\RC_{\delta}(\gamma)$ by segments orthogonal to $\gamma$ is the same as the length of its image in $\RC_{\delta'}(\gamma')$. So in particular, $f$ is isometric on the part of $\beta$ that lies in $\RC_\delta(\gamma)$. (Note also that the two halves of $\RC_{\delta,\gamma}$ have the same radius, while the two halves of $\RC_{\delta',\gamma'}$ may not, so the map $f$ may not take $\gamma$ to $\gamma'$, but that is fine.) Finally, consider the parts of $\beta,\beta'$ that remain when we delete the intersections with all reduced collars. We know that $f$ is an isometry on the (at most three) segments of $\beta$ that lie in reduced collars, and that $\ell(\beta)=\ell(\beta')$, so the remainder in $P$ has the same length as it does in $P'$.  If there are no non-side necks of $\mathfrak N$ incident to $\beta$, then the remainder is a single segment that is mapped isometrically by $f$, since $f$ has constant speed on sides. If there is a non-side neck $\gamma$ as above, then the part of the remainder that lies in the direction of the coorientation $\gamma_+$ has length either 
$$b_+ - R_{\delta,\gamma} - R_{\delta,\alpha} \ \ \ \ \text{ or } \ \ \ \ b_+ - R_{\delta,\gamma},$$ depending on whether the determined side $\alpha$ adjacent to $\beta$ in the direction of the given coorientation on $\gamma_+$ lies in $\mathfrak N$ or not. Similarly, the part of the remainder of $\beta'$ the lies in the given direction from $\gamma$ has length $$b_+' - R_{\delta',\gamma'} - R_{\delta',\alpha'} \ \ \ \ \text{ or } \ \ \ \ b_+' - R_{\delta',\gamma'}.$$  But in (b) we set $R_{\delta',\alpha'}=R_{\delta,\alpha}$, and in (c) we defined our radii so that $b_+ - R_{\delta,\gamma}=b_+' - R_{\delta',\gamma'}$. So in either case, each segment of the remainder of $\beta$ has the same length as the corresponding segment of $\beta'$. It follows that $f$ is an isometry on $\beta$.

Finally, we show that $f$ is locally $K$-bilipschitz outside of the $\delta$-reduced collar of the $a$-side of $P$. It suffices to check this separately on each reduced collar $\RC_{\delta}(\gamma)$, where $\gamma \in \mathfrak N$ is not the $a$-side. If $\gamma$ is a free side, then $f$ is an isometry from $\RC_\delta(\gamma)$ to $\RC_\delta'(\gamma')$, since $\ell(\gamma)=\ell(\gamma')$ and in (a) above we defined $\delta'=\delta$. There are then two more cases to consider.

\medskip

\noindent \emph{Case 1.} If $\gamma$ is a determined side, then the radii $R_{\delta,\gamma}$ and $ R_{\delta',\gamma'}$ were defined to be equal, so $f$ is isometric on each leaf of the foliation of $R_{\delta,\gamma}$ by geodesics orthogonal to $\gamma$. Moreover, the length of the equidistant arc to $\gamma$ at distance $t$ is $\ell(\gamma) \cosh t$, and similarly for $\gamma'$, so each such arc in $\RC_\delta(\gamma)$ is mapped at speed $\ell(\gamma')/\ell(\gamma)$ onto the corresponding arc of $\RC_{\delta'}(\gamma')$. Lemma \ref{lem: controlled distortion of necks} implies that $1/M \leq \ell(\gamma)/\ell(\gamma') \leq M$, so $f$ is $M$-bilipschitz on each such arc. As the two foliations are orthogonal in both the domain and the image collar, this implies $f$ is $M$-bilipschitz, where $M=M(\mathcal L)$.

\medskip

\noindent \emph{Case 2.} Suppose that $\gamma$ is a non-side neck. Then as long as $\epsilon$ is small, $\gamma$ cannot be incident to the $a$-side of $P$, which has length at most $\mathcal L$. So, we can assume that it is incident to the $b$-side, and we can use the notation $\gamma_\pm,\gamma_\pm',b_\pm,b_\pm'$ of case (c) above. Because of \eqref{eq: same radius}, the the geodesic leaves of $\RC_\delta(\gamma) $ that are orthogonal to $\gamma$ are mapped isometrically onto those of $\RC_\delta(\gamma)$, so it suffices to bound the speed of $f$ on each equidistant arc to $\gamma$. For $t\in [-R_{\delta,\gamma},R_{\delta,\gamma}]$, consider the equidistant arc $\gamma_t \subset \RC_{\delta}(\gamma)$ at distance $|t|$ from $\gamma$ where the sign of $t$ determines the component of $\RC_\delta(\gamma) \setminus \gamma$ in which $\gamma_t$ lies. Similarly, for each $t'\in [-R_{\delta',\gamma_-'},R_{\delta,\gamma_+'}]$ we have an arc $\gamma_t'$. Then $f(\gamma_t) = \gamma'_{t+b_+'-b_+}.$ Now
$$\ell(\gamma_t) = \cosh t \cdot \ell(\gamma), \ \ \ \  \ell(\gamma_{|t|+b_+'-b_+}) =  \cosh(|t+b_+'-b_+|) \cdot \ell(\gamma'),$$
and Lemma \ref{lem: controlled distortion of necks} says that $1/M \leq \ell(\gamma')/\ell(\gamma) \leq M$ and $|b_+'-b_+|\leq M$. So, it suffices then to show $\cosh(|t+x|)$ is within a multiplicative factor of $\cosh(|t|)$ if $|x| \leq M$. But this follows from a compactness argument for $t$ in some closed interval around $0$, while for large positive $t$ we have $\cosh(t+x)\approx \frac 12 \cosh(t)\cosh(x)$, so the ratio with $\cosh(t)$ is approximately $\cosh(x)$, which is bounded. The case of large negative $t$ is similar.
\end{proof}

\subsection{The proof of Proposition \ref{prop: alteration}}

\label{sec: prop proof}

Suppose we are in the setting of Proposition \ref{prop: alteration}, so that $P,P'$ are pairs of pants with boundary components $\gamma_i,\gamma_i'$, $i=1,2,3$, and where for some $\mathcal S \subset \{1,2,3\}$ we have that for $i\in S$, the lengths $\ell(\gamma_i),\ell(\gamma_i') \leq \mathcal L$, while for $i\not \in S$, the lengths $\ell(\gamma_i)=\ell(\gamma_i')$. Cuting along all seams in $P,P'$, we see that $P,P'$ are the doubles of right angled hexagons $H,H'$. Applying Lemma \ref{lem: bilipschitz maps hex} iteratively at most three times, we can change the lengths $\ell(\gamma_i)$, $i\in \mathcal S$, one by one into the lengths $\ell(\gamma_i')$. Composing the at most three maps one obtains, we get a map $f : H \longrightarrow H'$ satisfying properties 1 and 2 of Lemma \ref{lem: bilipschitz maps hex}, and where $f$ is $K$-bilipschitz outside the reduced collars of all elements of $\mathcal S$. Doubling $f$ proves the proposition.

\bibliographystyle{plain}
\bibliography{biblio}
\end{document}